\newtheorem*{rep@theorem}{\rep@title}
\newcommand{\newreptheorem}[2]{%
	\newenvironment{rep#1}[1]{%
		\def\rep@title{#2 \ref{##1}}%
		\begin{rep@theorem}}%
		{\end{rep@theorem}}}
\newtheorem{theorem}{Theorem}[section]
\newtheorem{lemma}[theorem]{Lemma}
\newtheorem{corollary}[theorem]{Corollary}
\newtheorem{proposition}[theorem]{Proposition}
\theoremstyle{definition}
\newtheorem{remark}[theorem]{Remark}
\newtheorem{definition}[theorem]{Definition}
\definecolor{ao}{rgb}{0.0, 0.5, 0.0}
\definecolor{lasallegreen}{rgb}{0.03, 0.47, 0.19}
\let\oldnorm\norm
\def\norm{\@ifstar{\oldnorm}{\oldnorm*}}
\newcommand{\al} {\alpha}
\newcommand{\pa} {\partial}
\newcommand{\ga} {\gamma}
\newcommand{\Ga} {\Gamma}
\newcommand{\Om} {\Omega}
\newcommand{\R}{{\mathbb R}}
\def\l@subsection{\@tocline{2}{0pt}{2pc}{6pc}{}}
\def\l@subsubsection{\@tocline{3}{0pt}{8pc}{8pc}{}}
\numberwithin{equation}{section}
\setlist{nosep}
\begin{document}
\singlespacing
\title{Neumann domains of planar analytic eigenfunctions}

\author[T.V.~Anoop]{T.V.~Anoop}
\author[V.~Bobkov]{Vladimir Bobkov}
\author[M.~Ghosh]{Mrityunjoy Ghosh}

\address[T.V.~Anoop]{\newline\indent
	Department of Mathematics,
	Indian Institute of Technology Madras, 
	\newline\indent
	Chennai 36, India
	\newline\indent
	\orcidlink{0000-0002-2470-9140} 0000-0002-2470-9140 
}
\email{anoop@iitm.ac.in}

\address[V.~Bobkov]{\newline\indent
	Institute of Mathematics, Ufa Federal Research Centre, RAS,
	\newline\indent 
	Chernyshevsky str. 112, 450008 Ufa, Russia
	\newline\indent 
	\orcidlink{0000-0002-4425-0218} 0000-0002-4425-0218
}
\email{bobkov@matem.anrb.ru}

\address[M.~Ghosh]{\newline\indent
	Tata Institute of Fundamental Research,
 \newline\indent
	Centre for Applicable Mathematics, 
	\newline\indent
	Sharadanagar, Bengaluru 560065, India
	\newline\indent
	\orcidlink{0000-0003-0415-2821} 0000-0003-0415-2821 
}
\email{ghoshmrityunjoy22@gmail.com}

\subjclass[2020]{
    35P05, 
    58K05,   
    26E05,  
    35P15.	
}
\keywords{eigenfunctions, real analytic, nodal domains, Neumann domains, effectless cut, critical points, gradient flow, Pleijel constant}

\begin{abstract}
    Along with the partition of a planar bounded domain $\Omega$ by the nodal set of a fixed eigenfunction of the Laplace operator in $\Omega$, one can consider another natural partition of $\Omega$ by, roughly speaking, gradient flow lines of a special type (separatrices) of this eigenfunction. Elements of such partition are called Neumann domains and their boundaries are Neumann lines. When the eigenfunction is a Morse function, this partition corresponds to the Morse--Smale complex and its fundamental properties have been systematically investigated by Band \& Fajman \cite{band2016topological}. Although, in the case of general position, eigenfunctions are always of the Morse type, particular eigenfunctions can possess degenerate critical points. In the present work, we propose a way to characterize Neumann domains and lines of an \textit{arbitrary} eigenfunction. 
    Instead of requiring the nondegeneracy of critical points of the eigenfunction, its real analyticity is principally used. 
    The analyticity allows for the presence of degenerate critical points but significantly limits their possible diversity.
    Even so, the eigenfunction can possess curves of critical points, which have to belong naturally to the Neumann lines set, as well as critical points of a saddle-node type. 
    We overview all possible types of degenerate critical points in the eigenfunction's critical set and provide a numerically based evidence that each of them can be observed for particular eigenfunctions. Alongside with \cite{band2016topological}, our approach is inspired by a little-known note of Weinberger \cite{weineffect} that appeared back in 1963, where a part of the Neumann line set, under the name of ``effectless cut'', was explicitly introduced and studied for the first eigenfunctions in domains with nontrivial topology. In addition, we provide an asymptotic counting of Neumann domains for a disk and rectangles in analogy with the Pleijel constant. 
\end{abstract} 

\maketitle

\begin{quote}
	\setcounter{tocdepth}{2}
	\tableofcontents
	\addtocontents{toc}{\vspace*{0ex}}
\end{quote}

\section{Introduction}\label{sec:intro}

Let $\Om\subset \R^2$ be a bounded simply or multiply connected domain. 
Denote by $\Gamma^D$ the union of some connected components of $\partial \Omega$ and let $\Gamma^N = \partial \Omega \setminus \Gamma^D$, where either $\Gamma^D$ or $\Gamma^N$ is allowed to be empty.
Assume that $\Gamma^N$ is (real) analytic. 
Consider the eigenvalue problem
\begin{equation}\label{cutproblem2d}
	\left\{   
	\begin{aligned}
		-\Delta u &= \lambda u \quad\text{in} \quad\Omega,\\
		u &=0 \quad\;\;\text{on}\quad  \Ga^D,\\
		\frac{\partial u}{\partial \nu}&=0 \quad\;\;\text{on}\quad  \Gamma^N,
	\end{aligned}
	\right.
\end{equation}
where $\nu$ denotes a unit normal vector to $\Gamma^N$.
We further assume that $\Gamma^D$ is sufficiently regular to guarantee that eigenfunctions are (real) analytic in a neighborhood of $\overline{\Omega}$, see Remark~\ref{rem:regularity} below.
As is well known, the spectrum of \eqref{cutproblem2d} consists of a discrete sequence of eigenvalues $\{\lambda_k\}$ such that
$$
0 \leq \lambda_1 < \lambda_2 \leq \lambda_3 \leq \ldots \leq \lambda_k \leq \ldots, 
\quad 
\text{and}~ \lambda_k \to +\infty ~~\text{as}~~ k \to +\infty.
$$

Let $\Ga_0$ stand for the outer boundary of $\Omega$. 
Assuming that $\Ga_0 \subset \Ga^D$ and that there is another connected component $\Ga_1 \subset \Gamma^D$, \textsc{Weinberger} \cite{weineffect} studied the \textit{first} eigenfunction $u$ of \eqref{cutproblem2d} and
established the existence of a curve $\widetilde{\gamma} \subset \overline{\Omega}$
with the following properties:
\begin{enumerate}
\item $\widetilde{\gamma}$ consists of a finite number  of analytic arcs of finite lengths,
\item $\partial u/\partial \nu=0$ along each of the analytic arcs,
\item $\widetilde{\gamma}$ separates $\Ga_1$ from $\Ga_0$ and from any other possible connected component of the Dirichlet boundary $\Ga^D$.
\end{enumerate}  
As a consequence, after ``cutting'' $\Omega$ into two pieces along $\widetilde{\gamma}$, the first eigenvalue (fundamental tone) of each part under the corresponding mixed Dirichlet-Neumann boundary conditions remains equal to $\lambda_1$. 
Because of this reason, \textsc{Weinberger} called $\widetilde{\gamma}$ an \textit{effectless cut}.
This result was used by \textsc{Hersch} \cite{hers} to prove the following fact:

\begin{displayquote}
Among all multiply connected membranes which are fixed along their outer boundary $\Gamma_0$ and one inner boundary component $\Gamma_1$ (and otherwise free), with given measure $A$ and lengths $L_{\Gamma_0}$, $L_{\Gamma_1}$ satisfying $L_{\Gamma_0}^2-L_{\Gamma_1}^2 = 4\pi A$, the annular membrane has maximal first eigenvalue.
\end{displayquote}

The proof of \textsc{Weinberger} is based on analysis of the gradient-descent system of $u$, 
\begin{equation}\label{eq:gradflow}
\dot{z}(t) = -\nabla u(z(t)), \quad t \in \mathbb{R},
\end{equation}
where $z=(x,y)$.
Thanks to the regularity of eigenfunctions, the Cauchy problem for the equation \eqref{eq:gradflow} with an initial condition $z(0)=z_0 \in \Omega$ has a unique solution (flow line).
By construction, the effectless cut $\widetilde{\gamma}$ is the union of some dividing flow lines (separatrices) of $u$ and arcs of critical points of $u$, and we will discuss this construction in more detail below.

About 50 years later, independently from \cite{weineffect} and each other, \textsc{Zelditch} \cite{zelditch2013eigenfunctions} and \textsc{McDonald \& Fulling} \cite{mcdonald2014neumann} 
(see also \cite{mcdonald2008neumann}) 
raised interest and importance of studying the decomposition of $\Omega$ by means of  dividing flow lines of \textit{higher} eigenfunctions $u$. 
Assuming that $u$ is a Morse function, this decomposition corresponds to the Morse--Smale complex of $u$, which is defined via stable and unstable manifolds of saddle points. 
This decomposition is called the Neumann partition.
The connected components of this decomposition and their boundaries are called \textit{Neumann domains} and \textit{Neumann lines}, respectively.
Such a Neumann partitioning of $\Omega$ has some advantages over the classical partitioning of $\Omega$ by the nodal set of $u$. 
For example, the Neumann domains demonstrate better stability to perturbations of $u$ along the eigenspace, see a discussion in \cite{mcdonald2014neumann}.
At the same time, properties of Neumann domains and lines are generally harder to study than those of nodal domains.
Several fundamental properties of Neumann partitioning have been recently investigated by
\textsc{Alon, Band, Bersudsky, Cox, Egger, Fajman, Taylor} in 
\cite{alon2020neumann,band2016topological,band2020defining,band2021spectral}, and we discuss some of them in more detail below.

Although, in the case of general position, every eigenfunction is a Morse function \cite{uhlenbeck1976generic}, even the simplest domains such as a disk and square do possess eigenfunctions with degenerate critical points. 
Moreover, some eigenfunctions might not even have saddle points, e.g., when they are radial.
Consequently, the main definitions provided in \cite{mcdonald2014neumann,band2016topological,band2020defining} and most of the results obtained therein cannot directly cover non-Morse eigenfunctions.

The aim of the present work is to extend the definitions and some results from \textsc{Band \& Fajman}  \cite{band2016topological} 
to \textit{arbitrary} eigenfunctions of \eqref{cutproblem2d} by
employing several ideas of the effectless cut approach of \textsc{Weinberger} \cite{weineffect}. More precisely, we generalize and improve the aforementioned works in several directions as highlighted below:
\begin{enumerate}
    \item 
    \underline{\textit{Morse to analytic}}: 
    Our eigenfunctions are allowed to have degenerate critical points, which brings more complexity to the structure of Neumann domains and Neumann lines.
    Inspired by \cite{weineffect}, we overcome most of the difficulties that arise due to a possible presence of degenerate critical points of an eigenfunction $u$ by relying on the analyticity of $u$.
The analyticity, together with the equation satisfied by $u$, yields additional information on critical points of $u$ and the lengths of its flow lines.
Despite this, an eigenfunction can possess curves of critical points, which have to belong naturally to the Neumann lines set, as well as isolated degenerate critical points of a saddle-node type (see \cite{chenmyrtaj2019} and Section~\ref{sec:classification}).
In the present Euclidean settings, the analyticity always holds,
thereby leading to somewhat more robust definitions of Neumann domains and lines. 
See \cite[Remark~3.3]{band2016topological} for a related discussion.    

    \item 
    \underline{\textit{Mixed boundary conditions on multiply connected domains}}: 
    Our eigenfunctions are allowed to satisfy either Dirichlet or Neumann boundary conditions on each connected component of $\partial \Omega$. 
    The Dirichlet part $\Gamma^D$ and Neumann part $\Gamma^N$ of $\partial \Omega$ have different impacts on the structure of the Neumann line set. Thus, a separate consideration of the behavior of Neumann lines in  neighborhoods of $\Gamma^D$ and $\Gamma^N$ is required.
    In particular, although $\Gamma^N$ should naturally belong to the Neumann line set, it might not consist of dividing flow lines. 
    The geometry of Neumann domains for the Dirichlet eigenfunctions that are Morse has been studied in 
    \cite{band2016topological}, and we refer to \cite[Remark~3.14]{band2016topological} for a brief comment about the Neumann eigenfunctions. 

    \item 
    \underline{\textit{Neumann domain counting}}:
    We also provide some results on the asymptotic number of Neumann domains in a disk and rectangles, in the spirit of \textsc{Pleijel}'s results \cite{Pleijel} on the asymptotic number of nodal domains.
    The consideration of these model domains leads to some natural conjectures. 
    We refer to \cite{mcdonald2014neumann,band2021spectral,band2016topological,band2020defining} for related analysis.
\end{enumerate}

Even in the analytic settings, the structure of the critical set and properties of critical points of eigenfunctions are not completely understood.
We refer to \cite{magnanini2016introduction,massimo2021number} for surveys on critical points of solutions of elliptic equations, to 
\cite{alessandrini1992index,arango2010critical,deng2023number,judge2022some,JudgeCPDE,Nadirashvili, Alberti, Grossi,paganimasciadri1993remarks}  for some general results on the number and types of critical points, and to 
\cite{arango2006morse,feehan2020morse,grossi2020morse} for Morse and Morse--Bott type aspects of degenerate critical points of solutions.

\medskip
The rest of the paper is organized as follows.
In Section~\ref{sec:properties}, we provide auxiliary results on properties of eigenfunctions, their critical points and flow lines.
In Section~\ref{sec:mainresults}, we introduce some definitions generalizing those from \cite{mcdonald2014neumann,band2016topological,weineffect} and formulate our main results, which we collect in Theorem~\ref{thm:main-properties}.
Section~\ref{sec:GWL} contains further
auxiliary results needed for the proof of Theorem~\ref{thm:main-properties}, the latter being proved in Section~\ref{sec:proofs-main}.
In Section~\ref{sec:trivialcrit}, we discuss the presence of degenerate critical points in the eigenfunction's landscape.
In Section~\ref{sec:counting}, we study the Neumann domain counting for a disk and rectangles.
Finally, in Section~\ref{sec:remarks}, we discuss a few natural remarks and problems.

\section{Properties of eigenfunctions and their flow lines}\label{sec:properties}

Hereinafter, we denote by $u$ an eigenfunction of \eqref{cutproblem2d} corresponding to an eigenvalue $\lambda > 0$. 
Thus, $u$ is \textit{not} a constant function, since otherwise there is nothing to investigate.

\begin{remark}\label{rem:regularity}
Recall that $\Gamma^N$ is assumed to be analytic, and $\Gamma^D$ is sufficiently regular to guarantee that $u$ is analytic in a neighborhood of $\overline{\Omega}$.
For example, if $\Gamma^D$ is analytic, then it satisfies this assumption, which is a classical result in the regularity theory. 
To provide explicit references, we cite, e.g., \cite[Remark, p.~668]{nirenberg1955remarks} for the $C^\infty(\overline{\Omega})$-regularity of $u$ and then \cite[Chapter~8, Theorem~1.2]{lions2012non} for the analyticity. 
We also refer to \cite[pp.~341-343]{magenes1958} for a similar statement, and to \cite{toth-zelditch} for a discussion. 
Note that these references cover much more general settings; moreover,  
\cite[Chapter~8, Theorem~1.2]{lions2012non} does not require $u$ to be a solution of \eqref{cutproblem2d}.
Another example of an appropriate $\Gamma^D$ includes the model case when $\Omega$ is a rectangle with $\partial \Omega = \Gamma^D$, since eigenfunctions are given in a closed form. 
\end{remark}

\begin{remark}\label{rem:regularity2}
	Since $u$ is analytic in a neighborhood of $\overline{\Omega}$, it continues to satisfy the equation in \eqref{cutproblem2d} in this neighborhood, which follows from the unique continuation property of the analytic function $\Delta u + \lambda u$.
\end{remark}

Throughout the text, \textit{local minimum points}, \textit{local maximum points} (\textit{local extremum points} when referring to both), and \textit{saddle points} will be regarded with respect to the extension of $u$ to a neighborhood of $\overline{\Omega}$. 
In particular, they are \textit{critical points} of $u$. 
We will use the notation

\begin{align}
        \mathcal{C}
&=
\{z\in \overline{\Om}:~ \nabla u(z)=0\},\\
	\mathcal{M}_-
	&=
	\{
	q\in \mathcal{C}:~
	q~\text{is a local minimum point of}~u
	\},\\
	\mathcal{M}_+
	&=
	\{
	p\in \mathcal{C}:~
	p~\text{is a local maximum point of}~u
	\},\\
	\mathcal{S}
	&=
	\{
	r\in \mathcal{C}:~
	r~\text{is a saddle point of}~u
	\}.
\end{align}

\begin{remark}\label{rem:boundary_critical}
Let us provide some comments: 
\begin{enumerate}[label={\rm(\roman*)}]
	\item\label{rem:boundary_critical:Mpmempty} 
 $\mathcal{M}_+ \cap \mathcal{M}_- =\emptyset$, which follows from our assumption that $u$ is not constant in $\Omega$ together with the analyticity of $u$. 

	\item $\mathcal{M}_\pm \cap \Gamma^D = \emptyset$, i.e., any $z \in \Gamma^D$ is either a regular point or a saddle point of $u$, see Lemma~\ref{lem:crit}~\ref{lem:crit:singsadleisolated}.
\end{enumerate}
\end{remark}

\subsection{Classification of critical points}\label{sec:classification}

As noted above, an eigenfunction $u$ might have \textit{degenerate} critical points, i.e., those critical points at which the Hessian determinant is zero.
However, the fact that $u$ is analytic and satisfies \eqref{cutproblem2d} imposes certain restrictions on possible degeneracy of its critical points, compared to that for general $C^\infty$-functions.
In this subsection, we overview all types of critical points that $u$ can possess.

Let $z_0 = (x_0,y_0)$ be a critical point of the eigenfunction $u$. 
We say that $u$ and a function $v: \mathbb{R}^2 \to \mathbb{R}$ at the points $z_0$ and $(0,0)$, respectively, are \textit{right-equivalent} if there exist neighborhoods $U_1$ of $z_0$ and $U_2$ of $(0,0)$ and a $C^1$-diffeomorphism $\phi: U_1 \to U_2$ such that $\phi(z_0) = (0,0)$ and $v(z) = u(\phi^{-1}(z))$ for any $z \in U_2$; cf. \cite[p.~58]{poston2014catastrophe}.
For brevity, we will omit to mention that the right-equivalence takes place \textit{at the points $z_0$ and $(0,0)$}.

The following classification of critical points of $u$ holds:
\begin{enumerate}[\rm(I)]
\item $z_0$ is \textit{non-degenerate}, i.e., the Hessian determinant of $u$ at $z_0$ does not vanish. 
The Morse lemma says that $u$ is right-equivalent to one of the functions
\begin{equation}\label{eq:nondegen}
	(x,y) \mapsto u(z_0) \pm x^2 \pm y^2.
\end{equation}
We can have either $\Delta u(z_0) \neq 0$ or $\Delta u(z_0) = 0$, and, respectively, either $u(z_0) \neq 0$ or $u(z_0) = 0$ by \eqref{cutproblem2d}.

\item $z_0$ is \textit{semi-degenerate}, i.e., the Hessian matrix of $u$ at $z_0$ has rank $1$. 
We discuss in Remark~\ref{rem:Hadamard} that $u$ is right-equivalent to one of the functions 
\begin{align}
	\label{eq:semidegen1}
	&(x,y) \mapsto u(z_0) \pm y^2,\\
	\label{eq:semidegen2}
	&(x,y) \mapsto u(z_0) \pm x^k \pm y^2
	\quad \text{for a natural number}~ k \geq 3.	
\end{align}
We have $\Delta u(z_0) \neq 0$ and also $u(z_0) \neq 0$ by \eqref{cutproblem2d}.

\item $z_0$ is \textit{fully degenerate}, i.e., the Hessian matrix of $u$ at $z_0$ is a zero matrix.
We discuss in Remark~\ref{rem:cheng} that 
there exists a natural number $M \geq 3$ such that $u$ is right-equivalent to a homogeneous harmonic polynomial of degree $M$, which can be written in the polar coordinates $(\rho,\vartheta)$ as
\begin{equation}\label{eq:degen}
	(\rho,\vartheta) \mapsto \rho^M \sin(M \vartheta).
\end{equation}
We have $\Delta u(z_0) = 0$ and also $u(z_0) = 0$ by \eqref{cutproblem2d}. 
\end{enumerate}

\noindent
Throughout the text, the functions \eqref{eq:nondegen}, \eqref{eq:semidegen1}, \eqref{eq:semidegen2}, \eqref{eq:degen} will be called \textit{prototypical}. 
See Figure~\ref{fig:levelsets} for their level sets.

\begin{figure}[t]
  \begin{subfigure}{0.31\textwidth}
    \includegraphics[width=\linewidth]{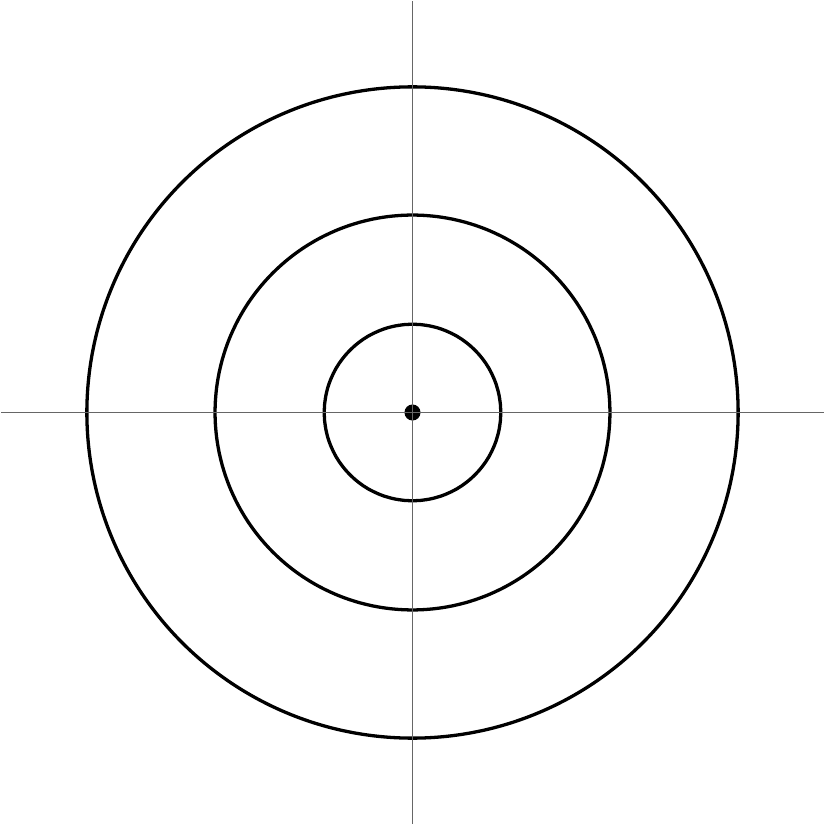}
    \caption{$x^2+y^2$} \label{fig:1a}
  \end{subfigure}%
  \hspace*{\fill}
  \begin{subfigure}{0.31\textwidth}
    \includegraphics[width=\linewidth]{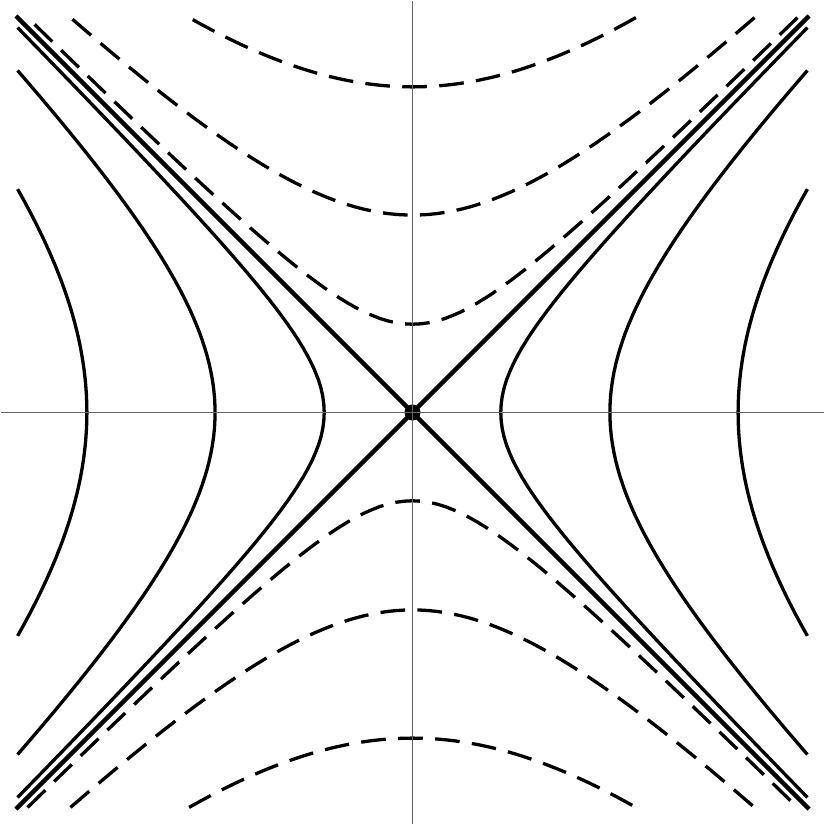}
    \caption{$x^2-y^2$} \label{fig:1b}
  \end{subfigure}%
  \hspace*{\fill}
  \begin{subfigure}{0.31\textwidth}
    \includegraphics[width=\linewidth]{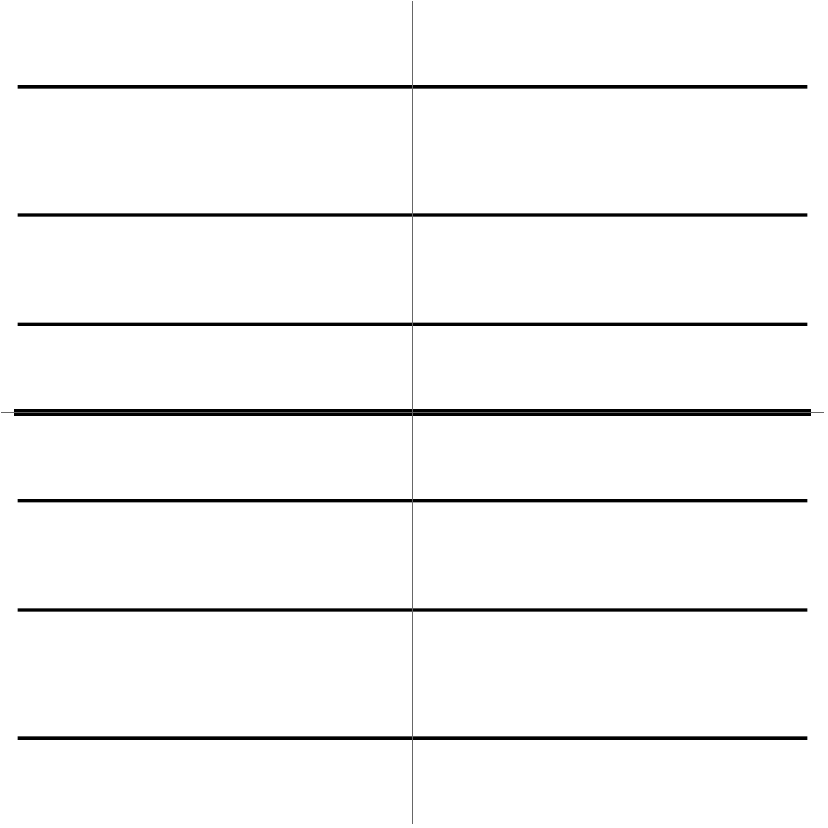}
    \caption{$y^2$} \label{fig:1c}
    \end{subfigure}
    \\[1em]
   \begin{subfigure}{0.31\textwidth}
    \includegraphics[width=\linewidth]{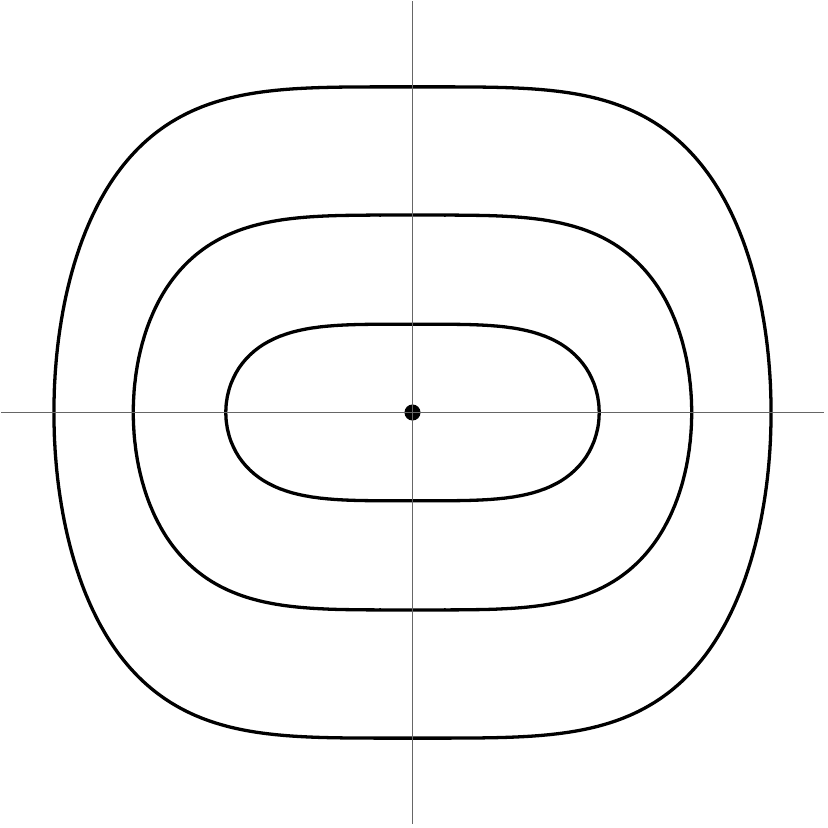}
    \caption{$x^4+y^2$} \label{fig:1d}
  \end{subfigure}%
  \hspace*{\fill}
  \begin{subfigure}{0.31\textwidth}
    \includegraphics[width=\linewidth]{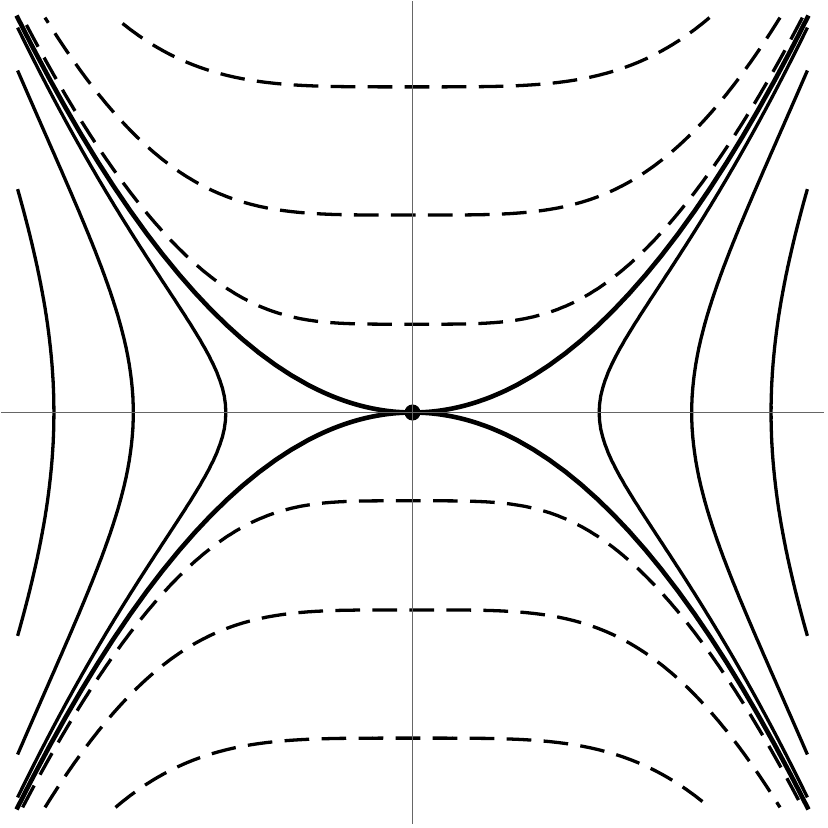}
    \caption{$x^4-y^2$} \label{fig:1e}
  \end{subfigure}%
  \hspace*{\fill} 
  \begin{subfigure}{0.31\textwidth}
    \includegraphics[width=\linewidth]{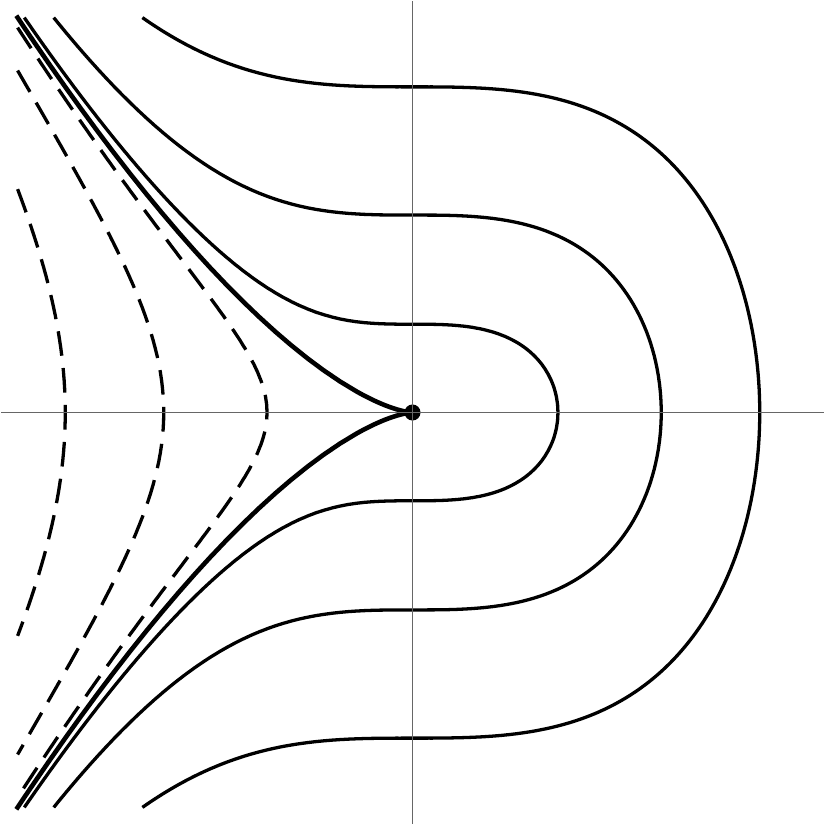}
    \caption{$x^3+y^2$} \label{fig:1f}
    \end{subfigure}
    \\[1em]
  \begin{subfigure}{0.31\textwidth}
    \includegraphics[width=\linewidth]{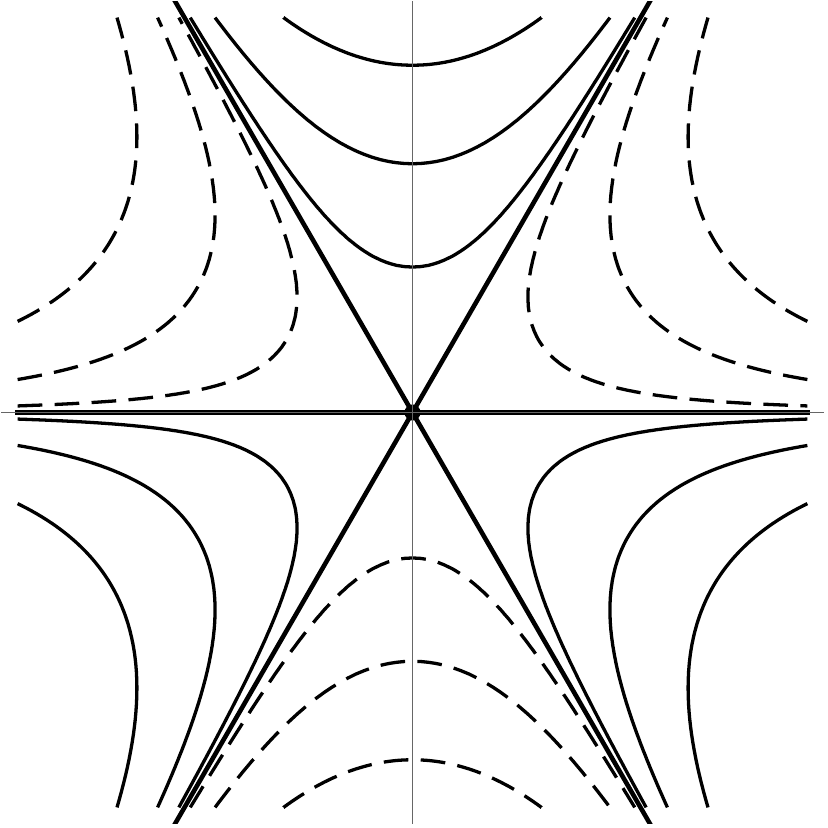}
    \caption{$\rho^3 \sin(3 \vartheta)$} \label{fig:1g}
    \end{subfigure}
    
\caption{Level sets of some prototypical functions. Thick solid lines and the origin -- the zero level set (nodal set), solid lines - level sets with levels $>0$, dashed lines - level sets with levels $<0$.} \label{fig:levelsets}
\end{figure}

\begin{remark}\label{rem:Hadamard}
Let $z_0$ be a semi-degenerate critical point of $u$. 
According to the splitting lemma (a.k.a.~the Morse lemma with parameters, see, e.g., \cite[Theorem~4.5]{poston2014catastrophe}, \cite[Theorem~4]{feehan2020morse}, and references therein), $u$ is right-equivalent to one of the functions 
$$
(x,y) \mapsto u(x_0,y_0) + h(x) \pm y^2,
$$ 
where $h$ is an analytic function (defined in a neighborhood of the origin in $\mathbb{R}$) such that $h(0)=h'(0)=h''(0)=0$.
If $h \equiv 0$, then $u$ is right-equivalent to (one of the functions) $(x,y) \mapsto u(x_0,y_0) \pm y^2$.
If $h \not\equiv 0$, then the analyticity of $h$ implies the existence of a natural number $k \geq 3$ such that $h^{(k)}(0) \neq 0$.
It is a corollary of the Hadamard lemma (see, e.g., \cite[Theorem~4.4]{poston2014catastrophe} for an explicit reference) that $h$ is right-equivalent to
$$
x \mapsto \pm x^k.
$$
We conclude that, in this case, $u$ is right-equivalent to
$$
(x,y) \mapsto 
u(x_0,y_0) \pm x^k \pm y^2
\quad \text{for a natural number}~ k \geq 3.
$$
\end{remark}

\begin{remark}\label{rem:terminology}
	According to the terminology from \cite{alessandrini1992index} and up to multiplication by $-1$,
	in the case of $(x,y) \mapsto u(z_0) + x^k - y^2$ with even $k \geq 2$, the saddle point $z_0$ is called \textit{simple saddle point}, and in the case of $(x,y) \mapsto u(z_0) + x^k - y^2$ with odd $k \geq 3$, the saddle point $z_0$ is called \textit{trivial point}. 
	The latter notion corresponds to the fact that, in a neighborhood of $z_0$, the topology of the level set $u^{-1}(c)$ does not change as $c$ crosses the critical level $u(z_0)$.
	In the theory of dynamical systems, simple saddle points are known as \textit{topological saddles}, and trivial points are known as \textit{saddle-nodes}, see, e.g., \cite{perko2013differential}.
	In fact, saddle-nodes (trivial points)  cause the main difficulties in the investigation of the Neumann partitioning; see  Section~\ref{sec:classification-of-manifolds}.
\end{remark}

\begin{remark}\label{rem:cheng}
By \cite[Theorem~1 and Eq.~($5^{\prime\prime}$)]{hartman1953local}, if $u$ vanishes at $z_0$ with order $M-1$, then, up to a rotation and scaling, the function $z \mapsto u(z+z_0)$ is equal to $(\rho,\vartheta) \mapsto \rho^M \sin(M \vartheta) + o(\rho^M)$ whenever $\rho$ is sufficiently small. 
Then \cite[Corollary, p.~202]{kuiper} implies the right-equivalence of $u$ and $(\rho,\vartheta) \mapsto \rho^M \sin(M \vartheta)$. 
Alternatively, observing that both $u_x$ and $u_y$ also satisfy the \textit{equation} in \eqref{cutproblem2d}, the right-equivalence follows by applying \cite[Theorem~1]{hartman1953local} to $u_x$ and $u_y$ and using \cite[Corollary~2.9.4]{pagani-thesis}.
\end{remark}

\begin{remark}\label{rem:isolated-critical-points}
A critical point $z_0$ is always \textit{isolated} except for the case when $u$ is right-equivalent to $(x,y) \mapsto u(z_0) \pm y^2$. 
In this exceptional case, $z_0$ belongs to an analytic curve $\theta$ of critical points, see Lemma~\ref{lem:isol}.
Note also that such $z_0$ is a critical point of the Morse--Bott type, i.e., $u$ is degenerate in a tangent direction and non-degenerate in a normal direction to $\theta$ at $z_0$, cf.\ the proof of Lemma~\ref{lem:morse-bott}.
\end{remark}

\begin{remark}
	Let us note that every type of critical points from the classification above can occur in the critical set of an eigenfunction $u$.
	Indeed, let $\Omega$ be a disk and let $\partial \Omega = \Gamma^D$.
	Then the fourth eigenfunction has non-degenerate critical points: four local extrema and one saddle at the center of the disk.
	The sixth eigenfunction (which is the second radial eigenfunction) has a circle of critical points, which are therefore semi-degenerate.
	The seventh eigenfunction has a fully degenerate critical point (monkey saddle) at the center of the disk. See Figure~\ref{fig:eigenfunctions} and also Section~\ref{subsect:disk} for analytics on the degeneracy of critical points.
	The appearance of three remaining types of critical points, namely, isolated semi-degenerate critical points (see Figures~\ref{fig:1d}, \ref{fig:1e}, \ref{fig:1f}), is discussed in Section~\ref{sec:trivialcrit}, and we refer to \cite{chenmyrtaj2019} where it is shown that saddle-nodes can occur already for Dirichlet eigenfunctions in a square.
\end{remark}

\begin{figure}[t]
  \begin{subfigure}[c]{0.31\textwidth}
    \includegraphics[width=\linewidth]{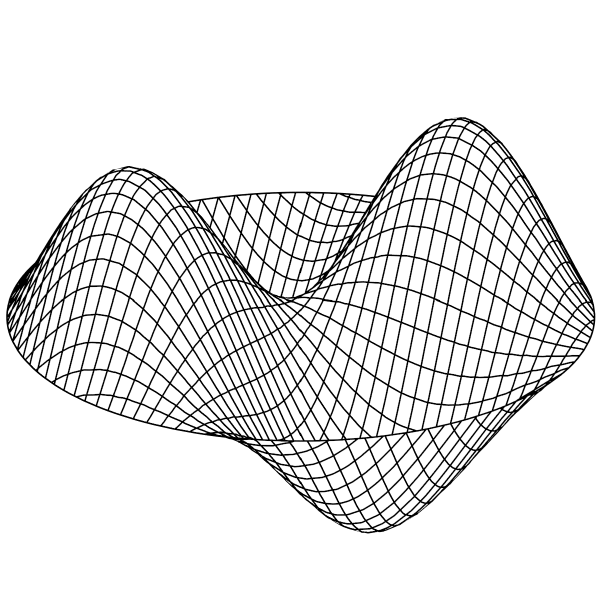}
    \caption{$u_4$} \label{fig:2a}
  \end{subfigure}%
  \hspace*{\fill}
  \begin{subfigure}[c]{0.31\textwidth}
    \includegraphics[width=\linewidth]{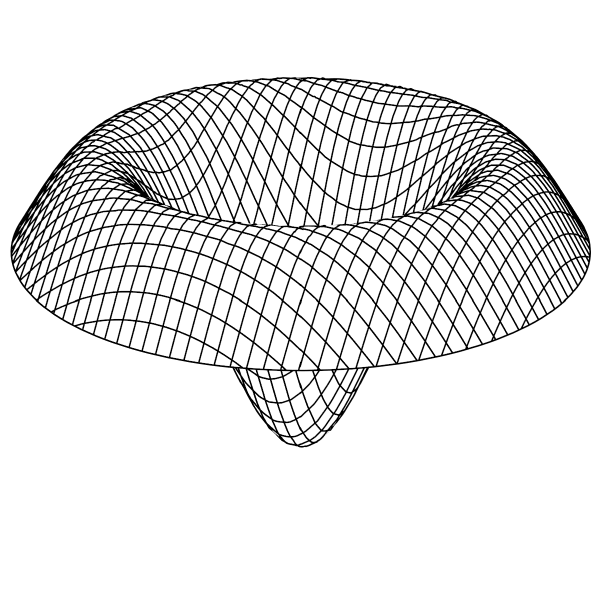}
    \caption{$u_6$} \label{fig:2b}
  \end{subfigure}%
  \hspace*{\fill}
  \begin{subfigure}[c]{0.31\textwidth}
    \includegraphics[width=\linewidth]{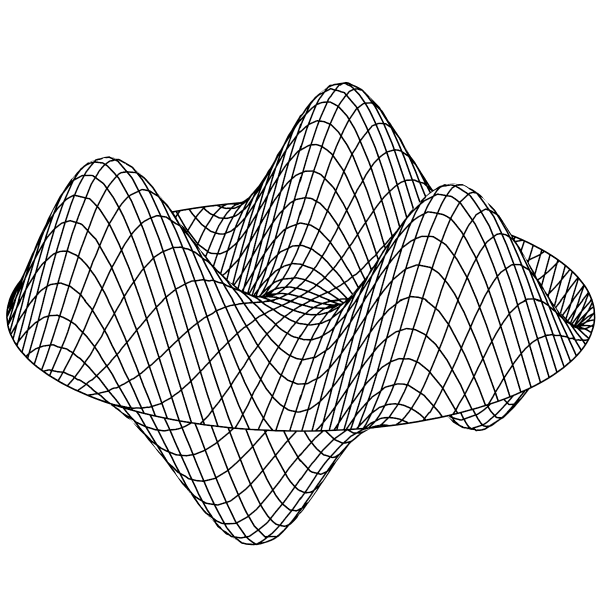}
    \caption{$u_7$} \label{fig:2c}
    \end{subfigure}
\caption{Fourth, sixth, and seventh eigenfunction of the Dirichlet Laplacian in a disk.} 
\label{fig:eigenfunctions}
\end{figure}

\begin{remark}
    One could wonder whether the radial eigenfunctions are the only possible eigenfunctions having a curve of critical points.
    In the recent work \cite{enciso2023schiffer}, it is shown that there exist \textit{nonradial}, smooth, doubly connected domains $\Omega$ which admit Neumann eigenfunctions that are locally constant on the boundary.
    That is, $\partial \Omega$ is a union of two \textit{noncircular} curves of critical points of certain eigenfunctions. 
    On the other hand, the first eigenfunction in a simply connected domain has only isolated critical points, see \cite[Corollary~3.4]{alessandrini1992index} and also \cite{arango2010critical}.
\end{remark}

\begin{remark}\label{rem:level-sets-diffeomorhpic}
The right-equivalence implies that the level sets of $u$ in a neighborhood of $z_0$ are diffeomorphic to those of a corresponding prototypical function in a neighborhood of $(0,0)$.
In particular, if $z_0$ is a saddle point of $u$, then the level set of $u$  at the level $u(z_0)$ in a neighborhood of $z_0$ consists of at most finitely many curves intersecting at $z_0$. 
\end{remark}

\begin{remark}
In the semi-degenerate case, a result related to the classification above is given in \cite[Section~3]{arango2006morse}, and it can also be deduced from Remark~\ref{rem:level-sets-diffeomorhpic}.
Namely, the level sets of the function $u-u(z_0)$ in a neighborhood of $z_0$ are \textit{homeomorphic} to those of one of the following functions in a neighborhood of $(0,0)$, up to multiplication of $u$ by $-1$: 
\begin{align*}
	f_a(x,y) &= x^4 + y^2, 
	\quad 
	f_b(x,y) = y^2,\\
	f_c(x,y) &= x^4 - y^2,
	\quad 
	f_d(x,y) = x^3 + y^2.
\end{align*}
\end{remark}

\medskip
After overviewing main properties of the gradient-descent system \eqref{eq:gradflow} associated with $u$ in Section~\ref{sec:gradflow}, 
we will return to the discussion of critical points of $u$ in Section~\ref{sec:classification-of-manifolds} by considering a relation between stable and unstable sets of a critical point $z_0$ of $u$ and those of the critical point $(0,0)$ of a corresponding prototypical function.

\subsection{Properties of eigenfunctions}\label{subsec:prop}
Let us recall that the set $\mathcal{C}$ denotes the \textit{critical set} of the eigenfunction $u$ (see Section~\ref{sec:properties}). 
We define the \textit{singular set} of $u$ as
\begin{equation*}
\mathcal{C}_{\mathrm{sing}}
=
\{z\in \mathcal{C}:~  u(z)=0\}.
\end{equation*}
We start with some well-known properties of $\mathcal{C}$ and $\mathcal{C}_{\mathrm{sing}}$.

\begin{lemma}\label{lem:crit}
The following assertions hold:  
\begin{enumerate}[\rm(i)]
	\item\label{lem:crit:critlevelsfinite} The set of critical levels $\{u(z):\, z\in \mathcal{C}\}$ is finite.
	\item\label{lem:crit:singularfinite} The singular set $\mathcal{C}_{\mathrm{sing}}$ is finite. 
	\item\label{lem:crit:singsadleisolated}
	Any $z_0 \in \mathcal{C}_{\mathrm{sing}}$ is an isolated critical point and also a saddle point.
\end{enumerate}\end{lemma}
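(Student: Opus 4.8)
The plan is to deduce \ref{lem:crit:singsadleisolated} directly from the classification of critical points in Section~\ref{sec:classification}, to obtain \ref{lem:crit:singularfinite} as a consequence of \ref{lem:crit:singsadleisolated} together with compactness, and to establish \ref{lem:crit:critlevelsfinite} separately from the real-analytic structure of the critical set. I expect \ref{lem:crit:critlevelsfinite} to be the main obstacle, since it concerns the global structure of $\mathcal{C}$, whereas the other two reduce to the local normal forms already at our disposal.

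For \ref{lem:crit:singsadleisolated}, let $z_0\in\mathcal{C}_{\mathrm{sing}}$, so $\nabla u(z_0)=0$ and $u(z_0)=0$; the equation in \eqref{cutproblem2d} then forces $\Delta u(z_0)=-\lambda u(z_0)=0$. I would run through the three cases of the classification. The semi-degenerate case (II) is excluded outright, since it requires $\Delta u(z_0)\neq 0$. In the non-degenerate case (I), the vanishing of $\Delta u(z_0)$ means the Hessian has zero trace and nonzero determinant, hence is indefinite, so $u$ is right-equivalent to $u(z_0)+x^2-y^2$: a non-degenerate saddle, which is isolated. In the fully degenerate case (III), $u$ is right-equivalent to $(\rho,\vartheta)\mapsto\rho^M\sin(M\vartheta)$ with $M\geq 3$, a monkey saddle, whose gradient vanishes only at the origin and which is therefore also an isolated saddle point. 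In every admissible case $z_0$ is thus an isolated critical point and a saddle point.

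For \ref{lem:crit:singularfinite}, observe that $\mathcal{C}_{\mathrm{sing}}=\{z\in\overline{\Omega}:\nabla u(z)=0,\ u(z)=0\}$ is closed, as an intersection of zero sets of continuous functions, and hence compact. By \ref{lem:crit:singsadleisolated} each of its points is isolated in $\mathcal{C}$, a fortiori isolated in $\mathcal{C}_{\mathrm{sing}}$; thus $\mathcal{C}_{\mathrm{sing}}$ carries the discrete topology, and a compact discrete space is finite.

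For \ref{lem:crit:critlevelsfinite}, I would exploit analyticity globally. Fix a compact neighborhood $K$ of $\overline{\Omega}$ on which $u$ is analytic; then $\widetilde{\mathcal{C}}=\{z\in K:\nabla u(z)=0\}$ is a compact real-analytic set containing $\mathcal{C}$. Since $u$ is nonconstant, unique continuation rules out $\nabla u\equiv 0$ on any open set, so $\widetilde{\mathcal{C}}$ has empty interior. By the structure theory of real-analytic sets (Lojasiewicz's stratification theorem), $\widetilde{\mathcal{C}}$ admits a finite partition into connected real-analytic submanifolds $S_j$. On each $S_j$ the differential of $u|_{S_j}$ at a point $p$ is the restriction of $\langle\nabla u(p),\cdot\rangle$ to $T_pS_j$, which vanishes because $\nabla u=0$ on $\widetilde{\mathcal{C}}\supseteq S_j$; hence $u$ is constant on each connected $S_j$. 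Therefore $u$ takes only finitely many values on $\widetilde{\mathcal{C}}$, and a fortiori on $\mathcal{C}$. The delicate point is precisely the appeal to the finite stratification of the analytic critical set; alternatively one can argue more by hand, using the classification and Remark~\ref{rem:isolated-critical-points} to see that isolated critical points cannot accumulate (an accumulation point would be a non-isolated critical point, hence of the $\pm y^2$ type, near which the critical set is a single analytic arc carrying no off-curve critical points) and that the curves of critical points, on which $u$ is constant since its tangential derivative vanishes, are locally single analytic arcs, so that compactness yields finitely many of them and hence finitely many associated values.
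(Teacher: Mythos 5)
Your proposal is correct, but it takes a genuinely different route from the paper's. The paper disposes of \ref{lem:crit:critlevelsfinite} and \ref{lem:crit:singularfinite} by citation --- the Morse--Sard theorem for real-analytic functions and, respectively, the Caffarelli--Friedman finiteness of the singular set --- and then deduces \ref{lem:crit:singsadleisolated} \emph{from} \ref{lem:crit:critlevelsfinite} and \ref{lem:crit:singularfinite} together with the strong maximum principle (a nonisolated singular point would produce, via the finiteness of critical levels, infinitely many singular points; a singular extremum would contradict the maximum principle). You invert this logical order: you obtain \ref{lem:crit:singsadleisolated} purely locally from the classification of Section~\ref{sec:classification} (a rank-one Hessian has nonzero trace, so $\Delta u(z_0)=-\lambda u(z_0)=0$ excludes the semi-degenerate case, leaving the indefinite Morse saddle and the homogeneous harmonic normal form $\rho^M\sin(M\vartheta)$, both isolated saddles), then get \ref{lem:crit:singularfinite} as ``compact and discrete implies finite'', and prove \ref{lem:crit:critlevelsfinite} independently by stratifying the analytic critical set and observing that $u$ is constant on each stratum because its tangential differential vanishes there. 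There is no circularity in your ordering, since Remarks~\ref{rem:Hadamard} and~\ref{rem:cheng}, on which the classification rests, do not use this lemma. What each approach buys: the paper's proof is shorter and offloads the hard analysis to the literature, but makes \ref{lem:crit:singsadleisolated} contingent on the two citations; yours makes \ref{lem:crit:singularfinite} and \ref{lem:crit:singsadleisolated} self-contained consequences of the normal forms and avoids both the maximum principle and the Caffarelli--Friedman result, at the price of invoking the right-equivalence machinery (splitting lemma, Hartman's theorem) and Lojasiewicz's stratification theorem --- the latter amounting to a proof of the analytic Morse--Sard statement that the paper simply cites. One small caution on your fallback ``by hand'' argument for \ref{lem:crit:critlevelsfinite}: Remark~\ref{rem:isolated-critical-points} as stated in the paper rests on Lemma~\ref{lem:isol}, whose proof uses part \ref{lem:crit:singsadleisolated}; since you prove that part first this is admissible, but you should then derive the ``critical set near a nonisolated point is a single curve on which $u$ is constant'' claim directly from the $\pm y^2$ normal form (a $C^1$ curve suffices for the compactness count) rather than quoting the remark verbatim.
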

\begin{proof}
The assertion~\ref{lem:crit:critlevelsfinite} follows from the Morse--Sard theorem (see, e.g., \cite[Theorem~1]{souvcek1972morse}). 
The assertion~\ref{lem:crit:singularfinite} follows from, e.g., \cite[Remark~3.1]{caffarelli1985partial}, where we consider the extension of $u$ to a neighborhood of $\overline{\Omega}$ if $\mathcal{C}_{\mathrm{sing}} \cap \partial \Omega \neq \emptyset$. 
The assertion~\ref{lem:crit:singsadleisolated} is a direct corollary of \ref{lem:crit:critlevelsfinite}, \ref{lem:crit:singularfinite}, and the maximum principles.
Indeed, if $z_0 \in \mathcal{C}_{\mathrm{sing}}$ is nonisolated, then there exists a sequence $\{z_n\} \subset \mathcal{C} \setminus \{z_0\}$ such that $z_n \to z_0$. 
According to the assertion~\ref{lem:crit:critlevelsfinite}, we have $u(z_n) = u(z_0) = 0$ for all sufficiently large $n$, i.e., $z_n \in \mathcal{C}_{\mathrm{sing}}$, which contradicts the assertion~\ref{lem:crit:singularfinite}.
If $z_0 \in \mathcal{C}_{\mathrm{sing}}$ is a local extremum point $u$, then we get a contradiction to the strong maximum principle.
\end{proof}

In the following lemma, we describe the structure of the critical set $\mathcal{C}$, see  \cite{alessandrini1992index,arango2010critical,judge2022some} and \cite{weineffect} 
for closely related results. 
We provide details for the sake of completeness.

\begin{lemma}\label{lem:isol} 
The critical set $\mathcal{C}$ is nonempty and consists of 
(at most finitely many) isolated critical points and analytic curves. 
Each curve $\theta \subset \mathcal{C}$ 
has the following properties:
\begin{enumerate}[\rm(i)]
	\item\label{lem:isol:simple} $\theta$ is simple, isolated in $\mathcal{C}$, has a finite length, and either closed or has both end points on $\Gamma^N$.
	\item\label{lem:isol:gamma-N} If $\theta$ intersects $\Gamma^N$ at infinitely many points, then $\theta$ is a connected component of $\Gamma^N$.
	\item\label{lem:isol:nonconstant} $u$ is a nonzero constant on $\theta$.
\end{enumerate}
\end{lemma}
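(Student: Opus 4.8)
The plan is to analyze the structure of $\mathcal{C}$ locally using the classification of critical points from Section~\ref{sec:classification}, and then assemble these local pictures into the global statement. First I would establish that $\mathcal{C}$ is nonempty: since $u$ is a nonconstant analytic function on the compact set $\overline{\Omega}$, it attains a global maximum and minimum, and at least one of these must be an interior extremum or, by Remark~\ref{rem:boundary_critical}, correspond to a critical point, yielding $\mathcal{C} \neq \emptyset$. Next, recalling Remark~\ref{rem:isolated-critical-points}, a critical point $z_0$ is isolated \emph{unless} $u$ is right-equivalent to $(x,y)\mapsto u(z_0)\pm y^2$, the semi-degenerate case with $h\equiv 0$ of Remark~\ref{rem:Hadamard}; this is precisely the situation that forces $z_0$ onto a curve of critical points. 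So the non-isolated critical points are exactly those lying on such curves $\theta$, and I would take this as the starting point for describing each $\theta$.

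To extract the analytic curve structure, I would examine the prototypical form $u(z_0)\pm y^2$: in suitable coordinates the critical set locally coincides with the line $\{y=0\}$, so $\theta$ is locally an analytic arc. Globally, I would argue that these arcs piece together into maximal analytic curves by invoking analyticity of $\nabla u$ and, where relevant, the implicit function theorem applied to the nonvanishing component of the Hessian (the normal direction is nondegenerate, as noted in the Morse--Bott remark). For assertion~\ref{lem:isol:nonconstant}, along $\theta$ the function $u$ has vanishing gradient, hence is locally constant on the connected curve, so $u\equiv c$ on $\theta$; the constant is \emph{nonzero} because Lemma~\ref{lem:crit}~\ref{lem:crit:singsadleisolated} shows every singular critical point (where $u=0$) is isolated, so a curve $\theta$ cannot meet $\mathcal{C}_{\mathrm{sing}}$. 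This also ties into the semi-degenerate classification, where $\Delta u(z_0)\neq 0$ and $u(z_0)\neq 0$ are recorded explicitly.

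For assertion~\ref{lem:isol:simple}, finiteness of the number of curves and their finite lengths should follow from analyticity: the critical set $\mathcal{C}$, being the common zero set of the analytic functions $u_x,u_y$, is a real-analytic variety in the compact $\overline{\Omega}$, hence (by standard structure theorems for analytic varieties, e.g.\ local finiteness and stratification) has finitely many one-dimensional components, each of finite length. Simplicity and isolation of $\theta$ in $\mathcal{C}$ would come from the local picture: near any point of $\theta$ the critical set is exactly the single arc $\{y=0\}$, precluding self-intersections or nearby parallel critical curves. To see that $\theta$ is either closed or has both endpoints on $\Gamma^N$, I would argue that an endpoint of $\theta$ in the interior $\Omega$ or on $\Gamma^D$ is impossible: an interior endpoint would be a non-isolated critical point not of the curve type, contradicting the dichotomy, while $\Gamma^D$ consists only of regular or isolated saddle points by Remark~\ref{rem:boundary_critical}. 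Thus the only way for $\theta$ to terminate is at $\Gamma^N$.

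Finally, assertion~\ref{lem:isol:gamma-N} requires showing that if $\theta$ meets $\Gamma^N$ in infinitely many points, then $\theta$ is an entire component of $\Gamma^N$. Here I would use that $\Gamma^N$ is analytic and $\partial u/\partial\nu = 0$ on it, so that $u$ restricted to a neighborhood of $\Gamma^N$ can be analyzed via its analytic continuation (Remark~\ref{rem:regularity2}); the infinitely many intersection points of the two analytic curves $\theta$ and $\Gamma^N$ accumulate, forcing them to coincide on a component by the identity principle for analytic curves. I expect this last assertion, together with the careful global patching of local arcs into finitely many maximal simple curves, to be the main obstacle, since it requires combining the local right-equivalence normal forms with the global analytic-variety structure and the boundary behavior on $\Gamma^N$ in a uniform way; the subtlety is ruling out pathological accumulation or branching of the critical curves, which is precisely where analyticity (rather than mere smoothness) does the essential work.
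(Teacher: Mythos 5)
Your proposal is correct in substance, but it reaches the lemma by a genuinely different route than the paper. The paper's proof is self-contained and never invokes the right-equivalence classification: at a nonisolated critical point $z_0$ it uses Lemma~\ref{lem:crit}~\ref{lem:crit:singsadleisolated} to get $u(z_0)\neq 0$, hence $\Delta u(z_0)=-\lambda u(z_0)\neq 0$, so after a rotation $u_{yy}(z_0)\neq 0$; the implicit function theorem then yields an analytic graph $y=g(x)$ containing all zeros of $u_y$ near $z_0$, and the crux is the one-variable identity theorem applied to $h(x)=u_x(x,g(x))$, whose zeros accumulate at $x_0$ because critical points accumulate at $z_0$, giving $h\equiv 0$, i.e., the entire graph is an arc of critical points. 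Maximality of the union of such arcs then gives simplicity, closedness/endpoints on $\Gamma^N$, isolation in $\mathcal{C}$, and (with compactness) finite length and finite number --- all by elementary means. You instead identify nonisolated critical points through the splitting-lemma normal form $u(z_0)\pm y^2$ and appeal to structure theorems for real-analytic varieties (local finiteness, stratification) for the finiteness and length statements. Both routes work; the paper's buys elementarity and self-containment, yours buys a conceptual shortcut at the cost of heavier machinery. Your treatments of assertions \ref{lem:isol:gamma-N} (identity principle for the two analytic curves) and \ref{lem:isol:nonconstant} (chain rule along $\theta$ plus isolation of singular critical points) match the paper's in spirit.

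Two points in your write-up need care. First, the right-equivalence of the classification is only through a $C^1$-diffeomorphism, so ``the critical set locally coincides with the line $\{y=0\}$ in suitable coordinates'' a priori produces only a $C^1$ arc, not an analytic one; to upgrade it you still need the paper's identity-theorem step (or, alternatively, note that the $C^1$ critical curve is contained in the analytic graph $\{u_y=0\}$ and that a 1-manifold continuously injected into another 1-manifold is relatively open in it, by invariance of domain). You gesture at this with ``IFT applied to the nonvanishing Hessian component'' but never carry it out, and this is exactly where the analytic content of the lemma lies. Second, Remark~\ref{rem:isolated-critical-points} forward-references the very lemma being proved (for the claim that the exceptional point lies on an analytic curve); you only use the isolation dichotomy, which does follow from the classification alone because right-equivalence preserves isolation of critical points, so there is no circularity --- but this should be stated explicitly rather than imported from the remark wholesale.
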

\begin{proof}  
The nonemptiness of $\mathcal{C}$ is obvious.
If $\mathcal{C}$ consists only of isolated critical points, then $\mathcal{C}$ is a finite set and we are done.  
Let $z_0\in \mathcal{C}$ be a nonisolated critical point of $u$, that is, there exists a sequence $\{z_n\} \subset \mathcal{C} \setminus \{z_0\}$ such that $z_n \to z_0$.
We know from Lemma~\ref{lem:crit}~\ref{lem:crit:singsadleisolated} that $u(z_0)\ne 0$ and hence $\Delta u(z_0) \neq 0$. 
Without loss of generality, we may assume that $u_{yy}(z_0) \neq 0$. 
By applying the implicit function theorem (see, e.g., \cite[Theorem 2.3.1]{KrantzParks}) to the function $u_y$ at $z_0$ (and considering an 
 extension of $u$ to a neighborhood of $\overline{\Omega}$ when $z_0 \in \partial \Omega$), we obtain two open intervals $I_0$ and $J_0$ such that $z_0=(x_0,y_0)\in I_0\times J_0$ and 
\begin{enumerate}[label={\rm(\alph*)}]
	\item\label{lem:isol:proof:b} there exists an analytic function $g: I_0\to J_0$   such that $g(x_0)=y_0$ and $u_y(x,g(x))=0$ for all $x\in I_0$;
	\item\label{lem:isol:proof:c} all zeros of $u_y$ in $I_0\times J_0$ lie on the graph of $g$.
\end{enumerate}
Now we consider an analytic function $h$ on $I_0$ defined as $h(x)=u_x(x,g(x))$. 
Since any $z_n$ is a critical point of $u$, \ref{lem:isol:proof:c} implies that $h(x_n)=0$ for any sufficiently large $n$.
Using the analyticity of $h$ and the identity theorem \cite[Corollary~1.2.7]{KrantzParks}, we conclude that $h\equiv 0$ on $I_0$. 
Thus, we deduce from \ref{lem:isol:proof:b}  that $\{(x,g(x)):\, x\in  \overline{I_0}\} \cap \overline{\Omega}$ is an analytic arc in $\mathcal{C}$ that contains $z_0$. 
Let $\theta$ be the union of all analytic arcs in $\mathcal{C}$ that contain $z_0$.
By replacing $z_0$ with any other point of $\theta$ and repeating the above arguments, we conclude the following:
\begin{enumerate}[\rm(I)]
	\item\label{lem:isol:proof:i}  each point on $\theta$ lies on an analytic arc (part of $\theta$) with a finite length; 
	\item\label{lem:isol:proof:ii} $\theta$ is a closed set in $\overline{\Omega}$, and hence $\theta$ is compact;
	\item\label{lem:isol:proof:iii}  $\theta \cap \partial \Omega\subset \Gamma^N$.
\end{enumerate}
Now, using \ref{lem:isol:proof:i}, we see that $\theta$ is a simple curve. 
Moreover, using \ref{lem:isol:proof:ii} and recalling \ref{lem:isol:proof:c}, we deduce that $\theta$ has a finite length. 
If $\theta$ does not meet the boundary, then the maximality of $\theta$ ensures that $\theta$ is a closed curve. 
The implicit function theorem applied to the extension of $u$ guarantees that $\theta$ is isolated in $\mathcal{C}$. Therefore, such analytic curves are finite in number.

Combining all the facts above, we conclude the assertion~\ref{lem:isol:simple}.
Since both $\theta$ and $\Ga_N$ are analytic, the assertion~\ref{lem:isol:gamma-N} comes from the identity theorem \cite[Corollary~1.2.7]{KrantzParks}.
Finally, by Lemma~\ref{lem:crit}, $u$ is a nonzero constant on each analytic arc in $\mathcal{C}$ containing $z_0$. 
Consequently, since $\theta$ is the union of all such arcs, the assertion~\ref{lem:isol:nonconstant} follows directly.
\end{proof}

\begin{lemma}\label{lem:numsaddlesOmega}
Every saddle point of $u$ in $\overline{\Omega}$ is an isolated critical point, and hence $\mathcal{S}$ is a finite set.
\end{lemma}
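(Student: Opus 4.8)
The plan is to reduce the statement to two facts already available in the excerpt: the classification of critical points from Section~\ref{sec:classification} (together with its summary in Remark~\ref{rem:isolated-critical-points}) and the structural description of the critical set in Lemma~\ref{lem:isol}. The guiding observation is that, among all prototypical models, the \emph{only} ones giving rise to non-isolated critical points are $(x,y)\mapsto u(z_0)\pm y^2$, and these correspond to local extrema rather than saddles. So the whole lemma should follow once I show that a saddle point cannot be of this exceptional type.

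First I would recall from Remark~\ref{rem:isolated-critical-points} that any critical point $z_0$ of $u$ is isolated unless $u$ is right-equivalent at $z_0$ to one of $(x,y)\mapsto u(z_0)\pm y^2$. Thus it suffices to verify that such an exceptional $z_0$ is never a saddle point. I would argue that if $u$ is right-equivalent to $(x,y)\mapsto u(z_0)+y^2$ (respectively $u(z_0)-y^2$), then $z_0$ is a local minimum (respectively maximum) point of $u$: the right-equivalence is realized by a $C^1$-diffeomorphism $\phi$ carrying a neighborhood of $z_0$ onto a neighborhood of the origin with $u=v\circ\phi$, and the prototypical $v$ attains a (non-strict) local extremum at the origin; since $\phi$ is a local homeomorphism, the extremum property transfers to $u$ at $z_0$. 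Hence such $z_0$ lies in $\M_-$ or $\M_+$, and a local extremum point is by definition not a saddle point. Consequently every saddle point falls outside the exceptional case and is therefore an isolated critical point.

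Finally, for the finiteness of $\S$, I would invoke Lemma~\ref{lem:isol}: the critical set $\C$ consists of at most finitely many isolated critical points together with at most finitely many analytic curves, and each such curve is isolated in $\C$ and made up of non-isolated critical points. By the first part, every saddle point is isolated, hence it cannot lie on one of these curves and must belong to the finite collection of isolated critical points of $u$. Therefore $\S$ is finite. (If one prefers not to lean on the finiteness already asserted in Lemma~\ref{lem:isol}, the same conclusion can be reached by compactness of $\overline{\Omega}$: an infinite set of isolated critical points would accumulate at some point $z^*\in\overline{\Omega}$, which would then be a non-isolated critical point, i.e.\ a point on a curve $\theta\subset\C$; but $\theta$ is isolated in $\C$, contradicting the accumulation.)

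I expect the only genuine subtlety to lie in the middle step, namely in pinning down that the non-isolated critical points are \emph{exactly} the $\pm y^2$ models and that these are genuinely local extrema in the sense fixed in Section~\ref{sec:properties} (so that the trichotomy $\C=\M_-\sqcup\M_+\sqcup\S$ places them outside $\S$). Once this is cleanly established, both the isolatedness of saddles and the finiteness of $\S$ are immediate consequences of the classification and Lemma~\ref{lem:isol}.
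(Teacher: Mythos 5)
Your proposal is correct and follows essentially the same route as the paper: both arguments reduce to Remark~\ref{rem:isolated-critical-points} (a nonisolated critical point is right-equivalent to $(x,y)\mapsto u(z_0)\pm y^2$) and then conclude that such a point is a local extremum, hence not a saddle, with finiteness of $\mathcal{S}$ following from Lemma~\ref{lem:isol}. The only cosmetic difference is that the paper deduces the extremum property via Remark~\ref{rem:level-sets-diffeomorhpic} (the level sets $\{u = u(z_0)+\alpha\}$ near $z_0$ are empty for $\alpha$ of one sign), whereas you transfer it directly through the diffeomorphism realizing the right-equivalence.
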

\begin{proof}
Suppose $z_0 \in \overline{\Omega}$ is  a nonisolated critical point of $u$. 
In view of Remarks~\ref{rem:level-sets-diffeomorhpic} and \ref{rem:isolated-critical-points} (see also \cite[Lemma~1 and Remark~2]{arango2006morse}), for a sufficiently small $|\alpha|$ the level sets $\{z:\, u(z) = u(z_0) + \alpha\}$ in a neighborhood of $z_0$ are homeomorphic to the level sets of one of 
the functions $(x,y) \mapsto \pm y^2$ in a neighborhood of the origin.
In particular,
$\{z:\, u(z) = u(z_0) + \alpha\} = \emptyset$ for either $\alpha<0$ or $\alpha>0$, depending on the sign in front of $y^2$. 
Consequently, $z_0$ cannot be a saddle point, and this completes the proof.
\end{proof}

The following result is a direct corollary of Lemmas~\ref{lem:isol} and \ref{lem:numsaddlesOmega}. 
\begin{corollary}\label{cor:no-saddles-on-theta}
Let $\theta \subset \mathcal{C}$ be a curve of critical points of $u$. 
Then $\theta$ consists either of local minimum points or of local maximum points of $u$.
\end{corollary}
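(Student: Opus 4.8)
The plan is to combine the two cited lemmas with the fact that every critical point of $u$ falls into exactly one of three categories — local minimum, local maximum, or saddle point — and then to rule out a change of type along $\theta$ by invoking the equation satisfied by $u$.

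First I would observe that every point of $\theta$ is a \emph{non-isolated} critical point of $u$. By Lemma~\ref{lem:numsaddlesOmega}, every saddle point of $u$ is isolated; hence no point of $\theta$ can be a saddle point. Equivalently, by Remark~\ref{rem:isolated-critical-points}, at each $z_0 \in \theta$ the function $u$ is right-equivalent to $(x,y) \mapsto u(z_0) \pm y^2$, and such a point is a (possibly non-strict) local extremum point. Thus each point of $\theta$ lies in $\mathcal{M}_- \cup \mathcal{M}_+$, and it remains only to exclude the possibility that $\theta$ simultaneously contains local minima and local maxima.

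To pin down the type uniformly, I would use Lemma~\ref{lem:isol}\ref{lem:isol:nonconstant}: $u$ equals a nonzero constant $c$ along $\theta$. Combined with the equation $-\Delta u = \lambda u$ in \eqref{cutproblem2d}, valid on all of $\theta$ including its possible endpoints on $\Gamma^N$ after passing to the analytic extension of $u$ as in Remark~\ref{rem:regularity2}, this yields $\Delta u \equiv -\lambda c$ along $\theta$, a quantity of constant and nonzero sign since $\lambda > 0$ and $c \neq 0$. Now I would recall the elementary second-order test: at a local minimum point of the $C^2$ function $u$ the Hessian is positive semidefinite, so $\Delta u \geq 0$ there, whereas at a local maximum point $\Delta u \leq 0$.

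The conclusion then follows by a sign comparison. If $c > 0$, then $\Delta u = -\lambda c < 0$ at every point of $\theta$, which is incompatible with any point being a local minimum; hence every point of $\theta$ is a local maximum. Symmetrically, if $c < 0$, then $\Delta u > 0$ on $\theta$ and every point is a local minimum. This gives the asserted dichotomy. I do not expect a genuine obstacle: the only delicate point is the \emph{homogeneity} of the type along $\theta$, which could alternatively be handled by showing that the local-minimum and local-maximum subsets of $\theta$ are both relatively open and using connectedness of $\theta$, but the constancy of $u$ on $\theta$ together with the equation settles it at once and uniformly, and simultaneously dispenses with boundary points on $\Gamma^N$ through the analytic extension.
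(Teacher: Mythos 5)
Your proof is correct and follows essentially the same route as the paper, which simply declares the statement a direct corollary of Lemmas~\ref{lem:isol} and~\ref{lem:numsaddlesOmega}: you use Lemma~\ref{lem:numsaddlesOmega} to exclude saddle points on $\theta$, and Lemma~\ref{lem:isol}~\ref{lem:isol:nonconstant} together with the equation $-\Delta u = \lambda u$ to fix the sign of $\Delta u = -\lambda c$ along $\theta$ and hence the extremum type uniformly. Your sign argument via the second-derivative test (including the treatment of endpoints on $\Gamma^N$ through the analytic extension of Remark~\ref{rem:regularity2}) is precisely the kind of reasoning the paper itself employs, e.g.\ in the proof of Lemma~\ref{lem:morse-bott}, so there is nothing to add.
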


In Lemma~\ref{lem:morse-bott}, we provide further properties of curves of critical points of $u$.

\subsection{Gradient flow}\label{sec:gradflow}
Recall that the gradient flow lines (also called as integral curves, trajectories, etc.) of $u$ are defined as solutions of the Cauchy problem
\begin{equation}\label{eq:gradflow2}
\dot{z}(t) = -\nabla u(z(t)), \quad t \in I,
\qquad z(0)=z_0 \in \overline{\Omega},
\end{equation}
where $z(t)=(x(t),y(t))$ and $I = I(z_0) \subset \mathbb{R}$ stands for the \textit{maximal interval} such that the solution of \eqref{eq:gradflow2} belongs to $\overline{\Omega}$.
Denote by $\gamma(\cdot,z_0): I \to \mathbb{R}^2$ the solution of \eqref{eq:gradflow2} and recall that it exists, unique, and analytic (cf.\ \cite[Section 2.3, Remark 1]{perko2013differential}). 
Moreover, if $|\nabla u(z_0)| > 0$, then $u$ is strictly monotone along $\gamma(\cdot,z_0)$, namely, $u(\gamma(t_1,z_0)) > u(\gamma(t_2,z_0))$ for any admissible $t_1<t_2$. 
Let us explicitly note that we consider $\gamma(\cdot,z_0)$ for both positive and negative ``times'', with $\gamma(0,z_0) = z_0$. 
Evidently, if $z_0$ is a critical point of $u$, then $\gamma(t,z_0)=z_0$ for any $t \in \mathbb{R}$. 
Throughout the text, when using the collocation \textit{flow line}, we always mean the solution $\gamma(\cdot,z_0)$ starting with a \textit{regular point} $z_0$ of $u$.

\begin{remark}
\textsc{Band \& Fajman} in \cite[Section~3]{band2016topological} prefer to redefine flow lines so that they exist for all times, and if a flow line intersects $\Gamma^D$ at a point $z_0$, then it is defined to either stop at $z_0$ or emanate from $z_0$, depending on the sign of the normal derivative, and being continued by a constant vector $\nabla u(z_0)$ to all other times.
The consideration of the set $I$ in the Cauchy problem~\eqref{eq:gradflow2} makes the formal side of our analysis slightly different from that in \cite{band2016topological}, but this difference does not affect the behavior of flow lines in $\overline{\Omega}$.
\end{remark}

The following result, known as the Lojasiewicz inequality \cite{lojastopo,lojasiewicz1982}, is among fundamental properties of analytic functions.
For convenience, we state it for the eigenfunction $u$. 
\begin{lemma}\label{lem:lojas}
Let $z_0 \in \overline{\Omega}$ be a critical point of $u$. 
Then there exist a neighborhood $U$ of $z_0$, an exponent $\rho \in [1/2,1)$, and a constant $C>0$ such that 
$$
|\nabla u(z)| \geq C |u(z_0) - u(z)|^\rho 
\quad \text{for any}~ z \in U.
$$
\end{lemma}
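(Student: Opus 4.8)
The plan is to recognize the stated inequality as the classical \emph{gradient Lojasiewicz inequality} for the real analytic function $u$. Working throughout with the analytic extension of $u$ to a neighborhood of $\overline{\Omega}$ (Remark~\ref{rem:regularity}), we may treat $z_0$ as an interior critical point of an analytic function, which handles boundary points uniformly. First I would reduce to the normalized case $u(z_0)=0$: replacing $u$ by $u-u(z_0)$ changes neither $\nabla u$ nor the difference $u(z_0)-u(z)$, so it suffices to bound $|\nabla u(z)|$ from below by $C\,|u(z)|^{\rho}$ near a critical point at which $u$ vanishes. In this form the statement is precisely \cite{lojastopo,lojasiewicz1982}, and the cleanest route is to invoke it directly: its only hypothesis is real analyticity, which holds here.

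Since the full local classification of critical points is already available from Section~\ref{sec:classification}, I would also pursue a self-contained route that exhibits the exponent explicitly. For each prototypical function the inequality is an elementary computation. For the nondegenerate models \eqref{eq:nondegen} one has $|\nabla v|\asymp|z|$ and $|v-v(0)|\asymp|z|^{2}$, giving $\rho=\tfrac12$; for the semi-degenerate models \eqref{eq:semidegen1}--\eqref{eq:semidegen2} a quasi-homogeneous estimate yields $\rho=1-\tfrac1k$, with $\rho=\tfrac12$ in the pure $\pm y^{2}$ case \eqref{eq:semidegen1} (where the bound is transversal to a curve of critical points); and for the fully degenerate model \eqref{eq:degen} the homogeneity of the harmonic polynomial gives $|\nabla v|\asymp\rho^{M-1}$ and $|v|\le\rho^{M}$, hence the exponent $\tfrac{M-1}{M}$. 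In every case $\rho\in[\tfrac12,1)$. It then remains to transfer the inequality from the model $v$ to $u$ through the right-equivalence $\phi$: writing $\nabla u(z)=D\phi(z)^{\top}\,\nabla v(\phi(z))$ and using that a $C^{1}$-diffeomorphism has $\sigma_{\min}(D\phi)$ bounded below on a small compact neighborhood, one obtains $|\nabla u(z)|\ge c\,|\nabla v(\phi(z))|\ge cC\,|u(z)-u(z_0)|^{\rho}$ with the \emph{same} exponent.

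The main obstacle for the self-contained route is the regularity of the equivalence in the fully degenerate case: the transfer argument needs a genuine $C^{1}$-diffeomorphism, so that $D\phi$ exists and is boundedly invertible, whereas the normal form $\rho^{M}\sin(M\vartheta)$ of Remark~\ref{rem:cheng} is a priori only furnished up to a weaker equivalence. One would therefore have to verify that the map supplied by \cite{kuiper} is regular enough, or else estimate $|\nabla u|$ directly from the expansion $u(z+z_0)=\rho^{M}\sin(M\vartheta)+o(\rho^{M})$ recorded in Remark~\ref{rem:cheng}. For this reason I regard the direct appeal to the general Lojasiewicz inequality as the safer proof. In either approach the essential point is the existence of an exponent strictly below $1$ together with a positive constant $C$; this fails for merely smooth functions (already for $z\mapsto e^{-1/|z|^{2}}$), so analyticity is not a technical convenience here but the crux of the matter.
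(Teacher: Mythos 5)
Your primary route is exactly what the paper does: the paper offers no proof of this lemma at all, simply stating it as the classical gradient Lojasiewicz inequality for real analytic functions \cite{lojastopo,lojasiewicz1982}, applied to the analytic extension of $u$ to a neighborhood of $\overline{\Omega}$ so that boundary critical points are covered. Your supplementary self-contained sketch via the classification of critical points goes beyond the paper, and you correctly identify its weak point (the regularity of the right-equivalence in the fully degenerate case), so your decision to rely on the direct citation is precisely the paper's choice.
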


The Lojasiewicz inequality implies that if $\gamma(t_n,z_0) \to z \in \overline{\Omega}$ for \textit{some} sequence $\{t_n\}$ diverging to $\infty$, then 
$\gamma(t_n,z_0) \to z$ for \textit{any} sequence $\{t_n\}$ diverging to $\infty$, i.e.,
$$
z=\lim_{t \to \infty}\gamma(t,z_0),
$$
the length of the whole flow line $\gamma(I,z_0)$ is finite, and $z$ is a critical point of $u$, see \cite{lojastopo,lojasiewicz1982} and also the proof of Proposition~\ref{prop:absil-x}.
Here, $\infty$ stands for either $+\infty$ or $-\infty$.
As a consequence, we have the following alternative for each direction of the flow line of any $z_0 \in \Omega$: 
\begin{equation}\label{alternative}
\parbox{\dimexpr\linewidth-10em}{%
	\strut
	either $\gamma(\cdot,z_0)$ reaches $\partial\Omega$ in a finite time, or $\gamma(\cdot,z_0)$ converges to a critical point of $u$ in $\overline{\Omega}$.
	\strut
}
\end{equation}
We will clarify the alternative \eqref{alternative} in Lemma~\ref{lem:classif-gamma}, as it will be convenient in our proofs.
Hereinafter, we will use the notation
$$
\gamma(\pm\infty,z_0) 
= 
\lim_{t \to \pm\infty}\gamma(t,z_0).
$$

\begin{remark}\label{rem:thom-conjecture}
In the case when $\gamma(\cdot,z_0)$ converges to a critical point of $u$ (either as $t \to +\infty$ or $t \to -\infty$), it is known from \cite[p.~768]{kurdyka2000proof} that the following limit exists:
$$
\lim_{t \to \infty} \frac{\dot{\gamma}(t,z_0)}{|\dot{\gamma}(t,z_0)|},
$$
that is, normalized tangents converge. 
(Recall that we deal with $\mathbb{R}^2$, while the existence of this limit is an open problem in higher dimensions.)
Consequently, one can naturally define the angle between limits of normalized tangents to two flow lines converging to the same critical point.
\end{remark}

The following lemma says that the Neumann part $\Gamma^N$ of $\partial \Omega$ can be parameterized by flow lines except at critical points. 
The result is rather evident from the very definition of $\Gamma^N$, so we omit the proof.
\begin{lemma}\label{lem:traj-neum}
Let $z_0 \in \Gamma^N$. 
Then 
$\gamma(t,z_0) \in \Gamma^N$ for all $t \in \mathbb{R}$.
\end{lemma}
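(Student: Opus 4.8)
The plan is to exploit the Neumann condition $\partial u/\partial \nu = 0$ on $\Gamma^N$, which says precisely that $\nabla u$ has no normal component along $\Gamma^N$. Consequently, at every point of $\Gamma^N$ the gradient $\nabla u$ is tangent to $\Gamma^N$ (or vanishes, at a critical point), so the vector field $-\nabla u$ driving \eqref{eq:gradflow2} is everywhere tangent to $\Gamma^N$. Tangency of the field to this analytic curve makes the curve invariant under the flow, and the conclusion will then follow from the uniqueness of solutions of the Cauchy problem \eqref{eq:gradflow2}.

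To make this precise, I would first parameterize each connected component of $\Gamma^N$ by arc length $\sigma$; recall that each such component is a connected component of $\partial \Omega$, hence a closed analytic curve and in particular compact. Since $\nabla u(\sigma(s))$ is tangent to $\Gamma^N$ and $|\sigma'(s)| = 1$, we may write $\nabla u(\sigma(s)) = f(s)\,\sigma'(s)$ for an analytic scalar function $f$. Searching for a flow line of the form $z(t) = \sigma(s(t))$, the chain rule turns $\dot{z} = -\nabla u(z)$ into the scalar ODE $\dot{s}(t) = -f(s(t))$. This scalar equation has a unique solution through any initial value $s(0) = s_0$ with $\sigma(s_0) = z_0$, and the resulting $z(t) = \sigma(s(t))$ is a genuine solution of \eqref{eq:gradflow2} lying entirely on $\Gamma^N$. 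By the uniqueness asserted for \eqref{eq:gradflow2}, this is exactly $\gamma(\cdot,z_0)$, whence $\gamma(t,z_0) \in \Gamma^N$.

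The remaining point is that the flow line is defined for all $t \in \mathbb{R}$, i.e.\ that $I(z_0) = \mathbb{R}$. This is where compactness enters: the scalar ODE $\dot{s} = -f(s)$ lives on the compact circle underlying a component of $\Gamma^N$ with an analytic right-hand side, so its solutions cannot escape in finite time and are therefore global. If $z_0$ happens to be a critical point, the statement is trivial since $\gamma(\cdot,z_0) \equiv z_0$; if instead $\gamma(\cdot,z_0)$ limits onto a critical point of $u$ on $\Gamma^N$ (a zero of $f$), the analyticity of $f$ guarantees this limit is approached only as $t \to \pm\infty$ and never in finite time, so again the solution stays on $\Gamma^N$ for all real $t$. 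Rather than a genuine obstacle, the main thing to be careful about is the bookkeeping ensuring that $z(t)$ never leaves $\overline{\Omega}$; this is automatic because $z(t)$ never leaves $\Gamma^N \subset \partial \Omega \subset \overline{\Omega}$ in the first place, so the maximal interval is all of $\mathbb{R}$.
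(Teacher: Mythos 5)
Your proof is correct. The paper in fact \emph{omits} the proof of this lemma, remarking only that the result ``is rather evident from the very definition of $\Gamma^N$''; your argument --- tangency of $-\nabla u$ to $\Gamma^N$ forced by the Neumann condition, reduction to a scalar ODE on each compact (closed, analytic) boundary component, and identification with $\gamma(\cdot,z_0)$ via uniqueness for \eqref{eq:gradflow2} --- is exactly the formalization of that remark, and you handle the only delicate point (that $I(z_0)=\mathbb{R}$, i.e.\ a zero of $f$ is never reached in finite time) correctly, where incidentally uniqueness for the locally Lipschitz scalar ODE already suffices without invoking analyticity of $f$.
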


The following useful corollary of Lemma~\ref{lem:traj-neum} is the observation that flow lines starting in $\Omega \cup \Gamma^D$ cannot reach $\Gamma^N$ in a finite time.
\begin{corollary}\label{cor:boundary}
	Let $z_0 \in \overline{\Omega} \setminus \Gamma^N$. 
	Then $\gamma(t,z_0) \in \overline{\Omega} \setminus \Gamma^N$ for any $t \in I$.
\end{corollary}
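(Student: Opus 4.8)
The plan is to argue by contradiction, exploiting the uniqueness of the flow together with Lemma~\ref{lem:traj-neum}. Suppose the conclusion fails, so that $z_0 \in \overline{\Omega} \setminus \Gamma^N$ while nevertheless $\gamma(t_1,z_0) \in \Gamma^N$ for some $t_1 \in I$. Set $w := \gamma(t_1,z_0) \in \Gamma^N$ (note $t_1$ may be positive or negative, which does not affect the argument). Recalling that, by the very definition of the maximal interval $I$, we always have $\gamma(t,z_0) \in \overline{\Omega}$, the content of the statement is precisely that the flow never meets $\Gamma^N$, so assuming such a time $t_1$ is the correct negation.

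First I would invoke Lemma~\ref{lem:traj-neum} applied to the initial point $w$: the entire flow line $\gamma(\cdot,w)$ remains in $\Gamma^N$ for all $t \in \mathbb{R}$. In particular, since $\Gamma^N \subset \overline{\Omega}$, the solution starting at $w$ never leaves $\overline{\Omega}$, so its maximal interval is all of $\mathbb{R}$; consequently $\gamma(-t_1,w)$ is well defined and lies in $\Gamma^N$.

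Next I would use the autonomy of the gradient system \eqref{eq:gradflow2} and the uniqueness of its solutions (recalled just before Lemma~\ref{lem:lojas}) to obtain the standard flow identity $\gamma(s,\gamma(t_1,z_0)) = \gamma(s+t_1,z_0)$ for admissible $s$. Taking $s = -t_1$ gives $\gamma(-t_1,w) = \gamma(0,z_0) = z_0$. Combined with the previous step this yields $z_0 \in \Gamma^N$, contradicting $z_0 \in \overline{\Omega} \setminus \Gamma^N$ and thereby proving the claim.

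I do not expect a genuine obstacle here, as the statement is a direct corollary; the only point requiring a little care is the flow identity, whose justification rests on the uniqueness of the Cauchy problem \eqref{eq:gradflow2}. Indeed, both $s \mapsto \gamma(s,\gamma(t_1,z_0))$ and $s \mapsto \gamma(s+t_1,z_0)$ solve the same autonomous equation and agree at $s=0$ (both equal $w$), hence they coincide on their common interval of existence. One should also note explicitly that evaluating the reversed flow at $-t_1$ is legitimate precisely because Lemma~\ref{lem:traj-neum} guarantees that the flow through $w$ exists for all real times.
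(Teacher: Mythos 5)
Your proof is correct and is precisely the argument the paper leaves implicit: the corollary is stated without proof as an immediate consequence of Lemma~\ref{lem:traj-neum}, and your contradiction argument (if $\gamma(t_1,z_0)\in\Gamma^N$, then by Lemma~\ref{lem:traj-neum} and the flow identity from uniqueness of the autonomous Cauchy problem \eqref{eq:gradflow2}, $z_0=\gamma(-t_1,\gamma(t_1,z_0))\in\Gamma^N$) is exactly the intended derivation. Your extra care in noting that the flow through the point of $\Gamma^N$ exists for all real times, so that the time-reversed evaluation is legitimate, is a sound and welcome detail.
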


As a consequence of Lemmas~\ref{lem:traj-neum} and~\ref{lem:isol}, we have the following description of $\Gamma^N$.
\begin{corollary}\label{cor:boundary-GN}
$\Gamma^N$ consists of at most finitely many flow lines (together with their end points) and closed curves of critical points of $u$.
\end{corollary}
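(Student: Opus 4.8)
The plan is to analyze $\Gamma^N$ one connected component at a time. Since $\Gamma^N = \partial\Omega \setminus \Gamma^D$ is a union of connected components of $\partial\Omega$, it consists of finitely many analytic closed (Jordan) curves; fix one such component $C$. By Lemma~\ref{lem:traj-neum}, any flow line starting on $C$ remains in $\Gamma^N$ for all $t \in \mathbb{R}$, and by continuity and connectedness it remains in $C$; in particular the maximal interval is $I = \mathbb{R}$, so $-\nabla u$ is tangent to $C$ along $C$. Because $\partial u/\partial\nu = 0$ on $\Gamma^N$, a point of $C$ is a critical point of $u$ precisely when the tangential derivative of $u$ along $C$ vanishes.

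First I would dispose of the fully critical component: if every point of $C$ is critical, then $C$ is a closed curve of critical points (on which, by Lemma~\ref{lem:isol}~\ref{lem:isol:nonconstant}, $u$ is a nonzero constant), and we are in the second alternative of the statement. Otherwise, I claim $C$ carries only finitely many critical points. Indeed, by Lemma~\ref{lem:isol} the critical set $\mathcal{C}$ is a finite union of isolated points and analytic curves $\theta$; the isolated points contribute finitely many points of $C$, while any curve $\theta \subset \mathcal{C}$ meeting $C$ in infinitely many points must, by Lemma~\ref{lem:isol}~\ref{lem:isol:gamma-N}, coincide with a connected component of $\Gamma^N$, hence with $C$, contradicting that $C$ is not fully critical. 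Thus each such $\theta$ meets $C$ in finitely many points, and there are finitely many $\theta$, which proves the claim; note this also rules out any proper critical sub-arc of $C$.

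Next I would show that a non-fully-critical $C$ actually carries at least one critical point, so that deleting its finitely many critical points splits $C$ into finitely many open arcs $A$, each free of critical points. The absence of any critical point on $C$ would force the nonvanishing tangent field $-\nabla u$ to have a periodic orbit covering the circle $C$, which is incompatible with the strict monotonicity of $u$ along flow lines recorded after \eqref{eq:gradflow2}; hence at least one critical point exists. On each arc $A$ I would pick a regular point $z_0$: the flow line $\gamma(\cdot,z_0)$ stays in $A$ (it cannot reach a critical endpoint in finite time) and, since $-\nabla u$ is nonvanishing and tangent on the one-dimensional set $A$, its image is all of $A$. Using the compactness of $\Gamma^N$ together with the Lojasiewicz convergence discussed after Lemma~\ref{lem:lojas} (and the alternative \eqref{alternative}), the limits $\gamma(\pm\infty,z_0)$ exist and are critical points lying on $C$, namely the two endpoints of $A$. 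Therefore $\overline{A}$ is exactly one flow line together with its end points.

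Assembling these observations over the finitely many arcs of the finitely many non-fully-critical components, and adjoining the fully critical components as closed curves of critical points, yields the asserted decomposition. The \emph{main obstacle} is the bookkeeping of the third paragraph: proving that each regular arc is covered by a single flow line whose two limit points are the critical endpoints, which rests on combining the strict monotonicity of $u$ along flow lines, the invariance and tangency from Lemma~\ref{lem:traj-neum}, and the Lojasiewicz-type convergence on the compact set $\Gamma^N$. The finiteness claim of the second paragraph, hinging on the dichotomy in Lemma~\ref{lem:isol}~\ref{lem:isol:gamma-N}, is the other place requiring care.
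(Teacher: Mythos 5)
Your proposal is correct and takes essentially the same route as the paper, which states this corollary without a written proof as an immediate consequence of Lemma~\ref{lem:traj-neum} (invariance of $\Gamma^N$ under the flow) and Lemma~\ref{lem:isol} (finitely many isolated critical points and critical curves, with the dichotomy that a critical curve meeting $\Gamma^N$ infinitely often is a whole component); your componentwise analysis is precisely the natural elaboration of that. The only incidental remark is that your ``at least one critical point'' step could be obtained more directly by noting that $u$ restricted to a non-fully-critical component is a smooth function on a circle and hence has at least two critical points (a maximum and a minimum), which also removes the degenerate possibility of a single arc whose two endpoints coincide.
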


We complement Lemma~\ref{lem:traj-neum} by discussing the behavior of flow lines starting on $\Gamma^D$.
\begin{remark}\label{rem:traj-boundary}
If $z_0 \in \Gamma^D$ is a regular point of $u$, then $\gamma(\cdot,z_0)$ is defined either in $I = (-\infty,0]$ or in $I = [0,+\infty)$ depending on whether $\frac{\partial u}{\partial \nu}(z_0) < 0$ or 
$\frac{\partial u}{\partial \nu}(z_0) > 0$, respectively, where $\nu$ is pointed outwards. 
Indeed, since $u$ strictly decreases along $\gamma(\cdot,z_0)$ with increasing time, the flow line cannot meet $\Gamma^D$ again; 
moreover, such $\gamma(\cdot,z_0)$ cannot meet $\Gamma^N$ by Corollary~\ref{cor:boundary}. 
Therefore, $\gamma(\cdot,z_0) \cap \Gamma^D = \{z_0\}$, and $\gamma(t,z_0) \in \Omega$ either for any $t<0$ or for any $t>0$, depending on the sign of the normal derivative of $u$ at $z_0$.
\end{remark}

Using Corollary~\ref{cor:boundary} and Remark~\ref{rem:traj-boundary}, we clarify the alternative \eqref{alternative} as follows.
\begin{lemma}\label{lem:classif-gamma}
Let $z_0 \in \overline{\Omega}$ be a regular point of $u$.
Then $(-\infty,0] \subset I$ and/or $[0,+\infty) \subset I$, and the flow line $\gamma = \gamma(\cdot,z_0)$ has
either of the following behaviors at the boundary of the maximal time interval $I$:
\begin{enumerate}[label={(\arabic*)}]
\item Both ends of $\gamma$ are local extremum points. 
\item One end of $\gamma$ is a local extremum point and the other end is a saddle point.
\item Both ends of $\gamma$ are saddle points. 
\item One end of $\gamma$ is a local extremum point and $\gamma$ reaches $\Gamma^D$ in a finite time.
\item One end of $\gamma$ is a saddle point and $\gamma$ reaches $\Gamma^D$ in a finite time. 
\end{enumerate}
\end{lemma}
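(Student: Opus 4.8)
The plan is to split the argument according to where the regular point $z_0$ lies, namely $z_0\in\Omega$, $z_0\in\Gamma^D$, or $z_0\in\Gamma^N$, and in each case to read off the behavior of $\gamma=\gamma(\cdot,z_0)$ at the two ends of its maximal interval $I$ by invoking the alternative \eqref{alternative}. The engine driving everything is the strict monotonicity of $u$ along $\gamma$: since $\frac{d}{dt}u(\gamma(t))=-|\nabla u(\gamma(t))|^2<0$ as long as $\gamma$ stays in the regular set, $u$ strictly decreases until $\gamma$ reaches a critical point. I record at the outset the elementary but crucial fact that a critical point of $u$ is, by definition, either a local extremum point (an element of $\mathcal{M}_-\cup\mathcal{M}_+$) or a saddle point (an element of $\mathcal{S}$); hence any end of $\gamma$ that converges to a critical point falls into one of these two types.

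First I would treat the interior case $z_0\in\Omega$. Applying the alternative \eqref{alternative} to each of the two time directions, each end either reaches $\partial\Omega$ in finite time or converges to a critical point of $u$ in $\overline{\Omega}$. By Corollary~\ref{cor:boundary}, a flow line emanating from $\overline{\Omega}\setminus\Gamma^N$ can never meet $\Gamma^N$, so whenever an end reaches $\partial\Omega$ it must reach $\Gamma^D$. The key step is to rule out that \emph{both} ends reach $\Gamma^D$: if $\gamma(t_-)\in\Gamma^D$ and $\gamma(t_+)\in\Gamma^D$ with $t_-<t_+$ finite, then the Dirichlet condition forces $u(\gamma(t_-))=0=u(\gamma(t_+))$, contradicting the strict decrease of $u$ along $\gamma$. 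Consequently at least one end converges to a critical point (so that $(-\infty,0]\subset I$ or $[0,+\infty)\subset I$), and at most one end reaches $\Gamma^D$. Sorting the possibilities for the two ends, each being a local extremum, a saddle, or (at most once) a point of $\Gamma^D$, produces exactly the five listed configurations.

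Next I would dispose of the boundary cases. If $z_0\in\Gamma^D$, then Remark~\ref{rem:traj-boundary} already shows that $\gamma$ meets $\Gamma^D$ only at $z_0$ (at time $t=0$) and enters $\Omega$ in the other time direction, along which it is defined for all times; applying the alternative together with Corollary~\ref{cor:boundary} to that interior direction shows it converges to a critical point, which is an extremum or a saddle, placing us in case (4) or (5). If $z_0\in\Gamma^N$, then Lemma~\ref{lem:traj-neum} confines the entire flow line to the compact set $\Gamma^N$, so $I=\mathbb{R}$ and $\gamma$ never meets the disjoint set $\Gamma^D$; boundedness of the trajectory supplies, for each direction, a sequence of times along which $\gamma$ accumulates, and the Lojasiewicz consequence of Lemma~\ref{lem:lojas} upgrades this to genuine convergence to a critical point at both ends. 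Since each such limit is an extremum or a saddle, we land in one of cases (1)--(3).

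I expect the main obstacle to be the bookkeeping needed to confirm that the three location cases exhaust all regular $z_0$ and that the monotonicity argument (excluding two Dirichlet ends) and the Lojasiewicz-based convergence are applied under the correct hypotheses, in particular verifying that ``reaching $\partial\Omega$'' always means reaching $\Gamma^D$ rather than $\Gamma^N$ under the stated assumptions, and that convergence to a critical point indeed forces the corresponding time ray to be contained in $I$. Once these points are pinned down, matching the enumerated outcomes to the five cases is purely combinatorial.
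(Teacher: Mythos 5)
Your proposal is correct and takes essentially the same approach the paper intends: the paper offers no separate written proof of this lemma, presenting it as an immediate consequence of the alternative \eqref{alternative}, Corollary~\ref{cor:boundary}, and Remark~\ref{rem:traj-boundary}, which is precisely what you flesh out (adding Lemma~\ref{lem:traj-neum} and the Lojasiewicz convergence for the $\Gamma^N$ case, and strict monotonicity of $u$ along flow lines to exclude two Dirichlet ends). The only point to flag is that the trichotomy ``every critical point is a local extremum or a saddle'' is not a matter of definition but follows from the classification of critical points in Section~\ref{sec:classification}, which is also how the paper implicitly uses it.
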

 
It is known from \cite[Theorem~3]{absil2006stable} that
a critical point $z_0 \in \overline{\Omega}$  of $u$ is a local extremum point if and only if $z_0$ is stable in the sense of Lyapunov. 
Since the ``if'' part of this result will be frequently used below, we formulate it explicitly and also include the restriction to $\overline{\Omega}$.
In what follows, $B_R(z)$ stands for the open disk of radius $R>0$ centered at $z=(x,y)$.

\begin{proposition}\label{prop:absil}
	Let $z_0 \in \overline{\Omega}$ be a local minimum point of $u$.
	Then for any $\varepsilon>0$ there exists $\delta>0$ such that if $z \in B_\delta(z_0) \cap \overline{\Omega}$, then $\gamma(t,z) \in B_\varepsilon(z_0) \cap \overline{\Omega}$ for any $t \geq 0$ and hence $\gamma(+\infty,z) \in \overline{B_\varepsilon(z_0)} \cap \overline{\Omega}$.
\end{proposition}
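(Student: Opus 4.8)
The plan is to establish Lyapunov stability at the local minimum $z_0$ by using $u$ itself as a Lyapunov function for the gradient-descent flow, and to turn this into the quantitative statement by means of the Lojasiewicz inequality from Lemma~\ref{lem:lojas}. First I would fix a neighborhood $U$ of $z_0$ on which simultaneously $u(z) \ge u(z_0)$ (local minimality) and $|\nabla u(z)| \ge C\,(u(z)-u(z_0))^\rho$ hold, for some $\rho \in [1/2,1)$ and $C>0$; here I use $|u(z_0)-u(z)| = u(z)-u(z_0)$ granted by local minimality. I would also record that, by Corollary~\ref{cor:boundary} and Lemma~\ref{lem:traj-neum}, a forward flow line can neither cross $\Gamma^N$ nor leave $\overline{\Omega}$, while $z_0 \notin \Gamma^D$ since $\mathcal{M}_\pm \cap \Gamma^D = \emptyset$ (Remark~\ref{rem:boundary_critical}); hence the only way for $\gamma(\cdot,z)$ to escape $B_\varepsilon(z_0)$ is to meet the sphere $\partial B_\varepsilon(z_0)$ while remaining in $\overline{\Omega}$.

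The core computation is the finite-length estimate for forward trajectories (if $z$ is itself critical the flow line is constant and the claim is immediate, so assume $z$ regular). As long as $\gamma(t,z)$ stays in $U$, differentiation gives
\[
\frac{d}{dt}\big(u(\gamma(t,z))-u(z_0)\big)^{1-\rho} = -(1-\rho)\big(u(\gamma(t,z))-u(z_0)\big)^{-\rho}\,|\nabla u(\gamma(t,z))|^2 \le -(1-\rho)\,C\,|\nabla u(\gamma(t,z))|,
\]
where the inequality is Lemma~\ref{lem:lojas} rewritten as $(u-u(z_0))^{-\rho} \ge C\,|\nabla u|^{-1}$. Since $|\nabla u(\gamma(t,z))| = |\dot\gamma(t,z)|$ is the speed, integrating from $0$ to $T$ and discarding the nonnegative endpoint term yields
\[
\mathrm{length}\big(\gamma|_{[0,T]}\big) = \int_0^T |\dot\gamma(t,z)|\,dt \le \frac{\big(u(z)-u(z_0)\big)^{1-\rho}}{(1-\rho)\,C}.
\]
Thus the forward arclength issuing from $z$ is controlled by $(u(z)-u(z_0))^{1-\rho}$, which tends to $0$ as $z \to z_0$ by continuity of $u$.

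Given $\varepsilon > 0$, I would first shrink $\varepsilon$ so that $\overline{B_\varepsilon(z_0)} \cap \overline{\Omega} \subset U$, and then choose $\delta \in (0,\varepsilon/2)$ so small that the right-hand side of the length bound is $< \varepsilon/2$ for every $z \in B_\delta(z_0) \cap \overline{\Omega}$. Fixing such a regular $z$, let $T^* = \inf\{t\ge 0: |\gamma(t,z)-z_0| = \varepsilon\}$ be the first time the trajectory meets $\partial B_\varepsilon(z_0)$, with $T^* = +\infty$ if no such time exists. If $T^* < \infty$, then on $[0,T^*]$ the trajectory remains in $\overline{B_\varepsilon(z_0)} \cap \overline{\Omega} \subset U$, so the length estimate applies and
\[
|\gamma(T^*,z)-z_0| \le \mathrm{length}\big(\gamma|_{[0,T^*]}\big) + |z-z_0| < \frac{\varepsilon}{2} + \frac{\varepsilon}{2} = \varepsilon,
\]
contradicting $|\gamma(T^*,z)-z_0| = \varepsilon$. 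Hence $T^* = +\infty$ and $\gamma(t,z) \in B_\varepsilon(z_0)\cap\overline{\Omega}$ for all $t \ge 0$. Finally, the uniform bound on the forward length shows that $\gamma(\cdot,z)$ has finite total length, so it converges as $t \to +\infty$ to a point of $\overline{B_\varepsilon(z_0)} \cap \overline{\Omega}$ (necessarily a critical point), giving $\gamma(+\infty,z) \in \overline{B_\varepsilon(z_0)} \cap \overline{\Omega}$.

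The step I expect to be the main obstacle is precisely the treatment of \emph{non-strict} minima: when $z_0$ lies on a curve $\theta \subset \mathcal{C}$ of local minima (Corollary~\ref{cor:no-saddles-on-theta}), the quantity $\min_{\partial B_\varepsilon(z_0)\cap\overline{\Omega}}(u - u(z_0))$ may vanish, so the textbook Lyapunov argument via a positive sublevel barrier on the sphere breaks down. Routing the proof through the Lojasiewicz length estimate avoids choosing any such barrier, and the only remaining delicacy---ensuring the inequality is invoked only where it is valid---is exactly what the first-hitting-time argument secures, since it bootstraps that the trajectory never leaves $U$ before time $T^*$.
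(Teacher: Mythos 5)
Your proof is correct and takes essentially the same route as the paper: the paper simply cites \cite[Theorem~3]{absil2006stable} (whose proof is exactly this Lojasiewicz-based length estimate via Lemma~\ref{lem:lojas}) and, for $z_0\in\Gamma^N$, adds the same references to Lemma~\ref{lem:traj-neum} and Corollary~\ref{cor:boundary} that you use to keep trajectories in $\overline{\Omega}$. Your written-out argument (differential inequality for $(u-u(z_0))^{1-\rho}$, length bound, first-hitting-time contradiction) is precisely the argument the authors themselves spell out in the proof of the generalization, Proposition~\ref{prop:absil-x}.
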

\begin{proof}
For $z_0 \in \Omega$ the result is given by \cite[Theorem~3]{absil2006stable}, whose proof is based on the Lemma~\ref{lem:lojas}. 
Assume that $z_0 \in \partial \Omega$ is a local minimum point of $u$.
According to Lemma~\ref{lem:crit}~\ref{lem:crit:singsadleisolated}, we have $z_0 \in \Gamma^N$.
The proof in this case goes along the same lines as in \cite[Theorem~3]{absil2006stable}, with an additional reference to Lemma~\ref{lem:traj-neum} and Corollary~\ref{cor:boundary} to consider only flow lines from $\overline{\Omega}$. 
\end{proof}

\begin{remark}
	Note that Proposition~\ref{prop:absil}
	does not require $z_0$ to be isolated and the result remains valid also for local maximum points by considering negative times.
\end{remark}

In fact, the assumption of Proposition~\ref{prop:absil} that $z_0$ is a local extremum point of $u$ can be weakened to an extent which allows to cover critical points of the saddle-node type.
\begin{proposition}\label{prop:absil-x}
	Let $z_0 \in \overline{\Omega}$ be a critical point of $u$. 
	Assume that there are two flow lines $\gamma_1, \gamma_2 \subset \overline{\Omega}$ converging to $z_0$ as $t \to +\infty$. 
	Assume that there exists a neighborhood $U$ of $z_0$ such that $u(z) \geq u(z_0)$ for any $z \in \mathcal{K}$, where $\mathcal{K} \subset \overline{\Omega \cap U}$ is an open curvilinear sector bounded by $\gamma_1$, $\gamma_2$, and $\partial U$.
	Then for any $\varepsilon>0$ there exists $\delta>0$ such that if $z \in B_\delta(z_0) \cap \mathcal{K}$, then $\gamma(t,z) \in B_\varepsilon(z_0) \cap \mathcal{K}$ for any $t \geq 0$ and hence $\gamma(+\infty,z) \in \overline{B_\varepsilon(z_0)} \cap \overline{\mathcal{K}}$.
\end{proposition}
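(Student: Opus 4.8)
The plan is to adapt the Lyapunov-stability argument behind Proposition~\ref{prop:absil} (i.e.\ \cite[Theorem~3]{absil2006stable}), replacing the role played there by the hypothesis that $z_0$ is a local minimum with the localized inequality $u \geq u(z_0)$ on the sector $\mathcal{K}$. First I would record the Lojasiewicz arc-length estimate. By Lemma~\ref{lem:lojas}, fix a neighborhood $U_L$ of $z_0$, an exponent $\rho \in [1/2,1)$, and a constant $C>0$ with $|\nabla u(z)| \geq C|u(z_0)-u(z)|^\rho$ on $U_L$. Along any flow line $\gamma(\cdot,z)$ one has $\frac{d}{dt}u(\gamma(t,z)) = -|\nabla u(\gamma(t,z))|^2$ and $|\dot\gamma(t,z)| = |\nabla u(\gamma(t,z))|$. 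Setting $W(t) = u(\gamma(t,z)) - u(z_0)$, which is strictly positive as long as $\gamma(\cdot,z)$ stays in $\mathcal{K}$ (by hypothesis $u \geq u(z_0)$ there, while $u$ strictly decreases along a regular flow line), the Lojasiewicz inequality gives $W^{-\rho}|\nabla u| \geq C$ and hence $\frac{d}{dt}W^{1-\rho} = -(1-\rho)W^{-\rho}|\nabla u|^2 \leq -(1-\rho)C\,|\dot\gamma(t,z)|$ whenever $\gamma(t,z) \in \mathcal{K} \cap U_L$. Integrating, the length of any portion of $\gamma(\cdot,z)$ contained in $\mathcal{K}\cap U_L$ over $[0,T]$ is bounded above by $\frac{1}{(1-\rho)C}\,(u(z)-u(z_0))^{1-\rho}$, uniformly in $T$.

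Next I would establish that a flow line entering $\mathcal{K}$ near $z_0$ cannot escape the sector except possibly through the outer arc $\partial U$. Since $-\nabla u$ is analytic, hence locally Lipschitz, solutions of \eqref{eq:gradflow2} are unique, so two distinct flow lines cannot cross; in particular $\gamma(\cdot,z)$ cannot meet the sides $\gamma_1$, $\gamma_2$ of $\mathcal{K}$. Moreover, a flow line emanating from a regular point never reaches the critical point $z_0$ in finite time, again by uniqueness (if $z$ is itself a critical point the statement is trivial, since then $\gamma(t,z)\equiv z$). If $z_0 \in \partial\Omega$, then $z_0 \in \Gamma^N$ by Lemma~\ref{lem:crit}~\ref{lem:crit:singsadleisolated}, and Lemma~\ref{lem:traj-neum} together with Corollary~\ref{cor:boundary} guarantee that flow lines respect $\overline{\Omega}$ and do not leak across $\partial\Omega$, exactly as in the boundary case of Proposition~\ref{prop:absil}. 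Consequently, as long as $\gamma(t,z)$ remains inside a ball $B_\varepsilon(z_0) \subset U \cap U_L$ (so that $\partial U$ is out of reach), it stays in $\mathcal{K}\cap B_\varepsilon(z_0)$, where both $W > 0$ and the Lojasiewicz estimate are available.

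The conclusion then follows by a standard bootstrap. Given $\varepsilon>0$, shrink it so that $B_\varepsilon(z_0) \subset U \cap U_L$. By continuity of $u$ at $z_0$, choose $\delta \in (0,\varepsilon)$ so small that $\frac{1}{(1-\rho)C}\,(u(z)-u(z_0))^{1-\rho} < \varepsilon - \delta$ for every $z \in B_\delta(z_0)\cap\mathcal{K}$. Suppose some such flow line reached $\partial B_\varepsilon(z_0)$, and let $t^*$ be the first such time. On $[0,t^*]$ the flow line stays in $\mathcal{K}\cap\overline{B_\varepsilon(z_0)}$ by the previous paragraph, so the arc-length estimate applies and bounds its length by $\varepsilon-\delta$; but this length is at least $|\gamma(t^*,z)-z| \geq |\gamma(t^*,z)-z_0| - |z-z_0| > \varepsilon - \delta$, a contradiction. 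Hence $\gamma(t,z) \in B_\varepsilon(z_0)\cap\mathcal{K}$ for all $t \geq 0$. Since the total length is finite and $\gamma(\cdot,z)$ is confined to the compact set $\overline{B_\varepsilon(z_0)}\cap\overline{\mathcal{K}}$, the Lojasiewicz consequence recorded after Lemma~\ref{lem:lojas} yields that $\gamma(+\infty,z)$ exists, is a critical point, and lies in $\overline{B_\varepsilon(z_0)}\cap\overline{\mathcal{K}}$.

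I expect the main obstacle to be the containment step: rigorously excluding escape from the curvilinear sector through its curved sides $\gamma_1$, $\gamma_2$ and through the vertex $z_0$, while simultaneously handling the boundary case $z_0 \in \Gamma^N$. The non-crossing of flow lines and the impossibility of a regular trajectory reaching a critical point in finite time are the crucial ingredients, and everything else reduces to the one-dimensional Lojasiewicz length bound. The latter is insensitive to the non-minimal (saddle-node) nature of $z_0$ precisely because it only exploits the monotonicity of $u$ together with $u \geq u(z_0)$ on $\mathcal{K}$, which is exactly why the hypothesis on the sector can substitute for the local-extremum assumption of Proposition~\ref{prop:absil}.
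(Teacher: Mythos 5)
Your proof is correct and follows essentially the same route as the paper's: the Lojasiewicz inequality gives a uniform arc-length bound for the portion of a flow line lying in $\mathcal{K}$ near $z_0$ (the paper parametrizes by arc length, you by time, which is equivalent), and confinement to the sector is obtained from uniqueness of solutions of \eqref{eq:gradflow2} (no crossing of $\gamma_1,\gamma_2$, no finite-time arrival at the critical point $z_0$), after which the first-exit-time argument through $\partial B_\varepsilon(z_0)$ yields the contradiction exactly as in the paper. One slip: your claim that $z_0\in\partial\Omega$ forces $z_0\in\Gamma^N$ is not what Lemma~\ref{lem:crit}~\ref{lem:crit:singsadleisolated} gives for a \emph{general} critical point --- that lemma (and Remark~\ref{rem:boundary_critical}) only excludes local \emph{extrema} from $\Gamma^D$, while a saddle or saddle-node may well lie on $\Gamma^D$, and the proposition is precisely designed for such points. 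This mis-citation is immaterial here, since escape across $\partial\Omega$ is already ruled out by the facts you establish anyway: $\mathcal{K}$ is an open subset of $\overline{\Omega\cap U}$ bounded only by $\gamma_1$, $\gamma_2$, and $\partial U$, so leaving $\mathcal{K}$ inside $B_\varepsilon(z_0)\subset U$ would require crossing $\gamma_1\cup\gamma_2\cup\{z_0\}$, which uniqueness forbids.
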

\begin{proof}
	The proof uses Lojasiewicz's arguments in a similar way as in \cite[Section~3]{absil2006stable}.
	We provide details for the sake of clarity.
	For convenience, adding a constant, we assume that $u(z_0) = 0$.
	In view of Lemma~\ref{lem:lojas}, there exist $\sigma>0$, $\rho \in [1/2,1)$, and $C>0$ such that 
	\begin{equation}\label{eq:lojas1}
		|\nabla u(z)| \geq C |u(z)|^\rho 
		\quad \text{for any}~ z \in B_\sigma(z_0).
	\end{equation}
	Let us take any regular point $z \in B_{\sigma}(z_0) \cap \mathcal{K}$ and consider its flow line $\gamma(\cdot,z)$.
	Parametrizing $\gamma(t,z)$ by the arc-length $s$ starting from $z$ and denoting it as $\al(s)$, we have
	$$
	\dot{\al}(s) = -\frac{\nabla u(\al(s))}{|\nabla u(\al(s))|}.
	$$
	In view of \eqref{eq:lojas1}, 
	for any $s \geq 0$ such that $\al(s) \in B_{\sigma}(z_0)$
	we have
	$$
	\frac{d u}{ds}
	=
	\left<
	\nabla u, \dot{\al}(s)
	\right>
	=
	-|\nabla u|
	\leq 
	-C |u|^\rho 
	=
	-C u^\rho,
	$$
	and hence
	\begin{equation}\label{eq:lojas2}
		\frac{d (u^{1-\rho})}{ds} \leq -C (1-\rho) < 0.
	\end{equation}
	If, for some $0 \leq s_1 < s_2$,
	$\al(s) \in B_{\sigma}(z_0) \cap \mathcal{K}$ for any $s \in [s_1,s_2]$, then, integrating \eqref{eq:lojas2} over $[s_1,s_2]$, we get
	\begin{equation}\label{eq:lojas3}
		s_2-s_1
		\leq 
		\frac{1}{C(1-\rho)}
		\left(
		u^{1-\rho}(\al(s_1))
		-
		u^{1-\rho}(\al(s_2))
		\right)
		\leq
		\frac{1}{C(1-\rho)} \, u^{1-\rho}(\al(s_1)).
	\end{equation}
	Let $\varepsilon>0$ be given. Observe that it is sufficient to prove the statement for $\varepsilon < \sigma$. 
	Thanks to our assumption on $\mathcal{K}$, since $\rho < 1$ and $u$ is continuous,
	there exists $\delta \in (0,\varepsilon/2)$ such that 
	\begin{equation}\label{eq:lojas4}
		0 =u^{1-\rho}(z_0)\leq u^{1-\rho}(\xi) < \frac{C(1-\rho) \varepsilon}{2}
		\quad \text{for any}~ \xi \in B_\delta(z_0) \cap \mathcal{K}.
	\end{equation}
	Let us take any $z \in B_\delta(z_0) \cap \mathcal{K}$. 
	In view of the continuity of flow lines, 
	there exists a maximal $T \in (0,+\infty]$ such that 
	$\gamma(t,z)$ stays in $B_\varepsilon(z_0) \cap \mathcal{K}$ for all $t \in [0,T)$.
	Thus, we conclude from \eqref{eq:lojas4} and \eqref{eq:lojas3} (with $s_1=0$) that the length of the flow line $\gamma(\cdot,z)$ in $[0,T)$ is bounded by $\varepsilon/2$.
	Since $\delta<\varepsilon/2$, we have $|z_0-\gamma(T,z)| < \varepsilon$.
	Consequently, if $T$ is finite, then $\gamma(T,z) \in B_\varepsilon(z_0) \cap \overline{\mathcal{K}}$ and $\gamma(T,z)$ is a regular point. 
 Moreover, recalling that the solution of the Cauchy problem \eqref{eq:gradflow2} is unique, we get $\gamma(T,z) \not\in \gamma_1, \gamma_2$, and hence $\gamma(T,z) \in B_\varepsilon(z_0) \cap \mathcal{K}$, which contradicts the maximality of $T$.
	Therefore, we have $T=+\infty$, i.e., 
	$\gamma(t,z)$ stays inside $B_\varepsilon(z_0) \cap \mathcal{K}$ for any $t \geq 0$.
\end{proof}

Let us now provide a development of Lemma~\ref{lem:isol} concerning the properties of curves of critical points of $u$.

\begin{figure}[!ht]
\begin{center}
  \begin{subfigure}{0.4\textwidth}
    \includegraphics[width=\linewidth]{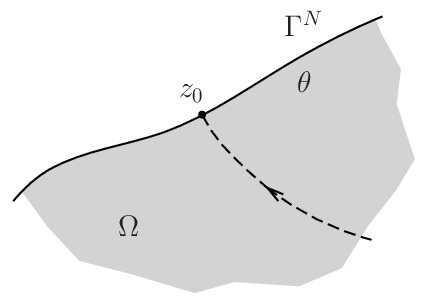}
    \caption{}
  \end{subfigure}%
  \hspace*{\fill}
  \begin{subfigure}{0.4\textwidth}
    \includegraphics[width=\linewidth]{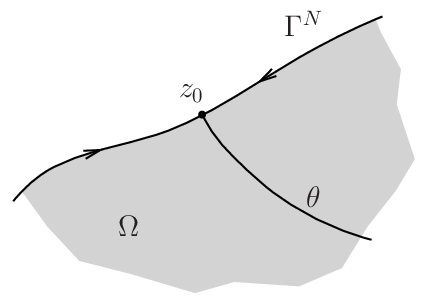}
    \caption{}
  \end{subfigure}%
\end{center}
\caption{Schematic plot for Lemma~\ref{lem:morse-bott}.}
\label{fig:theta12x}
\end{figure}

\begin{lemma}\label{lem:morse-bott}
    Let $\theta \subset \mathcal{C}$ be a curve of critical points.
    Then $\theta$ has the following properties:
    \begin{enumerate}[\rm(i)]
    \item\label{lem:morse-bott:fiber} 
    For any $z_0 \in \theta$ there exist at least one and at most two flow lines (in $\overline{\Omega}$) converging to $z_0$. 
    Any such flow line is orthogonal to $\theta$ at $z_0$. 
    \item\label{lem:morse-bott:gamma-N}
    If $\theta \cap \Gamma^N \neq \emptyset$, then either $\theta$ is a connected component of $\Gamma^N$, or $\theta \cap \Gamma^N$ consists of two points. At such points $\theta$ intersects $\Gamma^N$ orthogonally and there are flow lines in $\Gamma^N$ converging to them.
    \end{enumerate}
\end{lemma}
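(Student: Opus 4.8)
The plan is to work in a tubular neighbourhood of $\theta$ and to exploit the Morse--Bott structure guaranteed by Remark~\ref{rem:isolated-critical-points}. First I would reduce, via Corollary~\ref{cor:no-saddles-on-theta} (replacing $u$ by $-u$ if necessary), to the case in which every point of $\theta$ is a local minimum; by Lemma~\ref{lem:isol}\,\ref{lem:isol:nonconstant} one then has $u\equiv c$ on $\theta$ for some constant $c\neq0$ and $u\geq c$ near $\theta$. I would introduce analytic Fermi (normal) coordinates $(s,n)$ along the analytic arc $\theta$, so that $\theta=\{n=0\}$ and the Euclidean metric reads $dn^2+\phi(s,n)^2\,ds^2$ with $\phi>0$ analytic and $\phi(s,0)\equiv1$. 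Since every point of $\theta$ is critical and $u\equiv c$ there, $u(s,0)=c$ and $\partial_s u(s,0)=\partial_n u(s,0)=0$, while the Morse--Bott nondegeneracy in the normal direction gives $a(s):=\partial_n^2 u(s,0)>0$. Hence $u(s,n)=c+\tfrac12 a(s)n^2+O(n^3)$, so that $\partial_n u=a(s)n+O(n^2)$ whereas $\partial_s u=\tfrac12 a'(s)n^2+O(n^3)$; the tangential part of the gradient is one order smaller in $n$ than the normal part.

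\textbf{Assertion (i).}
Along a flow line near $\theta$ the normal equation $\dot n=-\partial_n u=-n\,A(s,n)$ with $A(s,0)=a(s)>0$ shows that $n$ is strictly monotone for small $n\neq0$ and tends to $0$; reparametrizing the flow line by $n$ turns it into a solution of the single first-order equation $\frac{ds}{dn}=\frac{\dot s}{\dot n}=n\,G(s,n)$, where $G=\phi^{-2}B/A$ is analytic near $(s_0,0)$ because $\partial_s u=n^2 B(s,n)$ with $B$ analytic and $a(s)>0$. Since the right-hand side vanishes at $n=0$ and is Lipschitz in $s$, the Picard--Lindel\"of theorem yields a unique solution $S(n)$ with $S(0)=s_0$ on each side $n>0$ and $n<0$ that lies in $\overline{\Omega}$, and $S'(0)=0$ encodes that the limiting tangent is orthogonal to $\theta$ (cf.\ Remark~\ref{rem:thom-conjecture}). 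Conversely, such a solution reparametrized in time is a genuine flow line converging to $z_0$. Thus the number of flow lines converging to $z_0$ equals the number of normal rays lying in $\overline{\Omega}$: two for $z_0\in\Omega$, and one or two for $z_0\in\theta\cap\Gamma^N$ (boundary points reduce to the interior case after the reflection used below), which is exactly ``at least one and at most two'', all orthogonal to $\theta$.

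\textbf{Assertion (ii).}
If $\theta\cap\Gamma^N$ is infinite, then $\theta$ is a connected component of $\Gamma^N$ by Lemma~\ref{lem:isol}\,\ref{lem:isol:gamma-N}. Otherwise the key tool is Schwarz reflection across the analytic arc $\Gamma^N$: flattening $\Gamma^N$ by a conformal map and combining the Neumann condition with analyticity, $u$ is invariant under the anticonformal reflection $R$ about $\Gamma^N$, and hence so is the critical curve $\theta$. An $R$-invariant regular analytic arc through a point $z_0\in\Gamma^N$ must have tangent either along or orthogonal to $\Gamma^N$. In the tangential case, writing $\theta$ locally as a graph over $\Gamma^N$ and using $R$-invariance forces $\theta\subset\Gamma^N$, so $\theta$ is a component of $\Gamma^N$; in the orthogonal case $R$-invariance forces the mirror half of $\theta$ to lie outside $\overline{\Omega}$, so $z_0$ is an \emph{endpoint} of $\theta$ and $\theta$ meets $\Gamma^N$ orthogonally. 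Consequently, when $\theta$ is not a component of $\Gamma^N$, every point of $\theta\cap\Gamma^N$ is an endpoint, whence by Lemma~\ref{lem:isol}\,\ref{lem:isol:simple} the arc $\theta$ is not closed and $\theta\cap\Gamma^N$ consists of exactly its two endpoints, at which the intersection is orthogonal. At such an endpoint the two normal rays to $\theta$ run along $\Gamma^N$, so by assertion (i) together with Lemma~\ref{lem:traj-neum} the flow lines converging to $z_0$ lie in $\Gamma^N$.

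\textbf{Main obstacle.}
The delicate step is the reflection in (ii): one must justify that the Neumann condition on the \emph{curved} analytic arc $\Gamma^N$ propagates to full $R$-invariance of $u$ (equivalently, that all odd normal derivatives vanish), which requires the conformal flattening to keep the transformed Helmholtz-type equation symmetric, so that the even extension is again a solution and unique continuation applies. A secondary technical point is the well-posedness of the reparametrized equation $\frac{ds}{dn}=n\,G(s,n)$ at the degenerate value $n=0$ and the bijective correspondence between its solutions and flow lines converging to $z_0$; both are secured by the analyticity of $G$ and by the factor $n$, which tames the tangential drift and makes the endpoint behaviour at $\theta$ rigorous.
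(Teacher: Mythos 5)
Your argument for assertion \ref{lem:morse-bott:fiber} is essentially sound and is genuinely more self-contained than the paper's, which simply verifies the Morse--Bott property and then cites the Morse--Bott stable-manifold theorem of Austin--Braam; your Fermi-coordinate reduction to the scalar ODE $\frac{ds}{dn}=nG(s,n)$ with Picard--Lindel\"of uniqueness on each side of $\theta$ is a clean elementary substitute. Two caveats. First, you take the normal nondegeneracy $a(s)>0$ from Remark~\ref{rem:isolated-critical-points}, but that remark explicitly defers its justification to the proof of this very lemma, so as written your setup is circular; the one-line fix (which is exactly what the paper does) is: since $u\equiv c$ and $\partial_n u\equiv 0$ along $\theta$, all tangential second derivatives vanish there, so $\partial_n^2u(s,0)=0$ would make the Hessian vanish entirely, forcing $0=-\Delta u(z_0)=\lambda u(z_0)$ and contradicting $c\neq 0$ (Lemma~\ref{lem:isol}~\ref{lem:isol:nonconstant}). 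Second, your treatment of boundary points in (i) defers to ``the reflection used below,'' which brings us to the real problem.

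The reflection argument underlying assertion \ref{lem:morse-bott:gamma-N} is not a technical obstacle to be overcome: the claimed symmetry is \emph{false} for a curved analytic $\Gamma^N$. If $R$ is the anticonformal (Schwarz) reflection across $\Gamma^N$, then $u\circ R$ satisfies $-\Delta(u\circ R)=\lambda\,|\mathrm{Jac}\,R|\,(u\circ R)$, which is a \emph{different} equation unless $R$ is a Euclidean isometry, i.e.\ unless $\Gamma^N$ is a straight line; conformal flattening cannot repair this, because the Laplacian transforms conformally but the zeroth-order term $\lambda u$ does not (this is precisely why Schwarz reflection for Helmholtz, unlike for harmonic functions, works only across lines). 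A concrete counterexample is the unit disk with $\partial\Omega=\Gamma^N$: the Neumann eigenfunctions $J_n(j'_{n,m}\varrho)\cos(n\vartheta)$ are not invariant under the inversion $\varrho\mapsto 1/\varrho$, and the critical circles of their analytic continuations are not mapped to critical circles. Hence neither $u$ nor its critical set is $R$-invariant, and the dichotomy you derive from $R$-invariance of $\theta$ (tangential versus orthogonal contact, endpoint structure) collapses.

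The conclusion you want is nevertheless true, and the paper reaches it without any reflection, using machinery you already have: since curves of critical points are isolated in $\mathcal{C}$ (Lemma~\ref{lem:isol}~\ref{lem:isol:simple}), any $z_0\in\theta\cap\Gamma^N$ (in the case of finite intersection) is an isolated critical point on $\Gamma^N$, so by Lemma~\ref{lem:traj-neum} an arc of $\Gamma^N$ adjacent to $z_0$ is a flow line converging to $z_0$; assertion \ref{lem:morse-bott:fiber} then forces this flow line, and hence $\Gamma^N$ itself, to meet $\theta$ orthogonally at $z_0$. Orthogonal contact means $\theta$ cannot cross $\Gamma^N$ and stay in $\overline{\Omega}$, so every such $z_0$ is an endpoint of $\theta$; in particular $\theta$ is not closed, and by Lemma~\ref{lem:isol}~\ref{lem:isol:simple} its other endpoint also lies on $\Gamma^N$ with the same orthogonality. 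Your boundary case in (i) can likewise be handled without reflection: run your ODE argument for the analytic extension of $u$ to a neighborhood of $\overline{\Omega}$ (Remarks~\ref{rem:regularity} and~\ref{rem:regularity2}), which still has $\theta$ as a critical curve, and then count which of the two resulting flow lines lie in $\overline{\Omega}$.
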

\begin{proof}
    Let $z_0 \in \theta$.
    Since $u$ is a \textit{nonzero constant} on $\theta$ (see Lemma~\ref{lem:isol}~\ref{lem:isol:nonconstant}), $u$ has to be non-degenerate with respect to the normal direction to $\theta$.
If not, then $z_0$ would be a fully degenerate critical point of $u$ and hence $0 = -\Delta u(z_0) = \lambda u(z_0)$, which is impossible.
    That is, $z_0$ a Morse--Bott critical point of $u$. 
    The assertion \ref{lem:morse-bott:fiber} then follows from \cite[Theorem~A.9]{austin1995morse}. 
    The number of flow lines is two provided $z_0 \in \Omega$, and either one or two provided $z_0 \in \Gamma^N$, see Figure~\ref{fig:theta12x}.

    Let us justify the assertion \ref{lem:morse-bott:gamma-N}.
	In view of Lemma~\ref{lem:isol}~\ref{lem:isol:gamma-N}, either $\theta$ is a connected component of $\Gamma^N$ or $\theta$ intersects $\Gamma^N$ at finitely many points.
    Consider the latter case and take any $z_0 \in \theta \cap \Gamma^N$.
    Since curves of critical points of $u$ are isolated in $\mathcal{C}$ (see Lemma~\ref{lem:isol}~\ref{lem:isol:simple}), $z_0$ is an isolated critical point on $\Gamma^N$, and hence there is a flow line $\gamma \subset \Gamma^N$ converging to $z_0$, see Lemma~\ref{lem:traj-neum}.
    According to the assertion \ref{lem:morse-bott:fiber}, $\gamma$ approaches $\theta$ in the normal direction, i.e., $\theta$ and $\Gamma^N$ are orthogonal at $z_0$. 
    Therefore, recalling that $\theta$ is smooth, it cannot be a closed curve and hence, by Lemma~\ref{lem:isol}~\ref{lem:isol:simple}, the other  end point of $\theta$ also belongs to $\Gamma^N$, at which we get the same orthogonality. 
\end{proof}

In the proofs of our main results, we will often  appeal to the continuous dependence of the Cauchy problem \eqref{eq:gradflow2} on the initial data.
For reader's convenience, we provide an explicit statement that we need. 
In the case $z_0 \in \Gamma^N$, the statement follows from Lemma~\ref{lem:traj-neum} and Corollary~\ref{cor:boundary}. 
\begin{lemma}\label{lem:Cuchy-contin}
Let $z_0 \in \Omega \cup \Gamma^N$ be a regular point of $u$. 
Let $t_0 \in I$ be such that $\gamma(t_0,z_0) \in \Omega \cup \Gamma^N$.
Then for any $\varepsilon>0$ there exists $\delta>0$ such that 
$\gamma(t_0,B_\delta(z_0) \cap \Omega) \subset B_\varepsilon(\gamma(t_0,z_0)) \cap \Omega$.  
\end{lemma}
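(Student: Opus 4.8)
The plan is to reduce the claim to the classical continuous dependence of solutions of an ODE on their initial data, applied to the ambient vector field $-\nabla u$, and then to upgrade this to the restriction to $\overline{\Omega}$ by controlling the flow near the boundary. Since $u$ is analytic in an open neighborhood $V$ of $\overline{\Omega}$ (Remark~\ref{rem:regularity2}), the field $-\nabla u$ is smooth, hence locally Lipschitz, on $V$, so the ambient flow $\Phi_t(z)$ solving $\dot z = -\nabla u(z)$ in $V$ is well defined and depends continuously on $(t,z)$; for $z\in\overline{\Omega}$ it coincides with $\gamma(t,z)$ as long as the trajectory stays in $\overline{\Omega}$. Without loss of generality I would assume $t_0 \geq 0$, the case $t_0 \leq 0$ being symmetric.

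First I would show that the compact flow segment $K = \gamma([0,t_0],z_0)$ lies in $\Omega \cup \Gamma^N$ and is bounded away from $\Gamma^D$. If $z_0 \in \Gamma^N$, then $K \subset \Gamma^N$ by Lemma~\ref{lem:traj-neum}. If $z_0 \in \Omega$, then Corollary~\ref{cor:boundary} forces $K \subset \overline{\Omega} \setminus \Gamma^N$, and no interior time $t^* \in (0,t_0)$ can have $\gamma(t^*,z_0) \in \Gamma^D$: such a point would be regular (a flow line issuing from a regular point cannot reach a critical point in finite time), and Remark~\ref{rem:traj-boundary} would then force the trajectory to lie in $\Omega$ on only one time side of $t^*$, contradicting $\gamma(0,z_0),\gamma(t_0,z_0) \in \Omega$. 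Hence $K \cap \Gamma^D = \emptyset$ in both cases, and since $K$ is compact while $\Gamma^D$ is closed and disjoint from it (distinct boundary components are at positive distance), $d_0 := \mathrm{dist}(K,\Gamma^D) > 0$.

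Next I would invoke classical continuous dependence over the compact interval $[0,t_0]$ (see, e.g., \cite{perko2013differential}): since $K \subset V$ and $V$ is open, for every $\varepsilon \in (0,d_0)$ there is a $\delta > 0$ such that for $|z - z_0| < \delta$ the ambient flow $\Phi_t(z)$ is defined on $[0,t_0]$ and $|\Phi_t(z) - \gamma(t,z_0)| < \varepsilon$ for all $t \in [0,t_0]$. I would then confine the nearby trajectories: fix $z \in B_\delta(z_0) \cap \Omega$ and let $T \in (0,t_0]$ be the supremum of times up to which $\gamma(\cdot,z) = \Phi_{\cdot}(z)$ stays in $\overline{\Omega}$. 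If $T < t_0$, then $\gamma(T,z) \in \partial\Omega$; but $\mathrm{dist}(\gamma(T,z),\Gamma^D) \geq d_0 - \varepsilon > 0$, so $\gamma(T,z) \in \Gamma^N$, contradicting Corollary~\ref{cor:boundary}, by which a flow line starting in $\Omega$ never reaches $\Gamma^N$. Hence $T = t_0$, so $t_0 \in I(z)$ and $\gamma(t,z) = \Phi_t(z)$ on $[0,t_0]$; moreover $\gamma(t_0,z)$ lies within $\varepsilon$ of $\gamma(t_0,z_0)$, off $\Gamma^D$ by the distance bound and off $\Gamma^N$ by Corollary~\ref{cor:boundary}, giving $\gamma(t_0,z) \in B_\varepsilon(\gamma(t_0,z_0)) \cap \Omega$, as required.

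The hard part will be precisely this last confinement step. The classical theorem controls only the ambient flow in $V$, whereas the statement concerns $\gamma(\cdot,z)$ restricted to $\overline{\Omega}$, so one must rule out that a nearby trajectory leaves $\overline{\Omega}$ before time $t_0$ (which would make $t_0 \notin I(z)$). The uniform-in-$t$ closeness together with the positive distance $d_0$ keeps nearby trajectories off $\Gamma^D$, while Corollary~\ref{cor:boundary} keeps those starting in $\Omega$ off $\Gamma^N$; combining the two is exactly what confines the trajectory to $\Omega$ throughout $[0,t_0]$.
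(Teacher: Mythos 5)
Your proof is correct and takes essentially the approach the paper intends: the paper states this lemma without proof, remarking only that it rests on classical continuous dependence for the Cauchy problem \eqref{eq:gradflow2}, with the case $z_0 \in \Gamma^N$ handled via Lemma~\ref{lem:traj-neum} and Corollary~\ref{cor:boundary}. Your argument is exactly that outline made rigorous --- the ambient flow on a neighborhood of $\overline{\Omega}$, the positive distance of the compact flow segment from $\Gamma^D$, and the confinement of nearby trajectories to $\Omega$ via Corollary~\ref{cor:boundary} --- so it supplies precisely the details the paper omits.
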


\subsection{Stable and unstable sets}\label{sec:classification-of-manifolds}

For a critical point $z_0 \in \overline{\Omega}$ of an eigenfunction $u$, let $W^{\mathfrak{s}}(z_0)$ and $W^{\mathfrak{u}}(z_0)$ be the \textit{stable} and \textit{unstable sets} defined as
\begin{align*}
W^{\mathfrak{s}}(z_0) &= \{z \in \overline{\Omega}:~ \gamma(+\infty,z) = z_0\},\\
W^{\mathfrak{u}}(z_0) &= \{z \in \overline{\Omega}:~ \gamma(-\infty,z) = z_0\}.
\end{align*}
Note that we necessarily have $\gamma(t,z) \in \overline{\Omega}$ for any $t \in [0,+\infty)$ in the definition of $W^{\mathfrak{s}}(z_0)$, and $\gamma(t,z) \in \overline{\Omega}$ for any $t \in (-\infty,0]$ in the definition of $W^{\mathfrak{u}}(z_0)$, see Lemma~\ref{lem:classif-gamma}.

Let us discuss the relation between stable/unstable sets of a critical point $z_0$ of $u$ and stable/unstable sets of a critical point $(0,0)$ of a corresponding prototypical function classified in Section~\ref{sec:classification}, see Table~\ref{table:manifolds} and Figure~\ref{fig:wsu}.

\begin{table}[htbp]
\centering
\begin{tabular}{l|c|c}
&  $W^{\mathfrak{s}}(0,0)$  &  $W^{\mathfrak{u}}(0,0)$ \\
\hline
$x^2+y^2$   & $\mathbb{R}^2$  &  $(0,0)$ \\
$x^2-y^2$   & $\{(x,y):~y=0\}$  &  $\{(x,y):~x=0\}$ \\
$y^2$   &  $\{(x,y):~ x=0\}$ &   $(0,0)$ \\
$x^{2k-1} + y^2$  & $\{(x,y):~x \geq 0\}$  &   $\{(x,y):~x \leq 0, ~y=0\}$   \\
$x^{2k} + y^2$   & $\mathbb{R}^2$  &  $(0,0)$ \\
$x^{2k} - y^2$   & $\{(x,y):~y=0\}$  &  $\{(x,y):~x=0\}$ \\
$\rho^M \sin(M \vartheta)$ & $\bigcup_{j=1}^M \{(\rho,\vartheta):~ \vartheta = \frac{j \pi}{M}\}$  & $\bigcup_{j=1}^M \{(\rho,\vartheta):~ \vartheta = \frac{j \pi}{M}+\frac{\pi}{2M}\}$
\end{tabular}
\caption{Stable and unstable sets of the critical point $(0,0)$ of some prototypical functions, where $k \geq 2$, $M \geq 3$.}
\label{table:manifolds}
\end{table}

\begin{figure}[!ht]
  \begin{subfigure}{0.31\textwidth}
    \includegraphics[width=\linewidth]{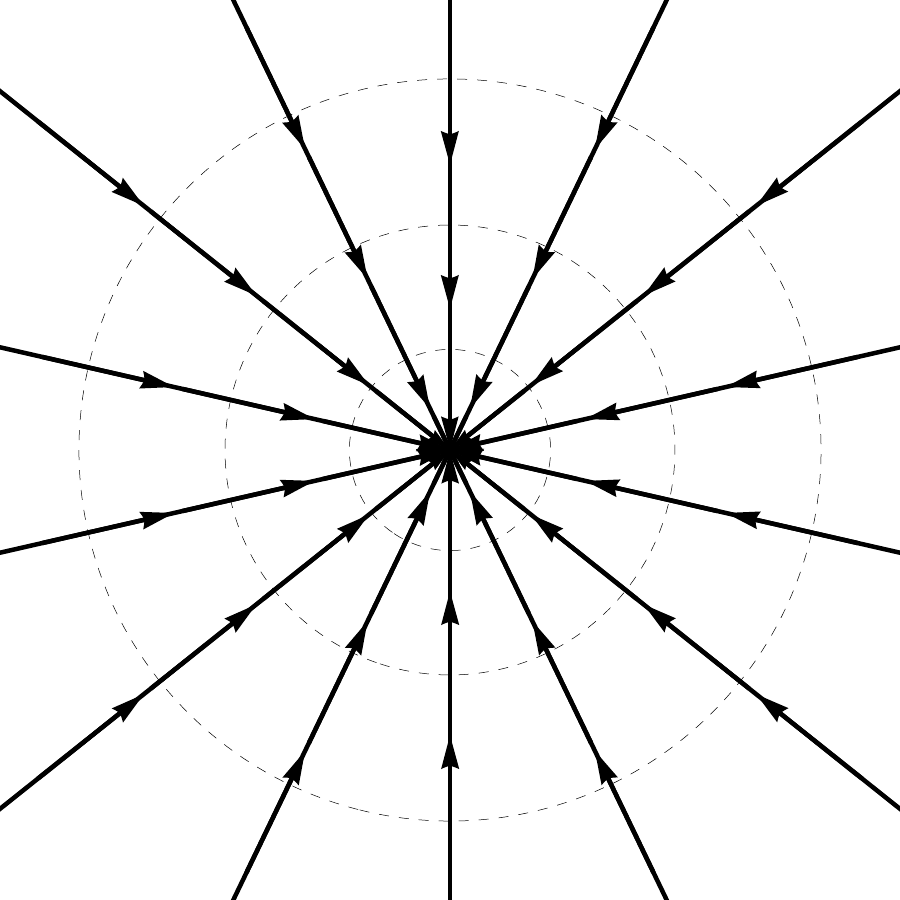}
    \caption{$x^2+y^2$} \label{fig:3a}
  \end{subfigure}%
  \hspace*{\fill} 
  \begin{subfigure}{0.31\textwidth}
    \includegraphics[width=\linewidth]{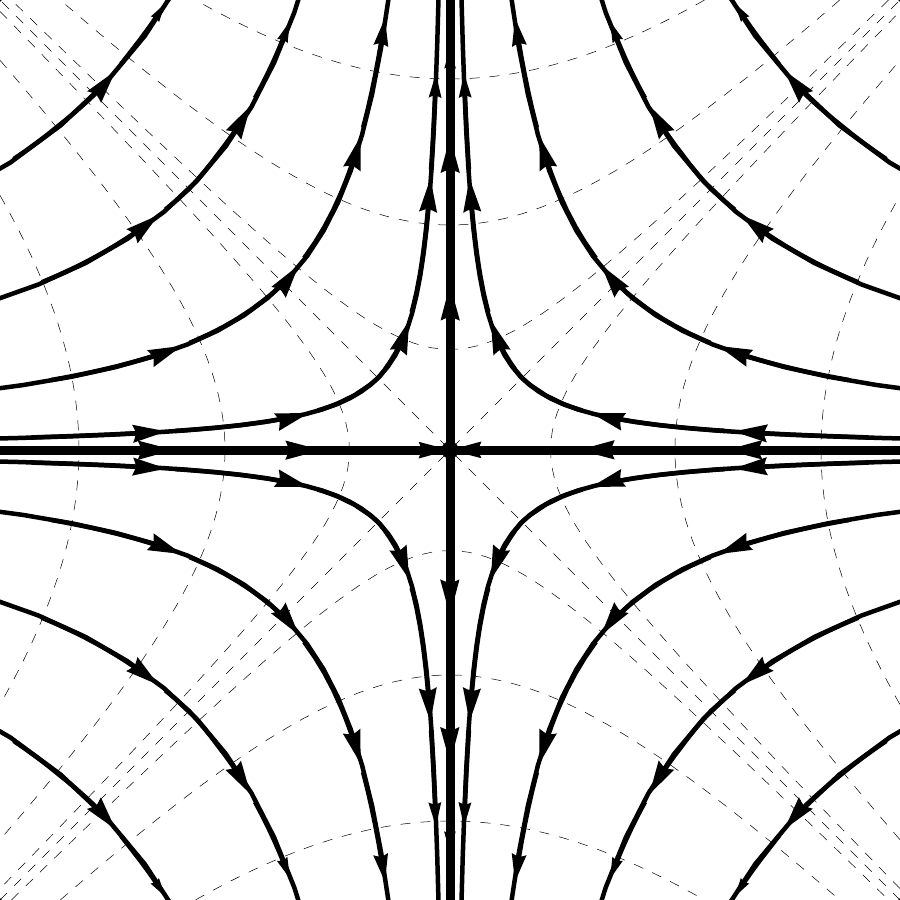}
    \caption{$x^2-y^2$} \label{fig:3b}
  \end{subfigure}%
  \hspace*{\fill} 
  \begin{subfigure}{0.31\textwidth}
    \includegraphics[width=\linewidth]{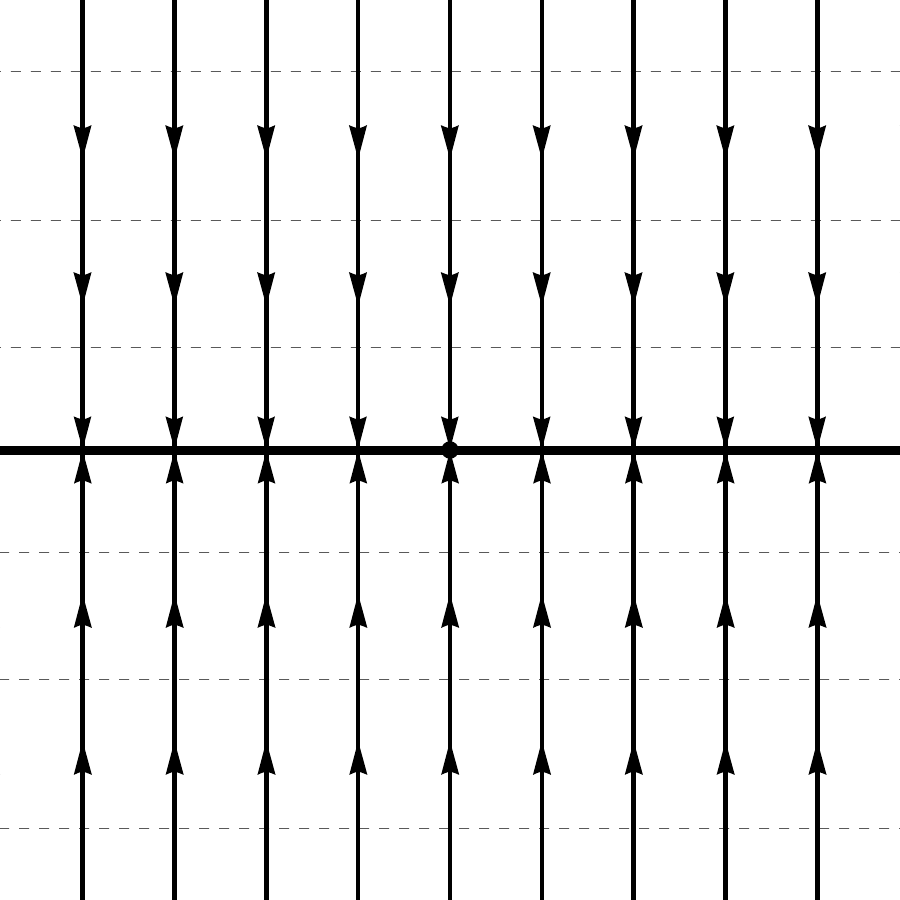}
    \caption{$y^2$} \label{fig:3c}
    \end{subfigure}
    \\[1em]
   \begin{subfigure}{0.31\textwidth}
    \includegraphics[width=\linewidth]{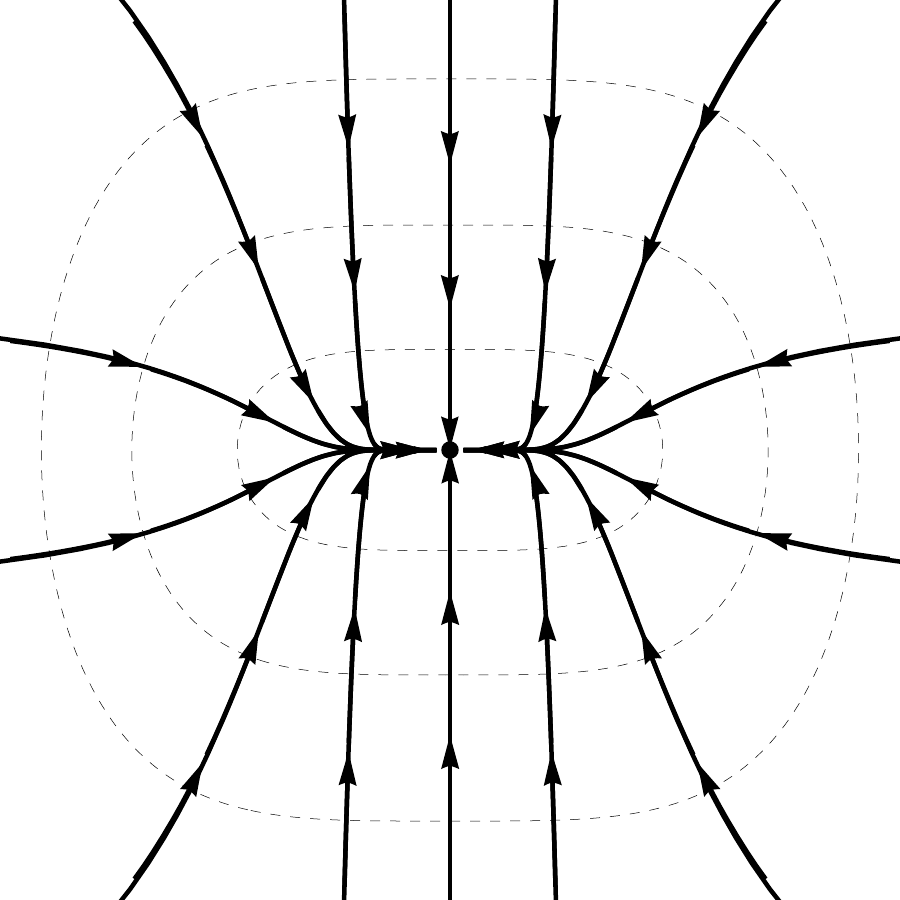}
    \caption{$x^4+y^2$} \label{fig:3d}
  \end{subfigure}%
  \hspace*{\fill}  
  \begin{subfigure}{0.31\textwidth}
    \includegraphics[width=\linewidth]{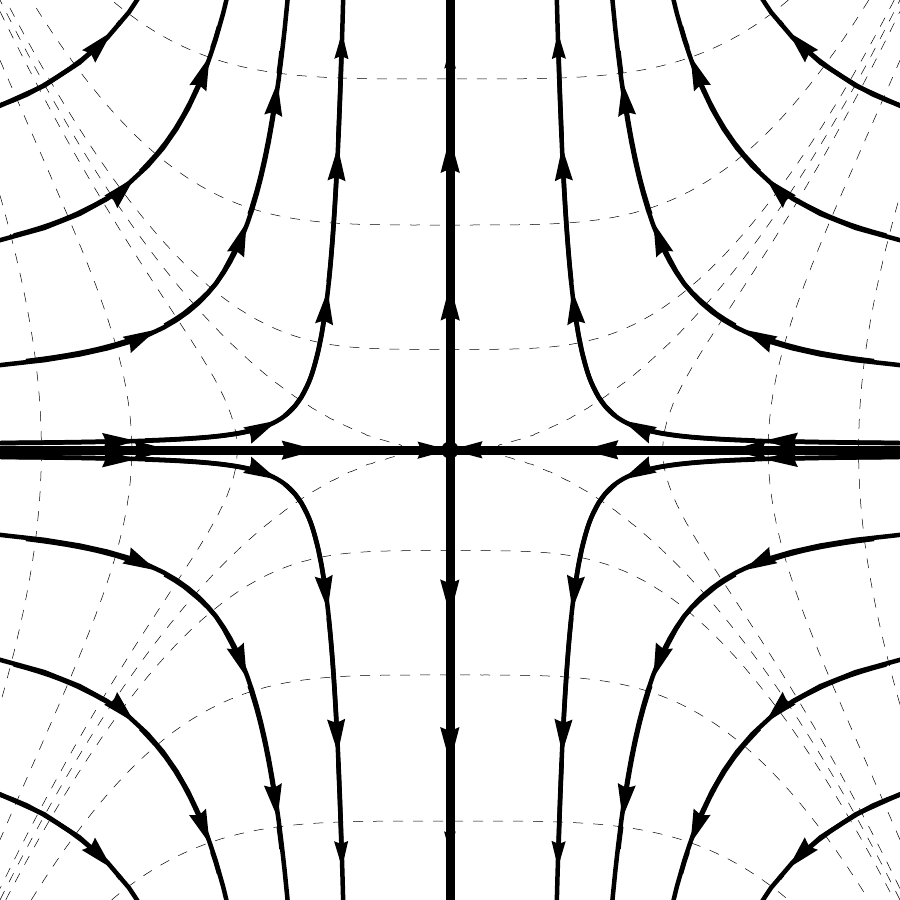}
    \caption{$x^4-y^2$} \label{fig:3e}
  \end{subfigure}%
  \hspace*{\fill} 
  \begin{subfigure}{0.31\textwidth}
    \includegraphics[width=\linewidth]{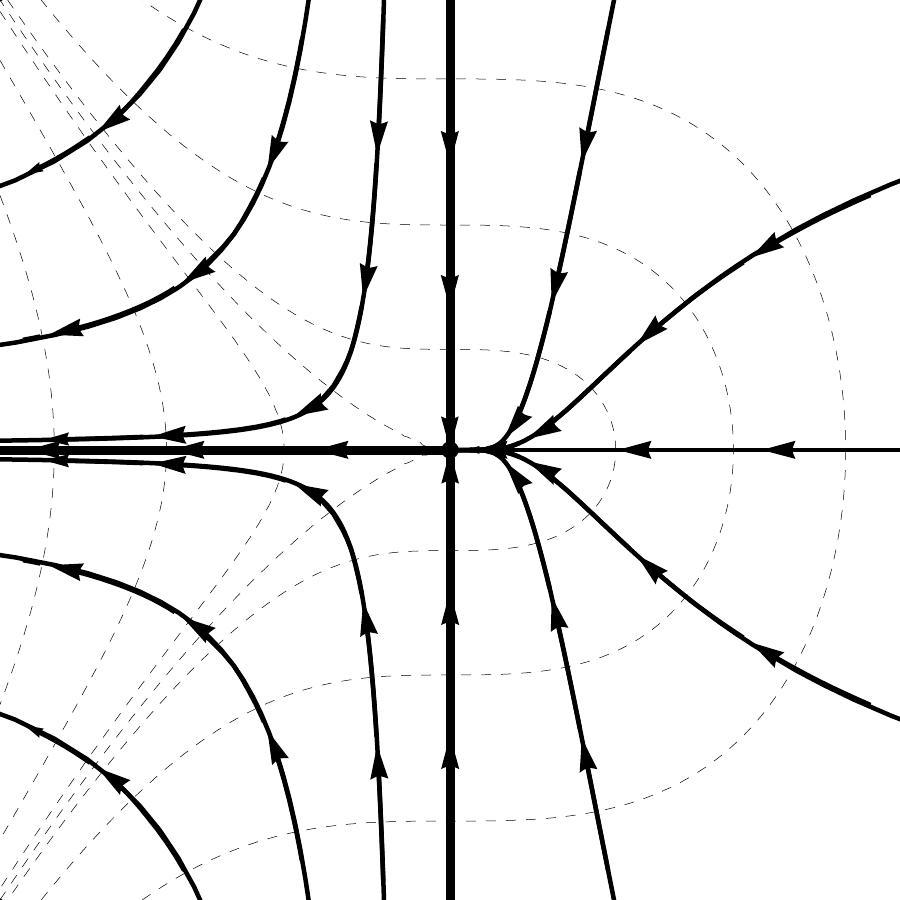}
    \caption{$x^3+y^2$} \label{fig:3f}
    \end{subfigure}
    \\[1em]
  \begin{subfigure}{0.31\textwidth}
    \includegraphics[width=\linewidth]{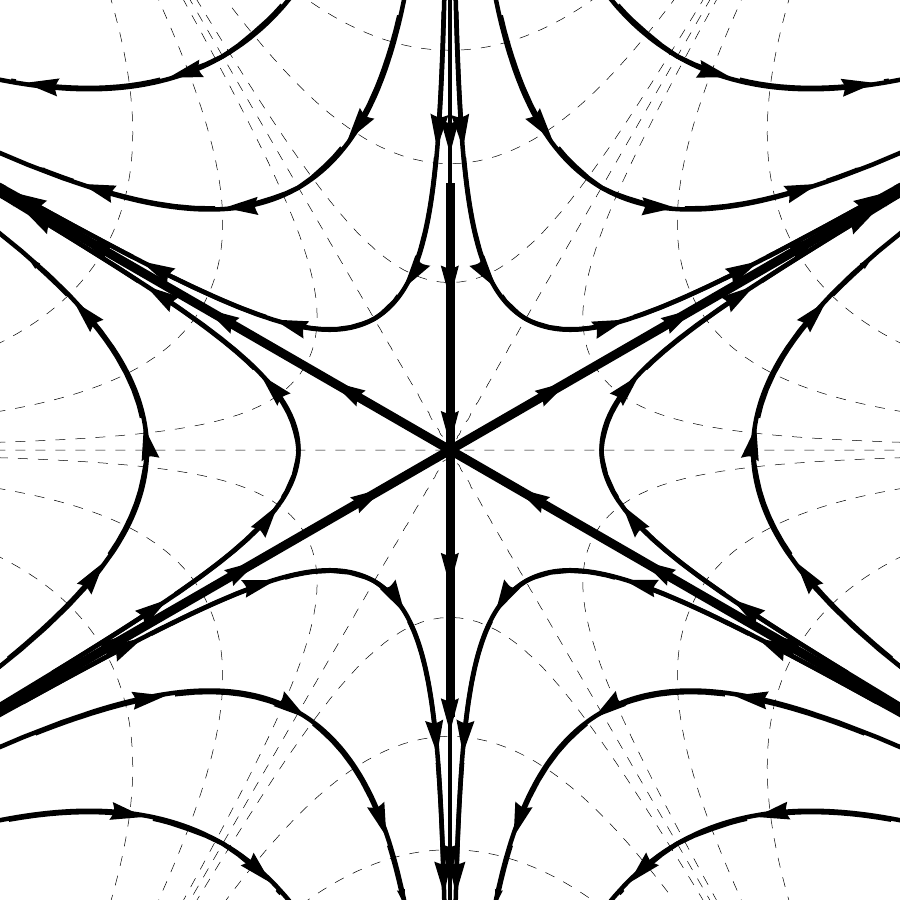}
    \caption{$\rho^3 \sin(3 \vartheta)$} \label{fig:3g}
    \end{subfigure}
    
\caption{Stable and unstable sets (solid) and  level lines (dashed) of some prototypical functions.} \label{fig:wsu}
\end{figure}

\begin{enumerate}[\rm(I)]
\item Let $z_0$ be non-degenerate. Then, locally, $W^{\mathfrak{s}}(z_0)$ and $W^{\mathfrak{u}}(z_0)$ are homeomorphic to those of one of the functions
\begin{equation}\label{eq:prot-mors}
(x,y) \mapsto u(z_0) \pm x^2 \pm y^2.
\end{equation} 
This is the content of the Hartman--Grobman theorem, see, e.g., \cite[Section~2.8]{perko2013differential}.
For a related result of global nature, we refer to the stable/unstable manifold theorem for Morse functions, see, e.g., \cite[Theorem~4.2]{banyaga2004lectures} (the case of compact manifolds) and \cite[Lemma~3.6]{band2016topological} (the case of manifolds with boundary).

\item Let $z_0$ be semi-degenerate. Then, locally, $W^{\mathfrak{s}}(z_0)$ and $W^{\mathfrak{u}}(z_0)$ are homeomorphic to those of one of the functions
\begin{align}
\label{eq:prot-semidegen1}
&(x,y) \mapsto u(z_0) \pm y^2,\\
\label{eq:prot-semidegen2}
&(x,y) \mapsto u(z_0) \pm x^k \pm y^2
\quad \text{for a natural number}~ k \geq 3.	
\end{align}
 In the case of \eqref{eq:prot-semidegen1}, one can see (using Lemma~\ref{lem:isol}) that $z_0$ is a Morse--Bott critical point, i.e., $u$ is degenerate along a curve $\theta$ of critical points containing $z_0$ and non-degenerate in a normal direction to $\theta$ at $z_0$, and the desired equivalence follows from, e.g., \cite[Theorem~A.9]{austin1995morse}, see also Lemma~\ref{lem:morse-bott}.
In the case of \eqref{eq:prot-semidegen2}, the result follows from \cite[Theorem~65, p.~340]{andronov1974qualitative} (see also \cite[Section~2.11]{perko2013differential}). 
Let us observe that in the case of \eqref{eq:prot-semidegen2} with odd $k$, i.e., when $z_0$ is a saddle-node, either
$W^{\mathfrak{s}}(z_0)$ or $W^{\mathfrak{u}}(z_0)$ has nonempty interior. 

\item Let $z_0$ be fully degenerate. 
In this case, we are not aware of the relation between $W^{\mathfrak{s}}(z_0)$ and $W^{\mathfrak{u}}(z_0)$ and those of the prototypical function 
$$
(\rho,\vartheta) \mapsto \rho^M \sin(M \vartheta), \quad M \geq 3,
$$
although we anticipate that they are homeomorphic.
In Lemma~\ref{lem:observation}, we provide a partial result in this regard which will be sufficient for our purposes.
Observe that, in the case of general analytic functions, if $z_0$ is a fully degenerate critical point, then its stable/unstable sets might be sensitive to diffeomorphisms of coordinates. 
In particular, their dimensions might change after the application of a diffeomorphism (see, e.g., \cite[Example~9]{szafraniec2021stable} for an explicit example and further results in this direction), although the level sets in a neighborhood of $z_0$ remain homeomorphic. 
\end{enumerate}

\section{Main results}\label{sec:mainresults}

\subsection{Definitions of \texorpdfstring{$G$}{G}, \texorpdfstring{$W$}{W}, \texorpdfstring{$L$}{L}}
Our definitions of Neumann domains and lines proposed in Section~\ref{sec:definitions-neumanndomains} below will rely on the consideration of three sets $G$, $W$,  $L$, defined by \eqref{def:G}, \eqref{def:W},  \eqref{eq:L}, respectively.
These sets collect flow lines in $\Omega$ of the following three types: 
\begin{enumerate}
    \item flow lines that reach $\Gamma^D$ in a finite time,

    \item flow lines which connect two local extremum points,

    \item flow lines having a saddle point on at least one of their ends.
\end{enumerate} 
Thus, according to Lemma~\ref{lem:classif-gamma}, we exhaust all possible behaviors of the flow lines.

First, we consider the set of all points in $\Omega$ whose flow lines reach the boundary $\partial\Omega$ in a finite (positive or negative) time:
\begin{equation}\label{def:G}
G=\{z\in \Om:~ \gamma(t_z, z) \in \Gamma^D\; \text{for some}\;
t_z \in \mathbb{R}\}.
\end{equation}
Recall that $\Gamma^N$ cannot be reached by flow lines starting in $\Omega$ in a finite time, see Corollary~\ref{cor:boundary}. 
This fact and the strict monotonicity of $u$ along the flow lines imply that $\gamma(t,z) \in {\Omega}$ for any $z \in G$ and $|t|<|t_z|$.
Observe that $G$ is empty provided $\Gamma^D = \emptyset$. 
In Section~\ref{sec:GWL}, we collect few main properties of $G$. 
In particular, $G$ is open, $\Gamma^D \subset \partial G$, any flow line starting in $\partial G \setminus \Gamma^D$ stays in $\partial G$ for all times, and $\partial G$ contains of at most finitely many flow lines, arcs of critical points, and isolated critical points.

\begin{remark}\label{rem:effectless-cut}
In \cite{weineffect},  \textsc{Weinberger} considered a multiply connected domain $\Omega$ such that the outer boundary $\Ga_0$ and at least one more connected boundary component $\Ga_1$ are contained in $\Gamma^D$.  
Then, for the first eigenfunction $u$ of \eqref{cutproblem2d}, he defined the set 
$$
G_1 = \{z\in \Om:~ \gamma(t_z, z) \in \Ga_1\; \text{for some}\;
t_z \in \mathbb{R}\}.
$$
Notice that, $G_1\subset G.$ It was proved that $G_1$ is open, $\widetilde{\gamma}=\partial G_1 \cap \Omega$ separates $\Ga_1$ from $\Gamma^D \setminus \Ga_1$ and consists of a finite number of analytic arcs of finite lengths, along each of which $\partial u/\partial \nu = 0$.
The set $\widetilde{\gamma}$ was called the \textit{effectless cut}, and its existence was successively used by \textsc{Hersch} \cite{hers} to derive an upper bound for the first eigenvalue, see Section~\ref{sec:intro}.
\end{remark}

\begin{remark}
The set $G$ is larger than $G_1$, and it is introduced to capture global properties of an arbitrary eigenfunction near $\Gamma^D$.
However, the consideration of simple model domains, as well as the analysis of definitions and results from \cite{mcdonald2014neumann,band2016topological}, 
indicate that $\partial G$ alone might not describe all the dividing flow lines (separatrices) of $u$ even in a neighborhood of $\Gamma^D$. 
We provide an explicit example in Remark~\ref{rem:GWL}~\ref{rem:GWL:G}.
\end{remark}

Next, we consider the set of all points in $\Omega$ whose flow lines connect two local extremum points of $u$:
\begin{equation}\label{def:W}
W 
= 
\Omega \cap \left\{\bigcup_{q \in \mathcal{M}_-~}
\bigcup_{p \in \mathcal{M}_+}
\left(
W^{\mathfrak{s}}({q})
\cap 
W^{\mathfrak{u}}({p})
\right)\right\},
\end{equation}
where $\mathcal{M}_-$ and $\mathcal{M}_+$ are the sets of local minimum and local maximum points of $u$ in $\overline{\Omega}$, respectively (see Section~\ref{sec:properties}).
Notice that $W$ is empty provided $\mathcal{M}_- = \emptyset$ or $\mathcal{M}_+ = \emptyset$.
Indeed, this happens, e.g., when $u$ is the first Dirichlet eigenfunction in a concentric ring or in any strictly convex domain \cite{payne1973two}. 
At the same time, in these examples, $G$ is \textit{not} empty.
In Section~\ref{sec:GWL}, we obtain a few main properties of $W$: $W$ is open, $G \cap W = \emptyset$, any flow line starting in $\partial W$ stays in $\partial W$ for all times, and $\partial W$ consists of at most finitely many flow lines, arcs of critical points, and isolated critical points.

\begin{remark}
Let us provide some remarks on the definition \eqref{def:W} of $W$:
\begin{enumerate}[label={\rm(\roman*)}]
\item The set $W$ can be equivalently defined as
\begin{equation}\label{eq:W1}
	W 
	= 
	\bigcup_{q \in \mathcal{M}_-~}
	\bigcup_{p \in \mathcal{M}_+}
	\{z \in \Omega:~ 
	\gamma(+\infty,z) = q,~
	\gamma(-\infty,z) = p\}.
\end{equation}
Moreover, $I=\mathbb{R}$ for any $z \in W$, see Lemma~\ref{lem:classif-gamma}.

\item The main purpose of the presence of intersection with $\Omega$ in \eqref{def:W} is to make $W$ open (we prove this fact in Lemma~\ref{lem:W:open}).
Otherwise, one might have $\Gamma^N \subset W$.
This is made only for convenience and, at large, is not an essential restriction. 

\item 
Recall that $\mathcal{M}_+ \cap \mathcal{M}_- = \emptyset$, see Remark~\ref{rem:boundary_critical}~\ref{rem:boundary_critical:Mpmempty}.
Since $u$ is strictly monotone along flow lines, there are no flow lines connecting two local maximum points or two local minimum points of $u$, and hence such cases are not included in the definition of $W$.

\item 
Local extremum points never occur on the Dirichlet boundary $\Gamma^D$ (see Lemma \ref{lem:crit}~\ref{lem:crit:singsadleisolated}), while they can occur on $\Gamma^N$.
The consideration of critical points on $\Gamma^N$ in the definition of $W$ can be motivated by the following example. 
Consider a \textit{third radial} Neumann eigenfunction $u$ in a concentric ring. 
Without loss of generality, we may assume that $u$ has a circle $C \subset \Omega$ of local maximum points and two circles (constituting $\Gamma^N$) of local minimum points.
If one does not include critical points on the boundary to the definition of $W$, then $W$ is empty, 
but any natural definition of the Neumann line set should contain the circle $C$.
Notice that $G$ is empty in this example since $\Gamma^D=\emptyset$. 
\end{enumerate}
\end{remark}

Finally, we consider the set of all points in $\overline{\Omega}$ whose flow lines have a saddle point on at least one of their ends.
More precisely, for any $r \in \mathcal{S}$, we denote
\begin{equation}\label{def:L}
L(r) 
= 
W^{\mathfrak{s}}({r})
\cup 
W^{\mathfrak{u}}({r}),
\end{equation} 
and let
\begin{equation}\label{eq:L}
L
=
\bigcup_{r \in \mathcal{S}}
\overline{L(r)},
\end{equation}
where $\mathcal{S}$ is the set of saddle points of $u$ in $\overline{\Omega}$ (see Section \ref{sec:properties}).
Observe that $L$ is empty provided $\mathcal{S} = \emptyset$. 
For instance, this happens for the first Dirichlet eigenfunction in any strictly convex domain \cite{payne1973two} and the second Neumann eigenfunction in a disk.
However, in the former case we have $G \neq \emptyset$, $W=\emptyset$, and in the latter case we have $G=\emptyset$, $W \neq \emptyset$.
In Section~\ref{sec:GWL}, we describe the structure of the sets $L(r)$ and $L$ in terms of the behavior of flow lines on their boundaries.

\begin{remark}\label{rem:L}
We list some basic observations related to the sets $L(r)$ and $L$:
\begin{enumerate}[label={\rm(\roman*)}]
\item The set $L$ can be equivalently defined as
\begin{equation}\label{eq:L1}
	L
	=
	\bigcup_{r \in \mathcal{S}}
	\overline{
		\{z \in \overline{\Omega}:~ \text{either}~ \gamma(-\infty,z) = r ~\text{or}~ \gamma(+\infty,z)=r\}
	}.
\end{equation}

\item 
Since the number of saddle points of $u$ is finite (see Lemma~\ref{lem:numsaddlesOmega}), we get
$$
L 
=
\overline{
	\bigcup_{r \in \mathcal{S}}
	L(r)
} 
=
\overline{\bigcup_{r \in \mathcal{S}} (W^{\mathfrak{s}}({r})
	\cup 
	W^{\mathfrak{u}}({r}))}.
$$
If $u$ is a Morse function and $\Gamma^N = \emptyset$, then this definition coincides with \cite[Definition~1.2]{band2016topological} of the Neumann line set; cf.~Definition~\ref{definition:NS} below.

\item\label{rem:L:boundaryL-Lr} 
Observe that 
$$
\partial L \subset \bigcup_{r \in \mathcal{S}} \partial L(r).
$$

\item By \eqref{def:L}, a saddle point $r$ is the only critical point contained in $L(r)$. 
However, the set $L$, being a closed set, may contain local extremum points from $\overline{\Om}$.

\item 
The set $L(r)$ has a nonempty interior provided $r \in \mathcal{S}$ is a saddle-node, see Remark~\ref{rem:terminology}. 
The possibility of appearance of such a critical point in the critical set of $u$ is shown in \cite{chenmyrtaj2019}, see also Section~\ref{sec:trivialcrit}, and we refer to \cite[Section~5]{arango2010critical} and \cite[Section~2]{massimo2021number} for related discussion. 
That is, such type of saddle points has to be taken into account and, in fact, it causes most of further complications. 
\end{enumerate}
\end{remark}

As we mentioned at the beginning of this section, Lemma~\ref{lem:classif-gamma} implies that the flow line of \textit{any} regular point $z \in \Omega$ belongs to at least one of the sets $G$, $W$, $L$. 
In other words, $G$, $W$, $L$ together describe all the (nontrivial) flow lines in $\Omega$.
In Section~\ref{sec:definitions-neumanndomains}, we show that the remaining part of $\overline{\Omega}$ is described in terms of the boundaries of $G$, $W$, $L(r)$ and it consists of dividing flow lines (separatrices), arcs of critical points, and isolated critical points.

\subsection{Neumann domains and lines}\label{sec:definitions-neumanndomains}

With the sets $G$, $W$, $L$ at hand, we are ready to propose definitions of Neumann domains and lines of an arbitrary nonconstant eigenfunction $u$ of \eqref{cutproblem2d}, cf.\ definitions from \cite{band2016topological,mcdonald2014neumann} for Morse eigenfunctions.

\begin{definition}\label{definition:NS} 
The \textit{Neumann line set} of $u$ is
$$
\mathcal{N}(u) 
=
\overline{(\partial G \cup \partial W \cup \bigcup_{r \in \mathcal{S}} \partial L(r)) \setminus \Gamma^D}.
$$
\end{definition}

\begin{definition}\label{definition:ND}
A \textit{Neumann domain} of $u$ is any connected component of the set 
$$
\left(
G \cup W \cup \mathrm{Int}(L)
\right)
\setminus 
\bigcup_{r \in \mathcal{S}} \partial L(r).
$$
A \textit{boundary Neumann domain} is any connected component of the set $G \setminus \bigcup_{r \in \mathcal{S}} \partial L(r)$. 
Any other Neumann domain is called \textit{inner Neumann domain}. 
\end{definition}

\begin{remark}\label{rem:GWL}
Let us provide a few comments on the definitions stated above:

\begin{enumerate}[label={\rm(\roman*)}]
\item In Theorem~\ref{thm:main-properties}~\ref{thm:main-properties:decomposition}, we establish that Neumann domains are connected components of $\Omega \setminus \mathcal{N}(u)$.

\item In view of Remark~\ref{rem:L}~\ref{rem:L:boundaryL-Lr}, we have
\begin{equation}\label{eq:LLL1}
	L \setminus 
	\bigcup_{r \in \mathcal{S}} \partial L(r)
	=
	(\mathrm{Int}(L) \cup \partial L)
	\setminus 
	\bigcup_{r \in \mathcal{S}} \partial L(r)
	=
	\mathrm{Int}(L)
	\setminus 
	\bigcup_{r \in \mathcal{S}} \partial L(r).
\end{equation}
Thus, Neumann domains can be equivalently defined as connected components of the set
$$
\left(
G \cup W \cup L
\right)
\setminus 
\bigcup_{r \in \mathcal{S}} \partial L(r).
$$

\item  By \eqref{eq:LLL1} and Remark~\ref{rem:L}~\ref{rem:L:boundaryL-Lr}, one gets
\begin{align*}
	\mathrm{Int}(L)
	\setminus 
	\bigcup_{r \in \mathcal{S}} \partial L(r)
	&=
	L \setminus 
	\bigcup_{r \in \mathcal{S}} \partial L(r)
	=
	\bigcup_{r \in \mathcal{S}} \overline{L(r)}  
	\setminus 
	\bigcup_{r \in \mathcal{S}} \partial L(r)
	\\
	&=
	\bigcup_{r \in \mathcal{S}} (\mathrm{Int}(L(r)) \cup \partial L(r))
	\setminus 
	\bigcup_{r \in \mathcal{S}} \partial L(r)
	\\
	&=
	\bigcup_{r \in \mathcal{S}}\mathrm{Int}(L(r))
	\setminus 
	\bigcup_{r \in \mathcal{S}} \partial L(r).
\end{align*}
Therefore, Neumann domains can be equivalently characterized as connected components of the set
\begin{equation}
\label{eq:NDequiv1}
	(
	G \cup W \cup \bigcup_{r \in \mathcal{S}}\mathrm{Int}(L(r))
	)
	\setminus 
	\bigcup_{r \in \mathcal{S}} \partial L(r).
\end{equation}

\item It will be shown in Lemma~\ref{lem:partialGW-finite-flow-lines} that any flow line contained in $(\partial G \cup \partial W) \setminus \Gamma^N$ converges to a saddle point. 
Using this fact and Theorem~\ref{thm:main-properties}~\ref{thm:main-properties:critpoints}, it can be deduced that
$$
\mathcal{N}(u)
=
\overline{(\bigcup_{r \in \mathcal{S}} \partial L(r)) \setminus \Gamma^D} \cup \mathcal{C} \cup \Gamma^N,
$$
where $\mathcal{C}$ is the critical set of $u$ (see Section~\ref{sec:properties}).

\item The set closure and the subtraction of $\Gamma^D$ are presented in Definition~\ref{definition:NS} to avoid those regular points of $u$ on $\Gamma^D$ whose flow lines are not separatrices.

\item\label{rem:GWL:G} The set $G$ alone does not necessarily describe all boundary Neumann domains even when $u$ is the first eigenfunction. 
For example, this happens on a symmetric dumbbell domain $\Omega$ with sufficiently thin handle and $\partial\Omega = \Gamma^D$ (see Figures~\ref{fig:saddle-node4} and~\ref{fig:saddle-node6-5c} in Section~\ref{sec:trivialcrit}).
In this case, the first eigenfunction $u$ shares the symmetries of $\Omega$, and it is natural to expect that $u$ is a Morse function with two points of global maximum and one saddle point $r$, see Figure~\ref{fig:saddle-node6-5c}.
The set $W^{\mathfrak{u}}(r)$ reaches $\partial\Omega$ at two points in a finite time, which results in the presence of \textit{two} boundary Neumann domains according to the definition from \cite[Section~3]{band2016topological} (and to Definition~\ref{definition:ND}). 
On the other hand, we have $G = \Omega \setminus W^{\mathfrak{s}}(r)$, and hence $G$ is a connected set. 
Nevertheless, the sets $G$ and $\partial G$ might be useful by themselves, see the discussion of the works \cite{hers,weineffect} in Section~\ref{sec:intro}.

\item When $\partial\Omega = \Gamma^D$ and $u$ is a Morse function, it can be shown that Definitions~\ref{definition:NS} and~\ref{definition:ND} describe the same Neumann line set and Neumann domains as definitions from \cite[Section~3]{band2016topological}, except that Neumann domains in our definition do not include $\Gamma^D$. 
\end{enumerate}
\end{remark}

Let us now provide several fundamental properties of Neumann domains and Neumann lines. 
Similar results can be found in \cite[Theorem~3.13]{band2016topological} in the case of Dirichlet boundary conditions under additional assumptions that $\Omega$ is simply connected and the eigenfunction $u$ is a Morse function having at least one saddle point (i.e., $\mathcal{S} \neq \emptyset$). 
In our settings, we do not require the nondegeneracy of critical points of $u$ and $\mathcal{S} \neq \emptyset$, and we deal with more general domains and boundary conditions.
\begin{theorem}\label{thm:main-properties}
Let $u$ be a nonconstant eigenfunction of \eqref{cutproblem2d}.
Then the following assertions on its Neumann domains hold:
\begin{enumerate}[label={\rm(\roman*)}]
\item\label{thm:main-properties:opennes} Each Neumann domain is an open set.
\hyperlink{thm:main-properties:opennes:proof}{{\tiny \hfill $\to\!\!\!\!\!\!  \qed$}}
\item\label{thm:main-properties:finite-number} The number of Neumann domains is finite.
\hyperlink{thm:main-properties:finite-number:proof}{{\tiny \hfill $\to\!\!\!\!\!\!  \qed$}}
\item\label{thm:main-properties:sign-changing} $u$ is sign-changing in any inner Neumann domain.
\hyperlink{thm:main-properties:sign-changing:proof}{{\tiny \hfill $\to\!\!\!\!\!\!  \qed$}}
\item\label{thm:main-properties:sign-const} $u$ is strictly positive or strictly negative in any boundary Neumann domain. 
\hyperlink{thm:main-properties:sign-const:proof}{{\tiny \hfill $\to\!\!\!\!\!\!  \qed$}}
\end{enumerate}
Moreover, the following assertions on the Neumann line set $\mathcal{N}(u)$ hold:
\begin{enumerate}[label={\rm(\arabic*)}]
\item\label{thm:main-properties:critpoints} $\Gamma^N \subset \mathcal{N}(u)$ and $\mathcal{C}\subset \mathcal{N}(u)$.
\hyperlink{thm:main-properties:critpoints:proof}{{\tiny \hfill $\to\!\!\!\!\!\!  \qed$}}
\item\label{thm:main-properties:decomposition} If $\mathcal{D}(u)$ is a union of all Neumann domains, then
$$
\Omega = \mathcal{D}(u) \cup (\mathcal{N}(u) \setminus \partial \Omega).
$$
Consequently, any Neumann domain is a connected component of $\Omega \setminus \mathcal{N}(u)$.
\hyperlink{thm:main-properties:decomposition:proof}{{\tiny \hfill $\to\!\!\!\!\!\!  \qed$}}
\item\label{thm:main-properties:finite-length} $\mathcal{N}(u)$ consists of at most finitely many flow lines, isolated critical points, and arcs of critical points of $u$. 
Any isolated critical point in $\mathcal{N}(u)$ is either an end point of a flow line from $\mathcal{N}(u)$ or an isolated point of $\mathcal{N}(u)$. 
\hyperlink{thm:main-properties:finite-length:proof}{{\tiny \hfill $\to\!\!\!\!\!\!  \qed$}}   
\item\label{thm:main-properties:finite-length2}
The length of $\mathcal{N}(u)$ is finite 
and $\partial u/\partial \nu = 0$ along any flow line and arc of critical points in $\mathcal{N}(u)$. 
\hyperlink{thm:main-properties:finite-length2:proof}{{\tiny \hfill $\to\!\!\!\!\!\!  \qed$}}
\end{enumerate}
\end{theorem}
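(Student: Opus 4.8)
The plan is to reduce the whole statement to the structural description of the three boundaries $\partial G$, $\partial W$, and $\partial L(r)$ obtained in Section~\ref{sec:GWL}, and then to glue these together using the exhaustion of flow-line behavior in Lemma~\ref{lem:classif-gamma}. I would dispose of the cheap topological parts first. Since $\mathcal{S}$ is finite by Lemma~\ref{lem:numsaddlesOmega}, the set $\bigcup_{r\in\mathcal{S}}\partial L(r)$ is a finite union of closed sets, hence closed; as $G$, $W$, and $\mathrm{Int}(L)$ are open (Section~\ref{sec:GWL}), the set defining the Neumann domains is open, and its connected components are open because $\mathbb{R}^2$ is locally connected, which is \ref{thm:main-properties:opennes}. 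For \ref{thm:main-properties:finite-length} I would invoke the fact from Section~\ref{sec:GWL} that each of $\partial G$, $\partial W$, $\partial L(r)$ is built from finitely many flow lines, arcs of critical points (Lemma~\ref{lem:isol}), and isolated critical points; taking the finite union and adjoining $\Gamma^N$, which by Corollary~\ref{cor:boundary-GN} is itself a finite union of flow lines and closed critical curves, yields the claimed description of $\mathcal{N}(u)$. The endpoint dichotomy for isolated critical points then follows by tracking the limits $\gamma(\pm\infty,\cdot)$ through Lemma~\ref{lem:classif-gamma}.

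The two parts of \ref{thm:main-properties:finite-length2} are independent. Finiteness of length is immediate once \ref{thm:main-properties:finite-length} is in hand, since every flow line in $\mathcal{N}(u)$ has finite length by the Lojasiewicz inequality (Lemma~\ref{lem:lojas}) and every arc of critical points has finite length by Lemma~\ref{lem:isol}, and there are finitely many of each. The boundary condition is pointwise: along a flow line the tangent direction is $-\nabla u$, so any normal $\nu$ is orthogonal to $\nabla u$ and $\partial u/\partial\nu=\langle\nabla u,\nu\rangle=0$; along an arc of critical points $\nabla u\equiv 0$ and the identity is trivial. Assertion \ref{thm:main-properties:finite-number} then follows from \ref{thm:main-properties:finite-length} and \ref{thm:main-properties:finite-length2}, because a bounded planar domain with finitely many rectifiable analytic arcs removed has only finitely many connected components.

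Next I would establish the two partition assertions. For \ref{thm:main-properties:decomposition}, the key point is that Lemma~\ref{lem:classif-gamma} exhausts the behavior of every regular flow line, so $\Omega\setminus\mathcal{C}\subset G\cup W\cup L$; combining this with the definition of $\mathcal{D}(u)$ and the identity $\mathrm{Int}(L)\setminus\bigcup_r\partial L(r)=L\setminus\bigcup_r\partial L(r)$ from Remark~\ref{rem:L}, the leftover points of $\Omega$ land in $\partial G\cup\partial W\cup\bigcup_r\partial L(r)$ together with $\mathcal{C}$, which gives the stated equality and hence the component characterization. For \ref{thm:main-properties:critpoints} I would prove $\mathcal{C}\subset\mathcal{N}(u)$ by showing each critical point is an endpoint of, or accumulated by, separatrices: a saddle $r$ lies on $\partial L(r)$, and an extremum is a limit of flow lines in $\partial W$ or $\partial G$. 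The inclusion $\Gamma^N\subset\mathcal{N}(u)$ then falls out of the decomposition: a point $z_0\in\Gamma^N\subset\partial\Omega$ cannot lie in $G\cup W\cup\mathrm{Int}(L)$, yet it is a limit of interior points of $G\cup W\cup L$, so $z_0\in\partial G\cup\partial W\cup\partial L\subset\partial G\cup\partial W\cup\bigcup_r\partial L(r)$ and $z_0\notin\Gamma^D$.

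Finally, the sign assertions use that, by \ref{thm:main-properties:critpoints} and \ref{thm:main-properties:decomposition}, a Neumann domain $\Sigma$ contains no critical points, so $\nabla u\neq 0$ on $\Sigma$ and $u$ attains its extrema over $\overline\Sigma$ on $\partial\Sigma$ at critical points (or on $\Gamma^D$). The sign at interior extrema is fixed: at a local maximum $p$ the Hessian is negative semidefinite, so $u(p)=-\Delta u(p)/\lambda\geq 0$, and $u(p)\neq 0$ by Lemma~\ref{lem:crit}, whence $u(p)>0$; symmetrically $u(q)<0$ at a minimum. For a boundary domain $\Sigma\subset G$, continuity of $\gamma(\pm\infty,\cdot)$ and connectedness force all flow lines to descend from a \emph{single} extremum, of one sign, to $\Gamma^D$ where $u=0$, with $u$ strictly monotone in between; hence $u$ keeps that sign, giving \ref{thm:main-properties:sign-const}. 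For an inner domain I would exhibit in $\overline\Sigma$ both a positive maximum and a negative minimum, which is immediate when $\Sigma$ meets $W$ since the flow line of any $z\in\Sigma\cap W$ limits onto such a pair, so $u$ changes sign, giving \ref{thm:main-properties:sign-changing}. The step I expect to be the genuine obstacle is making this last argument uniform in the presence of saddle-nodes: when an inner domain is generated through $\mathrm{Int}(L(r))$ for a trivial point $r$, flow lines converge to a saddle rather than to an extremum, and one must use the global structure of $W^{\mathfrak{s}}(r)$, $W^{\mathfrak{u}}(r)$ (cf.\ Remark~\ref{rem:L} and Proposition~\ref{prop:absil-x}) to still produce an extremum of each sign in $\overline\Sigma$; this is precisely the configuration the authors flag as causing most of the complications, and it is also where the bookkeeping for $\mathcal{C}\subset\mathcal{N}(u)$ at degenerate points demands the most care.
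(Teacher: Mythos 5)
Your proposal reproduces the paper's architecture for assertions \ref{thm:main-properties:opennes}, \ref{thm:main-properties:critpoints}, \ref{thm:main-properties:decomposition}, \ref{thm:main-properties:finite-length}, and \ref{thm:main-properties:finite-length2} (openness of $G$, $W$, $\mathrm{Int}(L)$ plus closedness of the finite union $\bigcup_{r\in\mathcal{S}}\partial L(r)$; exhaustion via Lemma~\ref{lem:classif-gamma}; the structure propositions for $\partial G$, $\partial W$, $\partial L(r)$), and those parts are essentially correct. The genuine gap is assertion \ref{thm:main-properties:sign-changing}, exactly where you flag it, and it is not a bookkeeping issue: your strategy of exhibiting critical points of both signs in $\overline{\Sigma}$ cannot be completed. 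If $\Sigma$ is an inner Neumann domain contained in $\mathrm{Int}(W^{\mathfrak{s}}(r))$ for a saddle-node $r$, then $u$ decreases along every flow line in $\Sigma$ toward $r$, so $u>u(r)$ on $\Sigma$ and $\min_{\overline{\Sigma}}u=u(r)$; sign-change is therefore \emph{equivalent} to $u(r)<0$, and the sign of $u$ at a saddle is not determined by any local consideration (your Hessian argument pins down the sign only at local extrema, by the equation; saddles can carry either sign or zero). The paper closes this case by abandoning dynamics altogether: it passes to $\widetilde{\Omega}_0=\mathrm{Int}(\overline{\Omega_0})$, notes that $\partial\widetilde{\Omega}_0$ is piecewise analytic with finitely many pieces and that $\partial u/\partial\nu=0$ on each piece (assertions \ref{thm:main-properties:finite-length} and \ref{thm:main-properties:finite-length2}), and integrates the equation: by the divergence theorem $\lambda\int_{\widetilde{\Omega}_0}u\,dx=-\int_{\partial\widetilde{\Omega}_0}\tfrac{\partial u}{\partial\nu}\,dS=0$, so $u$ must change sign since $\lambda>0$. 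This single integral identity is the missing idea; it handles saddle-nodes, curves of critical points, and every other degenerate configuration uniformly.

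Two smaller points. For assertion \ref{thm:main-properties:finite-number} you invoke the principle that a bounded planar domain minus finitely many rectifiable analytic arcs has finitely many components; as stated this is false (it fails for domains with wild boundary, and the closed flow lines in $\mathcal{N}(u)$ need not be analytic arcs at their endpoints, so analyticity cannot be used to bound intersections). The paper instead gets finiteness of components from the strict monotonicity of $u$ along flow lines: two flow lines can meet only at their endpoints (at most two points), a flow line meets an arc of critical points in at most one point, and two meeting critical arcs lie on one curve by Lemma~\ref{lem:isol}; this turns $\mathcal{N}(u)$ into a finite embedded graph. For assertion \ref{thm:main-properties:sign-const}, your claim that all flow lines in a boundary Neumann domain descend from a \emph{single} extremum is false in general (backward limits may be distinct extrema or saddles, e.g.\ points of $\mathrm{Int}(L(r'))$ for a saddle-node $r'$ lying in $G$); but this detour is unnecessary, since the mechanism you mention afterwards is already the paper's proof: a zero of $u$ in a component of $G\setminus\bigcup_{r\in\mathcal{S}}\partial L(r)$ would have a flow line reaching $\Gamma^D$, where $u=0$, contradicting strict monotonicity of $u$ along flow lines.
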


This theorem is proved in Section~\ref{sec:proofs-main}, after establishing main properties of the sets $G$, $W$, $L$ in Section~\ref{sec:GWL}.

\section{Properties of \texorpdfstring{$G$}{G}, \texorpdfstring{$W$}{W}, \texorpdfstring{$L$}{L}}\label{sec:GWL}
In this section, we provide several most important properties of the sets $G$, $W$, $L$, which will be needed to prove Theorem~\ref{thm:main-properties}.
We start with the openness of $G$ and $W$. 

\begin{lemma}\label{lem:G:open}
$G$ is open and $\Gamma^D \subset \partial G$.
\end{lemma}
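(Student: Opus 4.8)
The plan is to prove the two assertions separately, both relying on the finite-time reachability of $\Gamma^D$ together with the continuous dependence of flow lines on initial data.

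\textbf{Openness of $G$.}
Let me take any $z \in G$. By definition \eqref{def:G}, there exists $t_z \in \mathbb{R}$ such that $\gamma(t_z,z) \in \Gamma^D$. Since $z \in \Omega$ and $\Gamma^N$ cannot be reached in finite time from $\Omega$ (Corollary~\ref{cor:boundary}), the point $\gamma(t_z,z)$ lies on $\Gamma^D$, which is a regular point of $u$ (local extrema never occur on $\Gamma^D$, and if it were a saddle point the flow line would not simply cross the boundary there — more precisely, by Remark~\ref{rem:traj-boundary} a flow line meeting $\Gamma^D$ at a regular point does so transversally with nonzero normal derivative). The strategy is to push the flow slightly past $\Gamma^D$ into the exterior extension of $u$, using that $u$ extends analytically to a neighborhood of $\overline{\Omega}$ (Remark~\ref{rem:regularity}). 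Concretely, I would pick a time $t_z' = t_z + \eta$ (with the correct sign of $\eta$ matching the direction in which the flow exits $\Omega$) so that $\gamma(t_z',z)$ lies strictly in the exterior of $\overline{\Omega}$. Then I apply the continuous dependence of the Cauchy problem \eqref{eq:gradflow2} on initial data (the $\mathbb{R}^2$-version in the neighborhood where $u$ is extended, not merely Lemma~\ref{lem:Cuchy-contin} which is stated for $\Omega \cup \Gamma^N$): there is $\delta>0$ such that every $z' \in B_\delta(z)$ satisfies $\gamma(t_z',z')$ still in the exterior. By the intermediate value theorem applied to the (continuous, and near $z$ strictly monotone in the normal sense) signed distance to $\partial\Omega$ along each such flow line, each nearby $z'$ must cross $\Gamma^D$ at some intermediate time, giving $z' \in G$. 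Shrinking $\delta$ so that $B_\delta(z) \subset \Omega$ ensures $z'\in\Omega$, hence $B_\delta(z) \subset G$.

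\textbf{The inclusion $\Gamma^D \subset \partial G$.}
Fix $w \in \Gamma^D$. By Lemma~\ref{lem:crit}~\ref{lem:crit:singsadleisolated}, $w$ is either a regular point or an isolated saddle point. If $w$ is a regular point, then by Remark~\ref{rem:traj-boundary} the flow line enters $\Omega$ either for all small $t>0$ or for all small $t<0$; these interior points $\gamma(t,w)$ reach $\Gamma^D$ (namely $w$) in finite time, so they lie in $G$, and they converge to $w$ as $t\to 0$, placing $w \in \overline{G}$. Since $w \in \partial\Omega$ and $G \subset \Omega$ is open, $w \notin G$, whence $w \in \partial G$. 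If $w$ is a saddle point, I would argue that $w$ is still approached by interior points whose flow lines hit $\Gamma^D$: near a saddle the level set $u^{-1}(u(w))$ consists of finitely many arcs through $w$ (Remark~\ref{rem:level-sets-diffeomorhpic}), and there exist flow lines in $\Omega$ emanating from $w$ along which the normal derivative is nonzero upon reaching $\Gamma^D$; in the degenerate saddle-node subcase one invokes the structure of $W^{\mathfrak{u}}(w)$ or $W^{\mathfrak{s}}(w)$ from Table~\ref{table:manifolds} to locate interior points in $G$ arbitrarily close to $w$. Either way $w \in \overline{G}\setminus G = \partial G$.

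\textbf{Main obstacle.}
I expect the principal difficulty to be the openness argument at points $z$ whose flow line reaches $\Gamma^D$ at a \emph{saddle} point, rather than a regular point: there the transversal-crossing argument fails because the normal derivative may vanish, and nearby flow lines can be deflected along the stable/unstable directions instead of crossing the boundary. Handling this cleanly either requires showing such $z$ form a lower-dimensional (measure-zero, finitely-many-flow-lines) exceptional set that is anyway excluded from the open interior of $G$, or requires a careful local analysis near the saddle using the prototypical normal forms of Section~\ref{sec:classification}. The cleanest route is probably to observe that the set of $z\in G$ whose flow line meets $\Gamma^D$ at a regular point is already open by the transversality argument above, and that points reaching $\Gamma^D$ at the finitely many saddle points lie on finitely many separatrices, which I would show are not in the topological interior issue — but rather handle by noting $G$ as defined still contains a full neighborhood because the flow from nearby \emph{interior} points with slightly perturbed energy crosses $\Gamma^D$ transversally nearby. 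The regularity of $u$ up to and beyond $\partial\Omega$ (Remark~\ref{rem:regularity}) is what makes all of these perturbation arguments legitimate.
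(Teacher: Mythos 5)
Your core openness argument is the same as the paper's: push the flow past $\Gamma^D$ into the exterior of $\overline{\Omega}$ using the analytic extension of $u$, apply continuous dependence on initial data, and conclude that nearby flow lines must also cross $\Gamma^D$. However, the ``main obstacle'' to which you devote your final paragraph is illusory, and the fact that you leave it unresolved (your proposed workarounds are sketches, not arguments) is the genuine gap in your write-up. A non-constant flow line can never reach a critical point of $u$ in \emph{finite} time: a critical point $q$ is an equilibrium of \eqref{eq:gradflow2}, so the constant map $t \mapsto q$ is a solution, and if $\gamma(t_z,z)=q$ with $z \neq q$, then two distinct solutions would pass through $q$ at time $t_z$, contradicting uniqueness for the Cauchy problem. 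Hence the point where a flow line from $G$ hits $\Gamma^D$ in finite time is \emph{automatically} a regular point of $u$ --- this is precisely why the paper can write ``$q:=\gamma(t_0,z_0)\in\Gamma^D$, and hence $q$ is a regular point of $u$'' and then run the transversal-crossing argument with no exceptional cases whatsoever. Your own justification of the regularity of the hitting point is, by contrast, circular: you quote Remark~\ref{rem:traj-boundary}, which describes what happens at regular points, rather than excluding critical ones. One further small point in the perturbation step: to guarantee that the nearby flow lines cross $\partial\Omega$ on $\Gamma^D$ rather than on $\Gamma^N$, you should invoke Corollary~\ref{cor:boundary} (flow lines starting in $\Omega$ never reach $\Gamma^N$), which is what makes the intermediate crossing time $t_z$ land on $\Gamma^D$.

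For the inclusion $\Gamma^D \subset \partial G$, your treatment of regular points of $\Gamma^D$ agrees with the paper's. For the critical (saddle) points on $\Gamma^D$, however, your proposed analysis via level sets and the stable/unstable sets of Table~\ref{table:manifolds} is both vague and unnecessary; in particular, it is not clear that flow lines ``emanating from $w$'' into $\Omega$ exist or behave as you claim in the degenerate case. The paper's route is purely topological: by Lemma~\ref{lem:crit}, $\Gamma^D$ contains only finitely many critical points, so each such point $w$ is a limit of regular points of $\Gamma^D$; each of those regular points lies in $\partial G$ by the argument you already gave; and $\partial G$ is closed, so $w \in \partial G$. No local structure of $u$ near the saddle is needed at all.
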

\begin{proof}
If $G=\emptyset$, then there is nothing to prove. 
Let $G \neq \emptyset$ and take any $z_0\in G$. 
By the definition~\eqref{def:G} of $G$, there exists  $t_0\in \R$ such that $q:=\ga(t_0,z_0)\in \Gamma^D$, and hence $q$ is a regular point of $u$.
Thanks to the implicit function theorem, we see that $\Gamma^D$ is a smooth curve in a neighborhood of $q$.
Assuming, without loss of generality, that $t_0 > 0$, we have
\begin{equation}\label{eq:unalbau=0}
u(q) = 0
\quad
\text{and}
\quad
-|\nabla u(q)| = \frac{\partial u}{\partial \nu}(q) < 0,
\end{equation}
where $\nu$ is pointed outwards. 
Consider the analytic extension of $u$ to a neighborhood of $q$. 
Thanks to \eqref{eq:unalbau=0} and the strict monotonicity of $u$ along flow lines, there exists $t_1>t_0$ such that $q_1 := \gamma(t_1,z_0) \in \mathbb{R}^2 \setminus \overline{\Omega}$.
Let us choose $\varepsilon>0$ such that $B_\varepsilon(q_1) \subset \mathbb{R}^2 \setminus \overline{\Omega}$.
By the continuous dependence of the Cauchy problem \eqref{eq:gradflow2} on the initial data (see, e.g., Lemma~\ref{lem:Cuchy-contin}), there exists $\delta>0$ such that $\gamma(t_1,z)\in B_\varepsilon(q_1)$ for all $z\in B_\delta(z_0)$.
Since $B_\varepsilon(q_1) \subset \mathbb{R}^2 \setminus \overline{\Omega}$ and thanks to Remark~\ref{rem:traj-boundary}, for any $z\in B_\delta(z_0)$ there exists a unique $t_z \in (0,t_1)$ such that $\gamma(t_z,z) \in \Gamma^D$.
This implies that $B_\delta(z_0) \subset G$, and hence $G$ is open.

If $p \in \Gamma^D$ is a regular point of $u$, then $p \in \partial G$ 
since $\gamma(t,p) \in \Omega$ for all sufficiently small $t<0$ (respectively, $t>0$) whenever $\frac{\partial u}{\partial \nu}(p) < 0$ (respectively, $\frac{\partial u}{\partial \nu}(p) > 0$).
By Lemma~\ref{lem:crit}, $\Gamma^D$ contains finitely many critical points, and hence each critical point $p \in \Gamma^D$ can be approximated by regular points from $\Gamma^D$, which yields $p \in \partial G$.
Thus, the inclusion $\Gamma^D \subset \partial G$ follows.
\end{proof}

\begin{lemma}\label{lem:W:open}
$W$ is open.
\end{lemma}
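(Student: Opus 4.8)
The plan is to use the equivalent description \eqref{eq:W1} of $W$, the Lyapunov stability of local extrema from Proposition~\ref{prop:absil}, and the continuous dependence on initial data from Lemma~\ref{lem:Cuchy-contin}. Fix $z_0 \in W$ and set $q = \gamma(+\infty,z_0) \in \mathcal{M}_-$ and $p = \gamma(-\infty,z_0) \in \mathcal{M}_+$. Since $z_0 \in \Omega$ and its flow line converges to critical points at both ends (so it never reaches $\Gamma^D$), Corollary~\ref{cor:boundary} gives $\gamma(t,z_0) \in \Omega$ for every finite $t$; thus the orbit of $z_0$ lies in $\Omega$ on every compact time interval.

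First I would fix the target extrema. Because the critical set consists of finitely many isolated points and finitely many isolated analytic curves (Lemma~\ref{lem:isol}), and a curve of critical points through a local minimum consists entirely of local minima (Corollary~\ref{cor:no-saddles-on-theta}), I can choose $\varepsilon_q > 0$ so small that every critical point in $\overline{B_{\varepsilon_q}(q)} \cap \overline{\Omega}$ is a local minimum; symmetrically I choose $\varepsilon_p > 0$ so that every critical point in $\overline{B_{\varepsilon_p}(p)} \cap \overline{\Omega}$ is a local maximum. Applying Proposition~\ref{prop:absil} at $q$ (and its analogue for maxima at $p$, using negative times) produces $\delta_q, \delta_p > 0$ such that any forward (resp.\ backward) orbit entering $B_{\delta_q}(q) \cap \overline{\Omega}$ (resp.\ $B_{\delta_p}(p) \cap \overline{\Omega}$) remains in $B_{\varepsilon_q}(q)$ (resp.\ $B_{\varepsilon_p}(p)$) and, by the Lojasiewicz convergence discussed after Lemma~\ref{lem:lojas}, converges to a critical point therein, which by our choice of $\varepsilon_q$ (resp.\ $\varepsilon_p$) is a local minimum (resp.\ local maximum).

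Next I would transport this to $z_0$ via intermediate times. Since $\gamma(+\infty,z_0) = q$ and $\gamma(-\infty,z_0) = p$, there exist times $T_+ > 0 > T_-$ with $\gamma(T_+,z_0) \in B_{\delta_q}(q) \cap \Omega$ and $\gamma(T_-,z_0) \in B_{\delta_p}(p) \cap \Omega$. Two applications of the continuous dependence Lemma~\ref{lem:Cuchy-contin}, at the times $T_+$ and $T_-$, yield $\delta > 0$ (shrunk so that $B_\delta(z_0) \subset \Omega$) such that for every $z \in B_\delta(z_0)$ one has $\gamma(T_+,z) \in B_{\delta_q}(q) \cap \Omega$ and $\gamma(T_-,z) \in B_{\delta_p}(p) \cap \Omega$. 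By the previous step, $\gamma(+\infty,z)$ is then a local minimum and $\gamma(-\infty,z)$ a local maximum; as these two limits differ, $z$ cannot be a critical point, and the characterization \eqref{eq:W1} gives $z \in W$. Hence $B_\delta(z_0) \subset W$, and $W$ is open.

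The main obstacle is that local extrema need not be isolated: $q$ or $p$ may lie on an analytic curve of critical points, so nearby orbits will in general converge not to $q$ and $p$ themselves but to neighbouring extrema on these curves. This is precisely why $W$ is defined as a union over all pairs in $\mathcal{M}_- \times \mathcal{M}_+$, and the crux of the argument is to guarantee---via the isolation of the components of $\mathcal{C}$ in Lemma~\ref{lem:isol} together with Corollary~\ref{cor:no-saddles-on-theta}---that the limit critical points remain of the correct extremal type (in particular that no saddle intervenes near $q$ or $p$), so that the perturbed orbits still connect a maximum to a minimum.
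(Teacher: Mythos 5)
Your proof is correct and follows essentially the same route as the paper's: Lyapunov stability of extrema (Proposition~\ref{prop:absil}) combined with continuous dependence on initial data (Lemma~\ref{lem:Cuchy-contin}), with the nonisolated case controlled via Lemma~\ref{lem:isol} and Corollary~\ref{cor:no-saddles-on-theta}. The only difference is cosmetic: the paper splits into the cases of isolated versus nonisolated extrema, whereas you treat both uniformly by choosing the neighborhoods of $q$ and $p$ so that every critical point inside has the correct extremal type, which is a slightly cleaner packaging of the same argument.
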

\begin{proof}
If $W  = \emptyset$, then there is nothing to prove.
Assume that  $W  \neq \emptyset$ and take any $z_0 \in W$.
By the definition \eqref{def:W} of $W$, 
there exist a local minimum point $q \in \overline{\Omega}$ and a local maximum point $p \in \overline{\Omega}$ such that 
$z_0 \in (W^{\mathfrak{s}}({q}) \cap W^{\mathfrak{u}}({p})) \cap \Omega$.
Note that if $q$ or $p$ is a boundary point, then it belongs to $\Gamma^N$, as it follows from Lemma~\ref{lem:crit}~\ref{lem:crit:singsadleisolated}.
Our aim is to show the existence of a neighborhood of $z_0$ which is contained in $W$.
We divide the proof in two cases depending on the isolation of $q$ and $p$.

(1) Assume that both $q$ and $p$ are \textit{isolated}. 
Let $\varepsilon>0$ be sufficiently small so that 
$q$ is the only critical point in $\overline{B_\varepsilon(q)} \cap \overline{\Omega}$.
By Proposition~\ref{prop:absil}, there exists $\delta>0$ such that $\gamma(+\infty, \bar{z}) \in \overline{B_\varepsilon(q)} \cap \overline{\Omega}$ for any $\bar{z} \in B_\delta(q) \cap \overline{\Omega}$, and hence we have $\gamma(+\infty, \bar{z}) = q$.
On the other hand, $\ga(t,z_0) \in \Omega$ for any $t > 0$ thanks to Corollary~\ref{cor:boundary}, and hence there exists $t_0>0$ such that $\ga(t_0,z_0) \in B_{\delta}(q) \cap {\Omega}$. 
Using Lemma~\ref{lem:Cuchy-contin}, we can find $\sigma_1>0$ such that $\ga(t_0, z)\in B_\delta(q) \cap \Omega$ for any $z\in B_{\sigma_1}(z_0) \subset \Omega$. 
Therefore, we conclude that 
$\gamma(+\infty,z)=q$ for all $z\in B_{\sigma_1}(z_0)$.

Repeating the same arguments as above with respect to the negative times, we can find $\sigma_2>0$ such that $\gamma(-\infty,z)=p$ 
for all $z\in B_{\sigma_2}(z_0)$.
This yields 
$$
B_{\min\{\sigma_1,\sigma_2\}}(z_0)\subset (W^{\mathfrak{s}}({q})\cap W^{\mathfrak{u}}({p})) \cap \Omega \subset W.
$$

(2) Assume, without loss of generality, that $q$ is \textit{nonisolated}.
Thanks to Lemma~\ref{lem:isol}, $q$ belongs to an analytic curve $\theta$ of critical points of $u$, and we can choose $\varepsilon>0$ such that all critical points of $u$ in $\overline{B_{\varepsilon}(q)} \cap \overline{\Omega}$ belong to $\theta$. 
According to Corollary~\ref{cor:no-saddles-on-theta}, $\theta$  consists of local minimum points.
As above, by combining Proposition~\ref{prop:absil} and Lemma~\ref{lem:Cuchy-contin}, we get $\sigma_1>0$ such that $\gamma(+\infty, z) \in \overline{B_\varepsilon(q)} \cap \overline{\Omega}$ for any $z \in B_{\sigma_1}(z_0) \subset \Omega$. 
Therefore, noting that $\gamma(+\infty, z)$ is a critical point of $u$, we conclude that $\gamma(+\infty,z) \in \theta$ and it is a local minimum point of $u$ for any $z \in B_{\sigma_1}(z_0)$.

Let us now consider the behavior of flow lines of points $z \in B_{\sigma_2}(z_0)$ as $t \to -\infty$ for a sufficiently small $\sigma_2>0$.
If $p$ is isolated, we argue as in the case (1) to conclude that $\gamma(-\infty,z) = p$.
If $p$ is not isolated, we argue as above and conclude that $\gamma(-\infty,z)$ is a local maximum point of $u$ in a neighborhood of $p$, and it belongs to a curve of critical points.

Taking now any $z \in B_{\min\{\sigma_1,\sigma_2\}}(z_0)$, we denote $q_z = \gamma(+\infty,z)$ and $p_z = \gamma(-\infty,z)$, where
$p_z=p$ if $p$ is isolated.
We see that $z \in (W^{\mathfrak{s}}(q_z) \cap W^{\mathfrak{u}}(p_z)) \cap \Omega$ and hence $z \in W$.
Consequently, $B_{\min\{\sigma_1,\sigma_2\}}(z_0) \subset W$.
\end{proof}

Since $G, W$ are open and $L$ is closed, the following simple observation follows from the definitions of $G$, $W$, $L$.
\begin{lemma}\label{lem:GWL-intersection}
$G \cap \overline{W} = \emptyset$, $\overline{G} \cap W = \emptyset$, and $W \cap L = \emptyset$.
\end{lemma}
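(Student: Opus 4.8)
The plan is to prove the three disjointness claims by exploiting the monotonicity of $u$ along flow lines together with the characterizations of $G$, $W$, and $L$ in terms of the limiting behavior of flow lines. The unifying principle is that membership in each of these sets is determined by where a flow line originates and terminates, and that two points on the \emph{same} flow line share the same fate at $t \to \pm\infty$; thus the sets are, roughly speaking, saturated by flow lines, and their defining endpoint conditions are mutually incompatible.

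First I would handle $W \cap L = \emptyset$, which is the cleanest. Suppose $z \in W \cap L$. By the equivalent description \eqref{eq:W1} of $W$, we have $\gamma(+\infty,z) = q \in \mathcal{M}_-$ and $\gamma(-\infty,z) = p \in \mathcal{M}_+$, so both ends of the flow line through $z$ are local extremum points. On the other hand, $z \in L$ means $z \in \overline{L(r)}$ for some saddle $r \in \mathcal{S}$. The subtlety is that $L$ is a closure, so I cannot immediately assert that $z$ itself lies on a separatrix of $r$. Instead I would argue as follows: $W$ is open (Lemma~\ref{lem:W:open}), so a whole neighborhood $B_\sigma(z)$ lies in $W$; since $z \in \overline{L(r)}$, there is a point $z' \in L(r) \cap B_\sigma(z) \subset W$. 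But $z' \in L(r)$ forces $\gamma(+\infty,z') = r$ or $\gamma(-\infty,z') = r$, i.e.\ one end of the flow line through $z'$ is a saddle point, contradicting $z' \in W$ (whose flow lines connect two \emph{extrema}). Here I use Remark~\ref{rem:boundary_critical}\ref{rem:boundary_critical:Mpmempty} and the fact that $\mathcal{M}_\pm \cap \mathcal{S} = \emptyset$ by definition.

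Next I would treat $G \cap \overline{W} = \emptyset$ and $\overline{G} \cap W = \emptyset$ together, using that both $G$ and $W$ are open (Lemmas~\ref{lem:G:open} and~\ref{lem:W:open}) and that for open sets $A, B$ one has $A \cap \overline{B} = \emptyset \iff \overline{A} \cap B = \emptyset \iff A \cap B = \emptyset$ fails in general, so I must be slightly careful: it suffices to show that $G$ and $W$ have disjoint closures-with-one-side, which reduces to showing $G \cap \overline W=\emptyset$ and symmetrically. The core incompatibility is that a point $z \in G$ has a flow line reaching $\Gamma^D$ in finite time, whereas a point in $W$ has a flow line defined for all $t \in \mathbb{R}$ converging to interior/Neumann extrema and never meeting $\Gamma^D$ (by Corollary~\ref{cor:boundary} and Remark~\ref{rem:traj-boundary}). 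To pass to the closure, I would again use openness: if $z \in G \cap \overline{W}$, then since $G$ is open, $B_\sigma(z) \subset G$ for some $\sigma$, and since $z \in \overline W$ there is $z' \in W \cap B_\sigma(z) \subset G \cap W$; so it suffices to rule out $G \cap W = \emptyset$ directly, which is immediate from the endpoint dichotomy of Lemma~\ref{lem:classif-gamma}: a flow line cannot both reach $\Gamma^D$ in finite time and have both endpoints as extrema at infinite time. The symmetric statement $\overline G \cap W = \emptyset$ follows by the same neighborhood argument with the roles of $G$ and $W$ exchanged.

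The main obstacle is the presence of closures in all three statements: the naive argument only shows $G \cap W = \emptyset$ and $W \cap L(r) = \emptyset$, and one must upgrade these to statements involving $\overline{W}$, $\overline{G}$, and $\overline{L(r)}$. The key technical device that resolves this uniformly is the \emph{openness} of $G$ and $W$: whenever one of the two sets appearing in an intersection is open, a point in the intersection of one set with the closure of the other produces a genuine point in $G \cap W$ (resp.\ $W \cap L(r)$), reducing every claim to the corresponding open-versus-defining-condition incompatibility. I expect no computation is needed beyond invoking strict monotonicity of $u$ along nonconstant flow lines and the endpoint classification in Lemma~\ref{lem:classif-gamma}; the entire proof is a short logical argument once openness is used to eliminate the closures.
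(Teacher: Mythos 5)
Your proposal is correct and takes essentially the same approach as the paper, whose entire proof is the observation that the claims follow from the openness of $G$ and $W$ (Lemmas~\ref{lem:G:open} and~\ref{lem:W:open}), the closedness of $L$, and the definitions of the three sets. Your write-up simply makes explicit the neighborhood argument that upgrades $G\cap W=\emptyset$ and $W\cap L(r)=\emptyset$ to the closure statements, and the flow-line endpoint incompatibility (Lemma~\ref{lem:classif-gamma}) that the paper leaves implicit.
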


\subsection{Characterization of \texorpdfstring{$\partial G$}{dG}, \texorpdfstring{$\partial W$}{dW}, \texorpdfstring{$\partial L(r)$}{dL(r)} via flow lines}

In the following three lemmas we show that, outside of $\Gamma^D$, regular parts of the boundaries of the sets $G$, $W$, $L(r)$ can be characterized by flow lines.
Recall that $I$ stands for the maximal interval of existence of the flow line $\ga(\cdot, z_0)$ in $\overline{\Omega}$ for $z_0 \in \overline{\Omega}$, see Section~\ref{sec:gradflow}.

\begin{lemma}\label{lem:G:boundary}
Let $z_0 \in \partial G \setminus \Gamma^D$.
Then $\gamma(t,z_0) \in \partial G \setminus \Gamma^D$ for any $t \in \mathbb{R}$.
\end{lemma}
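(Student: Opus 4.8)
The plan is to treat the trivial case first: if $z_0$ is a critical point, then $\gamma(t,z_0)=z_0$ for all $t$ and the claim holds directly by hypothesis, so I assume $z_0$ is a regular point. Since $\partial G\subset\overline{G}\subset\overline{\Omega}$ and $z_0\notin\Gamma^D$, we necessarily have $z_0\in\Omega\cup\Gamma^N$. The argument rests on one structural fact about $G$, namely its \emph{invariance along the flow}: by the definition \eqref{def:G}, if $w\in G$ and $\gamma(s,w)\in\Omega$ for some $s$, then $\gamma(s,w)\in G$, because $\gamma(s,w)$ lies on the same trajectory as $w$, which reaches $\Gamma^D$ in finite time. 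With this in hand I would establish the lemma by proving, for every admissible $t$, that (a) $\gamma(t,z_0)\notin\Gamma^D$, (b) $\gamma(t,z_0)\notin G$, and (c) $\gamma(t,z_0)\in\overline{G}$. Items (b) and (c) together give $\gamma(t,z_0)\in\partial G$, and combined with (a) they yield the conclusion. Along the way I would also record that the maximal interval equals $\mathbb{R}$.

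Items (a) and (b) are quick. If $z_0\in\Gamma^N$, then Lemma~\ref{lem:traj-neum} keeps the whole trajectory inside $\Gamma^N$, so it meets neither $\Gamma^D$ nor the set $G\subset\Omega$; this settles (a) and (b) simultaneously. If instead $z_0\in\Omega$, then a trajectory value lying in $\Gamma^D$ would force $z_0\in G$ by \eqref{def:G}, contradicting $z_0\in\partial G$ and the openness of $G$ (Lemma~\ref{lem:G:open}); and a value $\gamma(t^*,z_0)=w\in G$ would, by applying the invariance fact along the trajectory (so that $\gamma(s_w+t^*,z_0)=\gamma(s_w,w)\in\Gamma^D$), again give $z_0\in G$, the same contradiction. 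Using (a), the fact that $\Gamma^N$ cannot be reached from $\Omega$ in finite time (Corollary~\ref{cor:boundary}), and the Lojasiewicz alternative \eqref{alternative}, each direction of the trajectory converges to a critical point, whence $I(z_0)=\mathbb{R}$.

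The heart of the argument, and the step I expect to be the main obstacle, is (c). Fix $t\in\mathbb{R}$ and set $w=\gamma(t,z_0)$, which by (a) lies in $\Omega\cup\Gamma^N$. Choose $z_n\in G$ with $z_n\to z_0$. The delicate point is that the approximating trajectories \emph{terminate} on $\Gamma^D$, so one must ensure that the flow of each $z_n$ is defined up to time $t$ and that the limit stays in the interior region where continuous dependence applies. This is precisely what Lemma~\ref{lem:Cuchy-contin} supplies: since $w\in\Omega\cup\Gamma^N$, for every $\varepsilon>0$ there is $\delta>0$ with $\gamma(t,B_\delta(z_0)\cap\Omega)\subset B_\varepsilon(w)\cap\Omega$. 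Hence for all large $n$ we have $\gamma(t,z_n)\in B_\varepsilon(w)\cap\Omega$, and because $z_n\in G$ together with $\gamma(t,z_n)\in\Omega$, the invariance fact gives $\gamma(t,z_n)\in G$. Therefore $\gamma(t,z_n)\to w$ along points of $G$, proving $w\in\overline{G}$. Combining (a), (b), and (c) yields $\gamma(t,z_0)\in\partial G\setminus\Gamma^D$ for every $t\in\mathbb{R}$, which completes the proof.
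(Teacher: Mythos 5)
Your proof is correct and follows essentially the same route as the paper's: both rest on the openness of $G$ (Lemma~\ref{lem:G:open}), the flow-invariance of $G$ read off from the definition \eqref{def:G}, and continuous dependence on initial data (Lemma~\ref{lem:Cuchy-contin}). The only difference is organizational: where you prove $\gamma(t,z_0)\in\overline{G}$ directly by flowing nearby points of $G$ forward, the paper assumes $\gamma(t_0,z_0)\in(\Omega\cup\Gamma^N)\setminus\overline{G}$ and derives a contradiction --- the same argument in contrapositive form.
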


\begin{proof}
Since $G$ is open by Lemma~\ref{lem:G:open}, we have
$z_0 \not\in G$. 
Thus, $\gamma(\cdot,z_0)$ cannot reach $\Gamma^D$ in a finite time, and hence $\gamma(t,z_0) \in \Omega \cup \Gamma^N$ for any $t \in I = \mathbb{R}$. If possible, let $t_0$ be such that $z = \gamma(t_0,z_0) \not\in \pa G$.
Consequently, we have either $z \in G$ or $z \in (\Omega \cup \Gamma^N) \setminus \overline{G}$.
The case $z \in G$ is impossible since the definition \eqref{def:G} of $G$ would imply $\gamma(\cdot,z_0) \subset G$ and hence $z_0 \in G$.
Let us show that the case $z \in (\Omega \cup \Gamma^N) \setminus \overline{G}$ is also impossible. 
Since $\mathrm{dist}(z,\overline{G})>0$, there exists a neighborhood $V$ of $z$ such that $V \cap \Omega \subset \Omega \setminus \overline{G}$. 
By Lemma~\ref{lem:Cuchy-contin}, for a sufficiently small neighborhood $U$ of $z_0$ we have $\gamma(t_0,U \cap \Omega) \subset \Omega \setminus \overline{G}$.
However, since $z_0 \in \partial G$, $U$ contains a point $z_1 \in G \subset \Omega$ and hence $\gamma(\cdot,z_1) \subset G$, which contradicts the fact that $\gamma(t_0,z_1) \in \Omega \setminus \overline{G}$. 
\end{proof}

\begin{lemma}\label{lem:W:boundary}
Let $z_0 \in \partial W$.
Then $\gamma(t,z_0) \in \partial W$ for any $t \in \mathbb{R}$.
\end{lemma}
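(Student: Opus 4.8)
The plan is to follow the structure of the proof of Lemma~\ref{lem:G:boundary}, with the flow-invariance of $W$ playing the role that the inclusion ``$\gamma(\cdot,z)\subset G$'' played there. I would rely on three facts: $W$ is open (Lemma~\ref{lem:W:open}), $W$ is a union of entire flow lines, and the flow depends continuously on initial data (Lemma~\ref{lem:Cuchy-contin}).

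First I would dispose of the trivial case. If $z_0$ is a critical point, then $\gamma(t,z_0)=z_0$ for every $t$, and the claim is immediate. So assume that $z_0$ is a regular point and locate it. Since $W$ is open, $z_0\notin W$; since $\partial W\subset\overline{W}$ and $G\cap\overline{W}=\emptyset$ (Lemma~\ref{lem:GWL-intersection}), also $z_0\notin G$, so by the definition~\eqref{def:G} the flow line $\gamma(\cdot,z_0)$ never reaches $\Gamma^D$ in finite time. I would further exclude $z_0\in\Gamma^D$: near a regular point of $\Gamma^D$ the flow crosses $\Gamma^D$ transversally (cf.\ Remark~\ref{rem:traj-boundary} and the proof of Lemma~\ref{lem:G:open}), so a neighborhood $U$ satisfies $U\cap\Omega\subset G$, whence $U\cap W=\emptyset$ and $z_0\notin\overline{W}$, a contradiction. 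Thus $z_0\in\Omega\cup\Gamma^N$, and together with Corollary~\ref{cor:boundary} (if $z_0\in\Omega$) or Lemma~\ref{lem:traj-neum} (if $z_0\in\Gamma^N$) this yields $I=\mathbb{R}$ and $\gamma(t,z_0)\in\Omega\cup\Gamma^N$ for all $t$.

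Next I would record the flow-invariance of $W$. If $w\in W$, then by the characterization~\eqref{eq:W1} one has $\gamma(+\infty,w)\in\mathcal{M}_-$ and $\gamma(-\infty,w)\in\mathcal{M}_+$; by the semigroup property and uniqueness of the flow, every point of $\gamma(\cdot,w)$ has the same two limits and stays in $\Omega$, hence lies in $W$. Thus $\gamma(\cdot,w)\subset W$.

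The core is then a contradiction argument identical in spirit to Lemma~\ref{lem:G:boundary}. Suppose $z:=\gamma(t_0,z_0)\notin\partial W$ for some $t_0$. Since $z\in\Omega\cup\Gamma^N$, either $z\in W$ or $z\in(\Omega\cup\Gamma^N)\setminus\overline{W}$. In the first case flow-invariance gives $z_0=\gamma(-t_0,z)\in W$, contradicting $z_0\in\partial W$ (as $W$ is open). In the second case I would pick a neighborhood $V$ of $z$ with $V\cap\overline{W}=\emptyset$ and, using Lemma~\ref{lem:Cuchy-contin}, a neighborhood $U$ of $z_0$ with $\gamma(t_0,U\cap\Omega)\subset V\cap\Omega$; since $z_0\in\partial W$ there is $z_1\in U\cap W$, and then $\gamma(t_0,z_1)$ lies in $W$ (invariance) and in $V$, contradicting $V\cap W=\emptyset$. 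The only genuinely new point compared with Lemma~\ref{lem:G:boundary} — and the step I expect to require the most care — is the preliminary localization of $z_0$: verifying that $\partial W$ meets $\Gamma^D$ only at (saddle) critical points and hence that $I=\mathbb{R}$, since, unlike Lemma~\ref{lem:G:boundary}, the statement does not pre-exclude $\Gamma^D$.
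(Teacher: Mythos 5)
Your proof is correct and follows essentially the same route as the paper's: openness of $W$, the disjointness $G \cap \overline{W} = \emptyset$ to rule out reaching $\Gamma^D$, and the two-case contradiction argument combining flow-invariance of $W$ with continuous dependence on initial data (Lemma~\ref{lem:Cuchy-contin}). Your explicit exclusion of regular points of $\Gamma^D$ from $\partial W$ and the separate treatment of critical points spell out details that the paper's terser proof leaves implicit, but they do not constitute a different approach.
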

\begin{proof}
The proof is similar to that of Lemma~\ref{lem:G:boundary}. We provide details for clarity.
Since $W$ is open by Lemma~\ref{lem:W:open}, we get $z_0 \not\in W$. 
By Lemma~\ref{lem:GWL-intersection}, we have $G \cap \overline{W}=\emptyset$, which implies that $\gamma(\cdot,z_0)$ cannot reach $\Gamma^D$ in a finite time.
Consequently, $I = \mathbb{R}$.
Suppose, by contradiction, that there exists $t_0 \in \mathbb{R}$ such that $z = \gamma(t_0,z_0) \not\in \pa W$.
That is, we have either $z \in W$ or $z \in (\Omega \cup \Gamma^N) \setminus \overline{W}$.
The former case is impossible since the definition \eqref{def:W} of $W$ would imply $\gamma(\cdot,z_0) \subset W$ and hence $z_0 \in W$.
In the latter case, 
there exists a neighborhood $V$ of $z$ such that $V \cap \Omega \subset \Omega \setminus \overline{W}$. 
Using Lemma~\ref{lem:Cuchy-contin}, for a sufficiently small neighborhood $U$ of $z_0$ we have $\gamma(t_0,U \cap \Omega) \subset \Omega \setminus \overline{W}$.
However, since $z_0 \in \partial W$, $U$ contains a point $z_1 \in W \subset \Omega$ and hence $\gamma(\cdot,z_1) \subset W$, which is a contradiction. 
\end{proof}

\begin{lemma}\label{lem:Lr:boundary}
Let $r\in \mathcal{S}$ and $z_0\in \pa L(r) \setminus \Gamma^D$. 
Then $\ga(t, z_0) \in \pa L(r)$ for any $t \in I$. 
\end{lemma}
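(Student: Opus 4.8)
The plan is to follow the scheme of the proofs of Lemmas~\ref{lem:G:boundary} and~\ref{lem:W:boundary}, with two modifications dictated by the nature of $L(r)$. The first key ingredient I would establish is the \emph{flow-invariance} of $L(r)$: if $w \in L(r)$ and $t \in I(w)$, then $\gamma(t,w) \in L(r)$. This is immediate from the definition~\eqref{def:L}, since a time shift does not change the forward or backward limit of a flow line: for $w \in W^{\mathfrak{s}}(r)$ one has $\gamma(+\infty,\gamma(t,w)) = \gamma(+\infty,w) = r$, and symmetrically $\gamma(-\infty,\gamma(t,w)) = \gamma(-\infty,w) = r$ for $w \in W^{\mathfrak{u}}(r)$. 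In particular the invariance holds in both time directions, so that $\gamma(t_0,w) \in L(r)$ forces $w = \gamma(-t_0,\gamma(t_0,w)) \in L(r)$ whenever the relevant times are admissible.

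If $z_0$ is a critical point, then $\gamma(t,z_0) = z_0 \in \partial L(r)$ for all $t$ and there is nothing to prove; so I would assume $z_0$ is a regular point. Since a non-constant flow line meets $\Gamma^D$ at most once (the level $u=0$ is attained at most once by strict monotonicity) and such a meeting can only occur at a finite endpoint of $I$ (Remark~\ref{rem:traj-boundary}), it suffices to prove $\gamma(t,z_0) \in \partial L(r)$ for $t$ in the interior of $I$, where $\gamma(t,z_0) \in \Omega \cup \Gamma^N$; any finite endpoint, where the flow line lands on $\Gamma^D$, then follows from the closedness of $\partial L(r)$ together with the continuity of $t \mapsto \gamma(t,z_0)$.

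Arguing by contradiction, I would suppose $z := \gamma(t_0,z_0) \notin \partial L(r)$ for some interior $t_0 \in I$. Unlike $G$ and $W$, the set $L(r)$ need not be open, so $z_0 \in \partial L(r)$ does not preclude $z_0 \in L(r)$; hence the correct dichotomy is $z \in \mathrm{Int}(L(r))$ or $z \in \overline{\Omega}\setminus \overline{L(r)}$. In the interior case I would pick a neighborhood $V \subset L(r)$ of $z$ and, using continuous dependence on initial data (Lemma~\ref{lem:Cuchy-contin}, supplemented by Lemma~\ref{lem:traj-neum} and Corollary~\ref{cor:boundary} if $z_0 \in \Gamma^N$), find a neighborhood $U$ of $z_0$ with $\gamma(t_0,U) \subset V \subset L(r)$; backward flow-invariance then yields $U \subset L(r)$, so $z_0 \in \mathrm{Int}(L(r))$, contradicting $z_0 \in \partial L(r)$. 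In the exterior case I would choose $V$ with $V \cap \overline{L(r)} = \emptyset$ and, again by continuous dependence, a neighborhood $U$ of $z_0$ with $\gamma(t_0,U) \subset V$; since $z_0 \in \overline{L(r)}$, the set $U$ contains some $z_1 \in L(r)$, and flow-invariance forces $\gamma(t_0,z_1) \in L(r)$, contradicting $\gamma(t_0,z_1) \in V$.

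The main obstacle, compared with the analogous lemmas for $G$ and $W$, is precisely the loss of openness of $L(r)$: one cannot simply quote the defining property of the set at an interior image point, and instead must combine continuous dependence on initial data with the two-sided flow-invariance of $L(r)$ to upgrade ``interior image point'' to ``interior source point''. A secondary point requiring care is that flow lines through $\partial L(r)$ may genuinely reach $\Gamma^D$, so the invariance is asserted only on the maximal interval $I$, and the $\Gamma^D$-endpoint is recovered a posteriori from the closedness of the boundary.
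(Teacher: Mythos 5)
Your proposal is correct and follows essentially the same route as the paper's proof: a contradiction argument with the dichotomy $z \in \mathrm{Int}(L(r))$ versus $z \notin \overline{L(r)}$, continuous dependence on initial data (Lemma~\ref{lem:Cuchy-contin}) combined with the two-sided flow-invariance of $L(r)$ (which the paper uses implicitly), and recovery of the $\Gamma^D$-endpoint from the closedness of $\partial L(r)$. The only cosmetic difference is that the paper, in the interior subcase, uses $\mathrm{Int}(L(r)) \subset \Omega$ together with Corollary~\ref{cor:boundary} to conclude $z_0 \in \Omega$ outright (so that a full neighborhood of $z_0$ lies in $L(r)$), which is exactly the role your parenthetical appeal to Lemma~\ref{lem:traj-neum} and Corollary~\ref{cor:boundary} plays.
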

\begin{proof}  
Suppose, by contradiction, that there exists $t_0 \in I$ such that $z = \gamma(t_0,z_0) \not\in \pa L(r)$.
Consider the case $z \in \Omega \cup \Gamma^N$.
Since $z \not\in \pa L(r)$, we have either $z \in \mathrm{Int}(L(r))$ or $z \in (\Omega \cup \Gamma^N) \setminus \overline{L(r)}$. 
Suppose that $z \in \mathrm{Int}(L(r))$. 
Then there is a neighborhood $U$ of $z$ contained in $\mathrm{Int}(L(r))$,
i.e., the flow line of every point from $U$ converges to $r$. 
Moreover, since $z \in \mathrm{Int}(L(r)) \subset \Omega$, we have $z_0 \in \Omega$ by Corollary~\ref{cor:boundary}.
Hence, using Lemma~\ref{lem:Cuchy-contin}, we see that $z_0 \in \mathrm{Int}(L(r))$, which contradicts our assumption $z_0\in \pa L(r)$.
Suppose that $z\in (\Om \cup \Gamma^N) \setminus \overline{L(r)}$.
Since $\mathrm{dist}(z,\overline{L(r)})>0$, there exists a neighborhood $V$ of $z$ such that $V \cap (\Omega \cup \Gamma^N) \subset (\Omega \cup \Gamma^N) \setminus \overline{L(r)}$. 
Therefore, by Lemmas~\ref{lem:Cuchy-contin} and~\ref{lem:traj-neum}, for a sufficiently small neighborhood $U$ of $z_0$ we have $\gamma(t_0,U \cap (\Omega \cup \Gamma^N)) \subset (\Omega \cup \Gamma^N) \setminus \overline{L(r)}$.
However, since $z_0 \in \partial L(r) \setminus \Gamma^D$, $U$ contains a point $z_1 \in (\Omega \cup \Gamma^N) \cap L(r)$ and hence $\gamma(\cdot,z_1) \subset (\Omega \cup \Gamma^N) \cap L(r)$, which is a contradiction to  $\gamma(t_0,z_1) \in (\Omega \cup \Gamma^N) \setminus \overline{L(r)}$. 
Thus, we proved that $\gamma(t,z_0) \in \partial L(r)$ whenever $\gamma(t,z_0) \in \Omega \cup \Gamma^N$ for $t \in I$.
Now we assume that $z \in \Gamma^D$.
In view of the previous case, the continuity of $\gamma(\cdot,z_0)$ and the closendness of $\partial L(r)$ guarantee that $z \in \partial L(r)$.
This completes the proof.
\end{proof}    

\begin{corollary}\label{cor:lem:Lr:boundary}
Let $r\in \mathcal{S}$ and $z_0 \in \mathrm{Int}(L(r))$.
Then $\gamma(t,z_0) \in \mathrm{Int}(L(r))$ for any $t \in \widetilde{I}$, where $\widetilde{I}$ is the maximal open interval of existence of $\ga(\cdot, z_0)$ in $\Omega$.
\end{corollary}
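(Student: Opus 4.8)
The plan is to argue by contradiction, directly exploiting the boundary invariance just established in Lemma~\ref{lem:Lr:boundary}. Before that, I would record two preliminary observations. First, since $\mathrm{Int}(L(r))$ is open in $\mathbb{R}^2$ and contained in $L(r) \subset \overline{\Omega}$, and since $\overline{\Omega} \setminus \Omega = \partial\Omega$ has empty interior, we necessarily have $\mathrm{Int}(L(r)) \subset \Omega$. Consequently, for $z_0 \in \mathrm{Int}(L(r))$ the flow line $\gamma(\cdot, z_0)$ remains in $\Omega$ for every $t \in \widetilde{I}$ (and in particular never meets $\Gamma^D$ on $\widetilde{I}$), in accordance with Corollary~\ref{cor:boundary}. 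Second, I would note that $L(r) = W^{\mathfrak{s}}(r) \cup W^{\mathfrak{u}}(r)$ is invariant along trajectories lying in $\Omega$: because $\gamma(s, \gamma(t_0, z_0)) = \gamma(s + t_0, z_0)$, the limits $\gamma(\pm\infty, \gamma(t_0, z_0))$ coincide with $\gamma(\pm\infty, z_0)$, so if $z_0 \in W^{\mathfrak{s}}(r)$ (resp. $W^{\mathfrak{u}}(r)$), then $\gamma(t_0, z_0) \in W^{\mathfrak{s}}(r)$ (resp. $W^{\mathfrak{u}}(r)$) for every $t_0 \in \widetilde{I}$. In either case $\gamma(t_0, z_0) \in L(r)$ for all $t_0 \in \widetilde{I}$.

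With these in hand, I would fix $t_0 \in \widetilde{I}$, set $z = \gamma(t_0, z_0) \in \Omega$, and suppose for contradiction that $z \notin \mathrm{Int}(L(r))$. Since $z \in L(r) \subset \overline{L(r)}$ by the invariance above, this forces $z \in \partial L(r)$, and because $z \in \Omega$ we have $z \in \partial L(r) \setminus \Gamma^D$. Lemma~\ref{lem:Lr:boundary} then applies to $z$ and gives $\gamma(t, z) \in \partial L(r)$ for every $t$ in the maximal interval of existence of $\gamma(\cdot, z)$ in $\overline{\Omega}$. The arc of the trajectory joining $z$ back to $z_0$ is the image of the interval between $0$ and $t_0$ under $\gamma(\cdot, z_0)$, hence lies entirely in $\Omega$; therefore $-t_0$ belongs to the maximal interval of $\gamma(\cdot, z)$, and we conclude $z_0 = \gamma(-t_0, z) \in \partial L(r)$. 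This contradicts $z_0 \in \mathrm{Int}(L(r))$, since interior and boundary are disjoint, and the claim follows.

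The only delicate point is the combination of the invariance step with the verification that $-t_0$ lies in the maximal interval of existence of $\gamma(\cdot, z)$ in $\overline{\Omega}$: one must ensure that retracing the trajectory from $z$ back to $z_0$ does not exit $\overline{\Omega}$. This is guaranteed precisely because the whole retraced arc is contained in $\Omega$ for times in $\widetilde{I}$. Everything else is a routine application of Lemma~\ref{lem:Lr:boundary} together with the disjointness of $\mathrm{Int}(L(r))$ and $\partial L(r)$.
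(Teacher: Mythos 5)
Your proof is correct and follows the route the paper intends: the corollary is stated without proof precisely because it follows from the flow-invariance of $L(r)$ (via the group property of the flow and the definitions of $W^{\mathfrak{s}}(r)$, $W^{\mathfrak{u}}(r)$) combined with Lemma~\ref{lem:Lr:boundary}, exactly as you argue, with the identity $\partial L(r) = \overline{L(r)} \setminus \mathrm{Int}(L(r))$ turning ``not interior'' into ``boundary'' and yielding the contradiction. The only cosmetic remark is that your first observation is not really needed (membership $\gamma(t_0,z_0)\in\Omega$ is already built into the definition of $\widetilde{I}$), and its justification via the empty interior of $\partial\Omega$ is, strictly speaking, insufficient in general topology (think of a slit domain), though the conclusion $\mathrm{Int}(L(r))\subset\Omega$ does hold for the analytic boundaries assumed in the paper and is used there as well.
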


\subsection{Structure of \texorpdfstring{$\partial L(r)$}{dL(r)}}
The aim of this section is to show that the boundary $\partial L(r)$ consists of at most finitely many flow lines (together with their end points), arcs of critical points, and parts of $\Gamma^D$. 
We start by providing several auxiliary result about the structure of $\partial L(r)$.
We refer to \cite{andronov1974qualitative} for various results of related nature.

For any $r \in \mathcal{S}$ we consider its level set
$$
E(r) = \{z:~ u(z)=u(r)\},
$$
where we assume $u$ to be extended to a neighborhood of $r$ whenever $r \in \partial \Omega$.
Thanks to Lemma~\ref{lem:numsaddlesOmega}, there exists a neighborhood $U$ of $r$ that does not contain any other critical point of $u$. 
By Remark~\ref{rem:level-sets-diffeomorhpic}, we can take $U$ smaller so that $E(r) \cap U$ is homeomorphic to the zero level set of a corresponding prototypical function (described in Section~\ref{sec:classification}) in a neighborhood of $(0,0)$, and hence $U \setminus E(r)$ consists of \textit{finitely many} open connected components, each of which has $r$ on its boundary.
We denote those connected components that intersect with $\Omega$ as 
$K_1,\dots,K_l$, where $l \geq 1$.
\begin{lemma}\label{lem:observation}
Let $r \in \mathcal{S}$. 
Then $\partial L(r)$ contains at least $l$ and at most $2l$ flow lines converging to $r$, and $\mathrm{Int}(L(r)) \cap U$ has at most $l$ connected components.
\end{lemma}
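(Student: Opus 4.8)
The plan is to localize everything near $r$ using the sector decomposition set up just before the statement, and then to read off the separatrices and the interior basins one sector at a time. First I would record the basic dichotomy coming from monotonicity: since $u$ is strictly monotone along flow lines and takes the common value $u(r)$ on $E(r)$, any regular point $z\in E(r)\setminus\{r\}$ has forward flow with $u<u(r)$ and backward flow with $u>u(r)$, so $z\notin W^{\mathfrak{s}}(r)\cup W^{\mathfrak{u}}(r)=L(r)$. Hence $E(r)\setminus\{r\}$ is disjoint from $L(r)$, and therefore every flow line converging to $r$ lies, near its endpoint $r$, inside a single sector $K_i$, on which the sign of $u-u(r)$ is fixed. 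It thus suffices to count, in each sector, the flow lines converging to $r$ that lie on $\partial L(r)$ and the components of $\mathrm{Int}(L(r))$ contained in it. By the symmetry $t\mapsto -t$ I may treat a sector $K_i$ on which $u>u(r)$, where the relevant set is $W^{\mathfrak{s}}(r)\cap K_i$.

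For the upper bound and the interior count I would use Proposition~\ref{prop:absil-x}. Let $\gamma_1,\dots,\gamma_m$ be the flow lines in $K_i$ converging to $r$, listed in the cyclic order of their limiting tangents at $r$ (which exist by the Lojasiewicz theory, cf.\ Remark~\ref{rem:thom-conjecture}). Two angularly adjacent ones $\gamma_j,\gamma_{j+1}$ bound a curvilinear subsector $\mathcal{K}$ on which $u\ge u(r)$, so Proposition~\ref{prop:absil-x} forces a one-sided neighborhood of $r$ in $\mathcal{K}$ to flow into $r$ (it stays near $r$ and, $r$ being the only critical point in $U$ by Lemma~\ref{lem:numsaddlesOmega} together with Lemma~\ref{lem:lojas}, converges to $r$); thus $\mathcal{K}\subset W^{\mathfrak{s}}(r)$ near $r$. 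Consequently every \emph{inner} separatrix $\gamma_2,\dots,\gamma_{m-1}$ is surrounded by points of $W^{\mathfrak{s}}(r)=L(r)\cap K_i$ and lies in $\mathrm{Int}(L(r))$, so only the two extreme separatrices $\gamma_1,\gamma_m$ (which coincide when $m=1$) can belong to $\partial L(r)$. This gives at most two flow lines of $\partial L(r)$ converging to $r$ per sector, hence at most $2l$ in total. The same computation identifies $\mathrm{Int}(L(r))\cap K_i$: for $m\ge 2$ the $m-1$ inner subsectors, glued along the now-interior curves $\gamma_2,\dots,\gamma_{m-1}$, form a single connected open piece, while for $m=1$ the two edge-subsectors are bounded by an arc of $E(r)$ (not a flow line), so Proposition~\ref{prop:absil-x} does not apply and $\mathrm{Int}(L(r))\cap K_i=\emptyset$. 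Summing over sectors yields at most $l$ components of $\mathrm{Int}(L(r))\cap U$.

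The lower bound requires the existence of at least one separatrix in each sector, i.e.\ $m\ge 1$. Here I would argue by connectedness of the exit behavior. On each of the two bounding arcs of $K_i$ (contained in $E(r)$) the field $-\nabla u$ points from $\{u>u(r)\}$ into the adjacent $\{u<u(r)\}$ sector, so points of $K_i$ close to the left, resp.\ right, bounding arc leave $K_i$ through that arc in finite time; these two sets are open, disjoint and nonempty. If no orbit of $K_i$ converged to $r$, then these two edge-exit sets, together with the set of orbits escaping through the outer boundary of the neighborhood, would cover the connected set $K_i$, and a point on the interface between the two edge-exit sets would have an orbit leaving through neither edge. By the alternative~\eqref{alternative} and Lemma~\ref{lem:lojas}, such an orbit, as long as it remains in the neighborhood, must converge to the unique critical point $r$. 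Each extreme separatrix lies on $\partial L(r)$ (one of its sides is a hyperbolic edge-subsector whose orbits exit, hence not in $L(r)$), so $\partial L(r)$ carries at least $l$ flow lines converging to $r$, which together with the second paragraph gives the two-sided bound.

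The step I expect to be the main obstacle is exactly this last existence argument: controlling orbits that might escape through the outer boundary $\partial U$ rather than through an edge or into $r$. This third possibility muddies the clean ``left-exit versus right-exit'' dichotomy, and making it rigorous will require shrinking $U$ and combining the uniqueness of the critical point (Lemma~\ref{lem:numsaddlesOmega}) with a trapping estimate of the type used in Proposition~\ref{prop:absil-x} to confine the interface orbit until it converges to $r$. Everything else—the sector decomposition furnished by Remark~\ref{rem:level-sets-diffeomorhpic}, the monotonicity dichotomy, and the Proposition~\ref{prop:absil-x} sandwiching that simultaneously delivers the upper bound and the connectedness of the interior basins—is robust and uses only the local level-set model together with soft dynamical facts; in particular it applies uniformly, including in the fully degenerate (monkey-saddle) case, where the conjugacy of $W^{\mathfrak{s}}(r),W^{\mathfrak{u}}(r)$ to the prototypical ones is not available.
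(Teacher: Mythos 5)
Your overall skeleton matches the paper's proof: work sector by sector (without loss of generality $u>u(r)$ on the sector $K$), use Proposition~\ref{prop:absil-x} to show that the region between two separatrices converging to $r$ lies in $\mathrm{Int}(L(r))$, and produce the extremal separatrices from an exit-set dichotomy along the two edges of the sector. However, there is a genuine gap in your upper-bound step. You begin by listing ``the flow lines in $K_i$ converging to $r$'' as a finite family $\gamma_1,\dots,\gamma_m$ ordered by their limiting tangents. This is not available: precisely in the saddle-node case --- the case this lemma is designed to cover, and the only case in which $\mathrm{Int}(L(r))\neq\emptyset$ --- there are uncountably many flow lines in a sector converging to $r$ (the whole one-sided basin does), and distinct ones can share the same limiting tangent (in the model $x^3+y^2$ every orbit of the basin $\{x>0\}$ approaches the origin tangentially to the $x$-axis). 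So neither the finiteness of the family nor the cyclic order by tangents is well defined; and if you instead restrict the list to the flow lines lying on $\partial L(r)$, their finiteness is part of what the lemma asserts, so the argument becomes circular.

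The paper resolves this, and simultaneously the escape-through-$\partial U$ issue that you flagged as the main obstacle, by one construction. For a sector $K$ it sets $A_i=K\cap\bigcup_{z\in\partial K^i\setminus\{r\}}\gamma(\cdot,z)$, the union of flow lines through the two edges $\partial K^1,\partial K^2$; these sets are open (as in Lemma~\ref{lem:G:open}) and disjoint by monotonicity of $u$ along orbits. It then chooses $z_i\in\partial A_i\cap K\cap B_\varepsilon(r)$ with $u(z_i)<m_i$, where $m_i$ is the minimum of $u$ over $\partial A_i\cap K\cap\partial B_\varepsilon(r)$. Since $\partial A_i$ is flow-invariant inside $K$ (as in Lemma~\ref{lem:G:boundary}) and $u$ decreases along orbits, the orbit of $z_i$ can never reach $\partial B_\varepsilon(r)$, so it stays in $\partial A_i\cap K\cap B_\varepsilon(r)$ and must converge to the unique critical point $r$; this yields, per sector, at least one and at most two flow lines in $\partial L(r)$ converging to $r$, with no case analysis on how general orbits might leave $U$. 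Only then is Proposition~\ref{prop:absil-x} applied, to the curvilinear sector $K^*$ bounded by $\gamma(\cdot,z_1)$ and $\gamma(\cdot,z_2)$, giving $B_\sigma(r)\cap K^*\subset\mathrm{Int}(L(r))$: this excludes any further boundary separatrices in $K$ and bounds the number of components of $\mathrm{Int}(L(r))\cap U$ by $l$. So your instinct about which tools are needed (Proposition~\ref{prop:absil-x} plus a trapping estimate) is right, but the trapping must be applied to the flow-invariant sets $\partial A_i$ --- where a simple level comparison does the job --- rather than to an abstractly produced ``interface orbit,'' and the upper bound must be extracted from the two constructed extremal orbits rather than from an a priori finite, tangent-ordered list.
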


\begin{proof}
Assume that $r \in \Omega$ and take any connected component $K := K_n$, where $n \in \{1,\ldots,l\}$. 
Assume, without loss of generality, that $u(z) > u(r)$ for any $z \in K$. 
Denote by $\partial K^1$ and $\partial K^2$ those parts of the boundary of $K$ that contain $r$. 
In particular, $\partial K^1, \partial K^2 \subset E(r)$. 
Consider the sets $A_1$ and $A_2$ defined as unions of flow lines emanating from $\partial K^1$ and $\partial K^2$ towards $K$, respectively, i.e.,
$$
A_i = K \cap \bigcup_{z \in \partial K^i \setminus \{r\}} \gamma(\cdot,z),
\quad i=1,2,
$$
see Figure~\ref{fig:lem48}.
Since $r$ is the only critical point in $U$, any $z \in \partial K^i \setminus \{r\}$ is a regular point. 
Therefore, it can be shown in much the same was as in Lemma~\ref{lem:G:open} that $A_i$ is an open set.
Since $u$ is strictly monotone along flow lines, we see that $A_1 \cap A_2 = \emptyset$. 

\begin{figure}[!ht]
\begin{center}
\includegraphics[width=0.5\linewidth]{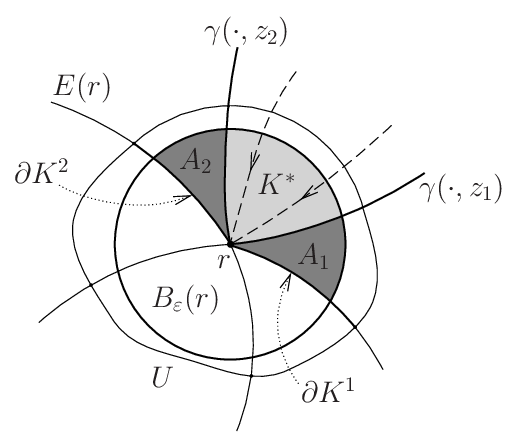}
\end{center}
\caption{Schematic plot of construction.}
\label{fig:lem48}
\end{figure}

 Observe that, by the openness of $K,$ we have $\partial A_i \cap K \cap \partial K^i=\emptyset$. Let us take any disk $B_\varepsilon(r) \subset U$.
By the continuity of $u$ and closedness of $\partial A_i$, we have 
$$
m_i := \min_{\xi \in \partial A_i \cap K \cap \partial B_\varepsilon(r)} u(\xi) > u(r).
$$
Let us now take any $z_i \in \partial A_i \cap K \cap B_\varepsilon(r)$ so that $u(z_i)<m_i$. 
Arguing in much the same way as in Lemma~\ref{lem:G:boundary} and recalling that $u(z_i)<m_i$ and $u$ is strictly monotone along flow lines, we see that $\gamma(t,z_i) \in \partial A_i \cap K \cap B_\varepsilon(r)$ for any $t \geq 0$. 
Hence, $\gamma(\cdot,z_i)$ converges to $r$ as $t \to +\infty$ since $r$ is the only critical point in $U$.

Thus, we get $\gamma(\cdot,z_i) \subset L(r)$, and in fact we have
$\gamma(\cdot,z_i) \subset \partial L(r)$ by construction of the sets $A_i$.
If $\gamma(\cdot,z_1)$ does not coincide with $\gamma(\cdot,z_2)$, then using Proposition~\ref{prop:absil-x} over a curvilinear sector $K^* \subset K$ bounded by $\gamma(\cdot,z_1)$, $\gamma(\cdot,z_2)$, and $\partial B_\varepsilon(r)$, we get the existence of a disk $B_\sigma(r)$ such that the flow line of any point $z \in B_\sigma(r) \cap K^*$ converges to $r$, i.e., $B_\sigma(r) \cap K^* \subset \mathrm{Int}(L(r))$. 
In particular, there are no flow lines from $\partial L(r) \cap K$ converging to $r$ other than $\gamma(\cdot,z_1)$ and $\gamma(\cdot,z_2)$.
On the other hand, if $\gamma(\cdot,z_1)$ coincides with $\gamma(\cdot,z_2)$, then this is the only flow line in $\partial L(r) \cap K$ converging to $r$ and $\mathrm{Int}(L(r)) \cap K = \emptyset$.
Therefore, the claim of the lemma is proved in the case $r \in \Omega$. 

If $r \in \partial \Omega$, then we argue in the same way as above for the extension of $u$ to a neighborhood of $r$. 
Note that if $r \in \Gamma^D$, then $\Gamma^D \subset E(r)$, while if $r \in \Gamma^N$, then $\Gamma^N$ contains two flow lines converging to $r$, and hence they belong to $\partial L(r)$.
Thus, restricting back to $\overline{\Omega}$, we obtain the desired claim.
\end{proof}

\begin{lemma}\label{lem:Lr-Int-empty}
Let $r \in \mathcal{S}$. 
If $\mathrm{Int}(L(r)) = \emptyset$, then $L(r) \subset \partial L(r)$, $\partial L(r) \setminus L(r)$ consists of a finite number of critical points, and any flow line in $\partial L(r)$ converges to $r$. 
\end{lemma}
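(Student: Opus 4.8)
All three assertions will follow from one structural fact, which is what I would isolate first: under the hypothesis $\mathrm{Int}(L(r))=\emptyset$, the set $L(r)\setminus\{r\}$ is a union of \emph{finitely many} flow lines, each converging to $r$. The first assertion itself is purely topological and I would dispatch it immediately: since $\mathrm{Int}(L(r))=\emptyset$ we have $\overline{L(r)}=\mathrm{Int}(L(r))\cup\partial L(r)=\partial L(r)$, and because $L(r)\subset\overline{L(r)}$ this yields $L(r)\subset\partial L(r)$ at once.

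To prove the structural fact I would return to the construction in the proof of Lemma~\ref{lem:observation}. Fix the neighborhood $U$ of $r$ containing no other critical point, together with the components $K_1,\dots,K_l$ of $U\setminus E(r)$ meeting $\Omega$. That proof exhibits in each $K_n$ two distinguished flow lines of $\partial L(r)$ converging to $r$ and establishes the dichotomy that these two coincide precisely when $\mathrm{Int}(L(r))\cap K_n=\emptyset$, in which case that single flow line is the only one in $\partial L(r)\cap K_n$ converging to $r$. Since $\mathrm{Int}(L(r))=\emptyset$ forces the coinciding alternative in \emph{every} $K_n$, and since $\mathrm{Int}(L(r))\cap K_n=\emptyset$ makes every point of $L(r)\cap K_n$ lie on a converging flow line inside $\partial L(r)$, I conclude that $L(r)\cap U\setminus\{r\}$ consists of exactly $l$ flow lines $\gamma_1,\dots,\gamma_l$ converging to $r$ (for $r\in\partial\Omega$ I would run the same argument for the extension of $u$, recording the extra flow lines of $\Gamma^N$ and the inclusion $\Gamma^D\subset E(r)$, exactly as in Lemma~\ref{lem:observation}). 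Because $W^{\mathfrak{s}}(r)$ and $W^{\mathfrak{u}}(r)$ are unions of full flow lines, each $\gamma_n$ extends to a unique maximal flow line staying in $L(r)$; conversely, any flow line contained in $L(r)$ converges to $r$, hence enters $U$ and there coincides with some $\gamma_n$. Thus $L(r)\setminus\{r\}$ is a union of finitely many maximal flow lines, each converging to $r$.

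With this in hand I would finish as follows. Each of the finitely many flow lines has finite length: Lemma~\ref{lem:lojas} gives finiteness near the $r$-end, while a $\Gamma^D$-end is reached in finite time with $|\nabla u|$ bounded on the compact $\overline{\Omega}$, so each flow line is a compact arc whose closure adds only its two endpoints. By Lemma~\ref{lem:classif-gamma} the far endpoint of each arc is either a critical point or a point of $\Gamma^D$; in the latter case the flow from that boundary point still converges to $r$, so it already lies in $L(r)$. Hence $\partial L(r)=\overline{L(r)}=L(r)\cup\{\text{far critical endpoints}\}$, and $\partial L(r)\setminus L(r)$ is the finite set of those endpoints that are critical points distinct from $r$; this is the second assertion. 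For the third assertion, any flow line lying in $\partial L(r)$ passes through a regular point $z_0\in\partial L(r)\setminus\Gamma^D$, to which Lemma~\ref{lem:Lr:boundary} applies; since $\overline{L(r)}\setminus L(r)$ contains only critical points and $z_0\notin\Gamma^D$, we obtain $z_0\in L(r)$, so its flow line is one of the $\gamma_n$ and converges to $r$.

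The main obstacle I anticipate is the local-to-global passage in the second paragraph: extracting cleanly from the proof of Lemma~\ref{lem:observation} that empty interior in each $K_n$ forces $L(r)\cap K_n$ to be a single flow line, and then checking that no extra flow lines of $L(r)$, nor any stray limit flow lines in $\overline{L(r)}$, can appear away from the $\gamma_n$. It is precisely the finite-length/compact-arc argument that excludes such accumulation and keeps $\partial L(r)\setminus L(r)$ a finite set of critical points.
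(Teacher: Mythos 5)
Your proof is correct and follows essentially the same route as the paper's: the trivial topological inclusion $L(r)\subset\overline{L(r)}=\partial L(r)$, finiteness of the flow lines constituting $L(r)$ via Lemma~\ref{lem:observation}, and then the closure/endpoint analysis yielding the last two assertions. The only differences are presentational: the paper simply cites the \emph{statement} of Lemma~\ref{lem:observation} (at most $2l$ flow lines of $\partial L(r)$ converge to $r$, and every flow line of $L(r)$ lies in $\partial L(r)$ and converges to $r$ by definition) where you re-run the sector dichotomy from its proof, and the paper compresses into a single ``consequently'' the endpoint bookkeeping (Lojasiewicz finite length, far ends being critical points or $\Gamma^D$-points already contained in the flow line) that you spell out explicitly.
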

\begin{proof}
The inclusion $L(r) \subset \partial L(r)$ is a trivial set-theoretic consequence of the assumption $\mathrm{Int}(L(r)) = \emptyset$.
In particular, any flow line from $L(r)$ (which converges to $r$ by definition) belongs to $\partial L(r)$ and hence, by Lemma~\ref{lem:observation}, $L(r)$ consists of finitely many flow lines. 
Consequently, $\partial L(r) \setminus L(r)$ consists of a finite number of critical points, and therefore any flow line in $\partial L(r)$ converges to $r$.
\end{proof}

Let us note that $\partial L(r)$ may contain flow lines that do not converge to $r$. 
Using Lemmas~\ref{lem:observation} and~\ref{lem:Lr-Int-empty}, we obtain the following result on the global behavior of $\partial L(r)$. 

\begin{proposition}\label{prop:partialL-finite-flow-lines}
Let $r \in \mathcal{S}$. 
Then $\partial L(r)$ contains at most finitely many flow lines, each of which converges to a saddle point.
\end{proposition}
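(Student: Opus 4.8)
The plan is to show two things for an arbitrary flow line $\gamma \subset \partial L(r)$: that at least one of its ends is a saddle point, and that the total number of such flow lines is finite. By Lemma~\ref{lem:Lr:boundary} every flow line meeting $\partial L(r)\setminus\Gamma^D$ stays in $\partial L(r)$, so $\gamma$ together with its limit points lies in the closed set $\partial L(r)$. By Lemma~\ref{lem:classif-gamma}, each end of $\gamma$ is a local extremum point, a saddle point, or a point where $\gamma$ reaches $\Gamma^D$ in finite time. Thus the assertion that $\gamma$ converges to a saddle amounts to excluding the two configurations in which \emph{neither} end is a saddle, namely (i) both ends are local extrema, and (ii) one end is a local extremum while the other reaches $\Gamma^D$.

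To rule out (i) and (ii) I would use a shadowing argument based on the stability of extrema. Fix $z_0\in\gamma$ and suppose, for contradiction, that neither end of $\gamma$ is a saddle. On an end where $\gamma$ converges to a local extremum $p$ (a maximum on the backward end, a minimum on the forward end, since $u$ is strictly monotone along $\gamma$), Proposition~\ref{prop:absil} provides a ball $B_\delta(p)$ all of whose points flow into a small neighborhood of $p$; on an end where $\gamma$ reaches $\Gamma^D$, the set $G$ is open by Lemma~\ref{lem:G:open}, so all nearby points reach $\Gamma^D$ in finite time as well. Combining this with the continuous dependence on initial data (Lemma~\ref{lem:Cuchy-contin}) along the compact orbit segment of $\gamma$ between the two neighborhoods, I obtain a tubular neighborhood $N$ of $z_0$ in which every flow line has the same two fates as $\gamma$. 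In case (i) each point of $N$ flows backward to a local maximum and forward to a local minimum, and in case (ii) each point flows to a local extremum on one side and to $\Gamma^D$ on the other; in either case none of these endpoints equals the saddle $r$, so $N\cap(W^{\mathfrak{s}}(r)\cup W^{\mathfrak{u}}(r))=\emptyset$, i.e.\ $N\cap L(r)=\emptyset$. This contradicts $z_0\in\partial L(r)\subset\overline{L(r)}$ and proves that every flow line in $\partial L(r)$ converges to a saddle point.

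For finiteness I would reduce to a local count at each saddle. Since $\mathcal{S}$ is finite (Lemma~\ref{lem:numsaddlesOmega}) and each flow line in $\partial L(r)$ converges to some $s\in\mathcal{S}$, it suffices to bound the number of flow lines of $\partial L(r)$ having a fixed $s$ as a limit. For $s=r$ this is exactly Lemma~\ref{lem:observation}, which gives at most $2l$ such flow lines. For $s\neq r$ I would repeat the localized construction from the proof of Lemma~\ref{lem:observation}: choose $B_\varepsilon(s)$ containing no other critical point and small enough that, by Remark~\ref{rem:level-sets-diffeomorhpic}, the level set $\{u=u(s)\}$ splits $B_\varepsilon(s)$ into finitely many sectors; in each sector the same analysis of the two extremal inflowing flow lines (together with Proposition~\ref{prop:absil-x}) shows that $\partial L(r)$ can contain at most two flow lines converging to $s$ through that sector. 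Hence only finitely many flow lines of $\partial L(r)$ converge to each saddle, and finiteness follows.

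The main obstacle is precisely this finiteness count in the saddle-node case, where $\mathrm{Int}(L(r))\neq\emptyset$ and infinitely many flow lines pass arbitrarily close to a saddle $s$, yet only finitely many may lie on the thin set $\partial L(r)$. The delicate point is to verify that a flow line lying in the interior of a parabolic sector of $s$ cannot belong to $\partial L(r)$ unless its opposite end returns to $r$ (so that it is already accounted for by Lemma~\ref{lem:observation}): one shows that near such a flow line the set $L(r)$ coincides locally with the unstable fan $W^{\mathfrak{u}}(r)$, whose bounding flow lines converge back to $r$. This dichotomy --- a flow line of $\partial L(r)$ converging to $s$ is either a separatrix of $s$ (finitely many) or a fan edge returning to $r$ (finitely many, by Lemma~\ref{lem:observation}) --- is what secures finiteness, and it is here that the case $\mathrm{Int}(L(r))=\emptyset$, handled directly by Lemma~\ref{lem:Lr-Int-empty}, must be separated from the saddle-node case.
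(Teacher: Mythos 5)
The first half of your proposal (every flow line in $\partial L(r)$ converges to a saddle) is sound and is essentially the paper's own argument: classify the ends via Lemma~\ref{lem:classif-gamma}, then kill the two saddle-free configurations by combining Proposition~\ref{prop:absil}, openness of $G$ (Lemma~\ref{lem:G:open}), and continuous dependence (Lemma~\ref{lem:Cuchy-contin}) to produce a neighborhood of a point of $\gamma$ that misses $L(r)$, contradicting $\gamma \subset \partial L(r)$. The finiteness half is where there is a genuine gap.

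Your count at a saddle $s \neq r$ rests on the dichotomy that a flow line of $\partial L(r)$ converging to $s$ is either a separatrix of $s$ (at most two per sector, by the construction of Lemma~\ref{lem:observation}) or a fan-interior flow line whose \emph{opposite end returns to $r$}. This dichotomy is not exhaustive, and the justification you offer for it (``near such a flow line the set $L(r)$ coincides locally with the unstable fan $W^{\mathfrak{u}}(r)$, whose bounding flow lines converge back to $r$'') is not a valid argument. The missed configuration is a flow line $\gamma$ that enters $s$ through the interior fan, i.e.\ $\gamma \subset \mathrm{Int}(L(s))$, while its opposite end converges to a \emph{third} saddle $s'$ with $\gamma \subset \partial L(s')$: points on one side of $\gamma$ can flow backward into the fan of $s'$ while points on the other side flow backward to $r$, so such a $\gamma$ lies on $\partial L(r)$ even though it is neither a separatrix of $s$ nor returns to $r$. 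Hence your per-sector bound of two at $s$ is false as stated. The paper handles exactly this by an exhaustive case analysis of $\gamma \subset \partial L(r)$ not converging to $r$: at each saddle end one splits, via Lemma~\ref{lem:Lr:boundary}, into $\gamma \subset \partial L(\cdot)$ versus $\gamma \subset \mathrm{Int}(L(\cdot))$; the neighborhood-avoidance argument eliminates every case in which $\gamma$ lies in the interior of $L(\cdot)$ at \emph{all} of its saddle ends (and, in particular, the mixed cases ``interior fan at a saddle end plus extremum or $\Gamma^D$ at the other end,'' which your proposal does not address either), so each surviving flow line lies on $\partial L(s)$ for \emph{some} saddle $s$ it converges to, and Lemma~\ref{lem:observation} applied at \emph{that} saddle gives the finite bound. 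Your scheme becomes correct once the dichotomy is replaced by this trichotomy (separatrix of $s$, separatrix of another saddle $s'$, or return to $r$), with exhaustiveness proved by the same shadowing argument you already deploy in the first half; a final separate word is also needed for flow lines of $\partial L(r)$ lying on $\Gamma^N$, which the paper dispatches via Corollary~\ref{cor:boundary-GN}.
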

\begin{proof}
If $\mathrm{Int}(L(r)) = \emptyset$, then the desired claim follows from  Lemmas~\ref{lem:observation} and~\ref{lem:Lr-Int-empty}.
Thus, we are interested in the case $\mathrm{Int}(L(r)) \neq \emptyset$.

First, we investigate the set $\pa L(r) \cap \Omega$.
Thanks to Lemma~\ref{lem:observation}, $\pa L(r) \cap \Omega$ contains finitely many flow lines converging to $r$.
Thus, our aim is to show that there are at most finitely many flow lines in $\pa L(r) \cap \Omega$ which do not converge to $r$.
Let us take any regular point $z\in \pa L(r) \cap \Omega$ and
let $\gamma = \gamma(\cdot,z) \subset \partial L(r)$ be its flow line satisfying $\gamma(+\infty,z) \neq r$ and $\gamma(-\infty,z) \neq r$.
According to Lemma~\ref{lem:classif-gamma}, there are only the following possible options on the behavior of $\gamma$:
\begin{enumerate}[label={(\arabic*)}]
\item\label{prop:proof:1} Both ends of $\gamma$ are local extremum points.

\item\label{prop:proof:2} One end of $\gamma$ is a local extremum point and another end is a saddle point different from $r$.

\item\label{prop:proof:3} Both ends of $\gamma$ are saddle points different from $r$. 

\item\label{prop:proof:4} One end of $\gamma$ is a local extremum point and $\gamma$ reaches $\Gamma^D$ in a finite time.

\item\label{prop:proof:5} One end of $\gamma$ is a saddle point different from $r$ and $\gamma$ reaches $\Gamma^D$ in a finite time. 
\end{enumerate}
Let us analyze each case separately. 
Recall that $\mathcal{M}_+$ and $\mathcal{M}_-$ denote the sets of local maximum and minimum points of $u$ in $\overline{\Om}$, respectively; see Section~\ref{sec:properties}.

\noindent
\ref{prop:proof:1} 
Since
$\gamma(+\infty,z) \in \mathcal{M}_-$ and $\gamma(-\infty,z) \in \mathcal{M}_+$, we have $z \in W$. 
However, $W \cap L = \emptyset$ by Lemma~\ref{lem:GWL-intersection}.
Thus, this case cannot occur.

\noindent
\ref{prop:proof:2} 
Without loss of generality, assume that $\xi := \gamma(+\infty,z) \in \mathcal{M}_-$ and $\eta := \gamma(-\infty,z) \in \mathcal{S} \setminus \{r\}$, where the latter inclusion means $\gamma \subset L(\eta)$.
Thanks to Lemma~\ref{lem:Lr:boundary}, we distinguish two subcases:

\begin{enumerate}[label={(\alph*)}]
\item\label{prop:proof:2:a} $\gamma \subset \partial L(\eta)$.
Since $\gamma$ converges to $\eta$ and $\mathcal{S}$ is finite, we use Lemma~\ref{lem:observation} to conclude that this subcase can occur at most finitely many times. 
In other words, there are at most finitely many flow lines in $\partial L(r) \cap \Omega$ realizing this subcase.

\item\label{prop:proof:2:b} $\gamma \subset \mathrm{Int}(L(\eta))$. 
Then there exists a neighborhood $U \subset \mathrm{Int}(L(\eta))$ of $z$. 
That is, the flow line of any point from $U$ converges to $\eta$ as $t \to -\infty$.
On the other hand, since $\xi \in \mathcal{M}_-$, we use
Proposition~\ref{prop:absil} in combination with 
Lemma~\ref{lem:Cuchy-contin} 
to deduce that $U$ can be decreased so that the flow line of any point from $U$ converges either to $\xi$ (if $\xi$ is isolated) or to a critical point in a neighborhood of $\xi$ (if $\xi$ is nonisolated) as $t \to +\infty$.
In the latter case, we can assume that this critical point is not $r$, thanks to Lemma~\ref{lem:numsaddlesOmega}.
Consequently, $U \cap L(r) = \emptyset$.
However, our assumption $z \in \partial L(r)$  implies that $U \cap L(r) \neq \emptyset$, which is a contradiction.
Thus, this subcase cannot occur.
\end{enumerate}

\noindent
\ref{prop:proof:3} Let $\xi := \gamma(+\infty,z) \in \mathcal{S} \setminus \{r\}$ and $\eta := \gamma(-\infty,z) \in \mathcal{S} \setminus \{r\}$.
We distinguish two subcases:

\begin{enumerate}[label={(\alph*)}]
\item\label{prop:proof:3:a} 
$\gamma \subset \partial L(\xi)$ or $\gamma \subset \partial L(\eta)$. 
As in the case \ref{prop:proof:2}-\ref{prop:proof:2:a}, Lemma~\ref{lem:observation} implies that this subcase can occur at most finitely many times.

\item\label{prop:proof:3:b} 
$\gamma \subset \mathrm{Int}(L(\xi))$ and $\gamma \subset \mathrm{Int}(L(\eta))$. 
Similarly to the case \ref{prop:proof:2}-\ref{prop:proof:2:b}, we conclude that the flow line of any point from a neighborhood $U \subset \mathrm{Int}(L(\xi)) \cap \mathrm{Int}(L(\eta))$ of $z$ converges to $\xi$ as $t \to +\infty$ and to $\eta$ as $t \to -\infty$. That is, $U \cap L(r) = \emptyset$, which contradicts the assumption $z \in \partial L(r)$.
Thus, this subcase cannot occur.
\end{enumerate}

\noindent
\ref{prop:proof:4} 
Without loss of generality, assume that $\xi := \gamma(+\infty,z) \in \mathcal{M}_-$ and $\eta := \gamma(t_0,z) \in \Gamma^D$ for some $t_0 < 0$. 
The latter yields $z \in G$, and hence Lemma~\ref{lem:G:open} implies the existence of a neighborhood $U \subset G$ of $z$.
On the other hand, similarly to the case \ref{prop:proof:2}-\ref{prop:proof:2:b}, we deduce from
Proposition~\ref{prop:absil} in combination with 
Lemma~\ref{lem:Cuchy-contin} that $U$ can be decreased so that the flow line of any point from $U$ converges either to $\xi$ (if $\xi$ is isolated) or to a critical point in a neighborhood of $\xi$ different from $r$ (if $\xi$ is nonisolated) as $t \to +\infty$.
Again, we get $U \cap L(r) = \emptyset$, which is impossible since $z \in \partial L(r)$.
Thus, this case cannot occur.

\noindent
\ref{prop:proof:5} 
Without loss of generality, assume that $\xi := \gamma(+\infty,z) \in \mathcal{S} \setminus \{r\}$ and $\eta := \gamma(t_0,z) \in \Gamma^D$ for some $t_0 < 0$. 
We distinguish two subcases:

\begin{enumerate}[label={(\alph*)}]
\item\label{prop:proof:5:a} 
$\gamma \subset \partial L(\xi)$. 
As in the case \ref{prop:proof:2}-\ref{prop:proof:2:a}, Lemma~\ref{lem:observation} implies that this subcase can occur at most finitely many times.

\item\label{prop:proof:5:b}  $\gamma \subset \mathrm{Int}(L(\xi))$. 
Since the sets $\mathrm{Int}(L(\xi))$ and $G$ are open, there exists a neighborhood $U \subset \mathrm{Int}(L(\xi)) \cap G$ of $z$.
That is, the flow line of any point from $U$ does not converge to $r$, i.e., $U \cap L(r) = \emptyset$, which contradicts $z \in \partial L(r)$.
Thus, this subcase cannot occur.
\end{enumerate}

\smallskip
Taking now all the subcases into account, we end up only with the following possibilities: \ref{prop:proof:2}-\ref{prop:proof:2:a}, \ref{prop:proof:3}-\ref{prop:proof:3:a}, and \ref{prop:proof:5}-\ref{prop:proof:5:a}. 
Each of these cases provides at most \textit{finitely many} flow lines in $\pa L(r) \cap \Omega$, 
each of which converges to some saddle point.

Let us now investigate the set $\partial L(r) \cap \partial \Omega$.
We have $\gamma(\cdot,z) \cap \Gamma^D = \{z\}$ for any regular point $z \in \Gamma^D$ (see Remark~\ref{rem:traj-boundary}). 
In particular, $\partial L(r) \cap \Gamma^D$ does not contain flow lines. 
Recall that any flow line starting on $\pa L(r) \setminus \Gamma^D$ stays on $\pa L(r)$ for all admissible times (see Lemma~\ref{lem:Lr:boundary}).
Using this fact, Lemma~\ref{lem:traj-neum} and Corollary~\ref{cor:boundary-GN} imply that $\partial L(r) \cap \Gamma^N$ contains at most finitely many flow lines.
It remains to show that any such flow line converges to a saddle point. 
Arguing by contradiction, we end up with the case~\ref{prop:proof:1}. 
Using Proposition~\ref{prop:absil} and
Lemma~\ref{lem:Cuchy-contin} in much the same way as in the proof of \ref{prop:proof:2}-\ref{prop:proof:2:b}, we reach a contradiction.
\end{proof}

\begin{lemma}\label{lem:partialL-isolated}
Let $r \in \mathcal{S}$. 
Then $\partial L(r)$ does not contain isolated points.
\end{lemma}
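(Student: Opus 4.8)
The plan is to argue by contradiction: suppose some $z_0 \in \partial L(r)$ is an isolated point, so there is a disk $B = B_\delta(z_0)$ with $B \cap \partial L(r) = \{z_0\}$. Since $\partial L(r) = \overline{L(r)} \setminus \mathrm{Int}(L(r))$ and the punctured disk $B \setminus \{z_0\}$ is connected and disjoint from $\partial L(r)$, it lies entirely either in $\mathrm{Int}(L(r))$ or in the complement of $\overline{L(r)}$ (both open). In the latter case the only point of $L(r)$ near $z_0$ is $z_0$ itself, so $z_0$ is an isolated point of $L(r)$; but by \eqref{def:L} every point of $L(r)\setminus\{r\}$ sits on a nontrivial flow line converging to $r$ (hence has neighbours in $L(r)$ on that flow line), and $r$ is itself the limit of the flow lines of $L(r)$ converging to it, so $L(r)$ has no isolated points, a contradiction. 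This reduces everything to the case $B \setminus \{z_0\} \subset \mathrm{Int}(L(r))$.

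For this case the key tool is the splitting of $L(r)$ by the critical level. Since $u$ is strictly monotone along flow lines, $W^{\mathfrak{s}}(r)\setminus\{r\} = \{z \in L(r):\ u(z) > u(r)\}$ and $W^{\mathfrak{u}}(r)\setminus\{r\} = \{z \in L(r):\ u(z) < u(r)\}$; in particular $u \ne u(r)$ on $L(r) \setminus \{r\}$ and $W^{\mathfrak{s}}(r) \cap W^{\mathfrak{u}}(r) = \{r\}$. First, $z_0$ cannot be regular: the flow line through $z_0$ meets $B \setminus \{z_0\} \subset L(r)$, so $z_0 \in L(r)$, and then $B \subset L(r)$ would force $z_0 \in \mathrm{Int}(L(r))$. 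Next, $z_0 \ne r$: if $z_0=r$, then by Lemma~\ref{lem:observation} the set $\partial L(r)$ contains a flow line converging to $r$, whose points lie in $B\setminus\{z_0\}$ and in $\partial L(r)$, contradicting $B\cap\partial L(r)=\{z_0\}$. Thus $z_0$ is a critical point distinct from $r$, so $z_0 \notin L(r)$, and we may shrink $B$ so that $r \notin B$.

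It remains to exclude $z_0$ being a critical point of each admissible type. If $z_0$ lies on an arc $\theta$ of critical points (Lemma~\ref{lem:isol}), the nearby points of $\theta$ are critical points distinct from $r$, hence not in $L(r)$, yet they lie in $B \setminus \{z_0\} \subset L(r)$, which is impossible. If $z_0$ is a saddle with $u(z_0)=u(r)$, then by Remark~\ref{rem:level-sets-diffeomorhpic} the level set $\{u=u(r)\}$ contains points other than $z_0$ arbitrarily close to $z_0$; these lie in $L(r)\setminus\{r\}$ and violate $u \ne u(r)$. If $z_0$ is a saddle with $u(z_0)\ne u(r)$, say $u(z_0) > u(r)$ (the other sign follows under $t\mapsto -t$), then after shrinking $B$ we get $u>u(r)$ on $B$, so $B\setminus\{z_0\}\subset W^{\mathfrak{s}}(r)$ and every nearby point flows forward to $r$; but the stable separatrices of the saddle $z_0$ furnish points near $z_0$ whose forward limit is $z_0\ne r$, a contradiction.

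The genuinely delicate case, and the main obstacle, is when $z_0$ is an isolated local extremum, say a local minimum (the maximum case is symmetric under time reversal). By the Lyapunov stability of extrema (Proposition~\ref{prop:absil}) we may fix $\varepsilon>0$ with $r\notin\overline{B_\varepsilon(z_0)}$ so that $\gamma(+\infty,z)\in\overline{B_\varepsilon(z_0)}$ for all $z$ near $z_0$; hence no point of $B\setminus\{z_0\}$ lies in $W^{\mathfrak{s}}(r)$, and therefore $B\setminus\{z_0\}\subset W^{\mathfrak{u}}(r)$, i.e.\ the flow lines through an entire punctured neighbourhood of $z_0$ converge backward to the \emph{single} saddle $r$. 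In particular $W^{\mathfrak{u}}(r)$ has nonempty interior, so $r$ is a saddle-node. I expect to close this case by a topological counting argument: for a small circle $C$ around $z_0$, the map sending a point of $C$ to the intersection of its backward flow line with a fixed small transversal of the flow near $r$ is continuous and injective, since distinct flow lines are disjoint and, by the normalized-tangent convergence of Remark~\ref{rem:thom-conjecture}, all these flow lines enter $r$ through a one-dimensional cross-section of $W^{\mathfrak{u}}(r)$; this produces a continuous injection of $C\cong\mathbb{S}^1$ into an interval, which is impossible. Equivalently, the boundary separatrices of $W^{\mathfrak{u}}(r)$, which belong to $\partial L(r)$ by Proposition~\ref{prop:partialL-finite-flow-lines}, must then accumulate back at $z_0$, contradicting its isolation. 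Finally, the boundary cases $z_0\in\Gamma^N$ and $z_0\in\Gamma^D$ are treated as above, invoking Lemma~\ref{lem:traj-neum} for flow lines along $\Gamma^N$, the $\Gamma^D$-part of Lemma~\ref{lem:Lr:boundary}, the fact that critical points of $u$ on $\Gamma^D$ are saddles (Lemma~\ref{lem:crit}~\ref{lem:crit:singsadleisolated}), and the analytic extension of $u$ across $\partial\Omega$ where needed.
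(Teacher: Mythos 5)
Your reduction to the case $B \setminus \{z_0\} \subset \mathrm{Int}(L(r))$, the exclusion of regular points and of $z_0 = r$, and the treatment of the cases where $z_0$ lies on an arc of critical points or is a saddle are all sound (the saddle cases implicitly need Lemma~\ref{lem:observation} applied to $z_0$ to guarantee stable separatrices exist, but that is available). The genuine gap is exactly in the case you yourself flag as delicate, and your ``topological counting argument'' does not close it. First, the map from the round circle $C$ to a transversal need not be injective: two distinct points of $C$ may lie on the \emph{same} flow line, since a flow line can cross a geometric circle around $z_0$ several times (disjointness of distinct flow lines does not help with this). This part is repairable by replacing $C$ with a connected component of a regular level set $\{u=c\}$ encircling $z_0$, which each flow line crosses at most once by strict monotonicity of $u$. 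Second, and more seriously, well-definedness and continuity of your map require that \emph{all} backward flow lines emanating from $C$ cross one fixed one-dimensional transversal near $r$. Remark~\ref{rem:thom-conjecture} only gives each individual flow line a limiting tangent direction at $r$; it gives no common limiting direction for the family and no common cross-section. Producing such a section is precisely the local structure theory at a possibly degenerate saddle that the paper explicitly does not have (Section~\ref{sec:classification-of-manifolds}, case (III)); your side claim that nonempty interior of $W^{\mathfrak{u}}(r)$ forces $r$ to be a saddle-node suffers from the same defect. Moreover, even after the level-set repair, the contradiction you aim for (a continuous injection of $\mathbb{S}^1$ into an interval) is not forced: the image could a priori be a full circle component of a level set of $u$ near $r$, and ruling that out requires an enclosure argument rather than counting. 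The fallback sentence about boundary separatrices of $W^{\mathfrak{u}}(r)$ ``accumulating back at $z_0$'' is an assertion, not a proof.

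For contrast, the paper disposes of \emph{all} critical-point cases at once, with no case analysis on the type of $z_0$ and no structure theory at $r$: since $u-u(r)$ has constant sign on the connected punctured neighborhood, the whole circle $\partial U$ lies in $W^{\mathfrak{s}}(r)$ (or, symmetrically, $W^{\mathfrak{u}}(r)$); its image $\gamma(t,\partial U)$ under the flow is a Jordan curve which must enclose the fixed point $z_0$ for every $t$, because no orbit through $\partial U$ can pass through $z_0$; yet compactness of $\partial U$, continuous dependence (Lemma~\ref{lem:Cuchy-contin}), and the Lojasiewicz estimate force $\gamma(t,\partial U)$ into an arbitrarily small ball around $r$ for large $t$, and a Jordan curve inside such a ball cannot enclose the distant point $z_0$. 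Adopting this enclosure argument would replace everything in your proof from the saddle cases onward and close the gap.
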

\begin{proof}
Suppose, by contradiction, that there exists an isolated point $z$ of $\partial L(r)$.
(Note that $z \neq r$, see Lemma~\ref{lem:observation}.)
That is, there is a neighborhood $U$ of $z$ such that $U \cap \partial L(r) = \{z\}$.
Therefore, $(U \cap L(r))\setminus\{z\} = U \cap \mathrm{Int}(L(r))$ and 
\begin{equation}\label{eq:decompU}
(U \cap \Omega) \setminus \{z\} = 
[U \cap \mathrm{Int}(L(r))]
\cup
[U \cap ({\Omega} \setminus \overline{L(r)})].
\end{equation}
Since $z\in \partial L(r)$, we  must have $(U \cap L(r))\setminus\{z\}\neq\emptyset$ and hence $U \cap\mathrm{Int}(L(r)) \neq \emptyset$. 
Clearly, the sets in the square brackets on the right-hand side of \eqref{eq:decompU} are disjoint open sets.
Moreover, we can assume $U \cap {\Omega}$ to be connected. 
Therefore, \eqref{eq:decompU} implies that $U \cap ({\Omega} \setminus \overline{L(r)}) = \emptyset$, and, consequently, 
$(U \cap {\Omega}) \setminus \{z\} \subset \mathrm{Int}(L(r)) \subset L(r)$.

Assume first that $z \in \partial \Omega$. 
In view of the inclusion $(U \cap {\Omega}) \setminus \{z\} \subset L(r)$, any $\xi \in U \cap \partial \Omega$ can be approximated by points from $L(r)$ and hence $\xi \in \partial L(r)$.
Thus, $z$ is not an isolated point of $\partial L(r)$, a contradiction.

Assume now that $z \in \Omega$.
If $z$ is a regular point of $u$, then $\gamma(\cdot,z) \subset \partial L(r)$ by Lemma~\ref{lem:Lr:boundary}, which is impossible since $z$ is isolated in $\partial L(r)$.
Let $z$ be a critical point of $u$.
For convenience, we decrease the neighborhood $U$ obtained in the first paragraph so that $U \subset \Omega$. 
Since $U \setminus \{z\} \subset L(r)$, $z \neq r$, and $u$ is strictly monotone along flow lines, we can assume, without loss of generality, that $u(z)>u(r)$. 
By the continuity of the flow, $\gamma(t, \partial U)$ encapsulates $z$ for all $t \in I$.
However, since $\partial U$ is compact, Lemma~\ref{lem:Cuchy-contin} implies that for any neighborhood $V$ of $r$ there exists $T>0$ such that $\gamma(t, \partial U) \subset V$ for any $t \geq T$.
Recalling that $\mathrm{dist}(z,r)>0$ and taking $V$ small enough, we get a contradiction.
\end{proof}

In the following lemma, we describe the behavior of $\partial L(r)$ near curves of critical points of $u$. 
\begin{lemma}\label{lem:partialL-theta}
Let $\theta$ be a curve of critical points of $u$.
Let $r \in \mathcal{S}$ and $z \in \theta$ be such that $z \in \partial L(r)$ and there is no flow line $\gamma \subset \partial L(r)$ converging to $z$.
Then there exists a neighborhood $U$ of $z$ such that
$\theta \cap U = \partial L(r) \cap U$.
\end{lemma}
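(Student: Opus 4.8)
The plan is to reduce everything to the local Morse--Bott normal form of $u$ near $\theta$ and to read the hypothesis as the precise statement that no separatrix of $\partial L(r)$ accumulates at $z$. First I would make two harmless reductions. Replacing $u$ by $-u$ if necessary preserves the statement, since this reverses the gradient flow and interchanges $W^{\mathfrak{s}}(r)$ with $W^{\mathfrak{u}}(r)$, leaving $L(r)=W^{\mathfrak{s}}(r)\cup W^{\mathfrak{u}}(r)$, its boundary, and the set of saddle points unchanged, while the endpoints of a flow line are orientation independent; hence by Corollary~\ref{cor:no-saddles-on-theta} I may assume $\theta$ consists of local \emph{minimum} points. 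I then treat the principal case $z\in\Omega$, so that $z$ is an interior point of $\theta$ (by Lemma~\ref{lem:isol}~\ref{lem:isol:simple} the endpoints of $\theta$, if any, lie on $\Gamma^N$). Since $\theta$ is isolated in $\mathcal{C}$ (Lemma~\ref{lem:isol}~\ref{lem:isol:simple}) and $r\notin\theta$, I fix a small disk $U=B_\delta(z)\subset\Omega$ whose only critical points lie on $\theta$ and for which $\theta\cap U$ is a single analytic arc splitting $U$ into two connected half-disks $U_+$ and $U_-$.

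The crucial step is to show $\partial L(r)\cap U\subset\theta$, i.e. that $U$ contains no \emph{regular} point of $\partial L(r)$. By Proposition~\ref{prop:partialL-finite-flow-lines}, $\partial L(r)$ carries only finitely many flow lines; let $Z\subset\theta$ be the finite set of their forward limit points that lie on $\theta$. Because $\theta$ consists of minima (forward-attracting), a flow line can have a point of $\theta$ as a limit only as $t\to+\infty$, so the hypothesis that no flow line of $\partial L(r)$ converges to $z$ is exactly the statement $z\notin Z$, giving $\mathrm{dist}(z,Z)>0$. Shrinking $\delta$ via Proposition~\ref{prop:absil} applied at the minimum $z$ with $\varepsilon<\mathrm{dist}(z,Z)$, every flow line meeting $U$ converges forward to a critical point in $\overline{B_\varepsilon(z)}$, which must lie on $\theta\cap\overline{B_\varepsilon(z)}$ and therefore avoids $Z$. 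Consequently no regular point $p\in\partial L(r)$ can lie in $U$: otherwise, by Lemma~\ref{lem:Lr:boundary} its whole flow line would stay in $\partial L(r)$ and converge forward to a point of $\theta$ near $z$, hence to a point of $Z\cap\overline{B_\varepsilon(z)}=\emptyset$, a contradiction. Thus $\partial L(r)\cap U\subset\mathcal{C}\cap U=\theta\cap U$.

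With this in hand the reverse inclusion is a short connectedness argument. The sets $\mathrm{Int}(L(r))$ and $\Omega\setminus\overline{L(r)}$ are disjoint open sets whose union is $\Omega\setminus\partial L(r)$, and by the previous step each half-disk $U_\pm$ is connected and disjoint from $\partial L(r)$ (since $U_\pm\cap\theta=\emptyset$); hence each $U_\pm$ lies entirely in $\mathrm{Int}(L(r))$ or entirely in $\Omega\setminus\overline{L(r)}$. Since $z\in\partial L(r)\subset\overline{L(r)}$, there are points of $L(r)$ arbitrarily close to $z$; these avoid $\theta$, because each $\zeta\in\theta$ satisfies $\gamma(\pm\infty,\zeta)=\zeta\neq r$ so $\theta\cap L(r)=\emptyset$, and therefore they lie in some $U_\pm$, say $U_+$, forcing $U_+\subset\mathrm{Int}(L(r))$. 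Finally, every point of $\theta\cap U$ is a limit of points of $U_+$, so $\theta\cap U\subset\overline{U_+}\subset\overline{L(r)}$; as $\theta\cap\mathrm{Int}(L(r))=\emptyset$, this yields $\theta\cap U\subset\partial L(r)$. Combining the two inclusions gives $\partial L(r)\cap U=\theta\cap U$.

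I expect the main obstacle to be the second step, namely controlling the finitely many global boundary flow lines inside a neighborhood of $z$; this is exactly where the finiteness of Proposition~\ref{prop:partialL-finite-flow-lines} must be combined with the Lyapunov stability of minima in Proposition~\ref{prop:absil}, and where the hypothesis enters to guarantee $z\notin Z$. The boundary case $z\in\theta\cap\Gamma^N$ would be handled by the same scheme with $U$ replaced by $U\cap\overline{\Omega}$, using Lemma~\ref{lem:morse-bott}~\ref{lem:morse-bott:gamma-N} together with Lemma~\ref{lem:traj-neum} and Corollary~\ref{cor:boundary}: when $\theta$ is a component of $\Gamma^N$ there is a single interior half-disk and the dichotomy argument simplifies, whereas when $z$ is an endpoint of $\theta$ on $\Gamma^N$ one additionally accounts for the flow lines contained in $\Gamma^N$, the hypothesis again excluding a separatrix of $\partial L(r)$ converging to $z$.
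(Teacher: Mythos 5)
Your proposal is correct and follows the same overall skeleton as the paper's proof: first establish that $\partial L(r)$ has no regular points near $z$, so that $\partial L(r)\cap U\subset\theta\cap U$; then recover the reverse inclusion by the connectedness dichotomy between $\mathrm{Int}(L(r))$ and $\Omega\setminus\overline{L(r)}$; and finally treat the $\Gamma^N$ sub-cases via Lemma~\ref{lem:morse-bott}. Where you genuinely deviate is in the execution of the forward inclusion. The paper takes a sequence $\{\eta_n\}\subset\partial L(r)\setminus\theta$ with $\eta_n\to z$ and splits into two cases: infinitely many mutually distinct flow lines $\gamma(\cdot,\eta_n)$ (contradicting the finiteness in Proposition~\ref{prop:partialL-finite-flow-lines} via Lemma~\ref{lem:Lr:boundary}), or infinitely many coinciding ones (producing, via the Lojasiewicz convergence property, a single flow line of $\partial L(r)$ converging to $z$, contradicting the hypothesis). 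You instead package the same two ingredients into the finite set $Z$ of forward limit points on $\theta$ of the boundary separatrices, observe that the hypothesis is exactly $z\notin Z$ (using that flow lines can reach the minima on $\theta$ only forward in time), and then invoke Lyapunov stability (Proposition~\ref{prop:absil}) to trap the forward limit of every flow line meeting $B_\delta(z)$ inside $\overline{B_\varepsilon(z)}$, away from $Z$. Your version is more quantitative and makes the role of the hypothesis transparent; the paper's version gets by without Proposition~\ref{prop:absil} at this step. Both are valid.

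One caution about your final paragraph: in the sub-case where $z$ is an endpoint of $\theta$ on $\Gamma^N$, the conclusion is not obtained by ``running the same scheme'' as in the interior case. The paper shows that this configuration is \emph{incompatible} with the hypotheses: once one component of $(U\cap\Omega)\setminus\theta$ lies in $\mathrm{Int}(L(r))$, the flow lines of $\Gamma^N$ converging to $z$ (guaranteed by Lemma~\ref{lem:morse-bott}) are forced into $\partial L(r)$, contradicting the assumption that no flow line of $\partial L(r)$ converges to $z$. Your forward-inclusion argument also forces this contradiction (those $\Gamma^N$ flow lines would be regular points of $\partial L(r)$ in $U$ lying off $\theta$), so your scheme does close this case, but as a vacuity statement rather than a direct verification; you should say so explicitly instead of suggesting the dichotomy argument simply goes through.
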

\begin{proof}
In view of Corollary~\ref{cor:no-saddles-on-theta}, we have $z \not\in \mathcal{S}$, and hence we assume, without loss of generality, that $z$ is a local minimum point of $u$.
By Lemma~\ref{lem:isol}~\ref{lem:isol:simple}, $\theta$ is isolated in $\mathcal{C}$, i.e., there exists a neighborhood $V$ of $z$ such that all critical points of $u$ in $V \cap \overline{\Omega}$ belong to $\theta$.
Moreover, by Lemma~\ref{lem:morse-bott}~\ref{lem:morse-bott:gamma-N}, we can further assume that if $z \in \Gamma^N$, then either $V \cap \theta = V \cap \Gamma^N$ or $V \cap \theta \cap \Gamma^N = \{z\}$, see Figure~\ref{fig:theta12x}.

Suppose that there exists a sequence $\{\eta_n\} \subset \partial L(r) \setminus \theta$ such that $\eta_n \to z$. 
By the choice of $V$, each $\eta_n$ is a regular point.
If infinitely many flow lines $\gamma(\cdot,\eta_n)$ are mutually different, then we get a contradiction to Lemma~\ref{lem:Lr:boundary} and Proposition~\ref{prop:partialL-finite-flow-lines}. 
On the other hand,
if infinitely many flow lines $\gamma(\cdot,\eta_n)$ coincide, then $\gamma(\cdot,\eta_n) \subset \partial L(r)$ is a flow line converging to $z$, which contradicts the assumption of the lemma.
Therefore, there exists a neighborhood $U \subset V$ of $z$ such that $U \cap (\partial L(r) \setminus \theta) = \emptyset$.
Consequently, we have
\begin{equation}\label{eq:proof:4.13:1}
\partial L(r) \cap U \subset \theta \cap U
\end{equation}
and $U \cap (L(r) \setminus \theta) = U \cap (\mathrm{Int}(L(r)) \setminus \theta)$, and hence any connected component of $(U \cap \Omega) \setminus \theta$ is a subset of either $\mathrm{Int}(L(r))$ or $\Omega \setminus \overline{L(r)}$.

Since $z \in \partial L(r)$, we have $U \cap (L(r) \setminus \theta) \neq \emptyset$, and thus there exists at least one connected component of $(U \cap \Omega) \setminus \theta$ which belong to $\mathrm{Int}(L(r))$.
If $z \in \Omega$, then $(U \cap \Omega) \setminus \theta$ consists of two connected components (up to shrinking $U$, if necessary), each of which has $\theta \cap U$ as a part of its boundary. 
Therefore, we get $\theta \cap U \subset \partial L(r) \cap U$. 
Combining this set inclusion with \eqref{eq:proof:4.13:1}, we obtain the desired claim in the case $z \in \Omega$.
If $z \in \Gamma^N$ and $U \cap \theta = U \cap \Gamma^N$, then $(U \cap \Omega) \setminus \theta$ is connected, which yields the same result.

In view of the choice of the neighborhood $V$ (see the first paragraph), it remains to consider the case $U \cap \theta \cap \Gamma^N = \{z\}$. 
We deduce from Lemma~\ref{lem:morse-bott} that $(U \cap \Omega) \setminus \theta$ has two connected components, each of which has a part of $\Gamma^N$ on its boundary, and such a part contains flow lines converging to $z$. 
Since at least one connected component is a subset of $\mathrm{Int}(L(r))$, we find flow lines from $\partial L(r) \cap \Gamma^N$ converging to $z$, which contradicts the assumption of the lemma. 
\end{proof}

Using the results obtained above, we provide the following global description of the structure of $\partial L(r)$.
\begin{proposition}\label{prop:boundary_Lr}
$\bigcup_{r \in \mathcal{S}} \partial L(r) \setminus \Gamma^D$ consists of at most finitely many flow lines (together with their end points outside of $\Gamma^D$) and arcs of critical points. 
\end{proposition}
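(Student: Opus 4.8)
The plan is to reduce everything to a single saddle point. Since $\mathcal{S}$ is finite by Lemma~\ref{lem:numsaddlesOmega}, it suffices to fix $r \in \mathcal{S}$ and show that $\partial L(r) \setminus \Gamma^D$ consists of at most finitely many flow lines (together with their end points lying outside $\Gamma^D$) and finitely many arcs of critical points; the claim then follows by taking the union over $\mathcal{S}$. Every point of $\partial L(r) \setminus \Gamma^D$ is either a regular point or a critical point of $u$. For the regular points I would invoke Lemma~\ref{lem:Lr:boundary}: the whole flow line through such a point stays in $\partial L(r)$, and by Proposition~\ref{prop:partialL-finite-flow-lines} there are only finitely many flow lines in $\partial L(r)$. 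Hence the regular part of $\partial L(r) \setminus \Gamma^D$ is contained in finitely many flow lines, whose end points are, by Lemma~\ref{lem:classif-gamma}, either critical points or points of $\Gamma^D$. The critical points split, by Lemma~\ref{lem:isol}, into the finitely many isolated critical points of $u$ and points lying on the finitely many analytic curves $\theta \subset \mathcal{C}$.

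It remains to control, for each such curve $\theta$, the set $A := \theta \cap \partial L(r)$, which is closed in $\theta$. Let $F \subset \theta$ be the set of points to which some flow line contained in $\partial L(r)$ converges; by Proposition~\ref{prop:partialL-finite-flow-lines} there are finitely many such flow lines, so $F$ is finite. The key input is Lemma~\ref{lem:partialL-theta}: for any $z \in \theta \setminus F$ with $z \in \partial L(r)$ there is a neighborhood $U$ of $z$ in which $\theta \cap U = \partial L(r) \cap U$, so $z$ is relatively interior to $A$; and any $z \in \theta \setminus (A \cup F)$ lies in the relatively open set $\theta \setminus \partial L(r)$, hence is interior to the complement of $A$ in $\theta$. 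Consequently the relative boundary of $A$ in $\theta$ is contained in $F$. The finite set $F$ cuts $\theta$ (an arc or a closed curve) into finitely many open subarcs, and on each subarc $A$ has empty relative boundary; by connectedness $A$ is then either the whole subarc or disjoint from it. Therefore $A$ is a finite union of closed subarcs of $\theta$ together with at most the finitely many points of $F$, i.e., finitely many arcs of critical points.

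Assembling the three contributions -- finitely many flow lines, finitely many isolated critical points, and finitely many arcs of critical points, each coming from one of the finitely many saddle points -- yields the proposition. I expect the main obstacle to be this last step, the finiteness of the arcs of critical points along a curve $\theta$: one must rule out that $\partial L(r)$ meets $\theta$ in infinitely many components. The decisive observation is that, away from the finite set $F$ of flow-line limits, Lemma~\ref{lem:partialL-theta} forces $A$ to be locally either all of $\theta$ or none of it, so its relative boundary cannot accumulate, and the connectedness argument on the subarcs determined by $F$ closes the gap.
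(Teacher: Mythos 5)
Your reduction to a single $r \in \mathcal{S}$, your treatment of the regular points, and your analysis along a curve $\theta$ of critical points are all sound; indeed, your relative-boundary/connectedness argument (the finite set $F$ of flow-line limits cuts $\theta$ into subarcs, on each of which $\theta \cap \partial L(r)$ is both relatively open by Lemma~\ref{lem:partialL-theta} and relatively closed, hence all or nothing) is a cleaner formulation of what the paper does informally by ``applying Lemma~\ref{lem:partialL-theta} along $\theta$ until \dots''. The genuine gap is in your handling of the \emph{isolated} critical points of $u$. The proposition does not allow isolated critical points as a separate constituent of $\partial L(r) \setminus \Gamma^D$: every point of this set must lie on one of the finitely many flow lines, be an end point of such a flow line, or lie on an arc of critical points. (Contrast this with Proposition~\ref{prop:boundary_GW}, where isolated critical points \emph{are} listed as a third category; the difference between the two statements is deliberate, and Theorem~\ref{thm:main-properties}~\ref{thm:main-properties:finite-length} relies on it when distinguishing flow-line end points from isolated points of $\mathcal{N}(u)$.) Your final assembly --- ``finitely many flow lines, finitely many isolated critical points, and finitely many arcs'' --- is therefore the conclusion of Proposition~\ref{prop:boundary_GW}, not of the statement at hand: you never show that an isolated critical point of $u$ lying in $\partial L(r) \setminus \Gamma^D$ is an end point of a flow line contained in $\partial L(r)$.

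This is precisely where the paper uses Lemma~\ref{lem:partialL-isolated}, which never appears in your proposal. The missing argument runs as follows: if $z \in \partial L(r) \setminus \Gamma^D$ is an isolated critical point of $u$, then by Lemma~\ref{lem:partialL-isolated} it is not an isolated point of $\partial L(r)$, so points of $\partial L(r)$ accumulate at $z$; near $z$ all of them (except $z$ itself) are regular points, hence by Lemma~\ref{lem:Lr:boundary} they lie on flow lines contained in $\partial L(r)$, and by Proposition~\ref{prop:partialL-finite-flow-lines} infinitely many of them must lie on a \emph{single} flow line $\gamma \subset \partial L(r)$. Since a flow line cannot reach a critical point in finite time, the corresponding time parameters are unbounded, and the Lojasiewicz property (Section~\ref{sec:gradflow}) then forces $\gamma$ to converge to $z$; thus $z$ is an end point of a flow line from $\partial L(r)$, as required. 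The paper organizes this in reverse: it starts from a critical point that is \emph{not} a flow-line end point, uses Lemma~\ref{lem:partialL-isolated} and the finiteness of flow lines to produce a sequence of nearby \emph{critical} points of $\partial L(r)$, and concludes via Lemma~\ref{lem:isol} that such a point lies on a curve $\theta$ --- so stray isolated critical points of $u$ are excluded automatically. With this piece added, your proof is complete and otherwise follows the paper's route.
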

\begin{proof}
Since the number of saddle points is finite, it is sufficient to prove the statement for any given $r \in \mathcal{S}$.
We already know from Proposition~\ref{prop:partialL-finite-flow-lines} that $\partial L(r)$ contains finitely many flow lines.
Thus, it remains to describe the structure of the part of $\partial L(r) \setminus \Gamma^D$ consisting of critical points which are not end points of flow lines from $\partial L(r)$.

Let us take any such critical point $z\in \pa L(r) \setminus \Gamma^D$. 
Thanks to Lemma~\ref{lem:partialL-isolated}, $z$ is nonisolated point of $\partial L(r) \setminus \Gamma^D$.
Thus, we get a sequence $\{z_n\} \subset (\partial L(r) \setminus \Gamma^D) \setminus \{z\}$ such that $z_n \to z$.
If there exists a subsequence of $\{z_n\}$ consisting of regular points belonging to a flow line $\gamma$, then $\gamma \subset \partial L(r)$ by Lemma~\ref{lem:Lr:boundary}. Hence $\gamma$ converges to $z$, which contradicts our assumption on $z$.
On the other hand, in view of Proposition~\ref{prop:partialL-finite-flow-lines}, only finitely many elements of $\{z_n\}$ can be regular points with mutually different flow lines.
Therefore, except of these finitely many elements, any 
$z_n$ is a critical point of $u$.
According to Lemma~\ref{lem:isol}, there exists a curve $\theta$ of critical points such that $z \in \theta$ and $z_n \in \theta$ for any sufficiently large $n$.

Thanks to Lemma~\ref{lem:partialL-theta}, there exists a neighborhood $U$ of $z$ such that $\theta \cap U = \partial L(r) \cap U$. 
We apply Lemma~\ref{lem:partialL-theta} along $\theta$ until either $\theta$ ends up at $\Gamma^N$ or there appears a point $\zeta \in \theta \cap \partial L(r)$ such that there exists a flow line from $\partial L(r)$ converging to $\zeta$. 
Since $u$ has (at most) finitely many curves of critical points (see Lemma~\ref{lem:isol}) and flow lines in $\partial L(r)$ (see Proposition~\ref{prop:partialL-finite-flow-lines}), we conclude that there exist finitely many arcs of critical points in $\partial L(r)$.
\end{proof}

\subsection{Structure of \texorpdfstring{$\partial G$}{dG} and \texorpdfstring{$\partial W$}{dW}}
In this section, we show that the boundaries $\partial G$ and $\partial W$ outside of $\Gamma^D$ consist of at most finitely many flow lines, arcs of critical points, and isolated critical points, cf.\ Proposition~\ref{prop:boundary_Lr}.

First, we need a result of the same type as Proposition~\ref{prop:partialL-finite-flow-lines}.
\begin{lemma}\label{lem:partialGW-finite-flow-lines}
$\partial G \cup \partial W$ contains at most finitely many flow lines, and any flow line from $(\partial G \cup \partial W) \setminus \Gamma^N$ converges to a saddle point. 
\end{lemma}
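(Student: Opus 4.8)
The plan is to follow the same line of reasoning as in the proof of Proposition~\ref{prop:partialL-finite-flow-lines}, replacing the set $L(r)$ by $G$ and $W$ and exploiting the disjointness relations of Lemma~\ref{lem:GWL-intersection}. Let $\gamma = \gamma(\cdot,z_0)$ be any flow line contained in $(\partial G \cup \partial W)\setminus\Gamma^N$; since flow lines are either entirely contained in $\Gamma^N$ or entirely disjoint from it (Lemma~\ref{lem:traj-neum} and Corollary~\ref{cor:boundary}), and since a flow line meeting $\Gamma^D$ has nearby points inside $G$ and so cannot lie in $\partial G \cup \partial W$, we may take $z_0$ to be a regular point of $\Omega$. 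By Lemma~\ref{lem:G:boundary} (if $z_0 \in \partial G$) or Lemma~\ref{lem:W:boundary} (if $z_0 \in \partial W$), the whole flow line stays in $\partial G$ (respectively $\partial W$) for all $t \in \mathbb{R}$; in particular $I = \mathbb{R}$ and $\gamma$ does not reach $\Gamma^D$ in finite time. Consequently, among the five possibilities listed in Lemma~\ref{lem:classif-gamma}, only the cases (1)--(3) can occur for $\gamma$.

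Next I would rule out case (1). If both ends of $\gamma$ were local extremum points, then the strict monotonicity of $u$ along flow lines would force $\gamma(+\infty,z_0) \in \mathcal{M}_-$ and $\gamma(-\infty,z_0) \in \mathcal{M}_+$, so that $z_0 \in W$ by the equivalent definition~\eqref{eq:W1}. This is incompatible with $z_0 \in \partial G$, since $\partial G \subset \overline{G}$ and $\overline{G}\cap W = \emptyset$ by Lemma~\ref{lem:GWL-intersection}; it is likewise incompatible with $z_0 \in \partial W$, since $W$ is open (Lemma~\ref{lem:W:open}) and hence $\partial W \cap W = \emptyset$. Therefore only cases (2) and (3) remain, in both of which at least one end of $\gamma$ is a saddle point. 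This proves the second assertion of the lemma.

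It remains to establish finiteness, which I expect to be the main obstacle, precisely because of the saddle-node case where $\mathrm{Int}(L(r))$ may be nonempty. Fix such a $\gamma$ and a saddle point $r$ to which it converges; then $\gamma \subset W^{\mathfrak{s}}(r) \cup W^{\mathfrak{u}}(r) = L(r)$, and by Corollary~\ref{cor:lem:Lr:boundary} and Lemma~\ref{lem:Lr:boundary} the flow line $\gamma$ lies entirely either in $\mathrm{Int}(L(r))$ or in $\partial L(r)$. The first alternative is impossible: if $\gamma \subset \mathrm{Int}(L(r))$, then a neighborhood $U$ of $z_0$ would be contained in $\mathrm{Int}(L(r))$, so every flow line through $U$ would converge to $r$ and in particular avoid $\Gamma^D$, whence $U \cap G = \emptyset$, while $U \cap W = \emptyset$ follows from $W \cap L = \emptyset$ (Lemma~\ref{lem:GWL-intersection}); this contradicts $z_0 \in \partial G \cup \partial W$. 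Hence $\gamma \subset \partial L(r)$. Since $\partial L(r)$ contains at most finitely many flow lines by Proposition~\ref{prop:partialL-finite-flow-lines}, and $\mathcal{S}$ is finite by Lemma~\ref{lem:numsaddlesOmega}, there are only finitely many flow lines in $(\partial G \cup \partial W)\setminus\Gamma^N$. Finally, the flow lines of $\partial G \cup \partial W$ lying in $\Gamma^N$ are finite in number by Corollary~\ref{cor:boundary-GN}, and adding these completes the count.
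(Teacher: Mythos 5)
Your reduction and your proof of the second assertion are correct and essentially identical to the paper's: flow lines meeting $\Gamma^D$ lie in $G$ and hence cannot sit in $\partial G\cup\partial W$, Lemmas~\ref{lem:G:boundary} and~\ref{lem:W:boundary} give invariance of the boundaries so that only cases (1)--(3) of Lemma~\ref{lem:classif-gamma} survive, and case (1) is excluded by the openness of $W$ together with $\overline{G}\cap W=\emptyset$. The finiteness part, however, contains a genuine gap. You claim that $\gamma\subset\mathrm{Int}(L(r))$ is impossible because every point of a neighborhood $U\subset\mathrm{Int}(L(r))$ of $z_0$ ``converges to $r$ and in particular avoids $\Gamma^D$'', whence $U\cap G=\emptyset$. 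This inference is false: membership in $L(r)=W^{\mathfrak{s}}(r)\cup W^{\mathfrak{u}}(r)$ controls only \emph{one} time direction of the flow. A point whose backward flow converges to $r$ can perfectly well reach $\Gamma^D$ in finite \emph{forward} time --- this is exactly case (5) of Lemma~\ref{lem:classif-gamma} --- and this genuinely occurs when $r$ is a saddle-node: then $W^{\mathfrak{u}}(r)$ (say) has nonempty interior (see Section~\ref{sec:classification-of-manifolds}), and for instance for the first Dirichlet eigenfunction of the dumbbell domain of Section~\ref{sec:trivialcrit} the points of $\mathrm{Int}(W^{\mathfrak{u}}(r))$ flow forward to $\Gamma^D$, so that $\mathrm{Int}(L(r))\cap G\neq\emptyset$. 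Thus $U\cap G=\emptyset$ does not follow; worse, the dichotomy you aim for can itself fail: a flow line of $\partial G$ may lie in $\mathrm{Int}(L(\eta))$ for one of its saddle ends $\eta$, provided it lies in $\partial L(\xi)$ for its other end $\xi$, so ``$\gamma\subset\partial L(r)$ for the chosen saddle end $r$'' is not true in general.

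The repair --- and this is what the paper does --- is a case analysis on the \emph{other} end $\xi$ of $\gamma$, rather than a contradiction from $\gamma\subset\mathrm{Int}(L(\eta))$ alone. If $\xi\in\mathcal{M}_-$, combine Proposition~\ref{prop:absil} with Lemma~\ref{lem:Cuchy-contin} to shrink $U$ so that the forward flow of every point of $U$ converges to $\xi$ (or to a critical point near $\xi$ if $\xi$ is nonisolated); together with the backward convergence to $\eta$ coming from $U\subset\mathrm{Int}(L(\eta))$, this yields $U\cap G=U\cap W=\emptyset$, contradicting $z_0\in\partial G\cup\partial W$. If $\xi\in\mathcal{S}$, then either $\gamma\subset\partial L(\xi)$ --- and Proposition~\ref{prop:partialL-finite-flow-lines} bounds the number of such flow lines --- or $\gamma\subset\mathrm{Int}(L(\xi))\cap\mathrm{Int}(L(\eta))$, in which case both time directions are controlled and one again gets $U\cap G=U\cap W=\emptyset$, a contradiction. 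With this case analysis in place, every surviving flow line lies in $\partial L(r)$ for \emph{some} saddle point $r$, and your concluding appeal to Proposition~\ref{prop:partialL-finite-flow-lines}, the finiteness of $\mathcal{S}$ (Lemma~\ref{lem:numsaddlesOmega}), and Corollary~\ref{cor:boundary-GN} for the flow lines inside $\Gamma^N$ then finishes the proof as you intended.
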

\begin{proof}
Since $\Gamma^D$ contains no flow lines (see Remark~\ref{rem:traj-boundary}), we take any regular point $z \in (\partial G \cup \partial W) \setminus \Gamma^D$ and let $\gamma = \gamma(\cdot,z)$ be the corresponding flow line.
Due to Lemmas~\ref{lem:G:boundary} and \ref{lem:W:boundary}, we have $\gamma \subset \partial G \cup \partial W$.
Since $G$, $W$ are open (by Lemmas~\ref{lem:G:open}, ~\ref{lem:W:open}), $\gamma$ cannot reach $\Gamma^D$ in a finite time, and hence $\gamma$ connects two critical points of $u$, see Lemma~\ref{lem:classif-gamma}.
Denote    
$\xi := \gamma(+\infty,z)$ and $\eta := \gamma(-\infty,z)$.
Since $\Gamma^N$ contains at most finitely many flow lines by Corollary~\ref{cor:boundary-GN}, it is sufficient to assume that at least one of the points $\xi, \eta$ does not belong to $\Gamma^N$.
Since $G$, $W$ are open and $G \cap W = \emptyset$ (see Lemma~\ref{lem:GWL-intersection}), $\xi$ and $\eta$ cannot be two local extremum points of $u$, and hence at least one of them is a saddle point.

Without loss of generality, assume that $\eta \in \mathcal{S}$.
Consequently, $\gamma \subset L(\eta)$.
In view of Lemma~\ref{lem:Lr:boundary}, we have either $\gamma \subset \partial L(\eta)$ or $\gamma \subset \mathrm{Int}(L(\eta))$.
By Proposition~\ref{prop:partialL-finite-flow-lines}, $\partial L(\eta)$ contains finitely many flow lines.
Thus, suppose that $\gamma \subset \mathrm{Int}(L(\eta))$.
According to Lemma~\ref{lem:classif-gamma}, we have the following two cases for $\xi$: either $\xi \in \mathcal{M}_-$ or $\xi \in \mathcal{S}$.
We discuss these cases separately. 

\begin{enumerate}[label={(\arabic*)}]
\item\label{proof:lemm414:1} Let $\xi \in \mathcal{M}_-$.
Since $\gamma \subset \mathrm{Int}(L(\eta))$, there exists a neighborhood $U \subset \mathrm{Int}(L(\eta))$ of $z$. 
That is, the flow line of any point from $U$ converges to $\eta$ as $t \to -\infty$.
On the other hand, since $\xi \in \mathcal{M}_-$, we use
Proposition~\ref{prop:absil} in combination with 
Lemma~\ref{lem:Cuchy-contin} to deduce that $U$ can be decreased so that the flow line of any point from $U$ converges either to $\xi$ (if $\xi$ is isolated) or to a critical point in a neighborhood of $\xi$ (if $\xi$ is nonisolated) as $t \to +\infty$.
That is, $U \cap G = \emptyset$ and $U \cap W = \emptyset$, which contradicts the assumption $z \in \partial G \cup \partial W$. 
Thus, this case cannot occur.

\item Let $\xi \in \mathcal{S}$, and hence $\gamma \subset L(\xi)$.
We distinguish two subcases:

\begin{enumerate}[label={\rm(\alph*)}] 
	\item $\gamma \subset \partial L(\xi)$. In this scenario, Proposition~\ref{prop:partialL-finite-flow-lines} implies that there are finitely many such flow lines.
	
	\item $\gamma \subset \mathrm{Int}(L(\xi))$. We argue similarly to the case~\ref{proof:lemm414:1} and conclude that the flow line of any point from a neighborhood $U \subset \mathrm{Int}(L(\xi)) \cap \mathrm{Int}(L(\eta))$ of $z$ converges to $\xi$ as $t \to +\infty$ and to $\eta$ as $t \to -\infty$. 
	Since $\xi, \eta \in \mathcal{S}$, we see that $U \cap G = \emptyset$ and $U \cap W = \emptyset$, which contradicts the assumption 
	$z \in \partial G \cup \partial W$.
	Thus, this subcase cannot occur.
\end{enumerate}
\end{enumerate}	 
As a result, we conclude that $\partial G \cup \partial W$ contains at most finitely many flow lines. 
\end{proof}

\begin{remark}
    If $\Gamma^N \neq \emptyset$, then $\partial G \cap \partial W$ might contain a flow line connecting two local extremum points of $u$ when both such points belong to $\Gamma^N$. 
    For instance, this happens in the case of the second Neumann eigenfunction in a disk.
\end{remark}

Second, we need a result of the same nature as Lemma~\ref{lem:partialL-isolated}.
\begin{lemma}\label{lem:G:parG-finitely-many-points}
$\partial G \cup \partial W$ contains at most finitely many isolated points, each of which is a critical point of $u$ in $\Omega$.
\end{lemma}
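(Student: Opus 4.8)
The plan is to show that every isolated point of $\partial G \cup \partial W$ is forced to be an \emph{isolated} critical point of $u$ lying in $\Omega$, after which finiteness will follow at once from Lemma~\ref{lem:isol}. So let $z$ be an isolated point of $\partial G \cup \partial W$ and fix a neighborhood $U$ of $z$ with $U \cap (\partial G \cup \partial W) = \{z\}$. First I would rule out that $z$ is a regular point. Since $\Gamma^D \subset \partial G$ by Lemma~\ref{lem:G:open} and $\Gamma^D$ is a one-dimensional curve with no isolated points, no point of $\Gamma^D$ is isolated in $\partial G \cup \partial W$; hence $z \in \Omega \cup \Gamma^N$. If such a $z$ were regular and belonged to $\partial G$ (respectively $\partial W$), then Lemma~\ref{lem:G:boundary} (respectively Lemma~\ref{lem:W:boundary}) would give $\gamma(t,z) \in \partial G$ (respectively $\partial W$) for all $t$, so a nondegenerate arc of the flow line through $z$ would lie in $\partial G \cup \partial W$, contradicting the isolation of $z$. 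Thus $z$ is a critical point of $u$, and $z \notin \Gamma^D$.

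Next I would exclude that $z$ lies on a curve $\theta$ of critical points or on $\Gamma^N$; this is the technical heart. The idea is to propagate the membership $z \in \partial W$ (the case $z \in \partial G$ is entirely analogous, reaching $\Gamma^D$ instead of a second extremum) to a whole arc of nearby boundary points, using continuous dependence on initial data (Lemma~\ref{lem:Cuchy-contin}) together with the Morse--Bott collar structure of Lemma~\ref{lem:morse-bott}, recalling from Corollary~\ref{cor:no-saddles-on-theta} that such a $\theta$ consists of local extrema. Concretely, if $z \in \theta$ with $\theta$ a curve of local minima, then choosing $w_n \in W$ with $w_n \to z$, the forward limits $\gamma(+\infty,w_n) \in \theta$ accumulate at $z$; since each flow line $\gamma(\cdot,w_n)$ lies entirely in $W$, these forward limits lie in $\overline{W}$ and, being critical, in $\partial W$. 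Organizing this through the continuous projection of the collar onto $\theta$ (so as to avoid the degenerate possibility that all forward limits coincide with $z$), one obtains an arc of $\theta$ through $z$ contained in $\partial W$, so $z$ is not isolated. This step parallels the treatment in Lemma~\ref{lem:partialL-theta} and Proposition~\ref{prop:boundary_Lr}.

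The boundary case $z \in \Gamma^N$ I expect to be the main obstacle. Here I would use that the two arcs of $\Gamma^N$ incident to $z$ consist of flow lines converging to $z$ (Lemma~\ref{lem:traj-neum} and Corollary~\ref{cor:boundary-GN}). Tracking where the interior flow lines adjacent to these arcs terminate and invoking the Lyapunov-type stability of Propositions~\ref{prop:absil} and~\ref{prop:absil-x}, one finds that either a separatrix in $\partial W$ converges to $z$, or a subarc of $\Gamma^N$ itself lies in $\partial W$ (as a limit of interior points of $W$); in either situation $z$ fails to be isolated. The subtlety is to handle uniformly the local minimum, local maximum, and saddle configurations at a boundary critical point, and to reconcile this with the genuinely isolated case permitted by the statement, namely an interior local extremum whose punctured neighborhood lies entirely in a single basin, so that no separatrix of $\partial G \cup \partial W$ emanates from it.

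Combining these steps, $z$ must be an isolated critical point of $u$ with $z \in \Omega$. Since the set of isolated critical points of $u$ is finite by Lemma~\ref{lem:isol}, the isolated points of $\partial G \cup \partial W$ form a finite subset of this set, which is exactly the assertion. (One may additionally note, via Lemma~\ref{lem:partialGW-finite-flow-lines}, that such an isolated point cannot be a saddle, since the stable and unstable separatrices of a saddle in $\partial G \cup \partial W$ would converge to it; this is consistent with the isolated points being local extremum points.)
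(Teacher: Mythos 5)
Your opening step (regular points of $u$ cannot be isolated in $\partial G \cup \partial W$, via Lemmas~\ref{lem:G:boundary} and~\ref{lem:W:boundary}) and your closing appeal to Lemma~\ref{lem:isol} both match the paper, but the middle of your argument --- what you yourself call the technical heart --- has a genuine gap, and it sits exactly where the paper uses an idea you never invoke. The paper's proof repeats the first paragraph of Lemma~\ref{lem:partialL-isolated}: if $U \cap \partial G = \{z\}$, then $(U\cap\Omega)\setminus\{z\}$ is the disjoint union of the two open sets $U \cap G$ and $U \cap (\Omega\setminus\overline{G})$; taking $U\cap\Omega$ connected (the punctured set stays connected in the plane) and noting $U\cap G\neq\emptyset$, one concludes $(U\cap\Omega)\setminus\{z\}\subset G$. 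This single observation settles everything at once: if $z\in\partial\Omega$, every point of $U\cap\partial\Omega$ is a limit of points of $G$, hence lies in $\partial G$, contradicting isolation; and if $z\in\Omega$, then since $G$ and $W$ contain \emph{no critical points at all} (points of $G$ have flow lines reaching $\Gamma^D$, points of $W$ have flow lines joining two distinct extrema), the inclusion $U\setminus\{z\}\subset G$ (resp.\ $\subset W$) forces $z$ to be an \emph{isolated} critical point. This rules out curves of critical points and $\Gamma^N$ simultaneously, with no collar analysis.

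Your alternative route does not close. In the curve case you take $w_n\in W$ with $w_n\to z$ and want the forward limits $\gamma(+\infty,w_n)\in\partial W$ to accumulate at $z$ from the side, but you concede the degenerate possibility that all these limits equal $z$, and ``the continuous projection of the collar onto $\theta$'' is not an argument that excludes it: by Lemma~\ref{lem:morse-bott}~\ref{lem:morse-bott:fiber} at most two flow lines converge to $z$, so all the $w_n$ could lie on one such flow line and your construction produces no second point of $\partial W$. Making this case work requires a further two-sided argument (points near that flow line, off the at most two separatrices into $z$, flow forward to points of $\theta\setminus\{z\}$ and backward to local maxima, hence lie in $W$, so that nearby points of $\theta$ do land in $\partial W$) --- none of which is in your sketch, and all of which the paper's connectedness argument makes unnecessary, since nearby points of $\theta$ are critical and therefore cannot lie in $W$, contradicting $U\setminus\{z\}\subset W$ directly. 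The case $z\in\Gamma^N$ is even more openly unfinished: ``one finds that either a separatrix in $\partial W$ converges to $z$, or a subarc of $\Gamma^N$ itself lies in $\partial W$'' states the desired conclusion rather than proving it. So the statement is correct and your frame is reasonable, but the proof as written is incomplete; the missing ingredient is the decomposition-plus-connectedness argument of Lemma~\ref{lem:partialL-isolated} combined with the remark that $G$ and $W$ consist entirely of regular points.
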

\begin{proof}
We start the proof with the analysis of $\partial G$.
In view of Lemmas~\ref{lem:G:open} and~\ref{lem:G:boundary}, $\partial G$ cannot contain \textit{regular} points of $u$ which are isolated in $\partial G$. 
Therefore, let us show that $\partial G$ contains finitely many \textit{critical} points which are isolated in $\partial G$.
Let $z$ be any such point, i.e., there exists a neighborhood $U$ of $z$ such that $\partial G \cap U = \{z\}$. 
Since $G$ is open by Lemma~\ref{lem:G:open}, we argue in the same way in the first paragraph of the proof of Lemma~\ref{lem:partialL-isolated} to conclude that $(U \cap \Omega) \setminus \{z\} \subset G$.
If $z \in \partial\Omega$, then we see that any point from $U \cap \partial \Omega$ can be approximated by points from $G$, and hence $U \cap \partial \Omega \subset \partial G$, which contradicts the isolation of $z$ in $\partial G$.

Let $z \in \Omega$.	
Decreasing $U$, we assume that $U \subset \Omega$, for convenience.
It is clear from the definition of $G$ that it does not contain critical points.
Consequently, the inclusion $U \setminus \{z\} \subset G$ implies that $z$ is an isolated critical point of $u$ in $U$. 
By Lemma~\ref{lem:isol}, $u$ possesses only finitely many isolated critical points, which completes the proof for $\partial G$.

Let us now discuss $\partial W$.
Let $z$ be an isolated point of $\partial W$, i.e., there exists a neighborhood $U$ of $z$ such that $\partial W \cap U = \{z\}$. 
Thanks to Lemmas~\ref{lem:W:open} and~\ref{lem:W:boundary}, we argue in much the same way as above to obtain that $z \in \Omega$, $U \setminus \{z\} \subset W$, and $z$ is an isolated critical point of $u$. 
The application of Lemma~\ref{lem:isol} finishes the proof.	
\end{proof}

\begin{remark}
Unlike $\partial L(r)$, the boundaries $\partial G$ and $\partial W$ \textit{can} possess isolated points.
Simple explicit examples are the first and second radial Dirichlet eigenfunctions in a disk, respectively.	
\end{remark}

Third, we provide a result of the same type as Lemma~\ref{lem:partialL-theta}.
\begin{lemma}\label{lem:partialG-theta}
Let $\theta$ be a curve of critical points of $u$.
Let $z \in \theta$ be such that $z \in \partial G$ (or $z \in \partial W$) and there is no flow line $\gamma \subset \partial G$ (or $\gamma \subset \partial W$) converging to $z$.
Then there exists a neighborhood $U$ of $z$ such that 
$\theta \cap U = \partial G \cap U$ (or $\theta \cap U = \partial W \cap U$).
\end{lemma}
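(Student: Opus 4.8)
The plan is to mirror the proof of Lemma~\ref{lem:partialL-theta}, replacing the set $L(r)$ by $G$; the argument for $\partial W$ is the same after substituting $W$ for $G$ and invoking Lemma~\ref{lem:W:boundary} and Lemma~\ref{lem:W:open} in place of their $G$-counterparts. By Corollary~\ref{cor:no-saddles-on-theta} the curve $\theta$ consists of local extremum points, so $z \notin \mathcal{S}$ and, without loss of generality, $z$ is a local minimum of $u$; moreover $\theta \cap \partial\Omega \subset \Gamma^N$ by Lemma~\ref{lem:isol}, whence $z \in \Omega \cup \Gamma^N$ and in particular $z \notin \Gamma^D$. First I would fix, using Lemma~\ref{lem:isol}~\ref{lem:isol:simple}, a neighborhood $V$ of $z$ in which every critical point of $u$ lies on $\theta$, and then refine $V$ via Lemma~\ref{lem:morse-bott}~\ref{lem:morse-bott:gamma-N} so that, when $z \in \Gamma^N$, either $V \cap \theta = V \cap \Gamma^N$ or $V \cap \theta \cap \Gamma^N = \{z\}$.

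The first main step is to rule out any sequence $\{\eta_n\} \subset \partial G \setminus \theta$ converging to $z$. For large $n$ such points lie in $V \setminus \Gamma^D$ and are regular by the choice of $V$, so Lemma~\ref{lem:G:boundary} forces the entire flow line $\gamma(\cdot,\eta_n)$ to remain in $\partial G$. If infinitely many of these flow lines were mutually distinct, this would contradict the finiteness asserted in Lemma~\ref{lem:partialGW-finite-flow-lines}. If infinitely many coincided, the common flow line would pass through points $\eta_n \to z$; since a regular flow line meets a critical point only in the limit $t \to \pm\infty$ (by uniqueness of solutions of \eqref{eq:gradflow2}), the Lojasiewicz convergence recorded after Lemma~\ref{lem:lojas} would produce a flow line in $\partial G$ converging to $z$, contradicting the hypothesis. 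Hence there is a neighborhood $U \subset V$ with $U \cap (\partial G \setminus \theta) = \emptyset$, which already gives the inclusion $\partial G \cap U \subset \theta \cap U$.

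For the reverse inclusion I would use that $G$ is open (Lemma~\ref{lem:G:open}) and, by its definition~\eqref{def:G}, contains no critical points, so that $\theta \cap G = \emptyset$ and every connected component of $(U \cap \Omega) \setminus \theta$ lies either in $G$ or in $\Omega \setminus \overline{G}$. Since $z \in \partial G$, the neighborhood $U$ meets $G$, so at least one such component lies in $G$ and carries $\theta \cap U$ on its boundary; as $G$ contains no critical points, this forces $\theta \cap U \subset \overline{G} \setminus G = \partial G$, yielding equality. When $z \in \Omega$ the analytic simple curve $\theta$ splits $U \cap \Omega$ into exactly two components, and when $z \in \Gamma^N$ with $V \cap \theta = V \cap \Gamma^N$ there is a single component abutting $\theta$; in both cases the conclusion follows directly.

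The main obstacle, exactly as in Lemma~\ref{lem:partialL-theta}, is the remaining boundary subcase $z \in \Gamma^N$ with $U \cap \theta \cap \Gamma^N = \{z\}$, where $\theta$ crosses $\Gamma^N$ transversally. By Lemma~\ref{lem:morse-bott}~\ref{lem:morse-bott:gamma-N} there are then flow lines contained in $\Gamma^N$ converging to $z$, and the two components of $(U \cap \Omega) \setminus \theta$ each abut such a flow line. Since at least one component lies in $G$, the adjacent $\Gamma^N$-flow line is a limit of points of $G$ and, because $G \subset \Omega$, it belongs to $\overline{G} \setminus G = \partial G$ while converging to $z$. This contradicts the hypothesis, so this subcase cannot occur under the assumptions of the lemma, and the identity $\theta \cap U = \partial G \cap U$ holds in all admissible configurations.
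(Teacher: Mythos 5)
Your proposal is correct and follows essentially the same route as the paper's proof, which likewise mirrors Lemma~\ref{lem:partialL-theta}: the same exclusion of sequences $\{\eta_n\} \subset \partial G \setminus \theta$ via Lemmas~\ref{lem:G:boundary} and~\ref{lem:partialGW-finite-flow-lines}, the same dichotomy that components of $(U \cap \Omega)\setminus\theta$ lie in $G$ or $\Omega\setminus\overline{G}$, and the same contradiction in the transversal case $U \cap \theta \cap \Gamma^N = \{z\}$ using Lemma~\ref{lem:morse-bott}. You even supply a detail the paper leaves implicit (why a common flow line through $\eta_n \to z$ must converge to $z$, via finite-time avoidance of critical points plus Lojasiewicz convergence), so nothing further is needed.
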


\begin{proof}
The proof goes along the same lines as in Lemma~\ref{lem:partialL-theta}. 
For completeness, we sketchily provide arguments for the case $\partial G$ only. 
Without loss of generality, we can assume that $z$ is a local minimum point of $u$. 
Since $\theta$ is isolated in $\mathcal{C}$, there exists a neighborhood $V$ of $z$ such that all critical points of $u$ in $V \cap \overline{\Omega}$ belong to $\theta$. Moreover, in view of Lemma~\ref{lem:isol}~\ref{lem:isol:gamma-N}, there are two possibilities when $z \in \Gamma^N$: either $V \cap \theta = V \cap \Gamma^N$ or $V \cap \theta \cap \Gamma^N = \{z\}$.

Let there exist a sequence $\{\eta_n\} \subset \partial G \setminus \theta$ such that $\eta_n \to z$. 
We observe that each $\eta_n$ is a regular point of $u$ by the choice of $V$. 
Using similar arguments as in the proof of Lemma~\ref{lem:partialL-theta} along with the help of Lemmas~\ref{lem:G:boundary} and \ref{lem:partialGW-finite-flow-lines}, we get a contradiction. 
Therefore, one can find a neighborhood $U \subset V$ of $z$ such that $U \cap (\partial G \setminus \theta) = \emptyset$. 
Consequently, we have
\begin{equation}\label{eq:proof:4.19-1}
\partial G \cap U\subset \theta \cap U,
\end{equation}
and, recalling that $G$ is open (see Lemma~\ref{lem:G:open}), any connected component of $(U \cap \Omega) \setminus \theta$ is a subset of either $G$ or $\Omega \setminus \overline{G}$.

Since $z \in \partial G$, it follows that $U \cap G \neq \emptyset$, and hence there exists at least one connected component of $(U \cap \Omega) \setminus \theta$ that fully lies in $G$.
If $z \in \Omega$, then $(U \cap \Omega) \setminus \theta$ consists of two connected components (up to shrinking $U$, if necessary), each of which has $\theta \cap U$ as a part of its boundary. 
Therefore, we get $\theta \cap U \subset \partial G \cap U$.
Combining this set inclusion with \eqref{eq:proof:4.19-1}, we obtain the desired result in the case $z \in \Omega$.
If $z \in \Gamma^N$ and $U \cap \theta = U \cap \Gamma^N$, then $(U \cap \Omega) \setminus \theta$ is connected, which yields the same result. 

Now, in view of the choice of the neighborhood $V$, it remains to consider the case $U \cap \theta \cap \Gamma^N = \{z\}$. 
We deduce from Lemma~\ref{lem:morse-bott} that $(U \cap \Omega) \setminus \theta$ has two connected components, each of which has a part of $\Gamma^N$ on its boundary, and such a part contains flow lines converging to $z$. 
Since at least one connected component is a subset of $G$, we find flow lines from $\partial G \cap \Gamma^N$ converging to $z$, which contradicts the assumption of the lemma. 
\end{proof}

Finally, we provide the main result of this subsection, which has the same nature as Proposition~\ref{prop:boundary_Lr}.
\begin{proposition}\label{prop:boundary_GW}
$(\partial G \cup \partial W) \setminus \Gamma^D$ consists of finitely many flow lines (together with their end points outside of $\Gamma^D$), arcs of critical points, and isolated critical points. 
\end{proposition}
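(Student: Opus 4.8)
The plan is to mirror the structure of the proof of Proposition~\ref{prop:boundary_Lr}, treating $\partial G$ and $\partial W$ separately, since the three ingredients we need---Lemmas~\ref{lem:partialGW-finite-flow-lines}, \ref{lem:G:parG-finitely-many-points}, and~\ref{lem:partialG-theta}---are each stated for $\partial G$ and $\partial W$ individually. Two of the three components of the claimed decomposition are already settled: Lemma~\ref{lem:partialGW-finite-flow-lines} gives that $\partial G \cup \partial W$ contains at most finitely many flow lines (each converging, outside $\Gamma^N$, to a saddle point, hence with controlled end points), and Lemma~\ref{lem:G:parG-finitely-many-points} gives that it contains at most finitely many isolated points, all of which are critical points in $\Omega$. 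Thus the only remaining task is to show that the critical points lying in $(\partial G \cup \partial W) \setminus \Gamma^D$ which are neither end points of one of these finitely many flow lines nor isolated points organize themselves into finitely many arcs of critical points.

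First I would take any such critical point $z \in \partial G \setminus \Gamma^D$, the case of $\partial W$ being identical. Since $z$ is not isolated in $\partial G$, there is a sequence $\{z_n\} \subset (\partial G \setminus \Gamma^D) \setminus \{z\}$ with $z_n \to z$. I would then argue that only finitely many of the $z_n$ can be regular points: indeed, a regular $z_n$ lies on a flow line $\gamma(\cdot,z_n) \subset \partial G$ by Lemma~\ref{lem:G:boundary}; if infinitely many of these flow lines were mutually distinct we would contradict the finiteness in Lemma~\ref{lem:partialGW-finite-flow-lines}, while if infinitely many coincided the common flow line would converge to $z$, contradicting the standing assumption that no flow line of $\partial G$ converges to $z$. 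Hence all but finitely many $z_n$ are critical points of $u$, and by Lemma~\ref{lem:isol} they lie, for large $n$, on a single analytic curve $\theta$ of critical points containing $z$.

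Next I would invoke Lemma~\ref{lem:partialG-theta} to obtain a neighborhood $U$ of $z$ with $\theta \cap U = \partial G \cap U$, and then propagate this identity along $\theta$. Following the argument in Proposition~\ref{prop:boundary_Lr}, the arc of $\theta$ that coincides with $\partial G$ can terminate only when $\theta$ reaches $\Gamma^N$, or when one arrives at a point $\zeta \in \theta \cap \partial G$ to which a flow line in $\partial G$ converges, or when $\theta$ closes up (recall from Lemma~\ref{lem:isol}~\ref{lem:isol:simple} that $\theta$ is either closed or has both end points on $\Gamma^N$). Because $u$ has only finitely many curves of critical points (Lemma~\ref{lem:isol}) and $\partial G$ contains only finitely many flow lines (Lemma~\ref{lem:partialGW-finite-flow-lines}), there can be only finitely many such arcs; the same conclusion holds for $\partial W$. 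Combining the flow-line count, the isolated-point count, and this arc count yields the asserted decomposition.

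The main obstacle I anticipate is not any single step but the bookkeeping at the transition points of the decomposition: one must be sure that a point of $\theta \cap \partial G$ at which a flow line converges, or where $\theta$ meets $\Gamma^N$, is counted once and does not cause the arcs to proliferate. This is controlled precisely by the finiteness of flow lines in $\partial G \cup \partial W$ (Lemma~\ref{lem:partialGW-finite-flow-lines}) together with the local picture near $\Gamma^N$ supplied by Lemma~\ref{lem:morse-bott}, exactly as in the treatment of $\partial L(r)$. The only genuine difference from Proposition~\ref{prop:boundary_Lr} is that here isolated critical points must be retained as a separate component of the decomposition, since---unlike $\partial L(r)$, which has no isolated points by Lemma~\ref{lem:partialL-isolated}---the boundaries $\partial G$ and $\partial W$ may genuinely possess them, as recorded in the remark following Lemma~\ref{lem:G:parG-finitely-many-points}.
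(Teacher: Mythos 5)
Your proposal is correct and follows essentially the same route as the paper's proof: it combines Lemma~\ref{lem:partialGW-finite-flow-lines} and Lemma~\ref{lem:G:parG-finitely-many-points} for the flow-line and isolated-point counts, then uses the approximating-sequence argument together with Lemma~\ref{lem:isol} and propagation of Lemma~\ref{lem:partialG-theta} along a curve of critical points to produce finitely many arcs. The only (harmless) deviations are that you treat $\partial G$ and $\partial W$ separately where the paper handles the union $\partial G \cup \partial W$ at once, and that you explicitly record the closed-curve termination case, which the paper leaves implicit.
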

\begin{proof}
We already know from Lemmas~\ref{lem:partialGW-finite-flow-lines} and~\ref{lem:G:parG-finitely-many-points} that $\partial G \cup \partial W$ contains at most finitely many flow lines (together with their end points) and isolated points which are critical points.
Let us now describe the structure of the remaining part of $(\partial G \cup \partial W) \setminus \Gamma^D$ consisting of critical points of $u$ which are neither end points of flow lines from $\partial G \cup \partial W$ nor isolated points of $\partial G \cup \partial W$.

Let $z$ be any such critical point.
Since $z$ is nonisolated point of $\partial G \cup \partial W$, there is a sequence $\{z_n\} \subset \partial G \cup \partial W$ such that $z_n \to z$. 
If there exists a subsequence of $\{z_n\}$ consisting of regular points belonging to a flow line $\gamma$, then $\gamma \subset \partial G \cup \partial W$ by Lemmas~\ref{lem:G:boundary} and~\ref{lem:W:boundary}, and hence $\gamma$ converges to $z$, which contradicts our assumption on $z$.
On the other hand, in view of Lemma~\ref{lem:partialGW-finite-flow-lines}, only finitely many elements of $\{z_n\}$ can be regular points with mutually different flow lines.
Thus, expect of these finitely many elements, $\{z_n\}$ consists of critical points of $u$.
Then, in view of Lemma~\ref{lem:isol}, there exists a curve $\theta$ of critical points of $u$ such that $z \in \theta$.

Thanks to Lemma~\ref{lem:partialG-theta}, we can find a neighborhood $U$ of $z$ such that
$(\partial G \cup \partial W) \cap U = \theta \cap U$. 
We apply this lemma along $\theta$ until either $\theta$ ends up at $\Gamma^N$ or there appears a point $\zeta \in \theta \cap (\partial G \cup \partial W)$ such that there exists a flow line from $\partial G \cup \partial W$ converging to $\zeta$. 
Since $u$ has (at most) finitely many curves of critical points (see Lemma~\ref{lem:isol}) and flow lines in $\partial G \cup \partial W$ (see Lemma~\ref{lem:partialGW-finite-flow-lines}), we conclude that there exists finitely many arcs of critical points in $\partial G \cup \partial W$.
\end{proof}

\section{Proof of Theorem \ref{thm:main-properties}}
\label{sec:proofs-main}

We prove each assertion separately.
Since proofs of some assertions are based on others, we prove them in an order different from the order of formulation.

\begin{proof}[Proof of the assertion~\ref{thm:main-properties:opennes}]
\hypertarget{thm:main-properties:opennes:proof}{}
The sets $G$ and $W$ are open by 
Lemmas~\ref{lem:G:open} and \ref{lem:W:open}, respectively. 
Since the set of saddle points $\mathcal{S}$ is finite by Lemma~\ref{lem:numsaddlesOmega}, $\bigcup_{r \in \mathcal{S}} \partial L(r)$ is closed. 
Thus, we conclude that any Neumann domain is open, that is, it is indeed a \textit{domain} (i.e., a connected open set).
\end{proof}

\begin{proof}[Proof of the assertion~\ref{thm:main-properties:critpoints}]
\hypertarget{thm:main-properties:critpoints:proof}{}
First, we show that $\Ga^N\subset \mathcal{N}(u)$. 
If $\Ga^N=\emptyset$, it holds trivially. 
Take any $z_0 \in \Gamma^N$ and let $\{z_n\} \subset \Omega$ be a sequence of regular points of $u$ converging to $z_0$. 
Let us consider the behavior of flow lines $\gamma(\cdot,z_n)$. 
According to Lemma~\ref{lem:classif-gamma} and the definitions of the sets $G$, $W$, $L$, each $z_n$ belongs to either of these three sets.
Thus, up to a subsequence, we have three possible cases: $\{z_n\} \subset G$, $\{z_n\} \subset W$, and $\{z_n\} \subset L$.
Since $G$ and $W$ are open by Lemmas~\ref{lem:G:open} and \ref{lem:W:open}, in the first two cases we have $z_0 \in \partial G$ and $z_0 \in \partial W$, respectively.
In the third case $\{z_n\} \subset L$, we get $z_0 \in L$ since $L$ is closed. 
Noting that $z_0 \in \Gamma^N$, we get $z_0 \in \partial L$ and hence $z_0 \in \partial L(r)$ for some $r \in \mathcal{S}$, see Remark~\ref{rem:L}~\ref{rem:L:boundaryL-Lr}.
Consequently, $\Ga^N\subset \mathcal{N}(u)$.

Second, we prove that $\mathcal{C} \subset \mathcal{N}(u)$.
For any $r \in \mathcal{S}$ there is a flow line in $\partial L(r)$ converging to $r$ (see Lemma~\ref{lem:observation}), and hence $r \in \mathcal{N}(u)$.
Let us take any local extremum point $q$ of $u$. 
Since $\mathcal{M}_\pm \cap \Gamma^D = \emptyset$ by Lemma~\ref{lem:crit}~\ref{lem:crit:singsadleisolated} and we already proved that $\Ga^N\subset \mathcal{N}(u)$, we can additionally assume that $q \in \Omega$.
By Proposition~\ref{prop:absil} (if $q$ is isolated) and Lemmas~\ref{lem:isol} and \ref{lem:morse-bott} (if $q$ is nonisolated), there exists a flow line $\gamma$ converging to $q$.
If $\gamma$ connects $q$ with another local extremum point of $u$, then $\gamma \subset W$, and hence $q \in \partial W$. 
If $\gamma$ reaches $\Gamma^D$ in a finite time, then $\gamma \subset G$, and hence $q \in \partial G$.
Finally, if $\gamma$ connects $q$ with a saddle point $z_0 \in \mathcal{S}$, then $\gamma \subset L(z_0)$, and hence 
$q \in \partial L(z_0)$. 
That is, we get $\mathcal{C} \subset \mathcal{N}(u)$.
\end{proof}

\begin{proof}[Proof of the assertion~\ref{thm:main-properties:decomposition}]
\hypertarget{thm:main-properties:decomposition:proof}{}
Recall that $\mathcal{D}(u)$ stands for the union of all Neumann domains.
Since each Neumann domain is an open subset of $\Omega$ by Theorem~\ref{thm:main-properties}~\ref{thm:main-properties:opennes}, and $\mathcal{N}(u) \subset \overline{\Omega}$, we have
$$
\mathcal{D}(u) \cup (\mathcal{N}(u) \setminus \partial \Omega) \subset \Omega.
$$
To justify the reverse set inclusion, it is enough to show that if $z\in \Om \setminus \mathcal{N}(u)$, then $z\in \mathcal{D}(u)$.  
Notice that such $z$ is a regular point of $u$ in view of the inclusion $\mathcal{C} \subset \mathcal{N}(u)$, see Theorem~\ref{thm:main-properties}~\ref{thm:main-properties:critpoints}.
According to the behavior of the corresponding flow line $\gamma(\cdot,z)$ described in Lemma~\ref{lem:classif-gamma}, $z$ belongs to either of the sets $G$, $W$, $L$.
Since $z \not\in \mathcal{N}(u)$, we have $z \not\in \bigcup_{r \in \mathcal{S}} \partial L(r)$. 
On the other hand,
\begin{align*}
L&= \mathrm{Int}(L) \cup \partial L\subset \mathrm{Int}(L) \cup \bigcup_{r \in \mathcal{S}} \partial L(r),
\end{align*}
see Remark~~\ref{rem:L}~\ref{rem:L:boundaryL-Lr} for the last set inclusion.
Consequently, we get $z \in (G \cup W \cup \mathrm{Int}(L)) \setminus \bigcup_{r \in \mathcal{S}} \partial L(r)$, that is, $z \in \mathcal{D}(u)$.
\end{proof}

\begin{proof}[Proof of the assertion~\ref{thm:main-properties:finite-length}]
\hypertarget{thm:main-properties:finite-length:proof}{}
The result follows from Propositions~\ref{prop:boundary_Lr} and~\ref{prop:boundary_GW}.
\end{proof}

\begin{proof}[Proof of the assertion~\ref{thm:main-properties:finite-length2}]
\hypertarget{thm:main-properties:finite-length2:proof}{}
If $\mathcal{N}(u)$ consists only of isolated critical points, then there is nothing to prove.
Assume that $\mathcal{N}(u)$ contains at least one flow line or arc of critical points. 
Any of such curve has a finite length, as it follows from the Lojasiewicz inequality (see Section~\ref{sec:gradflow}) or Lemma~\ref{lem:isol}, respectively.
Thanks to Theorem~\ref{thm:main-properties}~\ref{thm:main-properties:finite-length}, there are at most finitely many flow lines and arcs of critical points in $\mathcal{N}(u)$, which implies that the length of $\mathcal{N}(u)$ is finite.
By construction, we have $\partial u/\partial \nu = 0$ along any flow line and arc of critical points.
\end{proof}

\begin{proof}[Proof of the assertion~\ref{thm:main-properties:finite-number}]
\hypertarget{thm:main-properties:finite-number:proof}{}
By Theorem~\ref{thm:main-properties}~\ref{thm:main-properties:finite-length}, $\mathcal{N}(u)$ contains of at most finitely many flow lines (together with their end points) and arcs of critical points. 
Since $u$ is strictly monotone along flow lines, we have the following relation between flow lines and arcs of critical points: any two flow lines can meet each other at most at two points, a flow line can meet an arc of critical points at most at one point, and if two arcs of critical points meet, then they belong to one curve of critical points (see Lemma~\ref{lem:isol}).     
Therefore, $\Omega \setminus \mathcal{N}(u)$ has a finite number of connected components.
In view of the decomposition given by Theorem~\ref{thm:main-properties}~\ref{thm:main-properties:decomposition}, we conclude that the number of Neumann domains is finite.
\end{proof}

\begin{proof}[Proof of the assertion~\ref{thm:main-properties:sign-changing}]
\hypertarget{thm:main-properties:sign-changing:proof}{}
Let $\Omega_0$ be an inner Neumann domain. 
Consider a domain $\widetilde{\Omega}_0 = \mathrm{Int}(\overline{\Omega_0})$.
The domain $\widetilde{\Omega}_0$ is more regular than $\Omega_0$ since it does not contain possible ``cracks''. 
Indeed, if there exists $z \in \partial \Omega_0$ such that a neighborhood $U$ of $z$ satisfies $U \cap (\mathbb{R}^N \setminus \overline{\Omega_0}) = \emptyset$, then $U \subset \mathrm{Int}\,\overline{\Omega_0}$ and hence $z \not\in \partial \widetilde{\Omega}_0$.
Apart from such ``cracks'', the boundaries $\partial \widetilde{\Omega}_0$ and $\partial \Omega_0$ coincide.
Consequently, $\partial \widetilde{\Omega}_0$ is piecewise analytic and consists of finitely many pieces (see Theorem~\ref{thm:main-properties}~\ref{thm:main-properties:finite-length}).

Since $u$ is analytic in $\overline{\Omega}$, its restriction to the closure of $\widetilde{\Omega}_0$ is also analytic.
Moreover, $\partial u/\partial \nu = 0$ on each analytic piece of $\partial \widetilde{\Omega}_0$ by Theorem~\ref{thm:main-properties}~\ref{thm:main-properties:finite-length2}.
Integrating the equation in \eqref{cutproblem2d} over $\widetilde{\Omega}_0$ and applying the divergence theorem (see, e.g., \cite[Theorem~9.6]{maggi2012sets}), we get
$$
\lambda
\int_{\widetilde{\Omega}_0} u \,dx
=
-
\int_{\widetilde{\Omega}_0} \Delta u \,dx
=
-
\int_{\partial \widetilde{\Omega}_0}
\frac{\partial u}{\partial \nu} \,dS
=
0.
$$
Recalling that $\lambda >0$ (see Section~\ref{sec:properties}), we conclude that $u$ is sign-changing in $\widetilde{\Omega}_0$ and hence in $\Omega_0$ by the continuity.
\end{proof}

\begin{proof}[Proof of the assertion~\ref{thm:main-properties:sign-const}]
\hypertarget{thm:main-properties:sign-const:proof}{}
Let $\Omega_0$ be a boundary Neumann domain, i.e, a connected component of $G \setminus \bigcup_{r \in \mathcal{S}} \partial L(r)$. 
Let us take any $z_1,z_2 \in \Omega_0$ and suppose that $u(z_1) \leq 0 \leq u(z_2)$. 
By the continuity, there exists $z_3 \in \Omega_0$ such that $u(z_3)=0$.
According to the definition of $G$, the flow line $\gamma(\cdot,z_3)$ reaches $\Gamma^D$ in a finite time.
However, this is impossible since $u(z_3)=0$ and $u$ is strictly monotone along flow lines.
Thus, $u$ restricted to $\Omega_0$ is either strictly positive or strictly negative.
\end{proof}

\section{Appearance of isolated semi-degenerate critical points}\label{sec:trivialcrit}

In this section, we provide a numerically based evidence that every type of isolated semi-degenerate critical points can appear in the critical set of the first (positive) Dirichlet eigenfunction $u$ on a dumbbell shaped domain.
We do not pursue the rigor of arguments and only aim to demonstrate that the occurrence of such critical points is natural to expect and is not extraordinary. 
In fact, a rigorous proof of existence of a saddle-node in the critical set of a higher Dirichlet eigenfunction in a square is given in \cite{chenmyrtaj2019}.
The construction of our examples is substantially different.

Recall that at an isolated semi-degenerate critical point $z_0$ the first (positive) eigenfunction $u$ is right-equivalent to one of the functions $(x,y) \mapsto u(z_0) \pm x^k \pm y^2$ with $k \geq 3$ in a neighborhood of $(0,0)$, see Section~\ref{sec:classification}.
If $k$ is odd, then $z_0$ is called saddle-node (equivalently, trivial point, see Remark~\ref{rem:terminology}), and if $k$ is even, then $z_0$ is either a local maximum point or a topological saddle.
Moreover, we note that since $u$ is positive, it does not have fully degenerate critical points, see Section~\ref{sec:classification}.

Let us construct a two-parametric family of domains $\Omega(R,s)$ as follows.
Let $B_1(0,0)$ be the unit disk centered at the origin, and let $B_R(2+R,0)$ be the disk of radius $R>0$ centered at the point $(2+R,0)$, so that the distance between $B_1(0,0)$ and $B_R(2+R,0)$ is always $1$.
Let $T(R) \subset (-1,2+R) \times (-R,R)$ be an isosceles trapezoid (considered as an open set) such that 
$$
B_1(0,0) \cup B_R(2+R,0) \cup T(R) = \text{convex hull of}~ B_1(0,0) \cup B_R(2+R,0),
$$ 
see Figure~\ref{fig:saddle-node1}.
Denote the lower-right, upper-right, upper-left, and lower-left vertices of $T(R)$ as 
$$
(x_{lr},y_{lr}),~~ (x_{ur},y_{ur}),~~ (x_{ul},y_{ul}),~~ (x_{ll},y_{ll}),
$$
respectively. 
Consider now an isosceles trapezoid $T(R,s)$ spanned on the points
$$
(x_{lr},y_{lr}+s),~~ (x_{ur},y_{ur}-s),~~ (x_{ul},y_{ul}-s),~~ (x_{ll},y_{ll}+s),
$$
where $s \geq 0$ is such that $T(R,s)$ remains a trapezoid. 
That is, $T(R,s)$ shrinks $T(R)$ along the $y$-coordinate, see Figure~\ref{fig:saddle-node2}.
Finally, denote
$$
\Omega(R,s)
=
B_1(0,0) \cup B_R(2+R,0) \cup T(R,s).
$$

\begin{figure}[!ht]
  \begin{subfigure}{0.49\textwidth}
    \includegraphics[width=\linewidth]{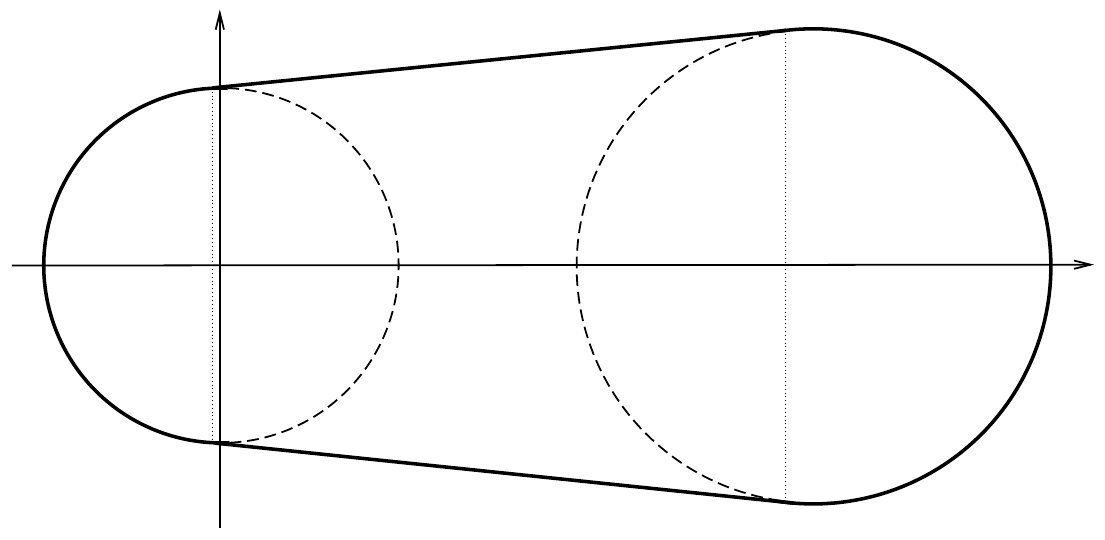}
    \caption{$R>1$, $s=0$} \label{fig:saddle-node1}
  \end{subfigure}%
  \hspace*{\fill} 
  \begin{subfigure}{0.49\textwidth}
    \includegraphics[width=\linewidth]{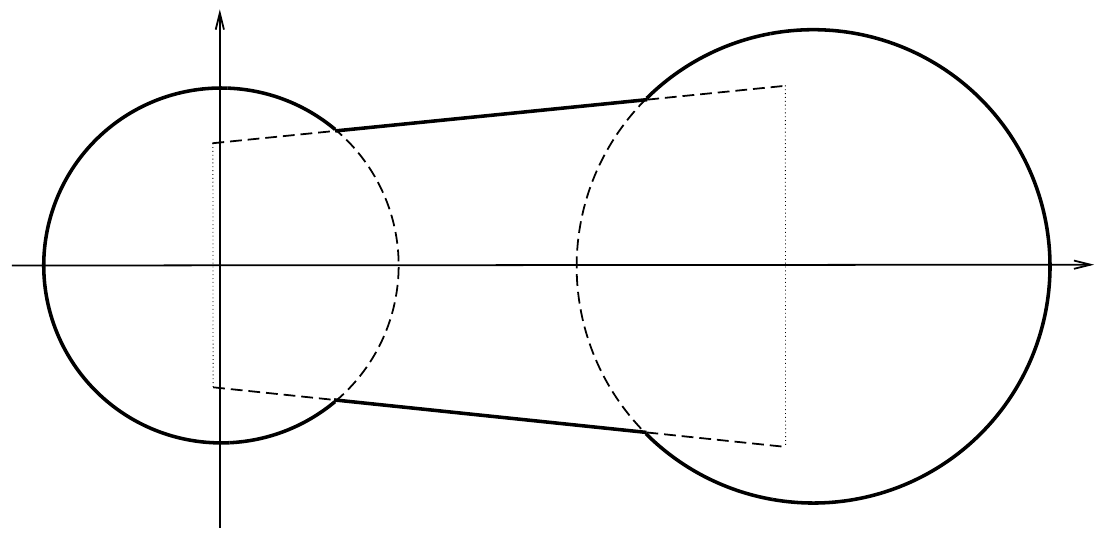}
    \caption{$R>1$, $s > 0$} \label{fig:saddle-node2}
  \end{subfigure}%
    \\[1em]
   \begin{subfigure}{0.49\textwidth}
    \includegraphics[width=\linewidth]{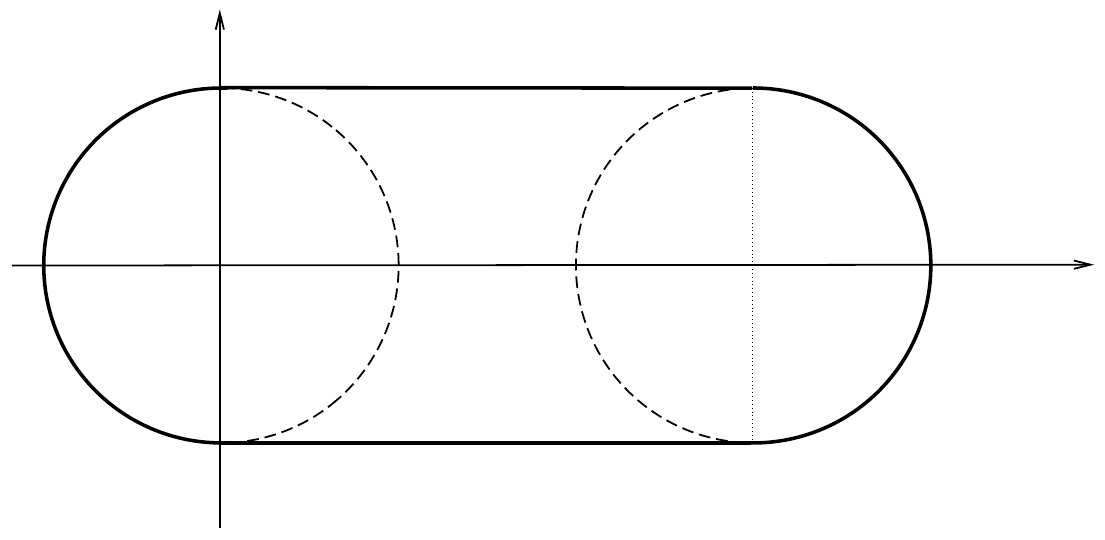}
    \caption{$R=1$, $s=0$} \label{fig:saddle-node3}
  \end{subfigure}%
  \hspace*{\fill} 
  \begin{subfigure}{0.49\textwidth}
    \includegraphics[width=\linewidth]{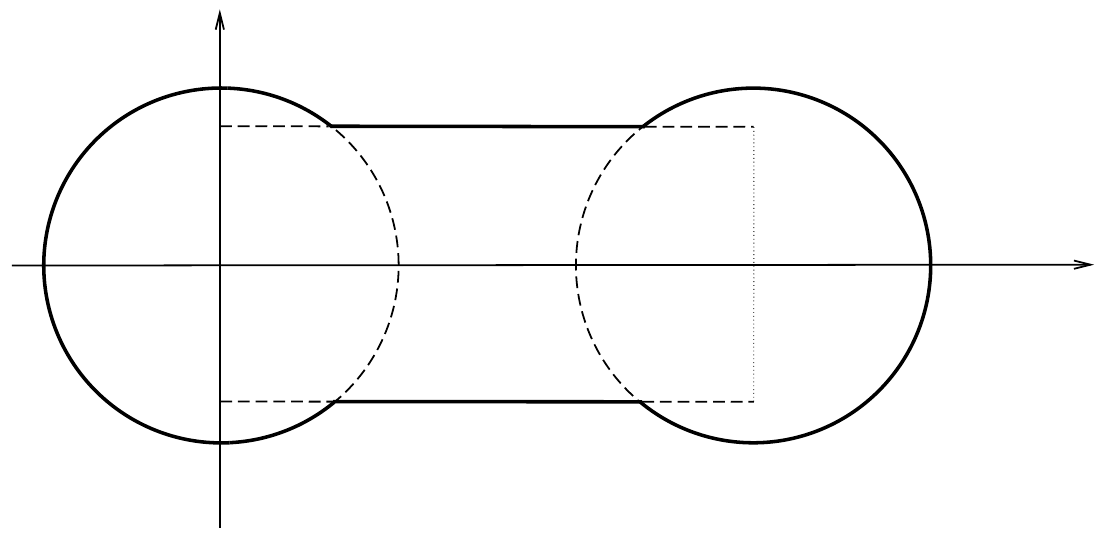}
    \caption{$R=1$, $s>0$} \label{fig:saddle-node4}
  \end{subfigure}%
    
\caption{$\Omega(R,s)$ with different values of $R$ and $s$.}
\end{figure}

\noindent
By construction, $\Omega(R,s)$ is symmetric with respect to the $x$-axis and convex in the $y$-direction, i.e., $\Omega(R,s)$ is Steiner symmetric with respect to the $x$-axis.
We see that if $R=1$ and $s=0$, then $\Omega(1,0)$ is a stadium domain, and it is convex, see Figure~\ref{fig:saddle-node3}.
If $R=1$ and $s>0$, then $\Omega(1,s)$ is a dumbbell domain symmetric with respect to two orthogonal lines, 
see Figure~\ref{fig:saddle-node4}.

For convenience, we denote the first (positive) Dirichlet eigenfunction in $\Omega(R,s)$ as $u_{R,s}$.
Thanks to the uniqueness of the first eigenfunction modulo scaling, $u_{R,s}$ shares the symmetries of $\Omega(R,s)$.
In particular, $u_{R,s}$ is always symmetric with respect to the $x$-axis. 
If $R=1$, then $u_{1,s}$ is also symmetric with respect to the line $\{x=3/2\}$, and $u_{1,s}$ is centrally symmetric with respect to the point $(3/2,0)$.

Let us now consider properties of critical points of $u_{R,s}$.
In view of the Steiner symmetry, the moving plane method applied in the $y$-direction (see, e.g., \cite[Theorem~1.3]{berestycki1991method}) shows that
$(u_{R,s})'_y<0$ for any $y \neq 0$.
In particular, \textit{all} critical points of $u_{R,s}$ are located on the $x$-axis.
We therefore consider the function $f_{R,s}$ defined as $f_{R,s}(x)=u_{R,s}(x,0)$ for $x \in (-1,2+2R)$.
Since $u_{R,s}$ is analytic, so is $f_{R,s}$, and hence $f_{R,s}$ has only isolated critical points whose number is finite. 
Moreover, the monotonicity of $u_{R,s}$ in the $y$-direction yields the following relation between critical points of $f_{R,s}$ and $u_{R,s}$:

\begin{enumerate}
	\item $x_0$ is a local maximum point of $f_{R,s}$ if and only if $(x_0,0)$ is a local maximum point of $u_{R,s}$. 
	In addition, $x_0$ is degenerate (i.e., $f_{R,s}''(x_0) = 0$) if and only if 
	$(x_0,0)$ is semi-degenerate (i.e., $(x_0,0)$ is a semi-degenerate local maximum point).
	
	\item $x_0$ is a local minimum point of $f_{R,s}$ if and only if $(x_0,0)$ is a topological saddle of $u_{R,s}$. 
	In addition, $x_0$ is degenerate if and only if $(x_0,0)$ is semi-degenerate (i.e., $(x_0,0)$ is a semi-degenerate topological saddle).
	
	\item $x_0$ is a saddle point (i.e., inflection point) of $f_{R,s}$ if and only if $(x_0,0)$ is a saddle-node of $u_{R,s}$.	
\end{enumerate}

We also comment on the behavior of critical points of $f_{R,s}$ (and $u_{R,s}$) with respect to the parameters $R$, $s$. 
Namely, a critical point either moves continuously, or collides with another critical points, or splits into several critical points, or disappears without interrelation with other critical points.
Since non-degenerate critical points are \textit{structurally stable} (i.e., their existence and Morse indices are preserved under $C^1$-small perturbations, see, e.g., \cite[Section~4.1]{perko2013differential}) and $u_{R,s}$ is expected to depend smoothly on $R$ and $s$,  
the collision/splitting and appearance/disappearance can happen only with semi-degenerate critical points. 
(Recall that $u_{R,s}$ has no fully degenerate critical points since $u_{R,s}>0$, while fully degenerate critical points can appear only in the nodal set of $u_{R,s}$.)

Relying on these facts, we proceed with discussing the appearance of semi-degenerate critical points.
Note that if $s=0$, then $\Omega(R,0)$ is a convex domain (see Figure~\ref{fig:saddle-node1}), and hence $u_{R,0}$ has \textit{exactly one} critical point which is a global maximum critical point (see, e.g., \cite{de2021uniqueness}).
On the other hand, if $R=1$ and $s \to 1$, then the ``handle'' of the symmetric dumbbell domain $\Omega(1,s)$ is narrowing (see Figure~\ref{fig:saddle-node4}) and $u_{1,s}$ converges in an appropriate sense 
to the union of equally normalized first eigenfunctions on the disks $B_1(0,0)$ and $B_1(3,0)$. 
Therefore, we conclude that the number of critical points of $u_{1,s}$ is \textit{at least three} for any sufficiently large $s \in (0,1)$, i.e., two local maximum points and one saddle.
In view of the dependence of critical points on the parameters discussed above, we conclude that there exists the smallest $s_0 \in [0,1)$ such that the unique global maximum critical point of $u_{1,s_0}$ is semi-degenerate and splits into several critical points for $s \in (s_0, 1)$, see Figure~\ref{fig:saddle-node-counter0}.
That is, such a critical point has the same type as $(0,0)$ of the function $(x,y) \mapsto u(z_0) - x^k - y^2$ with even $k \geq 4$.

\begin{figure}[!ht]
  \begin{subfigure}{0.33\textwidth}
    \includegraphics[width=\linewidth]{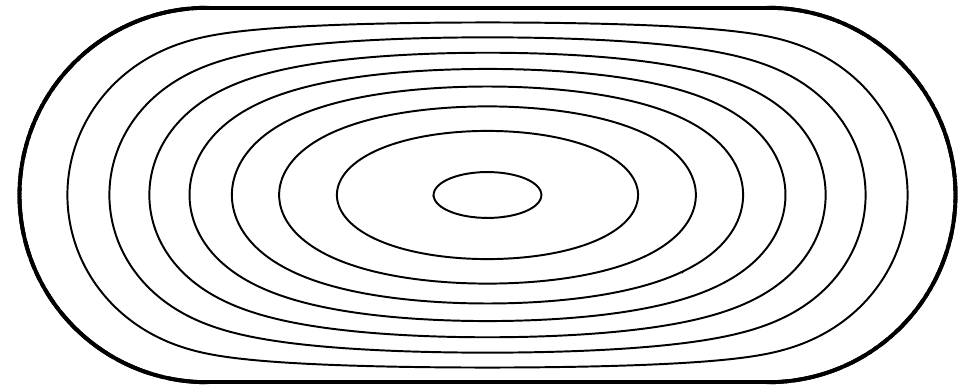}
  \end{subfigure}%
  \hspace*{\fill} 
  \begin{subfigure}{0.33\textwidth}
    \includegraphics[width=\linewidth]{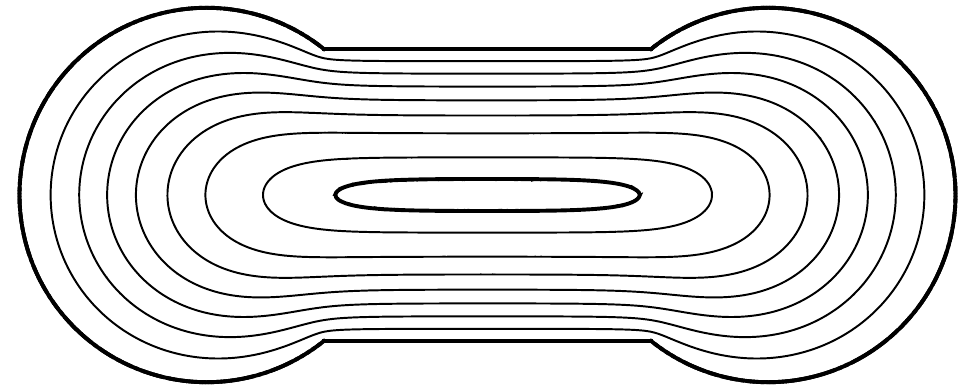}
  \end{subfigure}%
  \hspace*{\fill}  
  \begin{subfigure}{0.33\textwidth}
    \includegraphics[width=\linewidth]{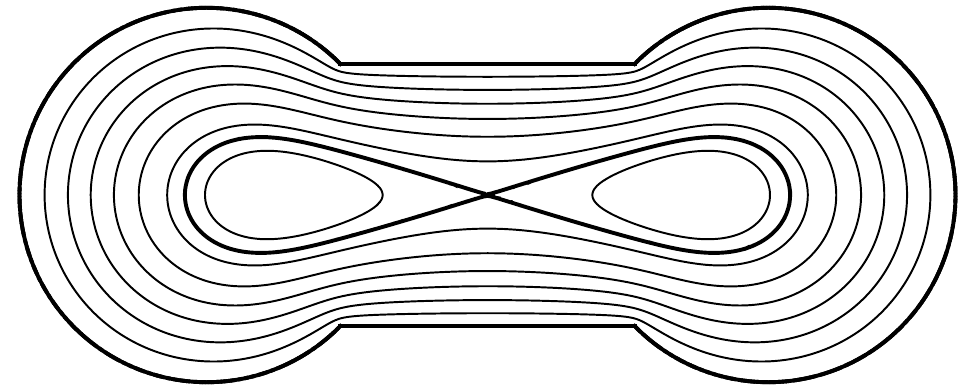}
  \end{subfigure}%
    \\[1em]
   \begin{subfigure}{0.33\textwidth}
    \includegraphics[width=\linewidth]{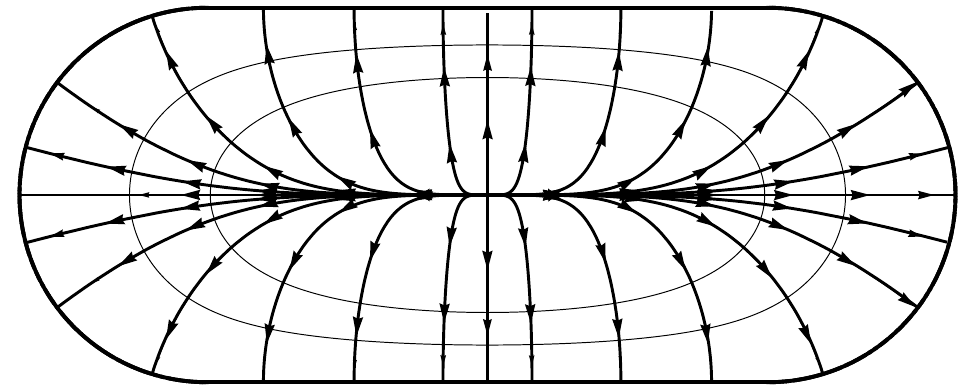}
  \end{subfigure}%
  \hspace*{\fill} 
  \begin{subfigure}{0.33\textwidth}
    \includegraphics[width=\linewidth]{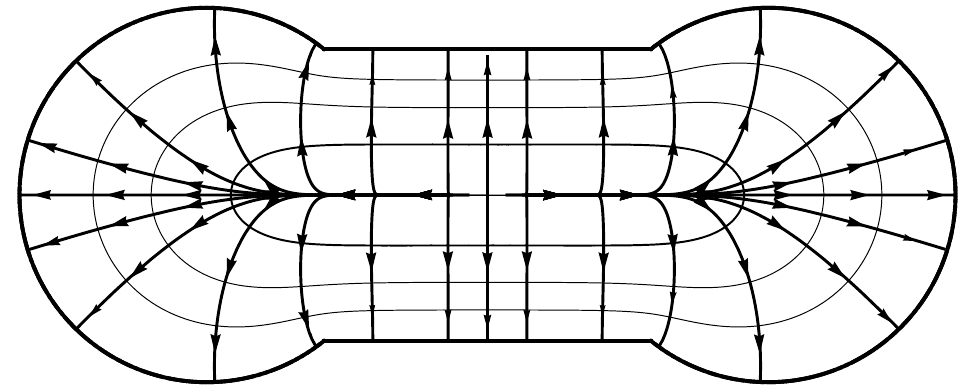}
  \end{subfigure}%
  \hspace*{\fill}  
  \begin{subfigure}{0.33\textwidth}
    \includegraphics[width=\linewidth]{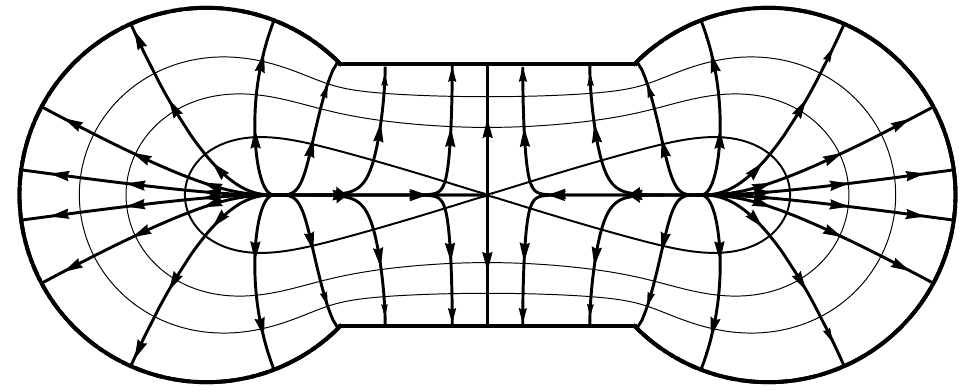}
  \end{subfigure}%
    \\[1em]
   \begin{subfigure}{0.33\textwidth}
    \includegraphics[width=\linewidth]{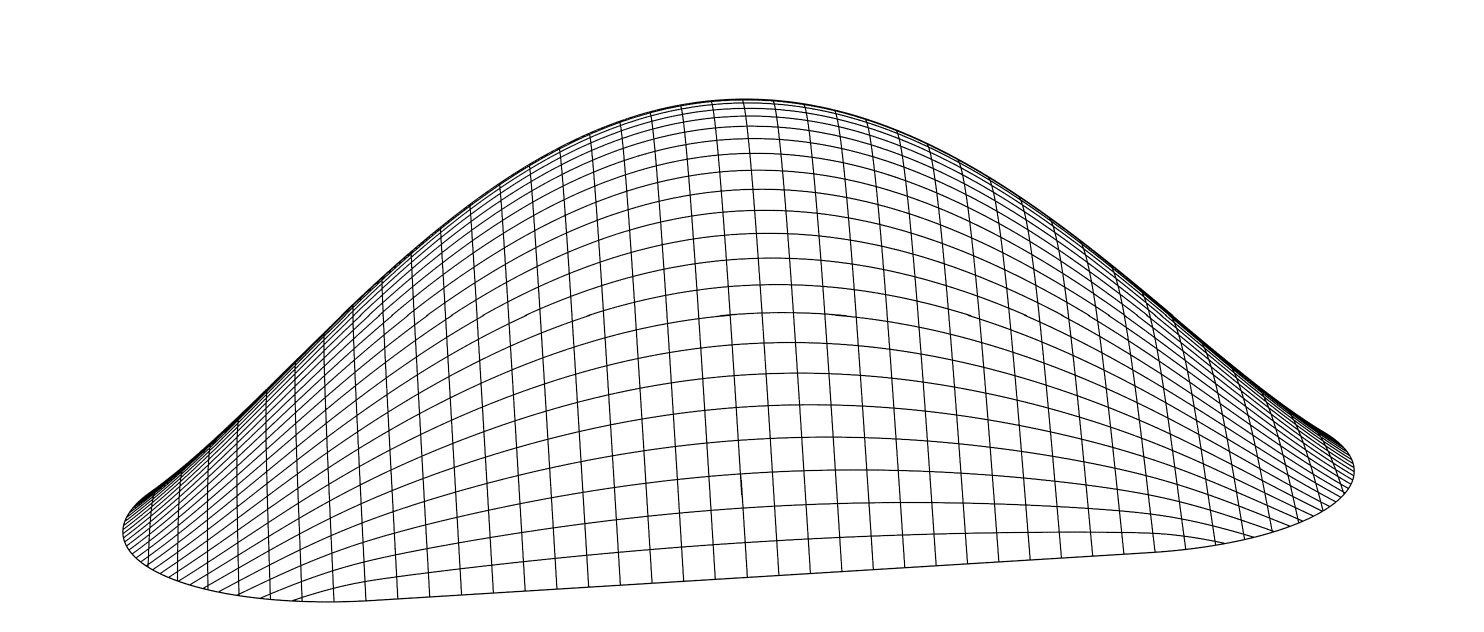}
    \caption{$s=0$}
  \end{subfigure}%
  \hspace*{\fill}   
  \begin{subfigure}{0.33\textwidth}
    \includegraphics[width=\linewidth]{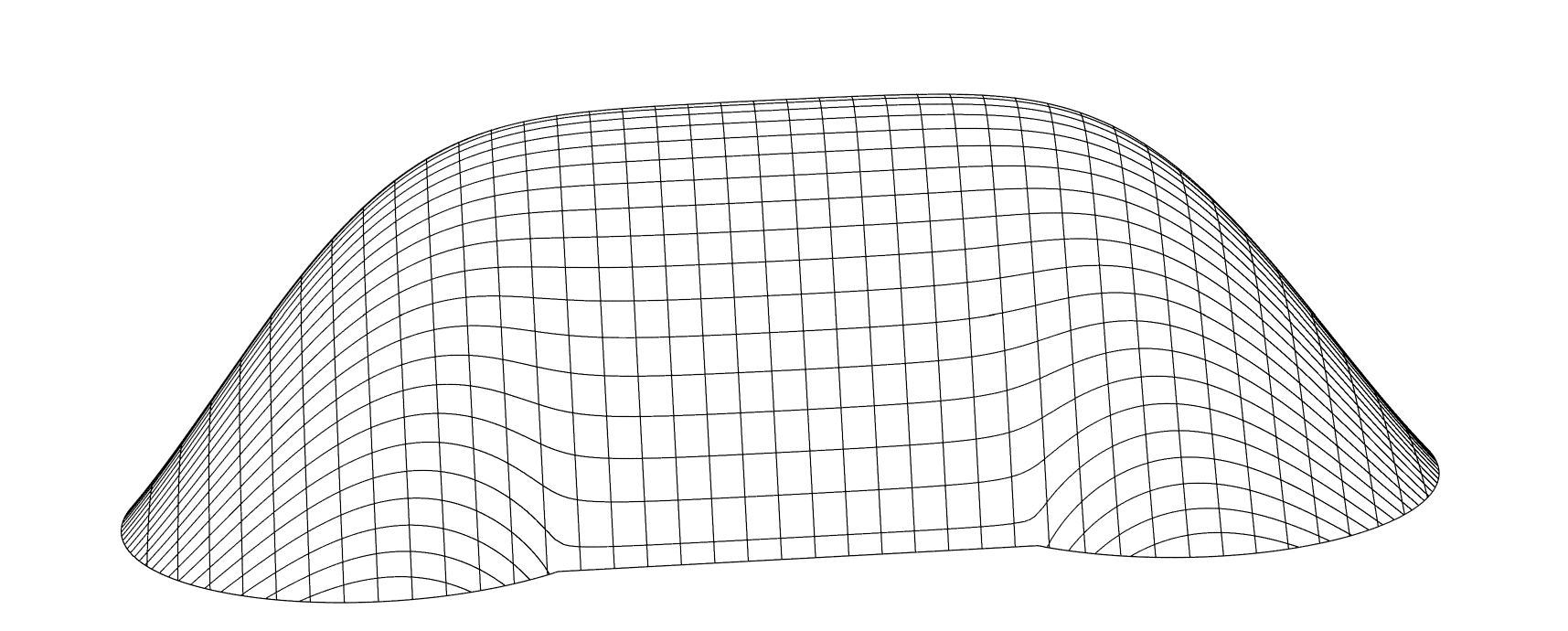}
    \caption{$s=0.22$}%
  \end{subfigure}%
  \hspace*{\fill}  
  \begin{subfigure}{0.33\textwidth}
    \includegraphics[width=\linewidth]{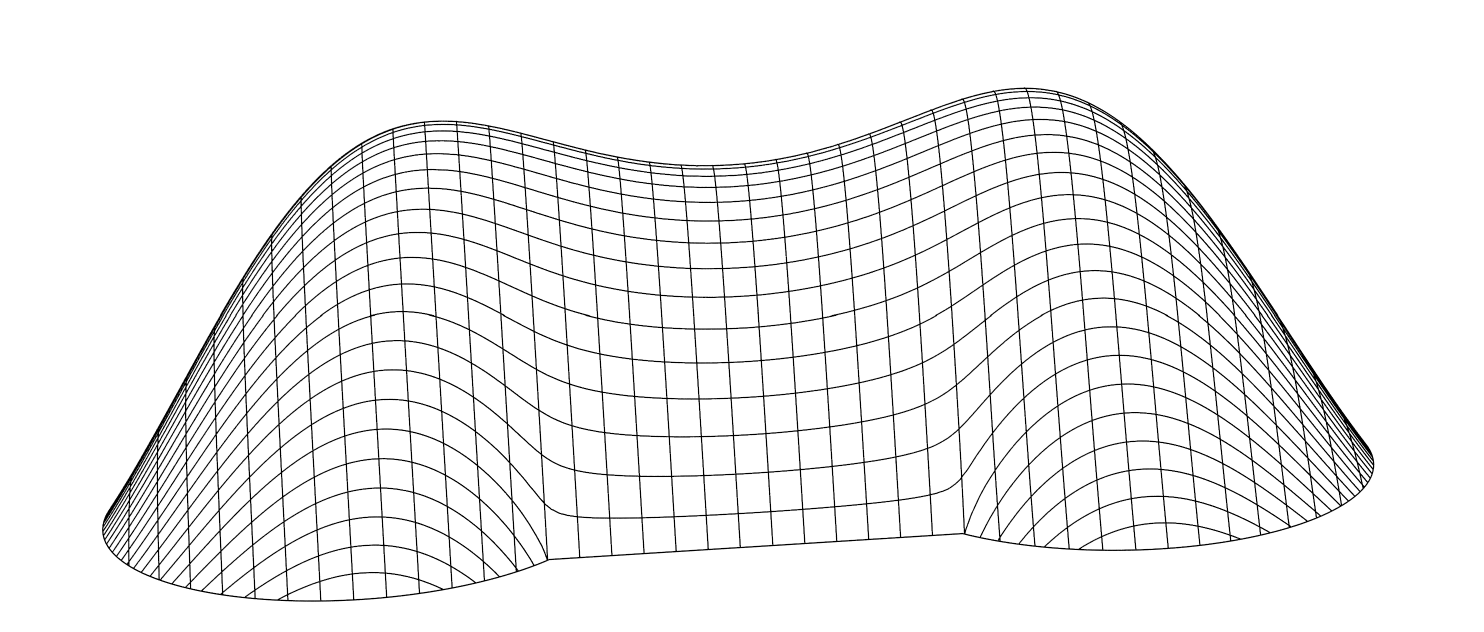}
    \caption{$s=0.3$}
    \label{fig:saddle-node6-5c}
  \end{subfigure}%
\caption{$\Omega(R,s)$ with $R=1$ and varying $s$. Level sets, flow lines, and perspective view of $u_{R,s}$.}
\label{fig:saddle-node-counter0}
\end{figure}

We notice that, in essence, all the above arguments can be made  mathematically rigorous with a little effort. 
Numerically-based conclusions which are harder to establish rigorously come in the following paragraphs.

Let us now discuss the appearance of the saddle-node.
This will come from the development of the previous case with $R \neq 1$, that is, during the transition from Figure~\ref{fig:saddle-node2} to Figure~\ref{fig:saddle-node1}.
By the continuous dependence of $u_{R,s}$ with respect to $R$ and $s$ and the properties of $u_{1,s}$ (i.e., with $R=1$), $u_{R,s}$ still has \textit{at least three} critical points (two local maximum points and one saddle) for a sufficiently large $s \in (0,1)$, provided $R \neq 1$ sufficiently close to $1$. 
For simplicity, assume that there are exactly three critical points.
Fixing any such $R$ and sending $s$ to $0$, we claim that the local (not global) maximum point collides with the saddle, thereby forming a saddle-node, and after collision the saddle-node disappears, leaving only one critical point - the global maximum point.
We do not prove this claim rigorously, but numerical experiments indicate exactly this behavior, see Figure~\ref{fig:saddle-node-counter} for some plots obtained using build-in methods of \textit{Mathematica}.

\begin{figure}[!ht]
  \begin{subfigure}{0.33\textwidth}
    \includegraphics[width=\linewidth]{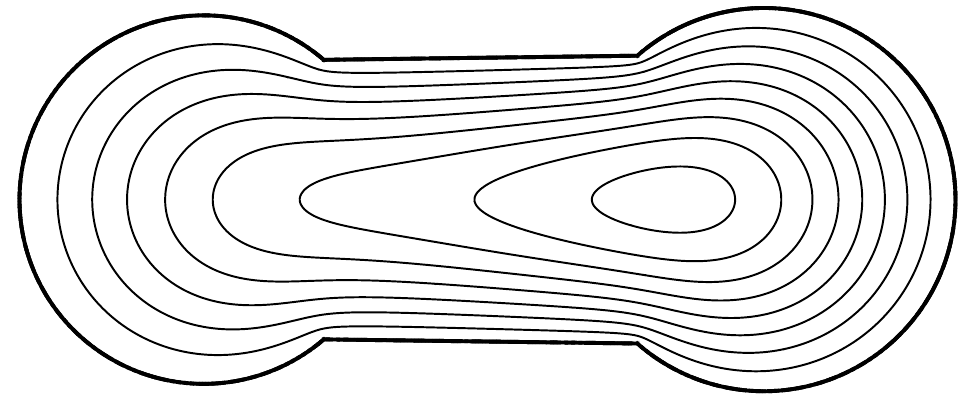}
  \end{subfigure}%
  \hspace*{\fill} 
  \begin{subfigure}{0.33\textwidth}
    \includegraphics[width=\linewidth]{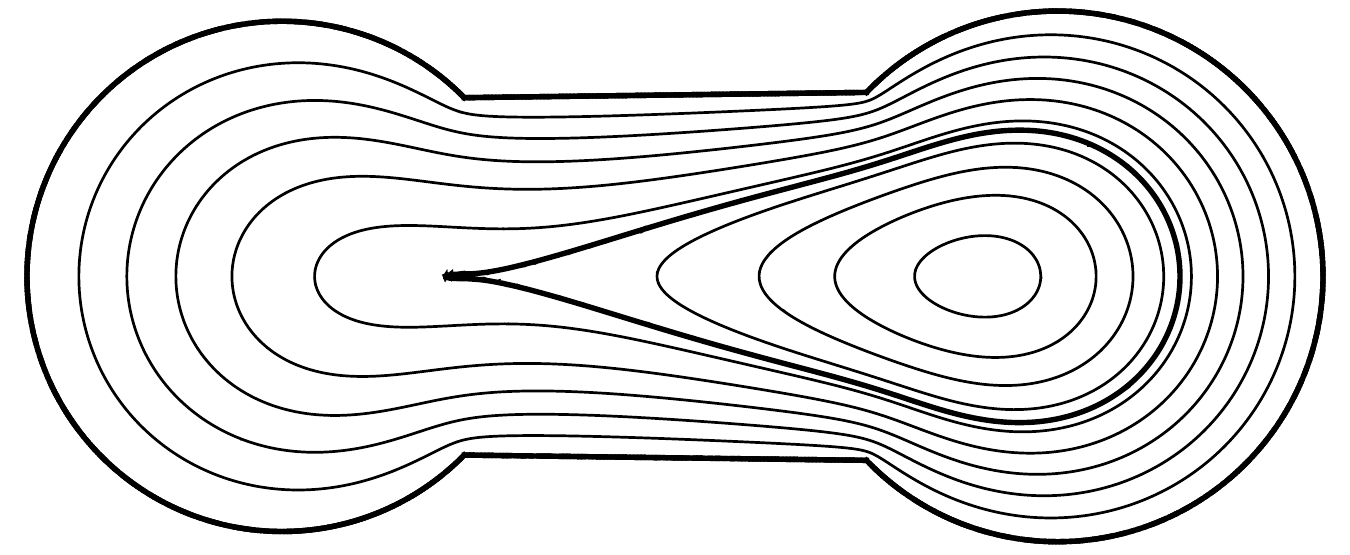}
  \end{subfigure}%
  \hspace*{\fill}  
  \begin{subfigure}{0.33\textwidth}
    \includegraphics[width=\linewidth]{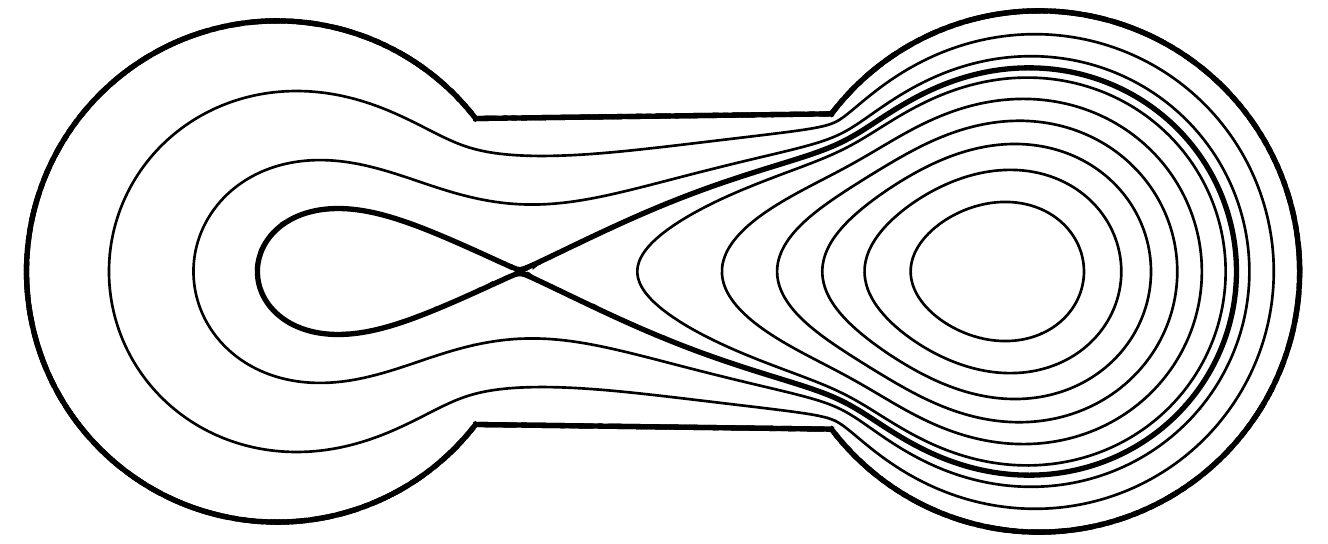} 
  \end{subfigure}%
    \\[1em]
   \begin{subfigure}{0.33\textwidth}
    \includegraphics[width=\linewidth]{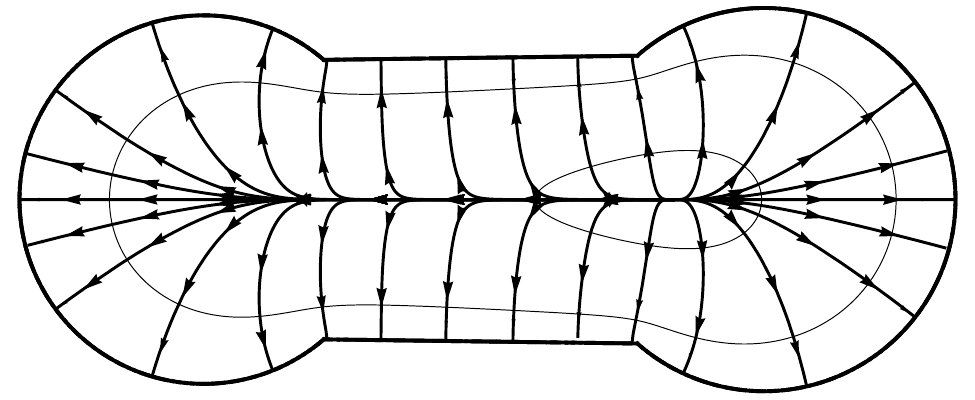} 
  \end{subfigure}%
  \hspace*{\fill}  
  \begin{subfigure}{0.33\textwidth}
    \includegraphics[width=\linewidth]{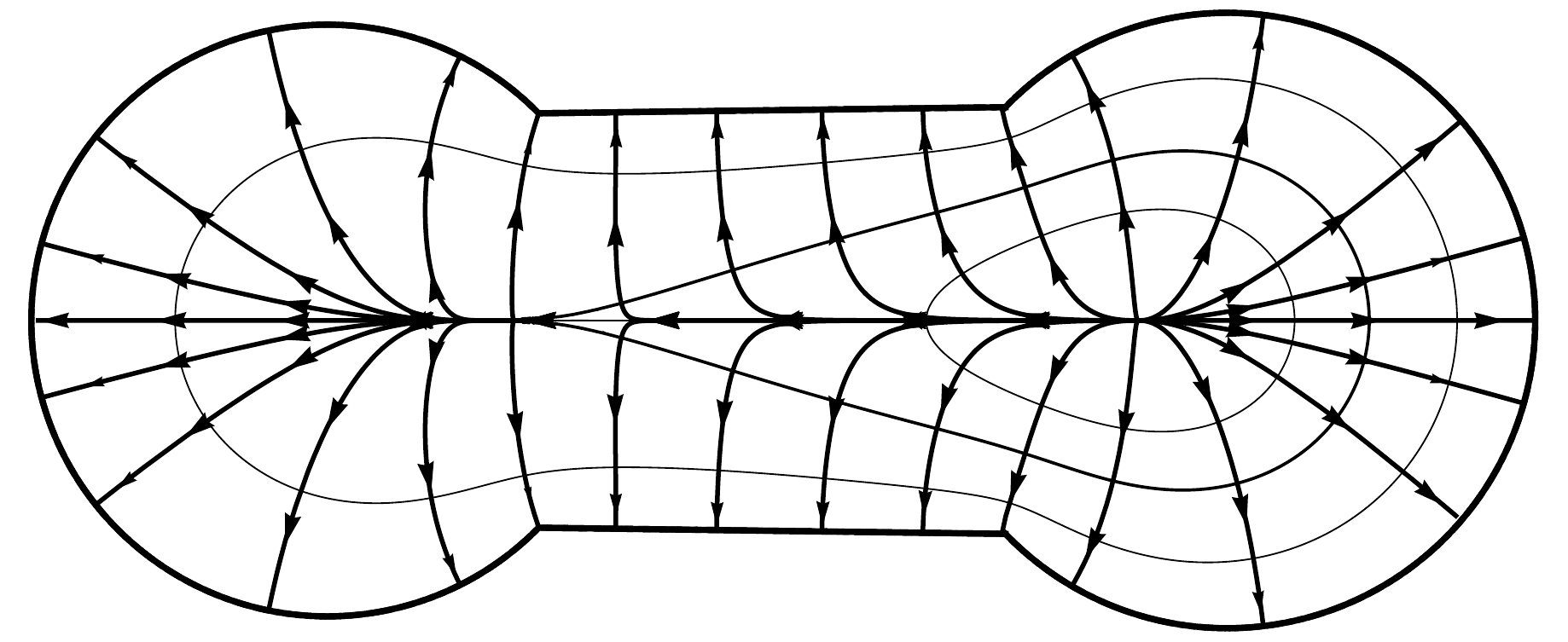}
  \end{subfigure}%
  \hspace*{\fill}  
  \begin{subfigure}{0.33\textwidth}
    \includegraphics[width=\linewidth]{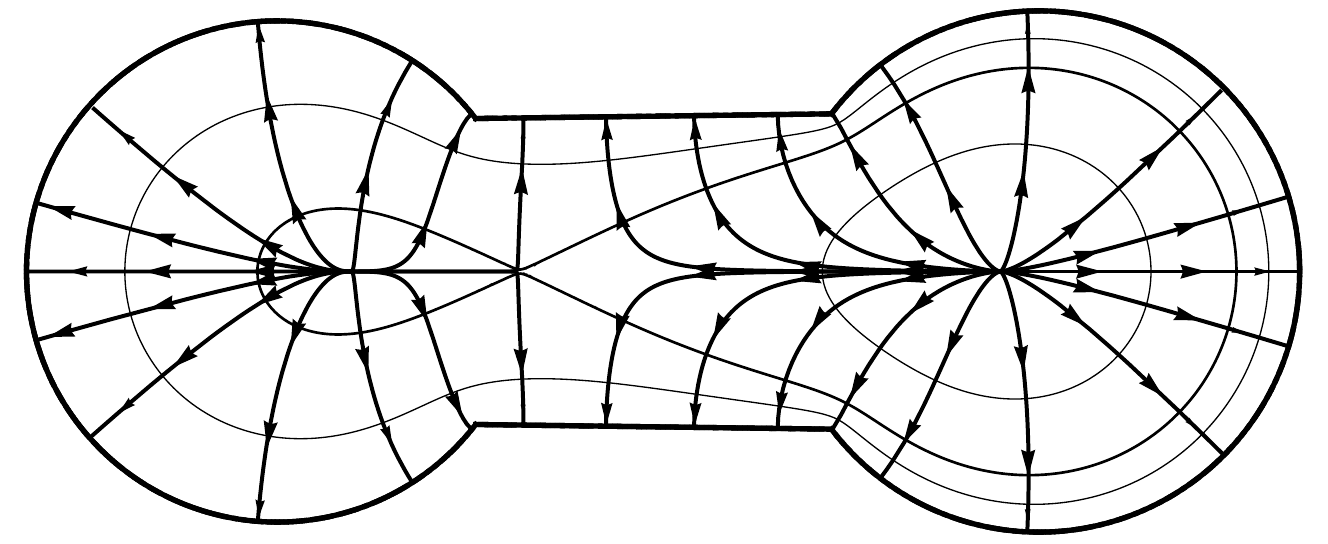} 
  \end{subfigure}%
    \\[1em]
   \begin{subfigure}{0.33\textwidth}
    \includegraphics[width=\linewidth]{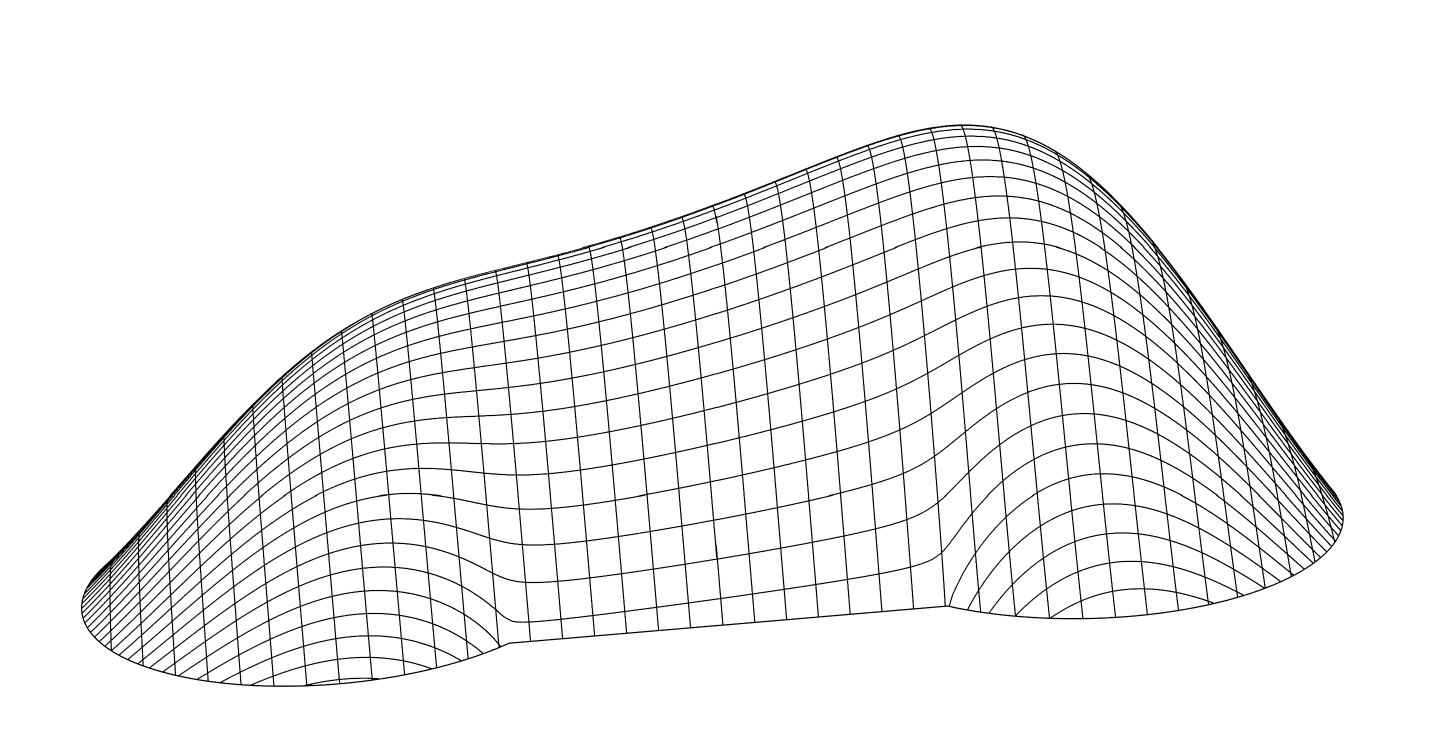}
    \caption{$s=0.25$}
  \end{subfigure}%
  \hspace*{\fill}  
  \begin{subfigure}{0.33\textwidth}
    \includegraphics[width=\linewidth]{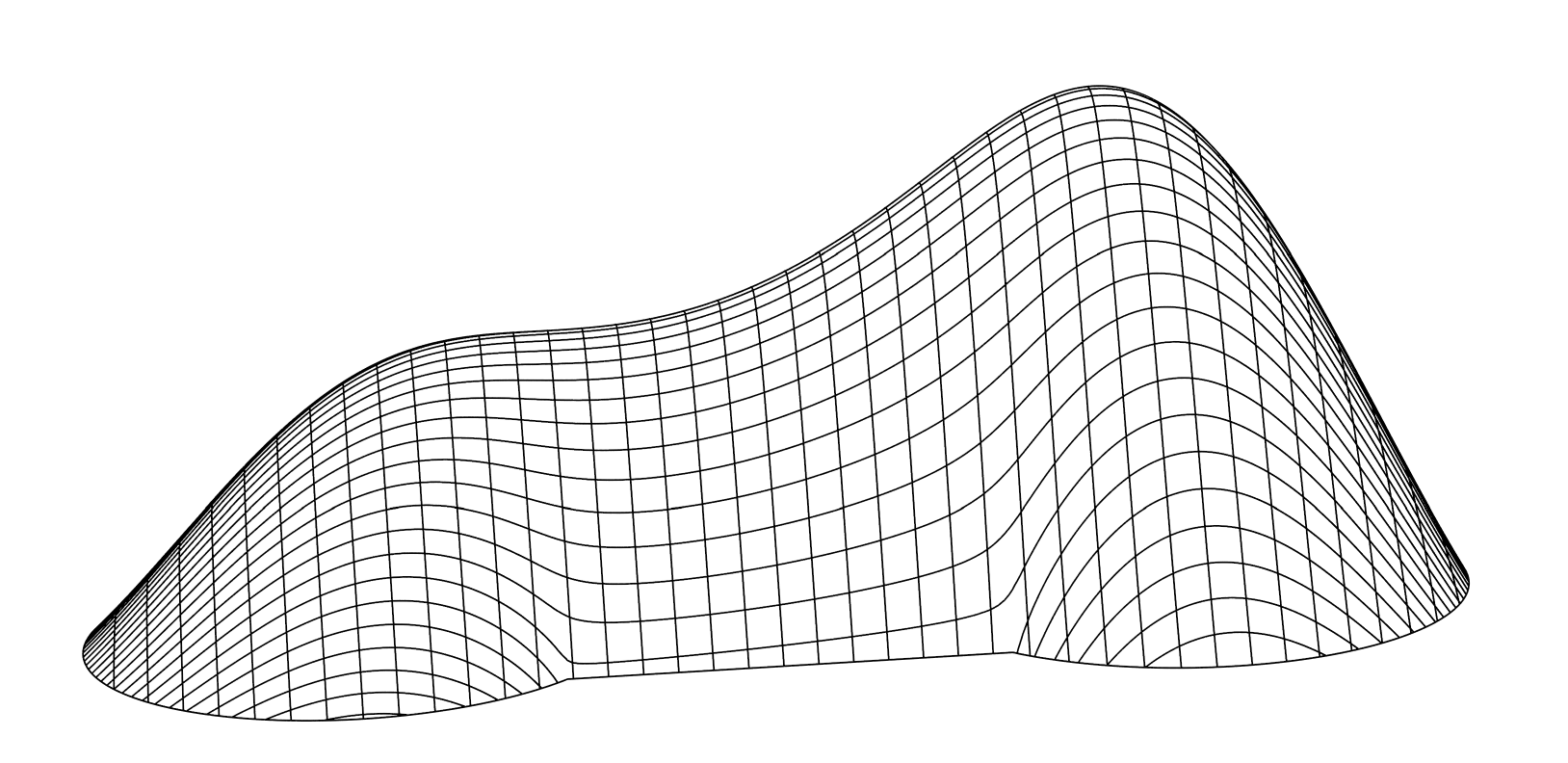}
    \caption{$s=0.309$}
  \end{subfigure}%
  \hspace*{\fill}  
  \begin{subfigure}{0.33\textwidth}
    \includegraphics[width=\linewidth]{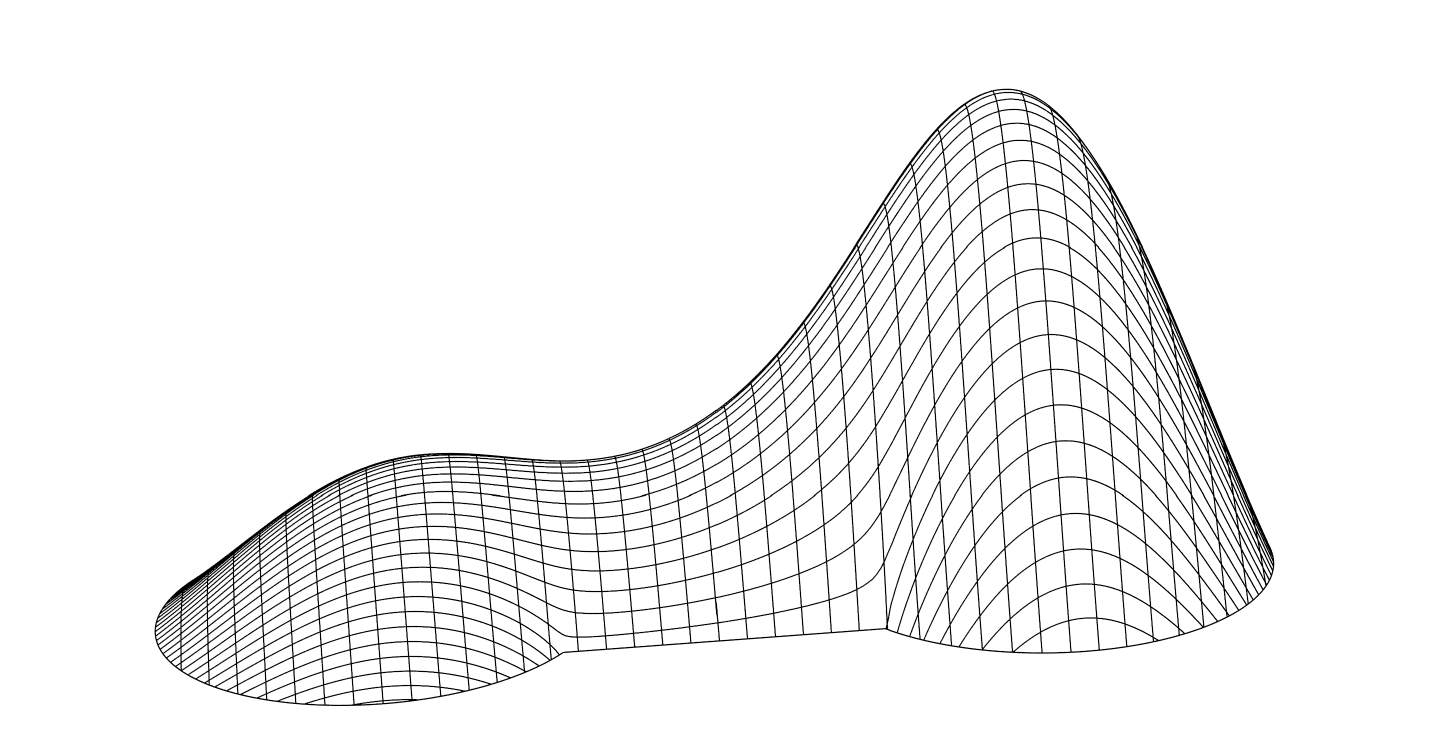}
    \caption{$s=0.4$}
  \end{subfigure}%
\caption{$\Omega(R,s)$ with $R=1.04$ and varying $s$. Level sets, flow lines, and perspective view of $u_{R,s}$.}\label{fig:saddle-node-counter}
\end{figure}

The appearance of the remaining semi-degenerate point $z_0$, namely, the topological saddle point as $(0,0)$ of the function $(x,y) \mapsto u(z_0) + x^k - y^2$ with even $k \geq 4$, is indicated using similar facts as above for a dumbbell-type domain $\Omega(R,s,r)$ defined as
$$
\Omega(1,s,r) = \Omega(1,s) \cup B_r(3/2,0),
$$
where $B_r(3/2,0)$ is the disk of radius $r>0$ centered at the point $(3/2,0)$, which is the center of symmetry of the domain $\Omega(1,s)$; see Figure~\ref{fig:dumbbel3}.
Clearly, if $r \leq 1-s$, then $\Omega(1,s,r) = \Omega(1,s)$, and hence the corresponding first eigenfunction has \textit{at least three} critical points (two local maximum points and one saddle), see Figure~\ref{fig:dumbbel3c}.
On the other hand, for some $r \in (1-s, 1]$, it is natural to expect that 
the eigenfunction has \textit{at least five} critical points (three local maximum  points and two saddles), see Figure~\ref{fig:dumbbel3a}. 
(This fact is even more clear if we initially stretch $\Omega(1,s)$ in the $x$-direction so that the ``handle'' becomes sufficiently long.)
Numerical results indicate that the semi-degenerate topological saddle $z_0$ is obtained for an intermediate value of $r$ as the joining of three critical points - one local maximum point (located at $(3/2,0)$) and two saddles, see Figure~\ref{fig:dumbbel3b}.

\begin{figure}[!ht]
  \begin{subfigure}{0.33\textwidth}
    \includegraphics[width=\linewidth]{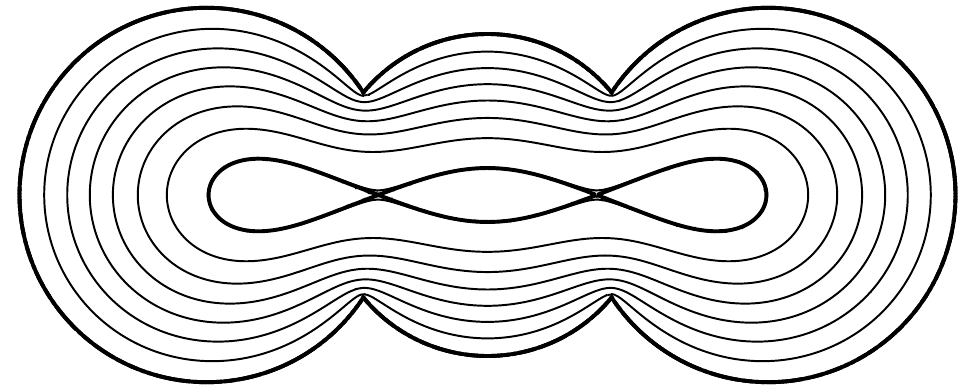}
  \end{subfigure}%
  \hspace*{\fill}  
  \begin{subfigure}{0.33\textwidth}
    \includegraphics[width=\linewidth]{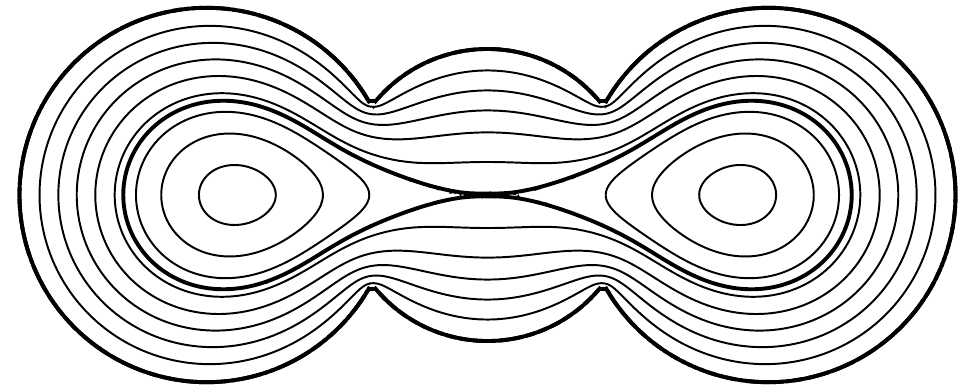}
 \end{subfigure}%
  \hspace*{\fill}  
  \begin{subfigure}{0.33\textwidth}
    \includegraphics[width=\linewidth]{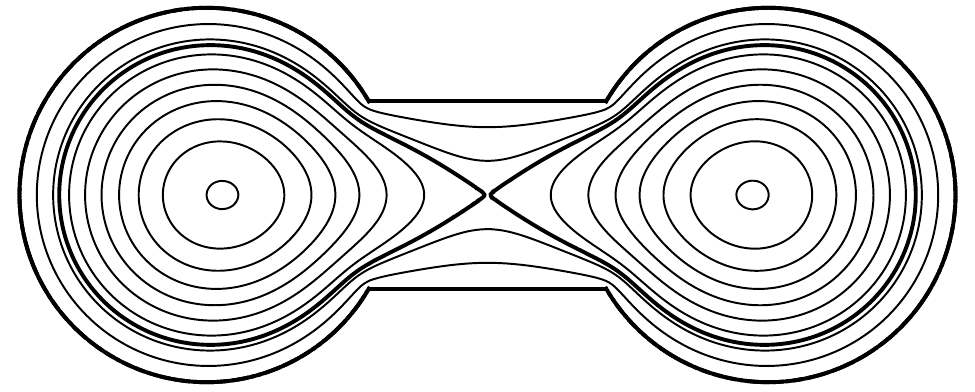} 
  \end{subfigure}%
    \\[1em]
   \begin{subfigure}{0.33\textwidth}
    \includegraphics[width=\linewidth]{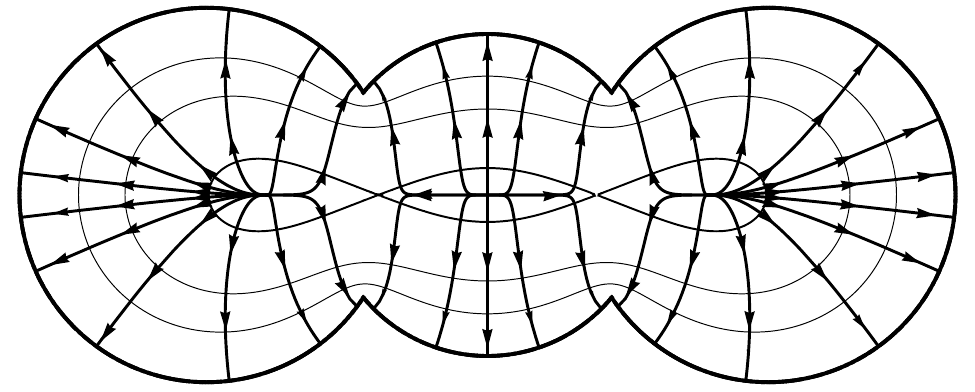} 
  \end{subfigure}%
  \hspace*{\fill} 
  \begin{subfigure}{0.33\textwidth}
    \includegraphics[width=\linewidth]{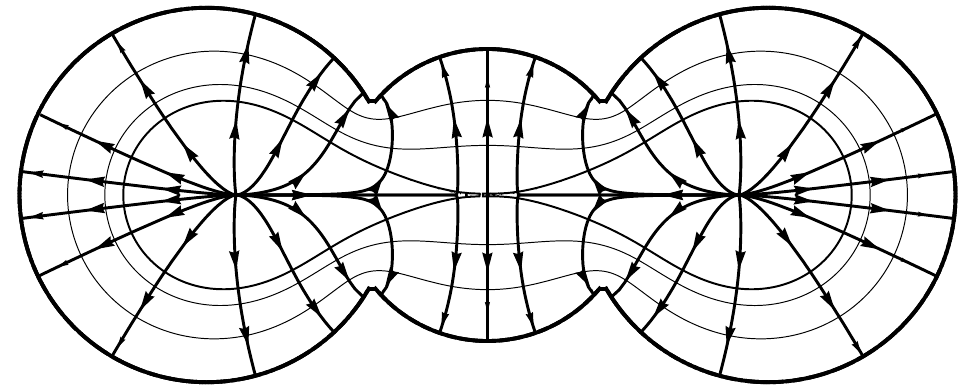}
  \end{subfigure}%
  \hspace*{\fill} 
  \begin{subfigure}{0.33\textwidth}
    \includegraphics[width=\linewidth]{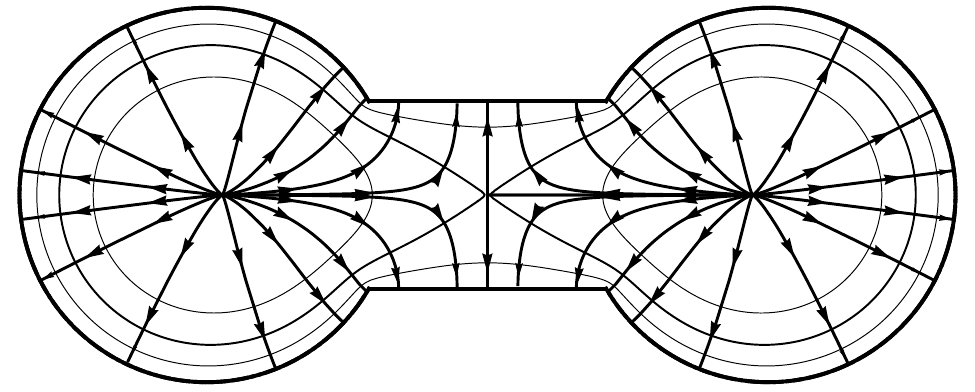} 
  \end{subfigure}%
    \\[1em]
   \begin{subfigure}{0.33\textwidth}
    \includegraphics[width=\linewidth]{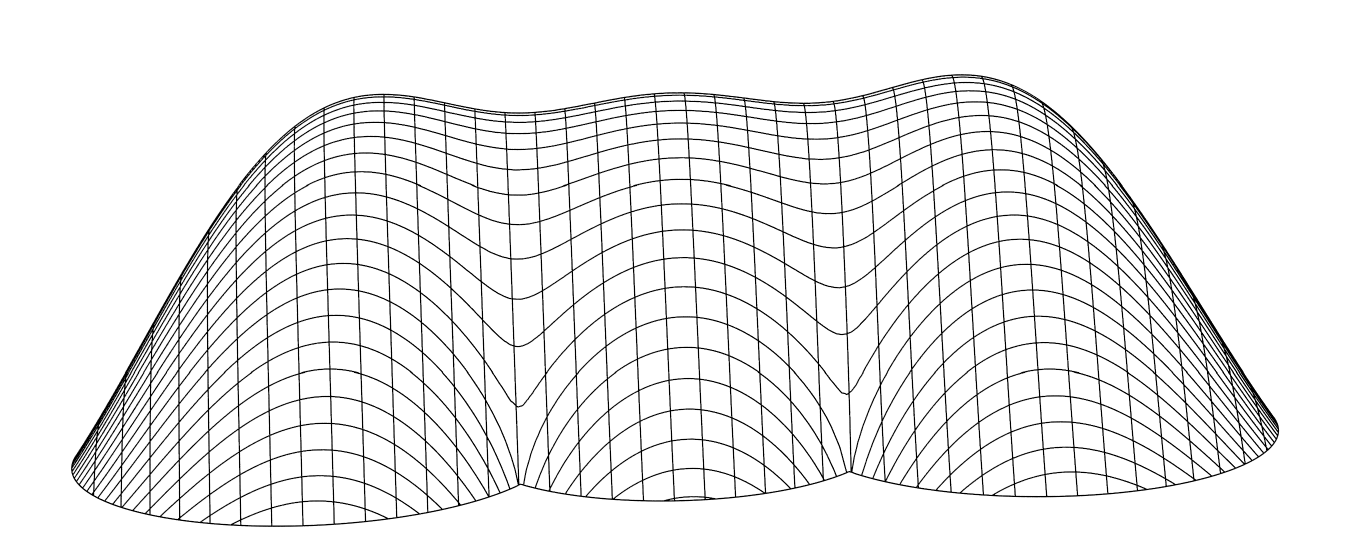}
    \caption{$r=0.86$} \label{fig:dumbbel3a}
  \end{subfigure}%
  \hspace*{\fill} 
  \begin{subfigure}{0.33\textwidth}
    \includegraphics[width=\linewidth]{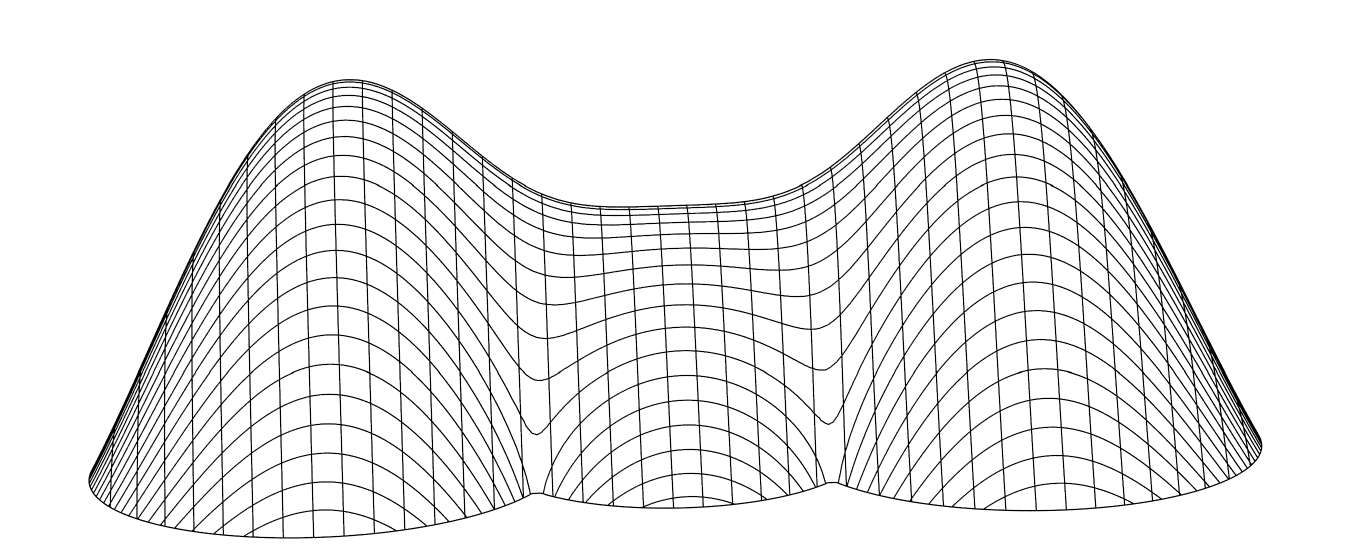}
    \caption{$r=0.78$} \label{fig:dumbbel3b}
  \end{subfigure}%
  \hspace*{\fill} 
  \begin{subfigure}{0.33\textwidth}
    \includegraphics[width=\linewidth]{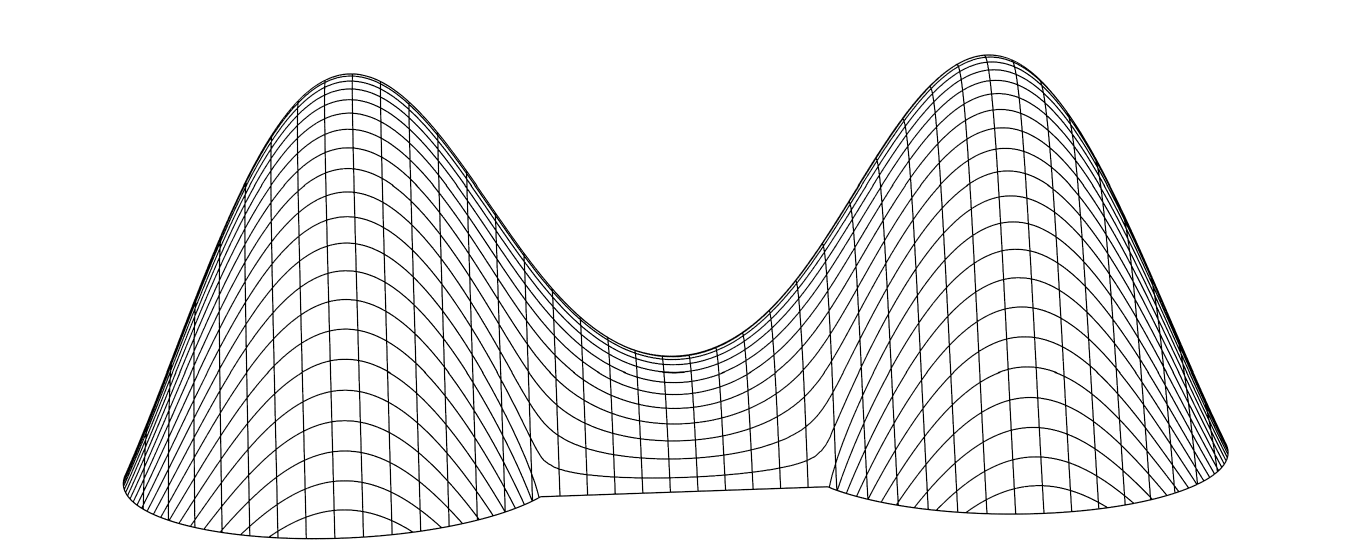}
    \caption{$r=0$} \label{fig:dumbbel3c}
  \end{subfigure}%
\caption{$\Omega(1,s,r)$ with $s=0.5$ and varying $r$. Level sets, flow lines, and perspective view of the corresponding first Dirichlet eigenfunction.}\label{fig:dumbbel3}
\end{figure}

\begin{remark}
    We refer to \cite{grebenkov2013geometrical,maji2023eigenfunction} and references therein for various results and overview on the behavior of eigenvalues and eigenfunctions of \eqref{cutproblem2d} in dumbbell shaped domains. 
\end{remark}

\begin{remark}
    Note that \cite[Examples~4.4 and~4.5]{grossi2020morse} (see also
	\cite[Example~2.2]{massimo2021number}) provide explicit \textit{torsion functions}, i.e., positive solutions of the equation $-\Delta u = 1$ in $\Omega$ with zero Dirichlet boundary conditions, with isolated semi-degenerate critical points of local maximum and saddle-node types.
\end{remark}

\section{Neumann domain count for disk and rectangles}\label{sec:counting}

Throughout this section, we denote by $u_k$ an eigenfunction corresponding to $\lambda_k$.
Let $\mu(u_k)$ be the number of \textit{Neumann domains}.
We are interested in investigating the value
$$
\mathfrak{N}(\Omega) 
=
\limsup_{k \to +\infty} \frac{\mu(u_k)}{k}
$$
for two model domains -- a disk and rectangle. 
The constant $\mathfrak{N}(\Omega)$ is introduced in the natural analogy with the Pleijel constant $\mathfrak{P}(\Omega)$ of $\Omega$ defined as
$$
\mathfrak{P}(\Omega)
=
\limsup_{k \to +\infty} \frac{\varpi(u_{k})}{k},
$$
where $\varpi(u_{k})$ stands for the number of \textit{nodal domains} of $u_k$, see \cite{Pleijel}.
It was conjectured by \textsc{I.~Polterovich} \cite[Remark~2.2]{Polterovich} that for \textit{any} bounded domain $\Omega \subset \mathbb{R}^2$ it holds
\begin{equation}\label{eq:countx1}
	\mathfrak{P}(\Omega)
	\leq
	\frac{2}{\pi}.
\end{equation}
The value $2/\pi$ is attained at least when $\Omega= (0,a)\times (0,b)$ is a rectangle with irrational $a^2/b^2$ (see \cite{helffer2015review} and also \cite[Section 6]{Pleijel}), while the validity of \eqref{eq:countx1} remains an open problem already when $\Omega$ is a square (see \cite{jung2020bounding}).

In Sections~\ref{subsect:rect} and~\ref{subsect:disk}, we show that when $\Omega$ is either a disk or rectangle $(0,a)\times (0,b)$ with irrational $a^2/b^2$, one has
\begin{equation}\label{eq:polt-neum}
	\mathfrak{N}(\Omega) 
	=
	2 \, \mathfrak{P}(\Omega) 
	\leq \frac{4}{\pi}.
\end{equation}
That is, for the maximizing sequence $\{u_k\}$, there are twice more Neumann domains than nodal domains as $k \to +\infty$.
In analogy with Polterovich's conjecture, it would be interesting to know whether \eqref{eq:polt-neum} holds for any domain $\Omega$ (under  reasonable regularity assumptions), or whether there exists $\Omega$ for which $\mathfrak{N}(\Omega) >
2 \mathfrak{P}(\Omega)$.
Heuristics based on the approach of \cite{Pleijel} (see also \cite[Remark 2.2]{Polterovich}) seem to support \eqref{eq:polt-neum}. 
Notice also the following lower bound established in \cite[Corollary~5.1]{band2016topological} for a Morse eigenfunction $u_k$:
\begin{equation}\label{eq:count-band1}
\mu(u_{k}) \geq \frac{\varpi(u_k)}{2},
\end{equation}
which might suggest that
$$
\mathfrak{N}(\Omega)
\geq 
\frac{\mathfrak{P}(\Omega)}{2}.
$$
We also refer to \cite[Sections~5,~6]{band2016topological} for further discussion on the Neumann domain count.

\subsection{Rectangles}\label{subsect:rect}
Let $\mathcal{R}(a,b) := \Omega= (0,a)\times (0,b)$ be a rectangle with sides $a,b>0$.
Consider the Dirichlet eigenvalue problem in $\mathcal{R}(a,b)$, i.e., \eqref{cutproblem2d} with $\Gamma^N = \emptyset$.
Classically, any eigenvalue is given by
$$
\lambda_{n,m} =\frac{\pi^2 n^2}{a^2} + \frac{\pi^2 m^2}{b^2},
\quad n,m \in \mathbb{N},
$$
and there is a basis of eigenfunctions consisting of
$$
u_{n,m}(x,y) = \sin\left(\frac{\pi n x}{a}\right) \sin \left(\frac{\pi m y}{b}\right),
\quad (x,y) \in \mathcal{R}(a,b).
$$
Notice that any $u_{n,m}$ is a Morse function.
If we additionally assume that $a^2/b^2$ is irrational, then each eigenvalue is simple.

\begin{proposition}
Let $a,b>0$ be such that $a^2/b^2$ is irrational. 
Then 
\begin{equation}\label{eq:rect}
	\mathfrak{N}(\mathcal{R}(a,b)) 
	=
	2 \, \mathfrak{P}(\mathcal{R}(a,b)) 
	= 
	\frac{4}{\pi}.
\end{equation}
\end{proposition}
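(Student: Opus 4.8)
The plan is to reduce \eqref{eq:rect} to a single asymptotic comparison between the number of Neumann and nodal domains and then to invoke the known value of the Pleijel constant. Since $a^2/b^2$ is irrational, one has $\mathfrak{P}(\mathcal{R}(a,b))=2/\pi$ (see \cite{helffer2015review} and \cite[Section~6]{Pleijel}), and the nodal set of $u_{n,m}$ is exactly the grid of segments $\{x=aj/n\}$ and $\{y=bi/m\}$, so that $\varpi(u_{n,m})=nm$. Hence it suffices to prove the single asymptotic identity
\begin{equation}\label{eq:rect-asymp}
\mu(u_{n,m}) = 2nm + O(n+m),
\end{equation}
uniformly in $(n,m)$, after which the proposition follows from a Weyl-law bookkeeping argument given at the end.

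First I would determine the critical set of $u_{n,m}$. A direct computation shows that $\mathcal{M}_+\cup\mathcal{M}_-$ consists of the $nm$ cell centers $\big((2j-1)a/2n,\,(2i-1)b/2m\big)$ (a maximum when $i+j$ is even and a minimum otherwise), that $\mathcal{S}$ contains the $(n-1)(m-1)$ interior nodal crossings $(ja/n,\,ib/m)$, and that there are $O(n+m)$ further critical points on $\partial\Omega$. At an interior saddle the Hessian is purely off-diagonal, with eigendirections along the diagonals $(1,\pm1)$; since $u=0$ there and $u$ is strictly monotone along flow lines, the two branches of $W^{\mathfrak{s}}$ join the saddle to the two diagonally adjacent maxima and the two branches of $W^{\mathfrak{u}}$ to the two diagonally adjacent minima. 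In particular, no separatrix issuing from an interior saddle reaches $\Gamma^D$, so the interior structure is entirely organized by the saddle--extremum separatrices.

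The core step is the count \eqref{eq:rect-asymp}, which I would extract from the separable structure. The vertical lines through the column centers and the horizontal lines through the row centers are invariant flow lines joining consecutive, oppositely signed extrema. Thus for a maximum $p$ whose cell does not touch $\partial\Omega$, these center-lines provide four flow lines from $p$ to the four edge-adjacent minima, while the four diagonal separatrices of the saddles at the corners of $p$'s cell cut $W^{\mathfrak{u}}(p)$ into exactly four curvilinear sectors; each sector is a connected component of $W^{\mathfrak{u}}(p)\cap W^{\mathfrak{s}}(q)$ for one adjacent minimum $q$, hence a single inner Neumann domain. Equivalently, a local incidence count shows that four Neumann domains abut each interior saddle while each Neumann domain abuts exactly two saddles, giving $\mu=2\,\#\{\text{interior saddles}\}+O(n+m)$. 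Since the number of extrema whose cell meets $\partial\Omega$, the number of boundary critical points, and the number of boundary Neumann domains are all $O(n+m)$, summing the four inner Neumann domains over the $nm/2+O(n+m)$ interior maxima (equivalently, using $\#\{\text{interior saddles}\}=(n-1)(m-1)$) yields \eqref{eq:rect-asymp}.

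Finally I would pass to the limit. By Weyl's law the index $k=k(n,m)$ of $\lambda_{n,m}$ satisfies $k\sim\frac{ab}{4\pi}\lambda_{n,m}$, and $\lambda_{n,m}\ge c\,(n^2+m^2)$, so $(n+m)/k\to0$. Combining this with \eqref{eq:rect-asymp} and $\varpi(u_{n,m})=nm$ gives $\mu(u_{n,m})/k=2\,\varpi(u_{n,m})/k+o(1)$, and since adding a null sequence leaves the $\limsup$ unchanged,
\begin{equation}
\mathfrak{N}(\mathcal{R}(a,b)) = \limsup_{k\to\infty}\frac{\mu(u_k)}{k} = 2\limsup_{k\to\infty}\frac{\varpi(u_k)}{k} = 2\,\mathfrak{P}(\mathcal{R}(a,b)) = \frac{4}{\pi},
\end{equation}
which is \eqref{eq:rect}. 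The main obstacle is the rigorous justification of \eqref{eq:rect-asymp} in the boundary layer: while the diagonal-separatrix and center-line picture pins down the four-sector decomposition around each interior maximum through the explicit product form of $u_{n,m}$, one must still verify that the flow lines emanating from cells adjacent to $\Gamma^D$ organize into only $O(n+m)$ boundary Neumann domains, i.e.\ that the boundary contributes genuinely lower-order terms. Controlling the Morse--Smale structure near $\partial\Omega$ is the delicate part, even though the bulk behavior is transparent.
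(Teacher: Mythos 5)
Your proposal is correct, and your per-eigenfunction count agrees with the paper's: the paper determines by direct inspection that $u_{n,m}$ has exactly $n(m-1)+(n-1)m$ inner and $2n+2m$ boundary Neumann domains, so $\mu(u_{n,m}) = 2nm+n+m$, which confirms your asymptotic claim $\mu(u_{n,m}) = 2nm + O(n+m)$; note also that the boundary-layer issue you flag as delicate is handled in the paper only at the same level of rigor (``direct inspection''), so you are not missing any argument the paper actually supplies, and your incidence count (four Neumann domains per interior saddle, two saddles per Neumann domain) is if anything more detailed than the paper's justification. Where you genuinely diverge is the limit step. The paper computes $\mathfrak{N}(\mathcal{R}(a,b))$ from scratch: it substitutes the exact count into the Weyl law, splits the resulting limsup by subadditivity, bounds the main term by $4/\pi$ via the AM--GM inequality, and exhibits an explicit maximizing sequence (with $n_k/k \to a$, $m_k/k \to b$) to show the bound is attained; only at the very end is the equality $\mathfrak{N} = 2\,\mathfrak{P}$ read off from the cited value $\mathfrak{P}(\mathcal{R}(a,b)) = 2/\pi$. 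You instead derive $\mathfrak{N} = 2\,\mathfrak{P}$ directly from the per-eigenfunction relation $\mu(u_k) = 2\varpi(u_k) + o(k)$, using that adding a null sequence does not change a limsup, and then import the known Pleijel constant for both equalities. Your route is shorter and makes transparent that the factor $2$ holds eigenfunction-by-eigenfunction rather than merely for the limsups; the price is that the entire optimization is outsourced to the cited computation of $\mathfrak{P}$, whereas the paper's argument evaluates $\mathfrak{N} = 4/\pi$ self-containedly and identifies the maximizing sequence explicitly, which is information your reduction does not produce.
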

\begin{proof}
Thanks to the nondegeneracy of critical points of $u_{n,m}$, it is not hard to analyze the number of its Neumann domains by direct inspection and deduce that $u_{n,m}$ has exactly 
\begin{enumerate}
\item $n(m-1)+(n-1)m$ inner Neumann domains (i.e., those Neumann domains whose boundary is fully consisted of Neumann lines),
\item $2m+2n$ boundary Neumann domains (i.e., those Neumann domains whose boundary intersects with $\partial\Omega$ by an arc),
\end{enumerate}
see Figure~\ref{fig1} for an example.
That is, we have
$$
\mu(u_{n,m}) =  2nm +n+m.
$$

\begin{figure}[ht]
\centering
\includegraphics[width=.6\linewidth]{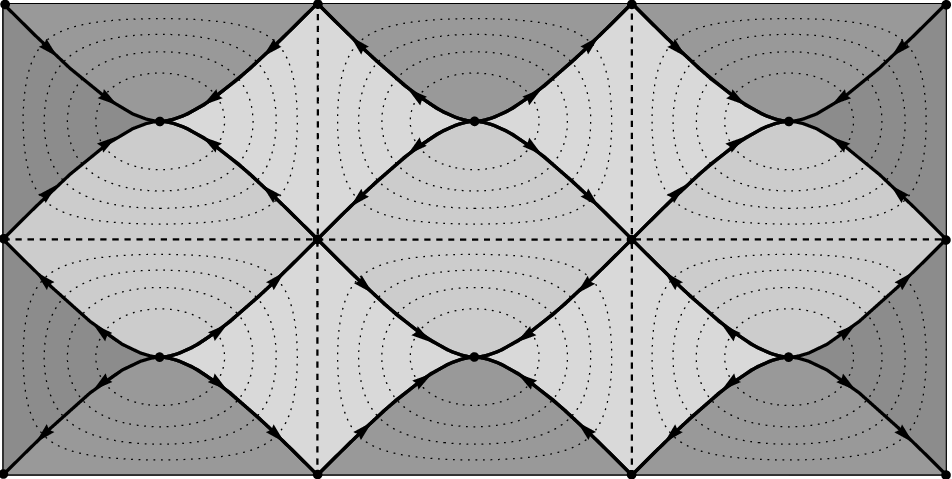}
\captionof{figure}{Neumann domains of the eigenfunction $u_{3,2}$ in $\mathcal{R}(2,1)$: $10$ boundary Neumann domains (dark grays) and $7$ inner Neumann domains (light grays). Dashed lines are nodal lines, dotted lines are some level lines.}
\label{fig1}
\end{figure}

Although the exact relation between the increasing sequence of eigenvalues $\{\lambda_k\}$ and the two-indexed sequence of eigenvalues $\{\lambda_{n,m}\}$ is hardly possible to obtain in the general case, we can use the Weyl law stating that
$$
k = (2\pi)^{-2} \, \pi \, ab \, \lambda_k + o(\lambda_k),
$$
where $\pi$ is the volume of the unit disk and $|\mathcal{R}(a,b)|=ab$. 
Thus, we get
$$
k = \frac{\pi \, ab}{4}  \left(\frac{n_{k}^2}{a^2} + \frac{m_{k}^2}{b^2}\right) + o(\lambda_k),
$$
where $\lambda_k = \lambda_{n_k,m_k}$ for a \textit{unique} pair of numbers $n_k, m_k \in \mathbb{N}$.

Using these facts, for the value $\mathfrak{N}(\mathcal{R}(a,b))$ we have
\begin{align}
&\mathfrak{N}(\mathcal{R}(a,b)) 
= 
\limsup_{k \to +\infty} \frac{\mu(u_k)}{k}\\
\label{eq:rect0}
&=
\limsup_{n+m \to +\infty} \frac{2nm +n+m}{\frac{\pi \, ab}{4}  \left(\frac{n^2}{a^2} + \frac{m^2}{b^2}\right) + o(\lambda_{n,m})}
=
\limsup_{n+m \to +\infty} \frac{2nm +n+m}{\frac{\pi \, ab}{4}  \left(\frac{n^2}{a^2} + \frac{m^2}{b^2}\right)}\\
&=
\frac{4}{\pi}\limsup_{n+m \to +\infty} \frac{2nm +n+m}{ab\left(\frac{n^2}{a^2} + \frac{m^2}{b^2}\right)}.
\end{align}
By the subadditivity, we get
\begin{equation}\label{eq:rect1}
\mathfrak{N}(\mathcal{R}(a,b))
\leq
\frac{8}{\pi}\limsup_{n+m \to +\infty} \frac{\frac{n}{a}\cdot\frac{m}{b}}{\frac{n^2}{a^2} + \frac{m^2}{b^2}}
+
\frac{4}{\pi a b}\limsup_{n+m \to +\infty} \frac{n+m}{\frac{n^2}{a^2} + \frac{m^2}{b^2}}.
\end{equation}
Applying the inequality of arithmetic and geometric means to the first term on the right-hand side of \eqref{eq:rect1}, we obtain
$$
\frac{\frac{n}{a}\cdot\frac{m}{b}}{\frac{n^2}{a^2} + \frac{m^2}{b^2}} \leq \frac{1}{2},
$$
and hence
$$
\frac{8}{\pi}\limsup_{n+m \to +\infty} \frac{\frac{n}{a}\cdot\frac{m}{b}}{\frac{n^2}{a^2} + \frac{m^2}{b^2}} \leq \frac{4}{\pi}.
$$
As for the second term on the right-hand side of \eqref{eq:rect1}, we have
$$
\frac{4}{\pi a b}\limsup_{n+m \to +\infty} \frac{n+m}{\frac{n^2}{a^2} + \frac{m^2}{b^2}} = 0.
$$
Thus, we conclude that
$$
\mathfrak{N}(\mathcal{R}(a,b))
\leq
\frac{4}{\pi}.
$$
This upper bound is actually an exact value of $\mathfrak{N}(\mathcal{R}(a,b))$.
Indeed, taking any sequence $\{(n_k,m_k)\}\subset \mathbb{N}^2$ such that $\lim\limits_{k \to +\infty} \frac{n_k}{k} = a$ and $\lim\limits_{k \to +\infty} \frac{m_k}{k} = b$, 
we get
$$
\mathfrak{N}(\mathcal{R}(a,b))
\geq 
\frac{4}{\pi}\lim_{k \to +\infty} \frac{2n_k m_k +n_k+m_k}{ab\left(\frac{n_k^2}{a^2} + \frac{m_k^2}{b^2}\right)}
=
\frac{4}{\pi}\lim_{k \to +\infty}\frac{2ab k^2 + ak + bk}{ab (k^2+k^2)}
=
\frac{4}{\pi}.
$$
Summarizing, we proved that $\mathfrak{N}(\mathcal{R}(a,b)) = 4/\pi$.
The first equality in \eqref{eq:rect} then follows from \cite[Proposition~5.1]{helffer2015review} which asserts that $\mathfrak{P}(\mathcal{R}(a,b)) = 2/\pi$.
\end{proof}

We also refer to \cite{mcdonald2014neumann,band2021spectral,band2016topological,band2020defining} for some analytic and numeric results on Neumann domains and lines when $\Omega$ is a rectangle under Dirichlet, Neumann, and periodic boundary conditions.

\subsection{Disk}\label{subsect:disk}
Let $B := \Omega$ be the unit disk centered at the origin. 
Separating variables in polar coordinates $(\varrho,\vartheta)$, one can find a basis of eigenfunctions of \eqref{cutproblem2d} in $B$ with zero Dirichlet boundary conditions consisting of
\begin{align}
\label{eq:neum:disk:1}
&u_{n,m}(\varrho,\vartheta) 
= 
J_n(j_{n,m} \varrho) \cos(n \vartheta),
\quad 
n \in \mathbb{N} \cup \{0\},~
m \in \mathbb{N},\\
\label{eq:neum:disk:0}
&\widetilde{u}_{n,m}(\varrho,\vartheta) 
= 
J_n(j_{n,m} \varrho) \sin(n \vartheta),
\quad 
n \in \mathbb{N},~
m \in \mathbb{N},
\end{align}
where $j_{n,m}$ stands for the $m$-th positive zero of the Bessel function $J_n$.
Both $u_{n,m}$ and $\widetilde{u}_{n,m}$ correspond to the eigenvalue $\lambda_{n,m} = j_{n,m}^2$. 
Any eigenfunction $u_{0,m}$ is radial, while any other eigenfunction \eqref{eq:neum:disk:1}, \eqref{eq:neum:disk:0} has a ``dihedral'' symmetry. 

\begin{remark}\label{rem:disk}
Let us discuss the degeneracy of critical points of $u_{n,m}$.
Observe that 
	\begin{equation}\label{eq:phi-derivative}
	\frac{\partial u_{n,m}}{\partial \varrho} 
	=
	j_{n,m} J_n'(j_{n,m} \varrho) \cos(n \vartheta),
	\quad
	\frac{\partial u_{n,m}}{\partial \vartheta} 
	= 
	- n J_n(j_{n,m} \varrho) \sin(n \vartheta),
	\end{equation}
	and the Hessian determinant of $u_{n,m}$ is
	$$
	\text{Hess}(u_{n,m}) 
	=
	- j_{n,m}^2 n^2 \left[J_n(j_{n,m} \varrho) J_n''(j_{n,m} \varrho) \cos^2(n \vartheta) + (J_n'(j_{n,m} \varrho))^2 \sin^2(n \vartheta)\right].
	$$
	Since positive zeros of $J_n$ and $J_n'$ are interlacing (see, e.g., \cite[Section~15.23, p.~480]{watson}),
	we deduce from \eqref{eq:phi-derivative} that, for $n \geq 1$, $(\varrho_0,\vartheta_0)$ with $\varrho_0>0$ is a critical point of $u_{n,m}$ if and only if either 
	$$
	J_n(j_{n,m} \varrho_0) \neq 0, \quad \cos(n \vartheta_0) \neq 0
	\quad \text{and} \quad
	J_n'(j_{n,m} \varrho_0) = 0, \quad \sin(n \vartheta_0)=0,
	$$
	or 
	$$
	J_n(j_{n,m} \varrho_0) = 0, \quad \cos(n \vartheta_0) = 0
	\quad \text{and} \quad
	J_n'(j_{n,m} \varrho_0) \neq 0, \quad \sin(n \vartheta_0) \neq 0.
	$$
	In the latter case, we deduce that $\text{Hess}(u_{n,m}) \neq 0$.
	In the former case, we obtain the same conclusion by additionally noting that  $J_n'$ and $J_n''$ do not have common positive zeros (see, e.g., \cite{baricz2018zeros}).
	If $n = 0$, then a critical point $(\varrho_0,\vartheta_0)$ with $\varrho_0>0$ is necessarily degenerate and, more precisely, semi-degenerate since $\frac{\partial^2 u_{0,m}}{\partial \varrho^2}(\varrho_0,\vartheta_0) \neq 0$.
    Indeed, we have $\frac{\partial^2 u_{0,m}}{\partial \varrho^2}(\varrho_0,\vartheta_0) \neq 0$ because $J_0'$ and $J_0''$ do not have common positive zeros, which follows, e.g., by noting that $J_0' = -J_1$.
    Finally, since $J_n(s) \sim c_1 s^n - c_2 s^{n+2}$ for some $c_1,c_2>0$ as $s \to 0$ (cf.\ \cite[Chapter~III]{watson}), we deduce that the origin is a non-degenerate critical point if $n=0$ and $n=2$, and it is a fully-degenerate critical point if $n \geq 3$.
\end{remark}

 \begin{remark}\label{rem:disk2}
	Let us now comment on the behavior of flow lines of $u_{n,m}$ in a neighborhood of the origin.
	The Cauchy problem \eqref{eq:gradflow2} in the polar coordinates $(\varrho,\vartheta)$ transfers to
	\begin{align}
		\frac{d \varrho}{dt} 
		&= 
		-\frac{\partial u_{n,m}}{\partial \varrho}
		=
		-j_{n,m} J_n'(j_{n,m} \varrho) \cos(n \vartheta),\\
		\frac{d \vartheta}{dt} 
		&= 
		-\frac{1}{\varrho^2}
		\frac{\partial u_{n,m}}{\partial \vartheta}
		=
		\frac{n}{\varrho^2} J_n(j_{n,m} \varrho) \sin(n \vartheta),
	\end{align} 
	with the initial conditions $(\varrho(0),\vartheta(0)) = (\varrho_0,\vartheta_0)$.
	Let $n \geq 1$.
	It is not hard to see that if $\sin(n \vartheta_0) = 0$ and $\varrho_0>0$ is sufficiently close to $0$, then $\vartheta(t) = const$ for all $t$ and $\varrho(t) \to 0$ as $t \to +\infty$ or $t \to -\infty$, i.e., the corresponding flow line in the Cartesian coordinates converges to the origin, cf.\ Figure~\ref{fig:1g}.
	Let us show that there are no other flow lines converting to the origin.
	Assuming $J_n(j_{n,m} \varrho_0) \neq 0$ (which is not restrictive for sufficiently small $\varrho_0>0$) and $\cos(n \vartheta_0) \neq 0$, we obtain
	$$
	\frac{d \varrho}{d \vartheta} 
	= 
	-\frac{j_{n,m} \varrho^2}{n} \frac{J_n'(j_{n,m} \varrho)}{J_n(j_{n,m} \varrho)} \cot(n \vartheta).
	$$
	Integrating, we deduce that
	$$
	\frac{n^2}{j_{n,m}} \int_{\varrho_0}^\varrho \frac{J_n(j_{n,m} r)}{r^2 J_n'(j_{n,m} r)} \, dr
	=
	-n \int_{\vartheta_0}^\vartheta \cot(n \theta) \, d\theta
	=
	-\ln (\sin(n\vartheta)) + \ln(\sin(n \vartheta_0)),
	$$
	and hence
	\begin{equation}\label{eq:neum:disk:3}
		\vartheta
		=
		\frac{1}{n} \arcsin \left( \sin(n \vartheta_0) \exp\left[-\frac{n^2}{j_{n,m}} \int_{\varrho_0}^\varrho \frac{J_n(j_{n,m} r)}{r^2 J_n'(j_{n,m} r)} \, dr\right]\right).
	\end{equation}
    This is the equation describing flow lines.  
	Noting that $J_n(s) \sim s^n$ and $J_n'(s) \sim s^{n-1}$ as $s \to 0$, we see that if $\varrho \to 0$, then the expression in the square brackets in \eqref{eq:neum:disk:3} goes to $+\infty$, which leads to a contradiction provided $\sin(n \vartheta_0) \neq 0$.
	That is, $\sin(n \vartheta_0) = 0$ is a necessary assumption for a flow line of $u_{n,m}$ to converge to the origin.
\end{remark}

Now we state a result on the Neumann domain count for $B$.
\begin{proposition}\label{prop:neuman-count:disk}
	Let $B$ be the unit disk. Then
	\begin{equation}\label{eq:disk}
		\mathfrak{N}(B) 
		=
		2 \, \mathfrak{P}(B)
		=
		16 \, \sup_{s>0} \left\{ s \left(\cos \theta(s)\right)^2 \right\} 
		= 0.9226... < \frac{4}{\pi},
	\end{equation}
 where $\theta=\theta(s)$ is the solution of the equation
\begin{equation}\label{eq:theta0}
\tan \theta - \theta = \pi s, 
\quad 
\theta \in \left(0, \frac{\pi}{2}\right).
\end{equation}
\end{proposition}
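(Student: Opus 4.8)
The plan is to reduce everything to the asymptotic counting of saddle points of $u_{n,m}$ together with the Debye asymptotics of the Bessel zeros $j_{n,m}$. First I would count critical points. By Remark~\ref{rem:disk}, for $n\ge 1$ the interior critical points of $u_{n,m}$ are exactly the $2nm$ local extrema (at $\varrho=j_{n,k}'/j_{n,m}$ with $\cos(n\vartheta)=\pm1$) and the $2n(m-1)$ topological saddles (at $\varrho=j_{n,k}/j_{n,m}$ with $\cos(n\vartheta)=0$), all non-degenerate, the only possibly degenerate point being the origin. The same product structure gives $2n$ angular and $m-1$ interior circular nodal lines, hence $\varpi(u_{n,m})=2nm$. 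For the Neumann count I would apply Euler's formula to the planar graph formed by $\mathcal{N}(u_{n,m})\cup\partial B$: on a closed surface a Morse function has exactly twice as many Neumann domains (the $2$-cells of the Morse--Smale complex) as saddle points, and the Dirichlet boundary together with the single (possibly degenerate) critical point at the origin contributes only lower-order corrections, giving
$$
\mu(u_{n,m}) = 4nm + O(n+m), \qquad \varpi(u_{n,m}) = 2nm + O(n+m).
$$

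Next I would pass to the index $k$. By the Weyl law with $|B|=\pi$ one has $k \sim j_{n,m}^2/4$, so
$$
\frac{\mu(u_{n,m})}{k} = \frac{16\,nm}{j_{n,m}^2}\bigl(1+o(1)\bigr), \qquad \frac{\varpi(u_{n,m})}{k} = \frac{8\,nm}{j_{n,m}^2}\bigl(1+o(1)\bigr),
$$
the doubled eigenfunctions $\widetilde{u}_{n,m}$ and the radial family $u_{0,m}$ (whose ratios tend to $0$) being harmless for the $\limsup$. The heart of the matter is the limit of $nm/j_{n,m}^2$ along rays $m/n\to s$. Here I would invoke the uniform (Debye) asymptotics $j_{n,m}\sim n\sec\theta$, where $n(\tan\theta-\theta)\sim \pi m$; writing $s=m/n$ this is precisely $\tan\theta-\theta=\pi s$, i.e.\ $\theta=\theta(s)$ as in \eqref{eq:theta0}, and then
$$
\frac{nm}{j_{n,m}^2} \;\longrightarrow\; \frac{m/n}{\sec^2\theta} \;=\; s\cos^2\theta(s).
$$
Taking the supremum over all admissible slopes $s$ yields $\mathfrak{N}(B)=16\sup_{s>0}\{s\cos^2\theta(s)\}=2\,\mathfrak{P}(B)$.

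Finally I would solve the scalar optimization. Substituting $\pi s=\tan\theta-\theta$ gives $\pi\,s\cos^2\theta=g(\theta):=(\tan\theta-\theta)\cos^2\theta$, and a short computation gives $g'(\theta)=\sin\theta\,(2\theta\cos\theta-\sin\theta)$, so the maximizer $\theta^\ast\in(0,\pi/2)$ solves $\tan\theta^\ast=2\theta^\ast$. At this point $g(\theta^\ast)=\theta^\ast\cos^2\theta^\ast=\theta^\ast/(1+4\theta^{\ast2})$; with $\theta^\ast\approx 1.1656$ this equals $\approx 0.1812$, whence $\mathfrak{N}(B)=16\,g(\theta^\ast)/\pi\approx 0.9226$. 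Since $g(\theta^\ast)<1/4$, we obtain $\mathfrak{N}(B)<16\cdot(1/4)/\pi=4/\pi$, the claimed strict inequality.

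I expect the main obstacle to be making the two approximations above rigorous: (i) controlling the Neumann-domain count near the Dirichlet boundary and at the degenerate origin well enough to guarantee that the error is genuinely $O(n+m)$ and does not perturb the leading coefficient $4$; and (ii) making the passage to the $\limsup$ honest — one must use the Bessel asymptotics uniformly as $n,m\to\infty$ with $m/n$ bounded, exhibit integer sequences $(n_k,m_k)$ realizing the optimal slope $s^\ast=\theta^\ast/\pi$, and confirm that no sparse family (small $n$, or the radial modes) produces a ratio exceeding $16\,g(\theta^\ast)/\pi$.
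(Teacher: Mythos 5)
Your proposal is correct in outline, and its analytic skeleton --- the Weyl law $k\sim j_{n,m}^2/4$, Bessel-zero asymptotics along rays $m/n\to s$ (Debye, i.e.\ Elbert--Laforgia), and the reduction to $16\sup_{s>0}\{s\cos^2\theta(s)\}$ --- is exactly the paper's. The genuine difference is how the count $\mu(u_{n,m})\sim 4nm$ is justified. The paper computes \emph{exact} values ($\mu(u_{0,m})=m$, $\mu(u_{1,m})=4m-1$, $\mu(u_{2,m})=8m$, $\mu(u_{n,m})=4nm$ for $n\geq 3$) by direct inspection of the flow-line picture, using the classification in Remark~\ref{rem:disk} and Remark~\ref{rem:disk2}; this is forced because $u_{0,m}$ (circles of critical points) and $u_{n,m}$ with $n\geq 3$ (fully degenerate origin) are \emph{not} Morse functions, so the closed-surface Morse--Smale identity ``number of $2$-cells equals twice the number of saddles'' that you invoke is not available off the shelf --- circumventing precisely this is what Theorem~\ref{thm:main-properties} is for. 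You correctly flag this as your obstacle (i); filling it would in effect reproduce the paper's case-by-case count (note also that your interior saddle count $2n(m-1)$ omits the $2n$ boundary saddles on $\Gamma^D$, harmless at leading order but relevant to the exact bookkeeping). Your obstacle (ii) is resolved in the paper by Bourget's hypothesis (each eigenvalue has multiplicity at most $2$, so every eigenfunction is a rotation of some $u_{n,m}$), by McCann's bound $j_{n,m}>(n^2+\pi^2(m-\tfrac14)^2)^{1/2}$ to eliminate the $n=0,1$ families, and by citing Bobkov's Pleijel-constant computation for the fact that a maximizing sequence satisfies $m=s_0n+o(n)$. What your route adds is the explicit optimization: $g(\theta)=(\tan\theta-\theta)\cos^2\theta$, $g'(\theta)=\sin\theta\,(2\theta\cos\theta-\sin\theta)$, maximizer $\tan\theta^*=2\theta^*$, and $g(\theta^*)=\theta^*/(1+4\theta^{*2})<1/4$; this yields a self-contained proof of the value $0.9226...$ and of the strict inequality $<4/\pi$, both of which the paper only quotes, and it likewise derives $\mathfrak{N}(B)=2\,\mathfrak{P}(B)$ from the parallel counts $\varpi(u_{n,m})=2nm$ and $\mu(u_{n,m})\sim 4nm$ rather than citing it.
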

\begin{proof}
By straightforward calculations and Remark~\ref{rem:disk}, one can observe the following facts:
\begin{enumerate}
	\item\label{enum:disk:1} If $n=0$, then $u_{0,m}$ has $m-1$ circles of semi-degenerate local extremum points and one non-degenerate local extremum point at the origin.
	Therefore, $u_{0,m}$ has $m-1$ inner Neumann domains and $1$ boundary Neumann domain (see Figure~\ref{fig:disk0}), and hence 
	\begin{equation}\label{eq:neum:disk:0m}
	\mu(u_{0,m}) = m.
	\end{equation}
	\item\label{enum:disk:2} If $n=1$, then $u_{1,m}$ has $2m$ non-degenerate local extremum points and
	$2m$ non-degenerate saddle points. 
	Therefore, it has $4(m-1)+1$ inner Neumann domains and $2$ boundary Neumann domains (see Figure~\ref{fig:disk1}), and hence 
	\begin{equation}\label{eq:neum:disk:1m}
	\mu(u_{1,m}) = 4m-1.
\end{equation}
	\item\label{enum:disk:3} If $n = 2$, then $u_{2,m}$ has $4m$ non-degenerate local extremum points and $4 m + 1$ non-degenerate saddle points.
	Therefore, it has $4(2m-1)$ inner Neumann domains and $4$ boundary Neumann domains (see Figure~\ref{fig:disk2}), and hence 
	\begin{equation}\label{eq:neum:disk:2m}
	\mu(u_{2,m}) = 8m.
\end{equation}
	\item\label{enum:disk:4} If $n \geq 3$, then $u_{n,m}$ has $2n m$ non-degenerate local extremum points, $2n m$ non-degenerate saddle points, and $1$ fully-degenerate saddle point at the origin.
	Therefore, it has $2n (2m-1)$ inner Neumann domains and $2n$ boundary Neumann domains (see Figure~\ref{fig:disk3}), and hence 
	\begin{equation}\label{eq:neum:disk:nm}
	\mu(u_{n,m}) = 4 n m.
\end{equation}
\end{enumerate}

\begin{figure}[!ht]
  \begin{subfigure}{0.49\textwidth}
    \includegraphics[width=0.9\linewidth]{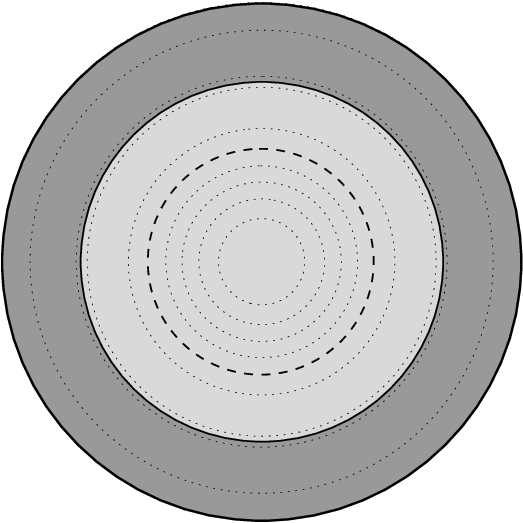}
    \caption{$n=0$, $m=2$} \label{fig:disk0}
  \end{subfigure}%
  \hspace*{\fill} 
  \begin{subfigure}{0.49\textwidth}
    \includegraphics[width=0.9\linewidth]{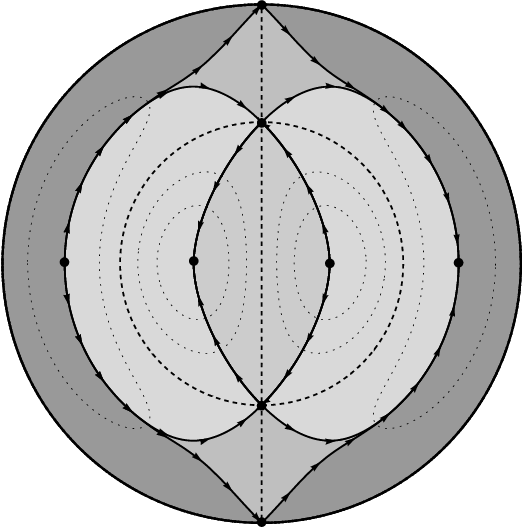}
    \caption{$n=1$, $m=2$} \label{fig:disk1}
  \end{subfigure}%
    \\[1em]
   \begin{subfigure}{0.49\textwidth}
    \includegraphics[width=0.9\linewidth]{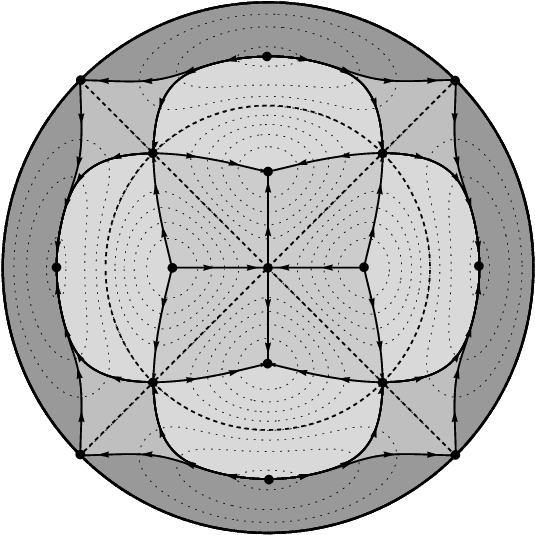}
    \caption{$n=2$, $m=2$} \label{fig:disk2}
  \end{subfigure}%
  \hspace*{\fill} 
  \begin{subfigure}{0.49\textwidth}
    \includegraphics[width=0.9\linewidth]{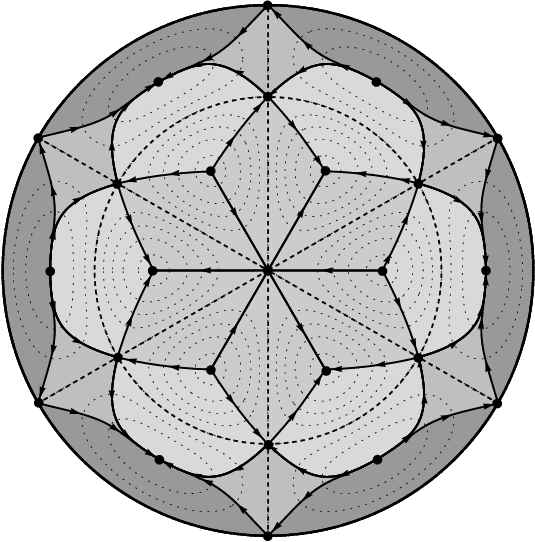}
    \caption{$n=3$, $m=2$} \label{fig:disk3}
  \end{subfigure}%
    
\caption{Neumann domains of the eigenfunction $u_{n,m}$: boundary Neumann domains - dark gray, inner Neumann domains - light grays, nodal lines - dashed, some level lines - dotted.}
\end{figure}

In particular, if $n=0$ or $n \geq 3$, then $u_{n,m}$ has degenerate critical points, and hence definitions and results from \cite[Section~3]{band2016topological} are not directly applicable to such eigenfunctions.
On the other hand, our definitions and results do apply.

Let us observe that any ``radial'' eigenvalue $\lambda_{0,m}$ has multiplicity $1$ and any other eigenvalue has multiplicity $2$, which follows from the validity of Bourget's hypothesis asserting that $J_n$ and $J_{n+l}$ do not have common positive zeros for any $n \in \mathbb{N} \cup \{0\}$ and $l \in \mathbb{N}$, see \cite[p.~484]{watson}.
Consequently, any eigenfunction of the considered problem is either of the form \eqref{eq:neum:disk:1} or its rotation, and hence the number of its Neumann domains is described as above.

Using these facts, the value $\mathfrak{N}(B)$ can be now computed in much the same was as in \cite{bobkov}.
We provide details for the sake of clarity.
In order to relate the two-indexed sequence $\{\lambda_{n,m}\}$ with the increasing one-indexed sequence $\{\lambda_k\}$, we use the Weyl law to get
$$
k = (2\pi)^{-2} |B|^2 \lambda_k + o(\lambda_k) = \frac{\lambda_k}{4} + o(\lambda_k).
$$
Recalling the validity of Bourget's hypothesis, we see that for any $\lambda_k$ there exists a unique pair $(n_k,m_k)$ such that $\lambda_k = \lambda_{n_k,m_k} = j_{n_k,m_k}^2$. 
Consequently, 
$$
k = \frac{j_{n_k,m_k}^2}{4} + o(j_{n_k,m_k}^2).
$$
Therefore, we get
\begin{equation}\label{eq:neum:disk:N}
\mathfrak{N}(B)
=
\limsup_{k \to +\infty} \frac{\mu(u_k)}{k}
=
4 \, \limsup_{k \to +\infty} \frac{\mu(u_{n_k,m_k})}{j_{n_k,m_k}^2}
=
4 \, \limsup_{n+m \to +\infty} \frac{\mu(u_{n,m})}{j_{n,m}^2}.
\end{equation}
We will see below that this limit is strictly positive.
Let us now use the following inequality from \cite[Corollary, p.~102]{mccann}:
\begin{equation}\label{eq:mccann}
j_{n,m} 
> 
\left(n^2 + \pi^2 \left(m-\frac{1}{4}\right)^2\right)^{1/2}
\quad 
\text{for any } n \in \mathbb{N} \cup \{0\}
\text{ and } m \in \mathbb{N}.
\end{equation}
In particular, \eqref{eq:mccann} implies that $j_{0,m} > \pi(m-1)$, and hence, by \eqref{eq:neum:disk:0m},
$$
\lim_{m \to +\infty} \frac{\mu(u_{0,m})}{j_{0,m}^2}
=
\lim_{m \to +\infty} \frac{m}{j_{0,m}^2} = 0.
$$
That is, $\{u_{0,m}\}$ is not a maximizing sequence.
Similarly, in view of \eqref{eq:neum:disk:1m} and \eqref{eq:mccann},
$$
\lim_{m \to +\infty} \frac{\mu(u_{1,m})}{j_{1,m}^2}
=
\lim_{m \to +\infty} \frac{4m-1}{j_{1,m}^2} = 0,
$$
and hence $\{u_{1,m}\}$ in not a maximizing sequence, too.
Therefore, by \eqref{eq:neum:disk:2m} and \eqref{eq:neum:disk:nm} we have
\begin{equation}\label{eq:neum:disk:N2}
\limsup_{n+m \to +\infty} \frac{\mu(u_{n,m})}{j_{n,m}^2}
=
\limsup_{n+m \to +\infty} \frac{4n m}{j_{n,m}^2}.
\end{equation}
Let $\{(n,m)\}$ be a maximizing sequence.
Arguing exactly as in  \cite[p.~806]{bobkov}, it can be shown that $m = n s_0 + o(n)$ for some $s_0 > 0$. 
In particular, we have $n \geq 3$.
Let us now use the result of \textsc{Elbert \& Laforgia} \cite{elbert1994lower} (see \cite[Section 1.5]{elbert2001some} for the precise statement employed in \eqref{3}) which states that	
\begin{equation}\label{3}
\lim_{n \to +\infty} \frac{j_{n,n s}}{n} = \frac{1}{\cos \theta(s)}, 
\quad s > 0,
\end{equation}
where $\theta=\theta(s)$ is the solution of the transcendental equation
\begin{equation}\label{eq:theta}
\tan \theta - \theta = \pi s, 
\quad 
\theta \in \left(0, \frac{\pi}{2}\right).
\end{equation}
Using \eqref{3}, we deduce from \eqref{eq:neum:disk:N} and \eqref{eq:neum:disk:N2} that 
\begin{align*}
\mathfrak{N}(B)
&=
4 \,
\limsup_{n+m \to +\infty} \frac{\mu(u_{n,m})}{j_{n,m}^2}
=
16 \, \limsup_{n+m \to +\infty} \frac{n m}{j_{n,m}^2}
=
16 \, \lim_{n \to +\infty} \frac{s_0 n^2}{j_{n,s_0 n}^2}
\\
&=
16 \, s_0 \, \left(\cos \theta(s_0)\right)^2 = 16 \, \sup_{s>0} \left\{ s \left(\cos \theta(s)\right)^2 \right\} 
= 0.9226...
\end{align*}
The first equality in \eqref{eq:disk} is given by \cite[Theorem~1.3]{bobkov}, which finishes the proof.
\end{proof}

\section{Comments and remarks}\label{sec:remarks}

\begin{enumerate}
    \item The application of \cite{weineffect} in \cite{hers}, which was mentioned in Section~\ref{sec:intro}, might require an inspection and further investigation, since the regularity of the effectless cut is not discussed in \cite{hers,weineffect}. 
    The regularity of the Neumann line set $\mathcal{N}(u)$ was recently investigated in	\cite{band2021spectral,band2020defining} under the assumption that the eigenfunction is a Morse function.

\item 
It would be interesting to know whether any inner Neumann domain intersects with exactly two nodal domains, as it happens in the Morse case, see \cite[Theorem~1.4~(vii)]{band2016topological}.

\item Neumann domains are not necessarily simply connected, which can be seen by considering radial Dirichlet eigenfunctions in a disk or concentric rings.
On the other hand, Neumann domains are simply connected if $\Omega$ is simply connected, $u$ is a Morse function, and the set $\mathcal{S}$ of saddle points of $u$ is nonempty, see \cite[Theorem~3.13]{band2016topological}. 
In the present settings, it would be also interesting to know whether any Neumann domain is simply connected if the critical set $\mathcal{C}$ consists only of isolated critical points and $\mathcal{S} \neq \emptyset$.

\item The analyticity of $\Gamma^N$ is particularly used in Lemma~\ref{lem:isol} (and hence in Lemma~\ref{lem:morse-bott}). If $\Gamma^N$ is only piecewise analytic, the interrelation between $\Gamma^N$ and curves of critical points might be different.
\end{enumerate}

	\smallskip
	\noindent
	\textbf{Acknowledgments.}
        The authors~are grateful to K.~Pagani for a discussion about Remark~\ref{rem:cheng}, to Yu.~Kordyukov, A.~Shavlukov, B.~Suleimanov for discussions on Sections~\ref{sec:classification} and~\ref{sec:classification-of-manifolds}, and to S.~Sasi for bringing the work \cite{chenmyrtaj2019} to their attention. 
        The work was performed during a series of visits of V.B.\ and M.G.\ to IIT Madras, and T.V.A.\ to IM UFRC RAS. 
        The former visits were supported by the Office of Global Engagement of the IIT Madras.
        The latter visit was supported by the Theoretical Physics and Mathematics Advancement Foundation ``BASIS'', Grant No.~23-3-4-1-1.
	T.V.A also acknowledges the Core Research Grant  (CRG/2023/005344)  by ANRF, and 
        M.G. is supported by TIFR Centre for Applicable Mathematics (TIFR-CAM).
\bibliographystyle{abbrvurl}
\bibliography{ref}

\end{document}